\documentclass[11pt]{amsart}
\usepackage{lmodern}
\usepackage{amsmath, amsthm, amssymb, amsfonts}
\usepackage[normalem]{ulem}
\usepackage{hyperref}
\usepackage{enumitem} 

\usepackage{verbatim} 
\usepackage{longtable}

\usepackage{mathtools}
\DeclarePairedDelimiter\ceil{\lceil}{\rceil}
\DeclarePairedDelimiter\floor{\lfloor}{\rfloor}

\usepackage{tikz}
\usetikzlibrary{decorations.pathmorphing}
\tikzset{snake it/.style={decorate, decoration=snake}}

\usepackage[margin=2.5cm]{geometry}
\usepackage{caption}

\usepackage{tikz-cd}
\usetikzlibrary{arrows}
\usetikzlibrary{shapes.geometric}

\theoremstyle{plain}
\newtheorem{thm}{Theorem}[section]
\newtheorem{cor}[thm]{Corollary}
\newtheorem{lem}[thm]{Lemma}
\newtheorem{prop}[thm]{Proposition}
\newtheorem{conj}[thm]{Conjecture}

\theoremstyle{definition}
\newtheorem{defn}[thm]{Definition}
\newtheorem{example}[thm]{Example}

\theoremstyle{remark}
\newtheorem{rmk}[thm]{Remark}

\newcommand{\BA}{{\mathbb{A}}}

\newcommand{\BC}{{\mathbb{C}}}
\newcommand{\BD}{{\mathbb{D}}}

\newcommand{\BH}{{\mathbb{H}}}

\newcommand{\BP}{{\mathbb{P}}}
\newcommand{\BQ}{{\mathbb{Q}}}
\newcommand{\BR}{{\mathbb{R}}}

\newcommand{\BZ}{{\mathbb{Z}}}

\newcommand{\CD}{{\mathcal D}}

\newcommand{\CH}{{\mathcal H}}

\newcommand{\CM}{{\mathcal M}}

\newcommand{\CP}{{\mathcal P}}

\newcommand{\Fp}{{\mathfrak{p}}}

\DeclareFontFamily{OT1}{rsfs}{}
\DeclareFontShape{OT1}{rsfs}{n}{it}{<-> rsfs10}{}
\DeclareMathAlphabet{\curly}{OT1}{rsfs}{n}{it}

\newcommand\Ext{\operatorname{Ext}}
\newcommand\Hom{\operatorname{Hom}}

\newcommand\Spec{\operatorname{Spec}}

\usepackage{tikz}
\usepackage{lmodern}
\usetikzlibrary{decorations.pathmorphing}

\title{A non-holomorphic P=W phenomenon}
\author{Zili Zhang}
\address{School of Mathematical Sciences, Key Laboratory Intelligent Computing and Applications (Tongji University), Ministry of Education, Shanghai, China}
\email{zhangzili@tongji.edu.cn}
\date{\today}

\begin{document}

\begin{abstract}
   We prove the P=W identity for isolated cluster varieties of dimension 3 without the full rank hypothesis. These cluster varieties are generally singular, and the associated Lagrangian fibrations are not complex algebraic. On the P side, we construct the perverse truncation by a detailed analysis of the explicit real-analytic geometry of the Lagrangian fibration. On the W side, we construct a natural non-proper algebraic morphism from the cluster varieties and investigate the decomposition of the derived push-forward of the constant sheaf along this morphism within the derived category of mixed Hodge modules. In the P=W phenomenon for non-full rank isolated cluster varieties of dimension 3, both the curious hard Lefschetz property on the W side and the relative hard Lefschetz property on the P side fail.

\end{abstract}

\maketitle
\setcounter{tocdepth}{1}
\tableofcontents

\section{Introduction}

\subsection{Perverse filtrations and the P=W phenomenon}
Let $Y$ be a real analytic variety. For a fixed perversity function $\Fp$, the bounded derived category $D_c^b(Y)$ of constructible sheaves of $\BQ$-vector spaces on $Y$ is naturally equipped with a perverse $t$-structure $({^\Fp\CD}^{\le 0},{^\Fp\CD}^{\ge 0})$. Given a morphism $f:X\to Y$ of real analytic varieties, the associated perverse truncations 
\[
^\Fp\tau_{\le k}: D_c^b(Y)\to {^p\CD}^{\le k} ,~~k\in\BZ
\]
induces, for any complex $F\in D_c^b(X)$, an increasing filtration
\begin{equation}\label{00}
P_0H^*(X,F)\subset P_1H^*(X,F)\subset\cdots H^*(X,F).
\end{equation}
This filtration, denoted by \eqref{00}, is called the \emph{perverse filtration of $F$ associated with the morphism $f$ with respect to the perversity function $\Fp$}; see Section 2.3 for detailed discussions. In ths special case where $F=Rf_*IC_X$ or $Rf_*\BQ_X$, the corresponding filtration \eqref{00} are simply referred to as the perverse filtration associated with $f$. When $f:X\to Y$ is a smooth morphism of complex algebraic varieties, the perverse filtration with respect to the middle perversity function is well-understood and satisfies a Lefschetz type symmetry: any relatively ample class $\alpha\in H^2(X,\BQ)$ induces isomorphisms
\[
\textup{Gr}^P_{d-k} H^*(X,\BQ)\xrightarrow[\cong]{\cup\alpha^k}\textup{Gr}^P_{d+k} H^*(X,\BQ),~~k\ge0.
\]
We refer to \cite{BBD,dCM,dCM09,KS} for more details about derived categories of constructible sheaves and $t$-structures. See also Section 2.3 for more discussion on perverse filtrations.

The classical $P=W$ phenomenon was first observed in the context of Hitchin systems and character varieties \cite{dCHM}. Let $C$ be a smooth projective curve.  The moduli space $\CM_B$ of $\textup{GL}(n,\BC)$-representations of the fundamental group $\pi_1(C)$, called the character variety, is an affine scheme. On the other hand, the Dolbeault moduli space $\CM_D$ of degree 0 rank $n$ semi-stable Higgs bundles on $C$ is naturally equipped with a proper Hitchin map $h:\CM_D\to\BA$ onto an affine space. Simpson proves in \cite{Sim} that there exists a canonical homeomorphism $\CM_B\cong\CM_D$ between these quasi-projective varieties, called the non-abelian Hodge correspondence. Therefore, there is a canonical identification of the cohomology groups $H^*(\CM_D,\BQ)=H^*(\CM_B,\BQ)$. Under this identification, one may compare the mixed Hodge-theoretic weight filtration on $H^*(\CM_B,\BQ)$ with the perverse filtration on $H^*(\CM_D,\BQ)$ associated with the Hitchin map. The following remarkable $P=W$ identity was conjectured by de Cataldo, Hausel and Migliorini in \cite{dCHM}, proved recently by Maulik-Shen  \cite{MS}, Hausel-Mellit-Minets-Schiffmann  \cite{HMMS}, and Maulik-Shen-Yin \cite{MSY} via different approaches.
\begin{thm}[$P=W$]
Under the canonical identification $H^*(\CM_B,\BQ)=H^*(\CM_D,\BQ)$ via the non-abelian Hodge theory, the $P=W$ identity holds:
\[
P_kH^*(\CM_D,\BQ)=W_{2k}H^*(\CM_B,\BQ)=W_{2k+1}H^*(\CM_B,\BQ),~~k\ge0.
\]
\end{thm}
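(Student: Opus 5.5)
This statement is the full $P=W$ theorem of \cite{MS,HMMS,MSY}, quoted here as background, so the plan below is a sketch of the route of Maulik--Shen \cite{MS}; nothing later in the paper depends on how it is proved. The strategy is to reduce the identity to two statements about tautological classes and then check them on both sides.

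The first input is Shende's theorem that the weight filtration on $H^*(\CM_B,\BQ)$ is multiplicative. It is generated by the tautological classes $c_k(\gamma)$ obtained as Künneth components of the Chern characters of the universal bundle. One shows that $c_k(\gamma)$ has weight exactly $2k$, using Markman's generation result and the Hodge--Tate nature of $\CM_B$ established by Hausel--Rodriguez-Villegas. It then suffices to prove two things on the Dolbeault side. First, $c_k(\gamma)$ lies in $P_k$. Second, the perverse filtration is multiplicative, i.e. $P_a\cup P_b\subset P_{a+b}$. Together these give $W_{2k}\subset P_k$. Equality then follows from a dimension count, using curious hard Lefschetz on the $W$ side and relative hard Lefschetz for the proper Hitchin map on the $P$ side; both are symmetric about the same middle degree.

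For the bound on tautological classes, I would follow the idea of Maulik--Shen. Use a vanishing-cycle and support argument to transport the problem to twisted Higgs bundles with $\deg D>2g-2$. There the Hitchin fibration has full supports by Ngô's support theorem: every perverse summand of $Rh_*\BQ$ has support the whole base. A perverse bound can then be checked over the elliptic locus, i.e. over compactified Jacobians of integral spectral curves. The Fourier transform of Arinkin gives a sheaf-theoretic Fourier decomposition. It shows that $\mathrm{ch}_k$ of the normalized universal sheaf acts with perversity at most $k$, as in the Beauville decomposition for abelian schemes. This yields the needed bounds $c_k(\gamma)\cup P_i\subset P_{i+k}$, which is stronger than $c_k(\gamma)\in P_k$.

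Multiplicativity of $P$ is the main obstacle. Fibers are singular and supports are non-trivial in the untwisted case. To handle it, I would use the bound in the form just stated, $c_k(\gamma)\cup P_i\subset P_{i+k}$. Cupping with generators bounds products without needing multiplicativity in general, since every class is a polynomial in the tautological generators. A parabolic/twisted degeneration argument then transfers the bound back from $D$ to $K_C$, completing $W_{2k}=P_k$. The equality $W_{2k}=W_{2k+1}$ is the separate statement that odd weight graded pieces vanish, which follows from the Hodge--Tate type of $H^*(\CM_B)$.
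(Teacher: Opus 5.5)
The paper does not prove this theorem. It is quoted from \cite{MS,HMMS,MSY} only as motivation, so the only thing to compare your proposal with is the citation itself. Your outline is a fair summary of the Maulik--Shen route in \cite{MS}:
\begin{enumerate}
\item reduce $P=W$ to the strong perversity bound $c_k(\gamma)\cup P_i\subset P_{i+k}$;
\item prove this bound for $D$-twisted Hitchin systems with $\deg D>2g-2$, using full supports and Arinkin's Fourier transform on the elliptic locus;
\item compare with $K_C$ by vanishing cycles;
\item conclude by the dimension count coming from curious and relative hard Lefschetz.
\end{enumerate}
You are right that the strong bound makes a separate proof that $P$ is multiplicative unnecessary; multiplicativity then follows a posteriori from $P=W$. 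So the ``two things'' in your second paragraph should be replaced by that single statement.

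Some details should be fixed.
\begin{itemize}
\item Multiplicativity of $W$ is automatic, because cup product is a morphism of mixed Hodge structures. It is not Shende's theorem.
\item Shende's theorem is that $c_k(\gamma)$ is of Hodge type $(k,k)$, and you need it in that form, not merely as ``weight exactly $2k$''. The type $(k,k)$ statement places every monomial of total weight $j$ in $W_{2j}\cap F^j$, which is the Deligne splitting piece $I^{j,j}$. Only then does Markman's generation theorem give $W_{2k}=\bigoplus_{j\le k}I^{j,j}$, spanned by monomials of total weight at most $k$. This is the fact that feeds into $W_{2k}\subset P_k$.
\item The Hodge--Tate property is a consequence of Shende's theorem together with Markman's. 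It is not an independent input from Hausel--Rodriguez-Villegas used to derive the weights.
\item The passage between $D$ and $K_C$ is exactly the vanishing-cycle comparison of your third paragraph: $\phi_f$ is $t$-exact up to shift and commutes with proper push-forward. There is no additional parabolic or degeneration step.
\end{itemize}

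Finally, a caveat on the setting. As phrased in the paper, $\CM_D$ parametrizes degree-$0$ semistable Higgs bundles. In that case:
\begin{itemize}
\item $\CM_D$ and $\CM_B$ are singular, and there is no universal family;
\item relative hard Lefschetz is available for $IC$ rather than for $\BQ$;
\item neither Markman's generation theorem nor Mellit's curious hard Lefschetz applies.
\end{itemize}
The cited works prove the identity for degree coprime to $n$, i.e.\ for twisted character varieties. Every input of your sketch uses that hypothesis, so you should state it explicitly. As written, your argument does not cover the degree-$0$ formulation.
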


When the classical $P=W$ identity holds, the relative hard Lefschetz symmetry \eqref{00} induces, via the non-abelian Hodge correspondence, a symmetry on the character varieties known as the curious hard Lefschetz property. A variety $X$ satisfies the \emph{curious hard Lefschetz property} if there exists a $2$-form of mixed Hodge type $(2,2)$ such that
\begin{equation} \label{chl}
\textup{Gr}^W_{2\dim X-2k} H^*(X,\BQ)\xrightarrow[\cong]{\cup\beta^k}\textup{Gr}^W_{2\dim X+2k} H^*(X,\BQ),~~k\ge0.
\end{equation} 
In fact, the formal resemblance between the relative hard Lefschetz and the curious hard Lefschetz properties was one of the earliest hints leading to the discovery of the $P=W$ phenomenon.

\subsection{P=W for cluster varieties}
The $P=W$ phenomenon has also been observed in the setting of cluster varieties, which are complex affine varieties defined by certain combinatorial data. See \cite{FZ,LS} for cluster varieties and \cite{Z,Z2} for the corresponding $P=W$ phenomenon. In \cite{Z2}, we propose the following conjecture.
    
\begin{conj}\label{conj}
    Let $X$ be a cluster variety over $\BC$. Then there exists a real Lagrangian fibration $h:X\to \BR^{\dim_\BC X}$ such that the $P=W$ identity holds on the cohomology groups, i.e.
    \[
    P_kH^*(X(M),\BQ)=W_{2k} H^*(X(M),\BQ)=W_{2k+1}H^*(X(M),\BQ),~~ k\ge 0. 
    \]
    We further conjecture that an analogous $PI=WI$ identity holds, i.e. the same identity remains true when cohomology is replaced by intersection cohomology $IH^*(X,\BQ)$.
\end{conj}

The $P=W$ conjecture for cluster varieties is verified in \cite{Z} for all 2-dimensional cluster varieties, and in \cite{Z2} for isolated cluster varieties of full rank in arbitrary dimension. In these cases, the cluster varieties are smooth so the $P=W$ and $PI=WI$ coincide. For 2-dimensional case, the Lagrangian fibrations are constructed via the plumbing calculus of (real) 4-dimensional Lefschetz fibrations. In isolated cluster variety case, the Lagrangian fibration is defined combinatorially as follows. Let $M=(a_{ij})$ be an $m\times n$ integer matrix. The isolated cluster variety $X(M)$ is defined as the subvariety in 
\[
\BC^{2n+m}=\Spec\,\BC[x_1,\cdots,x_n,x_1',\cdots,x_n',z_1,\cdots,z_m]
\]
cut out by the equations
\begin{equation} \label{defn1}
x_jx_j'=\prod_{i=1}^m z_i^{a_{ij}}+1\textup{ for }1\le j\le n \textup{ and } z_1,\cdots,z_m\ne 0.
\end{equation}
After a suitable change of variables, the Lagrangian fibration $h:X(M)\to \BR^{n+m}$ is given by
\begin{equation} \label{defn2}
\begin{split}
  h(x_1,\cdots,x_n,x'_1&,\cdots,x'_n,z_1,\cdots,z_m)\\
  =&(|x_1|^2-|x_1'|^2,\cdots,|x_n|^2-|x_n'|^2,\log|z_1|,\cdots,\log|z_m|).
\end{split}
\end{equation}
When the matrix $M$ is of full column rank, we say that the cluster variety $X(M)$ is of full rank. It follows immediately from definition that $h$ is not a morphism of complex algebraic varieties. So standard results from complex algebraic geometry, such as the Beilinson-Berstein-Deligne-Gabber decomposition theorem, do not directly apply. Nevertheless, in the full rank case with $\dim_\BC X(M)$  even, the fibration $h$ admits a deformation of complex structures, which is an equivariant free finite quotient of a product of certain proper holomorphic elliptic fibrations. Since the perverse filtration is independent of the complex structure, it can be computed using algebraic methods. When $\dim_\BC X(M)$ is odd and $M$ is of full rank, the product $h\times p:X(M)\times\BC^*\to \BR^{m+n+1}$ underlies such a complex structures and the perverse filtration can also be computed. These cases are studied in \cite{Z2}.

The non-full rank case is considerably more sutble. The associated cluster varieties are not smooth in general. Although the map \eqref{defn2} remains a Lagrangian fibration, it is no longer a finite free quotient of products of 1 or 2 dimensional cluster varieties. Furthermore, when $\dim_\BC X$ is odd, the Lagrangian fibration are essentially non-holmorphic, \emph{i.e.} it cannot be represented as a product of a complex holomorphic map with the natural projection $\BC^*\to \BR$, even after an \'etale base change.

\subsection{Main results}
In this paper, we focus on the 3-dimensional isolated cluster varieties of non-full rank. This is the lowest-dimensional setting in which the cluster varieties become singular and the Lagrangian fibrations fail to underlie complex structures. Our study of the 3-dimensional case is intended to develop a systematic approach which can be extended to the higher dimensions. The main result is the following.
\begin{thm}[Theorem \ref{P=W}, \ref{PI=WI}]
   The $P=W$ and $PI=WI$ conjectures (Conjecture \ref{conj}) hold for 3-dimensional isolated cluster varieties, where the perverse filtration is defined associated with the map $h$ given by \eqref{defn2} with respect to the upper middle perversity function.
\end{thm}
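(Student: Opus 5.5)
We prove the statement by computing both filtrations explicitly and comparing them on a common basis of $H^*(X(M),\BQ)$ and $IH^*(X(M),\BQ)$. Since $\dim_\BC X(M)=3=n+m$, the non-full-rank cases are, up to the evident equivalences (row and column operations on $M$ over $\BZ$, permuting indices, and splitting off trivial factors), essentially $(n,m)=(2,1)$ with $M=(a_1,a_2)$ and $(n,m)=(1,2)$ with $M$ the zero column. The $(2,1)$ case with both $a_j\neq0$ is the one that is genuinely new and singular. The degenerate cases, where some column of $M$ vanishes, split as a product of a smooth lower-dimensional piece with a factor $\{xx'=2\}\cong\BC^*$. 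For these I would reduce to the known $2$-dimensional results of \cite{Z} together with a Künneth argument for both $P$ and $W$.

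\textbf{W side.} I will use the algebraic morphism $\pi:X(M)\to(\BC^*)^m$, $(x,x',z)\mapsto z$. It is non-proper: its fibres are products of conics $x_jx_j'=c_j$, each of which degenerates to a node when $c_j=\prod z_i^{a_{ij}}+1=0$. I would compute $R\pi_*\BQ$ and $R\pi_*IC$ in $D^b\mathrm{MHM}((\BC^*)^m)$ and decompose them into a constant part plus pieces supported on the divisors $D_j=\{\prod z_i^{a_{ij}}=-1\}$. In the $(2,1)$ case these divisors are finite sets of roots of unity, and the singular points of $X(M)$ lie over $D_1\cap D_2$, which is nonempty exactly when $a_1,a_2$ share roots. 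The weights of each summand can be read off directly: the constant sheaf on $\BC^*$ carries weights $0$ and $2$, a smooth conic carries $\BQ(-1)$ in degree $1$ of weight $2$, and skyscrapers at nodes are pure of weight $0$. The hypercohomology spectral sequence then degenerates by weight reasons and yields $W_\bullet$ on $H^*$ and $IH^*$. I expect this to show the failure of curious hard Lefschetz, since the skyscraper contributions break the symmetry.

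\textbf{P side.} The perverse filtration for $h$ is topological, so I will compute $Rh_*\BQ$ and $Rh_*IC$ on $\BR^3$ directly. I would stratify $\BR^3$ by the loci where the fibre degenerates: $\mu_j=|x_j|^2-|x_j'|^2=0$ intersected with the preimage of $\log|z|$ over $D_j$. On each stratum, the fibres are torus bundles with circles pinched, and I would compute the stalks of $R^ih_*$ together with their monodromy-free local systems. Applying the upper middle perversity truncations $^\Fp\tau_{\le k}$ requires a real-analytic version of the support and cosupport conditions, checked stratum by stratum using local conic structure. I would then identify $P_k$ with images of $H^*(\BR^3,{}^\Fp\tau_{\le k}Rh_*\BQ)$, using a Leray-type spectral sequence over the contractible base.

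\textbf{Comparison.} I would choose explicit cohomology classes: $d\log z$, the classes of Lagrangian tori, vanishing-cycle spheres over the nodes, and local classes at the singular points. For each class I would compute its $P$-level and its $W$-level, and check that $P_k=W_{2k}=W_{2k+1}$ class by class, with a separate check for $IH^*$. I expect the main obstacle to be the P side at the singular points of $X(M)$ in the $(2,1)$ case. There $h$ is not holomorphic and no decomposition theorem is available, so the perverse truncation must be computed by hand from the local real-analytic model. My first attempt would be to describe that model as a quotient of the product of two $2$-dimensional local models by a circle-type gluing, and to compute the stalk cohomology of $Rh_*$ near the pinched fibres via Mayer–Vietoris.
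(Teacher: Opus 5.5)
Your overall architecture matches the paper's: an algebraic map to the torus with mixed Hodge modules for $W$, and a stratification of $\BR^3$ with hand-built truncations for $P$. But the comparison step, which is where the theorem actually lives, has no mechanism.

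\textbf{The gap: comparing the two filtrations.} Once the dimension tables agree, exactly two subspace identities remain: $P_1H^2(X)=W_2H^2(X)$ and $P_2H^3(X)=W_4H^3(X)$. These compare filtrations produced by two unrelated maps, $\pi\colon X\to\BC^*$ and $h\colon X\to\BR^3$. ``Computing the $P$-level and $W$-level of explicit classes'' gives no way to do either. A class named topologically (a Lagrangian torus, a vanishing sphere) has no a priori weight. A class named by its place in a decomposition of $R\pi_*\BQ_X$ has no a priori perverse level. So a class-by-class check presupposes a basis adapted to both filtrations, which is what has to be proved.

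The missing idea is a single topological condition that cuts out both lower pieces. The paper uses $X^\circ=\{|z|\neq1\}$. This set is simultaneously $\pi^{-1}(\BC^*\setminus S^1)$, a trivial $(\BC^*)^2$-bundle, and $h^{-1}(\{w\neq0\})$, a trivial $T^3$-bundle. The paper proves $W_{2k-2}H^k=\ker\{H^k(X)\to H^k(X^\circ)\}=P_{k-1}H^k$ for $k=2,3$.
\begin{itemize}
\item On the $P$ side, the low perverse summands $\BQ_u^{b-1}\oplus\BQ_v^{a-1}$ and the sheaf $H$ of Proposition~\ref{rh3} are supported in $\{w=0\}$. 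Global sections of $j_*\Lambda^2L$ and $\BQ_{\BR^3}$ restrict injectively to $\{w\neq0\}$.
\item In your $R\pi_*$ set-up the $W$ half is equally quick. The low-weight classes enter through connecting maps from $H^0(Z_a)$, $H^0(Z_b)$, $H^0(Z_a\cup Z_b)$, and they die over $\BC^*\setminus S^1$.
\end{itemize}

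\textbf{The $W$ side.} Your route differs from the paper's, works, and is shorter. The preimage under $\pi$ of a small disc about a point of $Z_a\cap Z_b$ is contractible, and about a point of $Z_a\setminus Z_b$ it is homotopic to $\BC^*$. Hence $R^1\pi_*\BQ_X\cong j_{a!}\BQ_{U_a}(-1)\oplus j_{b!}\BQ_{U_b}(-1)$ and $R^2\pi_*\BQ_X\cong j_!\BQ_{U_a\cap U_b}(-2)$. The Leray spectral sequence degenerates because $\BC^*$ is an affine curve. The tables for $H^*(X_{a,b})$ then follow in both cases, without the paper's passage through $Rf_!$, relative K\"unneth, the resolution and duality. $IH^*$ follows from the triangle $\BQ_X[3]\to IC_X\to\BQ_{Z_{(a,b)}}(-1)[1]\to$, whose connecting map vanishes for weight reasons.

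\textbf{The $P$ side.} Here your sketch misplaces the difficulty.
\begin{itemize}
\item $L=R^1h_*\BQ_X|_U$ is not monodromy-free. The fundamental group of $U$ is free of rank $3$ and acts by unipotent matrices with off-diagonal entries $b$, $a$, $-a$. This is what makes the stalks of $R^1h_*\BQ_X=j_*L$ of rank $2$ on the axes and $1$ at $O$, and gives $H^0(\BR^3,j_*L)=\BQ$.
\item Contractibility of $\BR^3$ does not kill higher cohomology of constructible complexes; for example $\BH^1(\BR^3,Rj_*L)=H^1(U,L)\neq0$. So the vanishing of $H^{>0}(\BR^3,-)$ on $j_*L$, $j_*\Lambda^2L$ and $H$ has to be proved.
\item No splitting of $Rh_*\BQ_X$ is known, so the truncations must be assembled from the $E_2$-degenerate Leray data via the octahedral axiom.
\item No local model at the $A_1$ points is needed. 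Since $h$ is proper, stalks are fibre cohomology. The delicate input is the specialization into $h^{-1}(O)$, where the arc count of Corollary~\ref{comb} produces either an extra $\BQ_O$ or a cokernel $\BQ_O^{(a,b)-1}$ in $R^3h_*\BQ_X$.
\end{itemize}

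Finally, column operations on $M$ are not equivalences; only column permutations are. For instance, $X_{1,1}$ is singular while $X_{1,0}$ is smooth.
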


Since the full rank case has already been treated in \cite{Z2}, it remains to prove the non-full rank case. Following the analysis in Section 2.4, such cluster varieties are of the form
\begin{equation} \label{dfn1}
X_{a,b}=\{ (x,x^{\prime},y,y^{\prime},z)\in \mathbb{C}^5: xx^{\prime}=z^a+1,\,yy^{\prime}=z^b+1,\,z\neq0\}
\end{equation}
for some integers $a,b$ not both zero. The corresponding Lagrangian fibration is defined as
\begin{equation} \label{dfn2}
\begin{aligned}
h: X_{a,b} &\rightarrow \mathbb{R}^{3} \\
(x,x^{\prime},y,y^{\prime},z) &\mapsto (|x|^{2}-|x^{\prime}|^{2},
|y|^{2}-|y^{\prime}|^{2},\log|z|).
\end{aligned}
\end{equation}
We omit $a,b$ when no confusion arises. On $P$ side, we exploit a stratification of the fibration $h$ and construct the perverse truncation using explicit geometric data such as the monodromy matrices around the singular values and the degenerations of singular fibers. On $W$ side, the natural cluster structure realizes the cluster variety as a $\BC^{*2}$-fibration over $\BC^*$, and we compute the weight filtration using mixed Hodge modules together with resolution of singularities. Finally, we identify the corresponding subspaces in both filtrations by certain topological invariants that are independent of the complex structures and of the definition of $h$. More detailed explanations of the main strategy and the ingredients on both the $W$ and $P$ side are provided below.

\subsubsection{W side} 
Every isolated cluster variety $X$ is equipped with a natural morphism $f:X\to(\BC^*)^m$. The mixed Hodge structure on $X$ can be computed via the derived push-forward along this non-proper morphism $f$.  In the case of $X_{a,b}$, we have a fibered product 
\[
\begin{tikzcd}
    X_{a,b}\arrow[r]\arrow[rd,"f_{a,b}"]\arrow[d]& X_a\arrow[d,"f_a"]\\
    X_b\arrow[r,"f_b"]& \BC^*,
\end{tikzcd}
\]
where $X_a\subset \BC^3$ is the cluster variety defined by $\{xx'=z^a+1,z\neq0\}$, and $\BC^*$ is parametrized by the coordinator $z\neq0$. This description enables us to study the decomposition property of the derived push-forward along the non-proper morphism $f_{a,b}$ via the ones of $f_a$ and $f_b$ and apply the relative K\"unneth formula in the category of the mixed Hodge modules. Since the relative K\"unneth formula concerns derived pushforward with compact support, this argument provides the mixed Hodge structures on $H^*_c(X_{a,b},\BQ)$. However, when $ord_2(a)=ord_2(b)$, where $ord_2$ is the $2$-adic valuation of an integer, the variety $X_{a,b}$ is singular. In that case, the mixed Hodge structure on $H^*_c(X,\BQ)$ is not directly related to that on $H^*(X,\BQ)$. To bridge this gap, we construct a minimal resolution $\pi:\widetilde{X}_{a,b}\to X_{a,b}$ and use $\widetilde{X}$ together with the Poincar\'e duality to relate $H^*_c(X_{a,b},\BQ)$ and $H^*(X_{a,b},\BQ)$. We also invoke the decomposition theorem for mixed Hodge modules to the resolution $\pi$ to determine the mixed Hodge structure on $IH^*(X_{a,b})$.  Our overall strategy is to employ mixed Hodge modules to compute the mixed Hodge structures of various cohomology groups, following the diagram below.
\[
\begin{tikzcd}
[cells={nodes={
        draw,           
        minimum width=2.5cm,
        minimum height=1.5cm,
    }},
    column sep=2.5cm,
    row sep=2.5cm]
\begin{array}{l}
H^*_c(X_a)\\
H^*_c(X_b)
\end{array}
\arrow[r,Rightarrow,"\textup{Thm }\ref{cpt}"] & H^*_c(X_{a,b}) \arrow[d,Rightarrow, "\textup{Prop }\ref{sing}"]& \\
\begin{array}{l}
H^*(X_a)\\
H^*(X_b)
\end{array}\arrow[u,Rightarrow,"\textup{Prop }\ref{3.3}"]& |[diamond, aspect=2]|{ord_2(a)=ord_2(b)?}\arrow[r,Rightarrow, "no"',"\textup{Prop }\ref{sm}"]\arrow[d,Rightarrow,"yes"',"\textup{Prop }\ref{res}"]& \begin{array}{c}X\textup{ smooth }\\H^*(X_{a,b}), IH^*(X_{a,b})\end{array}\\
&H^*(\widetilde{X}_{a,b}),H^*_c(\widetilde{X}_{a,b})\arrow[r,Rightarrow,"\textup{Thm }\ref{s}"] & \begin{array}{c}X\textup{ singular }\\H^*(X_{a,b}), IH^*(X_{a,b})
\end{array}
\end{tikzcd}
\]

A mixed Hodge structure $(V,W_\bullet,F^\bullet)$ is said to be of \emph{mixed Hodge-Tate type} if its $(p,q)$-component satisfies
\[
V^{p,q}=\textup{Gr}^p_F\textup{Gr}_{p+q}^WV=0 \textup{ unless }p=q.
\]
In particular, if $V$ is of mixed Hodge-Tate type, then $\textup{Gr}_{2k+1}^WV=0$ for every integer $k$,
and the mixed Hodge structure is completely determined by the dimensions of the graded pieces of the weight filtration. The mixed Hodge structures on $H^*(X_{a,b})$ and $IH^*(X_{a,b})$ are described follows.

\begin{thm}[Proposition \ref{sm}, Theorem \ref{s}]
Let $X_{a,b}$ be the cluster variety defined by \eqref{dfn1}. Then the mixed Hodge structures on $H^*(X_{a,b},\BQ)$ and $IH^*(X_{a,b},\BQ)$ are of mixed Hodge-Tate type. If $ord_2(a)\neq ord_2(b)$, then $X_{a,b}$ is smooth, so $H^*(X_{a,b},\BQ)=IH^*(X_{a,b},\BQ)$. The dimensions of the graded pieces of the weight filtration in this case are listed in the following table.

\begin{center}
\begin{tabular}{c|cccc}
$X_{a,b}$ & $W_0$ & $\textup{Gr}_2^W$&$\textup{Gr}_4^W$&$\textup{Gr}_6^W$ \\
\hline
$H^0$ & $1$ &   &   &  \\
$H^1$ &   & $1$ &   &  \\
$H^2$ &   & $a+b-2$ & $2$ &  \\
$H^3$ &   &    &  $a+b-1$ & $1$\\
\end{tabular}
\end{center}
If $ord_2(a)= ord_2(b)$, then $X_{a,b}$ is singular. The graded pieces of the weight filtration on $H^*(X_{a,b},\BQ)$ and $IH^*(X_{a,b},\BQ)$ are listed in the following tables.
\begin{center}
\begin{tabular}{c|cccc}
$X_{a,b}$ & $W_0$ & $\textup{Gr}_2^W$&$\textup{Gr}_4^W$&$\textup{Gr}_6^W$ \\
\hline
$H^0$ & $1$ &   &   &  \\
$H^1$ &   & $1$ &   &  \\
$H^2$ &   & $a+b-2$ & $2$ &  \\
$H^3$ &   &    &  $a+b-(a,b)-1$ & $1$\\
\end{tabular}

\begin{tabular}{c|cccc}
$X_{a,b}$ & $W_0$ & $\textup{Gr}_2^W$&$\textup{Gr}_4^W$&$\textup{Gr}_6^W$ \\
\hline
$IH^0$ & $1$ &   &   &  \\
$IH^1$ &   & $1$ &   &  \\
$IH^2$ &   & $a+b+(a,b)-2$ & $2$ &  \\
$IH^3$ &   &    &  $a+b-(a,b)-1$ & $1$\\
\end{tabular}
\end{center}
\end{thm}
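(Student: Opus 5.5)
We follow the diagram in the introduction and work throughout in the derived category of mixed Hodge modules on $\BC^*$, computing compactly supported cohomology first and then converting. First treat the surface case. The map $f_a:X_a\to\BC^*$, $(x,x',z)\mapsto z$, has fibre $\BC^*$ away from the $|a|$ roots of $z^a=-1$, and fibre a node $\{xx'=0\}$ over each root. Hence $Rf_{a!}\BQ$ decomposes as follows. In degree $2$ it is the constant variation $\BQ(-1)$. In degree $1$ it is an extension that is pure of weight $2$ on the generic locus, glued with $\BQ$ supported at the $a$ points. Proposition \ref{3.3} then gives $H^*_c(X_a)$, of Hodge--Tate type. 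Next apply the relative K\"unneth formula $Rf_{a,b!}\BQ=Rf_{a!}\BQ\otimes Rf_{b!}\BQ$, as in Theorem \ref{cpt}, and compute $H^*_c(\BC^*,-)$ of the tensor product via the hypercohomology spectral sequence. Every constituent is Hodge--Tate: Tate twists of constant sheaves on open subsets of $\BC^*$, together with skyscrapers. So the output is Hodge--Tate, and it is determined by dimensions and weights. The key combinatorial input is the intersection of the two singular loci $\{z^a=-1\}\cap\{z^b=-1\}$. This set has $(a,b)$ points exactly when $ord_2(a)=ord_2(b)$, and is empty otherwise.

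In the smooth case $ord_2(a)\ne ord_2(b)$, the variety $X_{a,b}$ is smooth of complex dimension $3$. Poincar\'e duality $H^k(X)\cong H^{6-k}_c(X)^\vee(-3)$ therefore transfers the weights, and the table follows from bookkeeping (Proposition \ref{sm}). This is a good place to check the numbers. The Euler characteristic is $0$, since $\BC^*$ acts freely, which is consistent with $1-1+a+b-(a+b)=0$. Here the convention $a,b\ge0$ should be justified by the symmetry $z\mapsto z^{-1}$ together with a coordinate rescaling.

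In the singular case $ord_2(a)=ord_2(b)$, the singular points are the $(a,b)$ points where $x=x'=y=y'=0$ and $z^a=z^b=-1$. Each is locally the $3$-fold node $xx'=yy'$ (times nothing), since $z^a+1$ and $z^b+1$ are proportional to first order. Blow up, or take a small resolution; we take the minimal resolution $\pi:\widetilde X\to X$ of Proposition \ref{res}, with exceptional fibres $\BP^1$, say. Then:
\begin{enumerate}
\item The smooth $\widetilde X$ has $H^*_c(\widetilde X)$ obtained from $H^*_c(X)$ by adding $(a,b)$ copies of $H^*_c(\BP^1)$ minus the points, via the long exact sequence of the pair. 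Poincar\'e duality then yields $H^*(\widetilde X)$.
\item The decomposition theorem for $\pi$ is semismall for a small resolution, so $R\pi_*\BQ_{\widetilde X}[3]=IC_X$. Thus $IH^*(X)=H^*(\widetilde X)$. This gives the extra $(a,b)$ classes in $IH^2$ of weight $2$, and by duality the drop by $(a,b)$ in $IH^3$ of weight $4$.
\item $H^*(X)$ follows from the exact sequence of the resolution square (Mayer--Vietoris for the discriminant square): $\cdots\to H^k(X)\to H^k(\widetilde X)\oplus H^k(\mathrm{Sing})\to H^k(\pi^{-1}\mathrm{Sing})\to\cdots$. The $(a,b)$ exceptional $H^2(\BP^1)$ classes, of weight $2$, kill the corresponding classes of $IH^2$ and restore $H^2$ to $a+b-2$. $H^3$ is unchanged from $IH^3$.
\end{enumerate}
All these maps are morphisms of mixed Hodge structures, so Hodge--Tate type is preserved.

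The main obstacle is the weight and extension bookkeeping in the K\"unneth spectral sequence near the common singular points. There we must decide whether the connecting maps are nonzero; this determines that $H^3$ loses $(a,b)$ in weight $4$ rather than elsewhere. The first thing to try is a local computation on a small disk around each common root, using the local model $xx'=yy'=t$ and its vanishing-cycle description. A separate fallback is to glue via Mayer--Vietoris over $\BC^*$ minus the singular points, where everything is a product of rank-one Tate local systems.
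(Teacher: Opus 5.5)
Your treatment of $H^*_c(X_{a,b})$ and of the smooth case is the same as the paper's: relative K\"unneth over $\BC^*$, then Poincar\'e duality. The singular case, however, has a genuine gap, and the obstacle you single out is not where the difficulty lies.

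\textbf{Where the drop by $(a,b)$ comes from.} The K\"unneth step does not see it. By Theorem \ref{cpt}, $H^3_c(X_{a,b})$ is the same in both cases. The only effect of $Z_a\cap Z_b\neq\varnothing$ is $(a,b)$ extra weight-$4$ classes in $H^4_c$, and these become the extra weight-$2$ classes of $IH^2$. The loss of $(a,b)$ in $\mathrm{Gr}^W_4$ of $IH^3$ and $H^3$ is produced by the resolution, and there your step (1) is wrong as stated.

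First, the resolution of Proposition \ref{res} is the blow-up, with exceptional divisors $\BP^1\times\BP^1$. For it $R\pi_*\BQ_{\widetilde X}[3]\neq IC_X$ (Lemma \ref{A1}(3)). Your step (2) therefore needs a genuinely small resolution, constructed algebraically so that it carries mixed Hodge structures. Blowing up the ideal $(x,y)$ does this: the ideal is invertible away from the nodes, and near each node it is the ideal of a non-Cartier plane in the cone $xx'=yy'$.

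Second, for such a small $\pi$ the cone of $\BQ_X\to R\pi_*\BQ_{\widetilde X}$ is $\BQ_{\mathrm{Sing}}[-2](-1)$. This gives
\[
0=H^2_c(X)\to H^2_c(\widetilde X)\to \BQ^{(a,b)}(-1)\xrightarrow{\ \delta\ } H^3_c(X)\to H^3_c(\widetilde X)\to 0,
\]
so nothing is simply ``added'': everything hinges on $\delta$. If $\delta$ were zero you would get $IH^4(X)=H^4(\widetilde X)\cong H^2_c(\widetilde X)^\vee(-3)\neq 0$ and $\dim \mathrm{Gr}^W_4 IH^3=a+b-1$, both wrong. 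The missing input is that $\delta$ is injective. This follows from Artin vanishing $IH^4(X)=0$ on the affine threefold $X$.

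The paper works with the blow-up instead. In Proposition \ref{a1-global} it shows, by comparing with the local cone model, that both $r$ and $\delta$ are nonzero. It then strips off the summands $\BQ_p[1](-1)\oplus\BQ_p[-1](-2)$ with the decomposition theorem. Once $\delta$ is settled, your small-resolution route is shorter, since it gives $IH^*(X)=H^*(\widetilde X)$ directly.

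\textbf{A second unproved claim in step (3).} You assert that the restriction
\[
r'\colon H^2(\widetilde X)\to\bigoplus_p H^2(\pi^{-1}(p))\cong\BQ^{(a,b)}(-1)
\]
is surjective. Both of your conclusions depend on exactly this, because $H^2(X)=\ker r'$ and $H^3(X)$ is an extension of $H^3(\widetilde X)$ by $\operatorname{coker} r'$. The exact sequence by itself, or an Euler characteristic count, does not decide it.

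You can supply it geometrically. For each $\zeta$ with $\zeta^{(a,b)}=-1$, take the strict transform of the plane $\{x=y=0,\ z=\zeta\}\subset X$ (or of $\{x=y'=0,\ z=\zeta\}$). It is a divisor on $\widetilde X$ of degree $\pm1$ on the exceptional curve over $(0,0,0,0,\zeta)$, and it is disjoint from the other exceptional curves. Hence these classes map onto a basis.

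A minor point, which does not propagate since you then quote Proposition \ref{3.3}: in your description of $Rf_{a!}\BQ$, the degree-$1$ sheaf is $\BQ_{\BC^*}$, whose fibre $H^1_c(\BC^*)$ has weight $0$, not $2$. The skyscraper $\BQ_{Z_a}(-1)$ lives in degree $2$.
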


\subsubsection{P side} 
Since the real analytic morphism $h:X\to \BR^3$ defined in \eqref{dfn2} is not complex algebraic, the BBDG decomposition cannot be applied directly to obtain the perverse decomposition. Instead, we view $h$ as a stratified map between stratified spaces and study the explicit geometry on each stratum. Let 
\[
U=\BR^3\setminus\{(u,v,w)\in\BR^3\mid uv=w=0\}
\]
be the open set in $\BR^3$ obtained by removing the $u$-axis and the $v$-axis. Denote by $V_{u^+},V_{u^-},V_{v^+},V_{v^-}$ be the strictly positive and negative half axes of the $u$-axis and $v$-axis, respectively, and let $O$ be the origin. Then 
\[
\BR^3=U\sqcup V_{u^+}\sqcup V_{u^-}\sqcup V_{v^+}\sqcup V_{v^-}\sqcup O
\]
is a Whitney stratification such that $h$ is locally trivial on each stratum. The geometry of $h$ with respect to the stratification is summarized as follows.

\begin{thm}[Proposition \ref{5}, Theorem \ref{5.3}]
Let $h:X_{a,b}\to \BR^3$, $U$, $V_{u^\pm}$, $V_{v^\pm}$ and $O$ be as above. Then:
\begin{enumerate}
\item Each fiber over $U$ is a $3$-torus $T^3$. The fundamental group of $U$ is free of rank $3$, and with a suitable choice of the basis, the monodromy action on $H^1$ of the fibers is given by matrices
\begin{equation*}
\begin{pmatrix}
1& 0& 0\\
0& 1& 0\\
0& b & 1
\end{pmatrix}
\quad\begin{pmatrix}
1& 0& 0\\
0& 1& 0\\
a& 0 & 1
\end{pmatrix}
\quad
\begin{pmatrix}
1& 0& 0\\
0& 1& 0\\
-a& 0& 1
\end{pmatrix}.
\end{equation*}
\item Each fiber over $V_{u^\pm}$ is a $3$-torus with $b$ $2$-tori collapsed to $b$ singular circles. Each fiber over $V_{v^\pm}$ is a $3$-torus with $a$ $2$-tori collapsed to $a$ singular circles. 
\item The fiber over $O$ satisfies:
 \begin{enumerate}
        \item If $ord_2(a)\neq ord_2(b)$, then $h^{-1}(O)$ is a $3$-torus with $a+b$ $2$-tori collapsed to $a+b$ singular circles.
        \item If $ord_2(a)=ord_2(b)$, then $h^{-1}(O)$ is a 3-torus with $a+b-\gcd(a,b)$ $2$-tori collapsed to $a+b-(a,b)$ singular circles and $\gcd(a,b)$ $2$-tori collapsed to $\gcd(a,b)$ singular points.   
 \end{enumerate}
\end{enumerate}
\end{thm}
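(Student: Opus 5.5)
The plan is to analyze $h$ one coordinate at a time. Write $z=e^{w+i\theta}$, so that $\log|z|=w$. Fix $z$ and look at the map $(x,x')\mapsto |x|^2-|x'|^2=u$ on the conic $xx'=z^a+1$.

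\textbf{The conic fibers.} When $c=z^a+1\neq 0$, the conic is $\BC^*$ and the level set $\{|x|^2-|x'|^2=u\}$ is a circle: $|x|^2$ is determined by $|x|^2-|c|^2/|x|^2=u$, and the circle coordinate is $\arg x$. When $c=0$, the conic is the cross $xx'=0$. The level set is then a circle for $u\neq 0$ and the single node point for $u=0$.

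\textbf{Fibers of $h$.} The fiber $h^{-1}(u,v,w)$ fibers over the circle $\{|z|=e^w\}$ with fiber (circle or point) $\times$ (circle or point). The $x$-circle degenerates exactly when $u=0$ and $z^a=-1$. On $|z|=e^w$ this requires $w=0$ (for $a\neq 0$), and then it happens at the $|a|$ roots of $z^a=-1$. Similarly the $y$-circle degenerates when $v=0$, $w=0$ and $z^b=-1$.

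This gives the fiber types stratum by stratum. Over $U$ nothing degenerates, so the fiber is a $T^2$-bundle over $S^1$, i.e.\ a $3$-torus. Over $V_{u^\pm}$ (where $v=w=0$, $u\neq 0$), $b$ circles' worth of $y$-fibers collapse. Each such collapse pinches a $2$-torus (the $x$-circle times the $y$-circle) to a circle, giving $b$ singular circles.

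Over $O$, the roots of $z^a=-1$ and $z^b=-1$ overlap in the common roots. A common root exists iff $ord_2(a)=ord_2(b)$, and then there are exactly $\gcd(a,b)$ of them. To see this, write $z=e^{i\pi t}$; then $z^a=-1$ means $at$ is an odd integer. If $d=\gcd(a,b)$, then common solutions with $t$ in $[0,2)$ exist iff $a/d$ and $b/d$ are both odd, which is equivalent to $ord_2(a)=ord_2(b)$. In that case the common solutions are $t=\text{odd}/d$, giving $d$ of them. At a common root both circles collapse, so a $2$-torus shrinks to a point. At the remaining $a+b-2d$ roots only one circle collapses. The count $a+b-\gcd(a,b)$ stated in part (3b) should be checked carefully against this tally.

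\textbf{Local triviality.} I will also verify that $h$ is locally trivial over each stratum. The configuration of degenerate points varies continuously within a stratum, so this should follow from Thom isotopy or from explicit trivializations.

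\textbf{Monodromy.} For part (1), $U$ is $\BR^3$ minus two crossing lines, which is homotopy equivalent to a sphere minus four points. Its fundamental group is therefore free of rank $3$, generated by small loops around three of the half-axes.

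For the $T^3$ fibers I use basis cycles $\gamma_x$, $\gamma_y$ (the conic circles, given by $\arg x$ and $\arg y$) and a section $\gamma_z$ lifting the $z$-circle. The circles $\gamma_x,\gamma_y$ are globally defined, so they are fixed by monodromy. The section $\gamma_z$ changes by the winding of the lifts. A loop in the $(u,w)$-plane around $V_{v^\pm}$ corresponds to passing through the $|a|$ nodes of the $x$-conics. Each node contributes a Dehn twist (Picard–Lefschetz), so $\gamma_z\mapsto\gamma_z\pm a\gamma_x$. The analogous loop around $V_{u^\pm}$ gives $\gamma_z\mapsto\gamma_z\pm b\gamma_y$. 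Orientations then produce the three displayed matrices, with the loops around $V_{v^+}$ and $V_{v^-}$ giving opposite signs.

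\textbf{Main obstacle.} I expect the main difficulty to be this sign and orientation bookkeeping in the monodromy, together with justifying the Dehn twist computation in this real-analytic setting. I would first reduce to the two-dimensional model $\{xx'=z^a+1\}\to\BR^2$, $(x,x',z)\mapsto(|x|^2-|x'|^2,\log|z|)$. This model is a classical toric-type Lagrangian fibration whose focus-focus singularities have known monodromy. I would then take products over the $z$-circle to recover the matrices above.
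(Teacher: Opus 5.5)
Your proposal is correct. For the fibers it is the paper's argument. The paper also fixes $z=e^{w+i\psi}$, solves each conic, and views every fiber as $T^3$ in the angles $(\theta,\varphi,\psi)$, with the $\varphi$-circles collapsed over $\psi\in(2k+1)\pi/b$ and the $\theta$-circles collapsed over $\psi\in(2k+1)\pi/a$. You assert without comment that a fiber over $U$ is a \emph{trivial} $T^2$-bundle over $S^1$. This follows from the global coordinates $\arg x$ or $\arg x'$ (whichever never vanishes on that fiber) and $\arg y$ or $\arg y'$, which is the paper's trivialization over the sets $U_{\pm\pm}$.

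Your tally at the origin is the correct one. The paper's relation $\approx$ collapses $a+b-\gcd(a,b)$ slices in total: $\gcd(a,b)$ of them to points and $a+b-2\gcd(a,b)$ to circles. So in (3b) the number $a+b-\gcd(a,b)$ should be read as the total number of collapsed $2$-tori, not as the number of singular circles. That total is what the paper later uses, as $\#(S^1\setminus\mathfrak{U}_{a,b})$, to compute $H^3(h^{-1}(O))$.

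For the monodromy your route differs from the paper's, which uses no vanishing cycles. The paper compares the trivializations $(\theta,\varphi,\psi)$ on $U_{++}$ and $(\theta,\varphi',\psi)$ on $U_{+-}$, using $\varphi+\varphi'=\arg(z^b+1)$. The curve $\psi\mapsto e^{b(w+i\psi)}+1$ winds $b$ times around $0$ for $w>0$ and $0$ times for $w<0$. This gives $d\varphi'=-d\varphi+b\,d\psi$ resp.\ $d\varphi'=-d\varphi$, and the paper then tracks the basis around an explicit four-point loop. Your remark that $\gamma_z$ ``changes by the winding of the lifts'' is exactly this computation. Carrying it out is the quickest way through the sign bookkeeping you single out.

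The Picard--Lefschetz/focus-focus picture gives a structural explanation: unipotence, $\gamma_x,\gamma_y$ fixed, and off-diagonal entry equal to the number of nodes. To make it rigorous, however, you would have to do two things:
\begin{itemize}
\item justify a nodal local model for this real-analytic map, rather than quoting a symplectic normal form;
\item show that the $a$ nodes lying in a single fiber twist with the same sign.
\end{itemize}
Factoring $z^a+1=\prod_k(z-z_k)$ settles the sign issue: each root adds $+1$ to the winding number exactly when the circle $|z|=e^w$ encloses it. Since this computes the total twist directly, it also makes the local model unnecessary, and at that point your argument has become the paper's.

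Finally, the $x$-part of $h$ does not involve $v$. So the opposite signs for the loops around $V_{v^+}$ and $V_{v^-}$ come only from orienting each loop by the outward direction of its half-axis (as boundary loops of the punctured sphere). With a common orientation of the $(u,w)$-plane, the two loops give the same twist.
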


Based on this geometric description of the fibers, we study the specialization map from the cohomology of the singular fiber to that of nearby smooth fibers, and use the resulting specialization data to compute the higher derived push-forward $R^ih_*\BQ_X$. Although we don't know whether $Rh_*\BQ_X\cong \oplus_i R^ih_*\BQ_X[-i]$, we prove that the Leray spectral sequence degenerates at $E_2$-page. Furthermore, we also decompose each $R^ih_*\BQ_X$ as a direct sum of sheaves supported on different strata. We then use the octahedral axiom to reorganize the direct summands according to the dimension of their supports, thereby producing the perverse truncation. 

\begin{thm} [Theorem \ref{pervtr}, Proposition \ref{perv split}]
The perverse cohomology sheaves are
\[
^\Fp\CH^k(Rh_*\BQ_X)=\begin{cases}
\BQ_{\BR^3}[1] & k=1,\\
j_*L[1]\oplus\BQ_u^{b-1}\oplus\BQ_v^{a-1} &k=2,\\
j_*\Lambda^2L[1]\oplus H& k=3,\\
\BQ_{\BR^3}[1] & k=4, \\
0 & \textup{otherwise},
\end{cases}
\]
and the perverse Leray spectral sequence
\[
   ^\Fp E_{2}^{p,q}=\BH^{p}(\BR^3,{^\Fp\CH}^q(Rh_*\BQ_X))\Longrightarrow H^{p+q}(X,\BQ)
   \]
   degenerates at $E_2$-page.
\end{thm}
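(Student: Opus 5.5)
My plan is to build the perverse truncation directly from the ordinary cohomology sheaves $R^ih_*\BQ_X$, using the geometric description of the fibers in the preceding theorem (Proposition \ref{5}, Theorem \ref{5.3}).

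\textbf{Step 1: compute the sheaves $R^ih_*\BQ_X$.} I would compute them via specialization maps from singular fibers to nearby smooth fibers.
\begin{itemize}
\item On $U$, the sheaf $R^ih_*\BQ_X$ is the local system $\Lambda^iL$, where $L=R^1h_*\BQ|_U$ has the monodromy given by the three unipotent matrices.
\item Over a point of $V_{u^\pm}$, the fiber is $T^3$ with $b$ two-tori collapsed to circles. By Mayer--Vietoris, its $H^1$ is the monodromy invariants of $L$ (rank $2$), and its $H^2$ acquires $b-1$ extra classes. These extra classes are killed by specialization, so they form skyscraper-type summands supported on the axis: $\BQ_u^{b-1}$, and symmetrically $\BQ_v^{a-1}$.
\item At $O$, the collapsed points in the case $ord_2(a)=ord_2(b)$ contribute further classes in degree $3$ supported at the origin.
\end{itemize}
I would then show that $R^ih_*\BQ_X\cong j_*\Lambda^iL\oplus(\text{sheaves supported on strata of the axes})$. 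The splitting would come from checking that the adjunction map $R^ih_*\BQ_X\to j_*j^*R^ih_*\BQ_X$ is surjective with a canonical section, computed stalkwise through the specialization maps.

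\textbf{Step 2: degeneration of the Leray spectral sequence at $E_2$.} Since the target is the contractible space $\BR^3$, the hypercohomology of the summands is computable. I expect $H^*(\BR^3,j_*\Lambda^iL)$ to reduce to local-system cohomology on $U$, which is homotopic to a wedge of three circles ($\pi_1$ free of rank $3$). I would argue degeneration by a dimension count: the total $\sum\dim E_2$ should match $\dim H^*(X)$ from the W-side tables (Proposition \ref{sm}, Theorem \ref{s}). Equality forces all differentials to vanish.

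\textbf{Step 3: regroup into perverse cohomology sheaves.} For the upper middle perversity on $\BR^3$, the following objects are perverse:
\begin{itemize}
\item a sheaf on the open stratum shifted by $[1]$, such as $\BQ_{\BR^3}[1]$ and $j_*\Lambda^kL[1]$, after checking that $j_*$ satisfies the costalk conditions along the codimension-$2$ axes;
\item sheaves on the half-axes placed in the appropriate degree;
\item a skyscraper at $O$.
\end{itemize}
I would assign each summand of $R^ih_*\BQ_X[-i]$ to its perverse degree, so that $j_*\Lambda^kL$ lands in degree $k+1$, together with $\BQ_{\BR^3}$ in degrees $1$ and $4$. Using the triangles $\tau_{\le i-1}\to\tau_{\le i}\to R^ih_*[-i]$ and the octahedral axiom, I would then reassemble $Rh_*\BQ_X$ so that the axis summands from $R^2$ are shifted into ${}^\Fp\CH^2$ and the point-supported pieces, collected as $H$, go into ${}^\Fp\CH^3$. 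For the perverse spectral sequence, I would compare dimensions again with the W-side result to get degeneration.

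\textbf{Main obstacle.} The hardest step is the octahedral reshuffling. It requires proving that the extension classes connecting summands of different ordinary degrees but equal perverse degree vanish, or at least that they do not obstruct the truncation. I would first try to kill them with $\Hom$-vanishing between $j_*$-type sheaves and sheaves supported on the axes: by adjunction, $\Hom(i^*\cdot,\cdot)$ reduces to vanishing of low-degree costalks of $j_*\Lambda^kL$ along the axes, which can be computed from the unipotent monodromy.
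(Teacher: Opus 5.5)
Your plan follows the paper's route: specialization to get the sheaves $R^ih_*\BQ_X$, octahedral regrouping, $\Hom$-vanishing for the gluing classes, and a dimension count. However, the step you single out as the crux fails as proposed, because the costalks of $j_*\Lambda^kL$ along the axes do not vanish in the relevant degree. Let $F$ be a sheaf supported on $\{uv=w=0\}$. Then $\Hom(F,Rj_*M[k])=0$ for all $k$, and $R^{\ge 2}j_*M=0$, so the triangle $j_*M\to Rj_*M\to R^1j_*M[-1]$ gives $\Hom(F,j_*M[2])\cong\Hom(F,R^1j_*M)$. The sheaf $R^1j_*M$ has rank $2$ on each half-axis (local coinvariants). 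Its sections supported at $O$ form $H^1(S^2,\iota_*M)$, where $\iota$ is the inclusion of the link $S^2$ minus its four axis points. Here $\chi(S^2,\iota_*M)=3(2-4)+4\cdot 2=2$, while $h^0$ and $h^2$ (global invariants and global coinvariants) are $1,2$ for $M=L$ and $2,1$ for $M=\Lambda^2L$. Hence $H^1(S^2,\iota_*M)\cong\BQ$ in both cases, and both $\Ext^1(\BQ_u,j_*L[1])$ and $\Ext^1(\BQ_O,j_*\Lambda^2L[1])$ are nonzero. The paper kills these classes by writing $\Hom(F,j_*M[2])=\Hom(j^*F,M[2])$. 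That is the adjunction for $Rj_*$, not for the underived $j_*$, so there is no valid argument to copy there.

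Worse, when $ord_2(a)\neq ord_2(b)$ the class gluing $\BQ_O\subset H$ to $j_*\Lambda^2L[1]$ is genuinely nonzero. Verdier self-duality $\BD Rh_*\BQ_X\cong Rh_*\BQ_X[6]$ exchanges upper and lower middle perversity. It would therefore turn a direct summand $j_*\Lambda^2L[1]$ of ${}^\Fp\CH^3$ into a direct summand $(\tau_{\le1}Rj_{O*}j_{V*}L)[2]$ of ${}^{p_-}\CH^3$; this is the lower-middle intermediate extension of $L[2]\cong(\Lambda^2L)^\vee[2]$. Its $\CH^{-1}=R^1j_{O*}j_{V*}L\cong\BQ_O$ would then inject into $R^2h_*\BQ_X$. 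But $R^2h_*\BQ_X$ has no sections supported at $O$: by Proposition \ref{degenerate}(3), specialization gives an injection $H^2(h^{-1}(O))\hookrightarrow H^2(h^{-1}(P))\oplus H^2(h^{-1}(Q))$ for $P\in V_u$, $Q\in V_v$ near $O$. So the direct-sum form of ${}^\Fp\CH^3$ is not correct in that case, and no $\Hom$-vanishing argument can produce it.

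What survives is your fallback. Proposition \ref{byhand} only needs the successive cones to be perverse, and extensions of perverse sheaves are perverse. So the octahedral construction does give the perverse truncations, with ${}^\Fp\CH^2$ an extension of $\BQ_u^{b-1}\oplus\BQ_v^{a-1}$ by $j_*L[1]$, and ${}^\Fp\CH^3$ an extension of $H$ by $j_*\Lambda^2L[1]$. The hypercohomology $\BH^*$ of each $j_*\Lambda^kL[1]$ sits in degree $-1$, and that of the axis-supported sheaves sits in degree $0$; the connecting maps land in $H^2(\BR^3,j_*\Lambda^kL)=0$. Hence the perverse $E_2$-page occupies two adjacent columns, degenerates without any appeal to the $W$ side, and has the paper's graded dimensions.

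Two further errors need fixing.
\begin{enumerate}
\item In Step 2, $H^*(\BR^3,j_*\Lambda^iL)$ is not the local-system cohomology of $U$: for instance $H^1(U,L)\cong\BQ^7$, while $H^1(\BR^3,j_*L)=0$. You need the triangles $j_*\to Rj_*\to R^1j_*[-1]$ and the conic structure at $O$, as in Proposition \ref{coh}. After that the Leray spectral sequence sits in the column $p=0$.
\item In Step 1, $R^3h_*\BQ_X$ also contains $\BQ_u^{b-1}\oplus\BQ_v^{a-1}$, since the fibre over $V_u$ has $b_3=b$. The origin also behaves opposite to what you wrote. The extra skyscraper $\BQ_O$ occurs when $ord_2(a)\neq ord_2(b)$. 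When $ord_2(a)=ord_2(b)$ the stalk drops to $a+b-(a,b)$, and $H$ is the kernel of $\BQ_u^{b-1}\oplus\BQ_v^{a-1}\to\BQ_O^{(a,b)-1}$ (Corollary \ref{comb}). So $H$ is not a collection of point-supported pieces.
\end{enumerate}
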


\begin{thm}[Theorem \ref{PH}, \ref{PIH}]
Let $X_{a,b}$ be the cluster variety and be the real analytic proper map defined in \ref{defn2}. If $X_{a,b}$ is smooth, i.e. $ord_2(a)\neq ord_2(b)$, then the dimensions of the graded pieces of perverse filtration associated with $h:X\to\BR^3$ are listed in the following table.
\begin{center}
\begin{tabular}{c|cccc}
$X_{a,b}$ & $P_0$ & $\textup{Gr}_1^P$ & $\textup{Gr}_2^P$ & $\textup{Gr}_3^P$ \\    
\hline
$H^0$ & $1$ &   &   &  \\
$H^1$ &   & $1$ &   &   \\
$H^2$ &   & $a+b-2$& $2$ &  \\
$H^3$ &   &   & $a+b-1$ & $1$
\end{tabular}
\end{center}
If $X_{a,b}$ is singular, i.e. $ord_2(a)= ord_2(b)$, then the dimensions of the graded pieces of the perverse filtration associated with $h:X\to\BR^3$ are listed in the following table.
\begin{center}
\begin{tabular}{c|cccc}
$X_{a,b}$ & $P_0$ & $\textup{Gr}_1^P$ & $\textup{Gr}_2^P$ & $\textup{Gr}_3^P$ \\    
\hline
$H^0$ & $1$ &   &   &  \\
$H^1$ &   & $1$ &   &   \\
$H^2$ &   & $a+b-2$& $2$ &  \\
$H^3$ &   &   & $a+b-(a,b)-1$ & $1$
\end{tabular}
\end{center}
In this situation, the dimensions of the graded pieces of the perverse filtration on $IH^k(X)$ associated with $h$ are listed in the following table. 
\begin{center}
\begin{tabular}{c|cccc}
$X_{a,b}$ & $P_0$ & $\textup{Gr}_1^P$ & $\textup{Gr}_2^P$ & $\textup{Gr}_3^P$ \\    
\hline
$IH^0$ & $1$ &   &   &  \\
$IH^1$ &   & $1$ &   &   \\
$IH^2$ &   & $a+b+(a,b)-2$& $2$ &  \\
$IH^3$ &   &   & $a+b-(a,b)-1$ & $1$
\end{tabular}
\end{center}
\end{thm}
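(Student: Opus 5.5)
The plan is to read the dimensions off the perverse Leray spectral sequence of the preceding theorem (Theorem \ref{pervtr}, Proposition \ref{perv split}). That sequence degenerates at $E_2$, so
\[
\dim \textup{Gr}^P_k H^{n}(X,\BQ)=\dim \BH^{n-k}\bigl(\BR^3,{^\Fp\CH}^k(Rh_*\BQ_X)\bigr),
\]
with the indexing shifted so that ${^\Fp\CH}^{k+1}$ contributes to $\textup{Gr}^P_k$. The proof then reduces to computing the hypercohomology of each perverse cohomology sheaf listed there.

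\begin{enumerate}[label=(\arabic*)]
\item \emph{The two constant pieces.} For ${^\Fp\CH}^1=\BQ_{\BR^3}[1]$ and ${^\Fp\CH}^4=\BQ_{\BR^3}[1]$, contractibility of $\BR^3$ gives one-dimensional hypercohomology in a single degree. These produce the $1$ in $P_0H^0$ and the $1$ in $\textup{Gr}^P_3H^3$.

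\item \emph{The pieces $j_*L[1]$ and $j_*\Lambda^2L[1]$.} These are computed via $\BH^*(\BR^3,j_*L)$, using that $U$ is homotopic to a wedge of three circles with the monodromy matrices of Theorem \ref{5.3}. The invariants of $H^1$ of the fiber under the three matrices form a $2$-dimensional space, because $a,b$ are not both zero. This gives the $1$ in $\textup{Gr}^P_1H^1$ and the $2$ in $\textup{Gr}^P_2H^2$, with similar bookkeeping for $\Lambda^2L$ and its invariants. Higher cohomology of $j_*$ must be computed by restricting to links around the axes, since $j_*L$ is not $Rj_*L$. I expect the first cohomology of these pushforwards to vanish, because the coinvariant contributions are absorbed by the skyscraper-type summands.

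\item \emph{The summands on the axes and at the origin.} The summands $\BQ_u^{b-1}$ and $\BQ_v^{a-1}$ are constant sheaves on the axes and contribute $a+b-2$ to $\textup{Gr}^P_1H^2$. The summand $H$ is supported at $O$ and on the axes, and contributes the $a+b-1$, respectively $a+b-(a,b)-1$, appearing in $\textup{Gr}^P_2H^3$. Its rank at $O$ is read from Theorem \ref{5.3}(3): it counts the singular circles in $h^{-1}(O)$ that do not come from nearby axis strata. In the singular case the $(a,b)$ collapsed points change this count by $(a,b)$.

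\item \emph{Consistency check.} Adding the contributions, the totals must agree with the Betti numbers obtained on the $W$ side (Proposition \ref{sm}, Theorem \ref{s}). This pins down any ambiguous degree shifts.
\end{enumerate}

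For intersection cohomology I replace $\BQ_X$ by $IC_X$. The difference is concentrated at the $(a,b)$ singular points in $h^{-1}(O)$, where the fiber has collapsed $2$-tori. I would show that $Rh_*IC_X$ differs from $Rh_*\BQ_X$ only by a summand supported at $O$, of rank $(a,b)$ in perverse degree $2$ (contributing to $H^2$), together with the removal of the corresponding defect in $H^3$. This mirrors the resolution argument on the $W$ side: one can push forward along the minimal resolution $\pi$ and compose with $h$, since the cohomology of the fibers of $h\circ\pi$ over $O$ gains $(a,b)$ classes of $2$-spheres. The result is $IH^2$ having $\textup{Gr}^P_1$ of dimension $a+b+(a,b)-2$, while $IH^3$ is unchanged.

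The main obstacle is step (3) together with its $IC$ analogue. Identifying exactly which stalk classes of $H$ (and of its $IC$ version) at $O$ survive to global hypercohomology requires the specialization maps from $h^{-1}(O)$ to nearby fibers over both axes. This is where the $\gcd$ correction arises. I would first compute it via the Mayer--Vietoris sequence for a small ball around $O$ minus the axes.
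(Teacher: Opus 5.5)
Your route is the paper's: you read the graded pieces off the $E_2$-degenerate perverse spectral sequence (Theorem \ref{pervtr}, Proposition \ref{perv split}) via the hypercohomology of each summand (Proposition \ref{coh}). For $IH^*$ you compare $Rh_*IC_X$ with $Rh_*\BQ_X[3]$ through a cone supported at $O$. However, one count in step (2) is wrong, and the key step of the $IH^*$ part is missing.

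\emph{The invariant count in step (2).} The simultaneous invariants of $H^1$ of the fiber under the three monodromies are one-dimensional, not two. $\gamma_1$ moves $d\varphi$ by $b\,d\psi$, and $\gamma_2,\gamma_3$ move $d\theta$ by $\pm a\,d\psi$. So with $a$ and $b$ \emph{both} nonzero only $d\psi$ survives; with $b=0$ you would get two. This is why $\textup{Gr}^P_1H^1=H^0(U,L)$ is $1$-dimensional. The $2$-dimensional space is $H^0(U,\Lambda^2L)=\langle d\theta\wedge d\psi,\,d\varphi\wedge d\psi\rangle$, and it gives $\textup{Gr}^P_2H^2$.

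\emph{Step (3) is not an obstacle.} Neither the higher cohomology in step (2) nor step (3) is a real difficulty. The sheaves $j_*L$, $j_*\Lambda^2L$ and $H$ are constructible for a conic stratification of $\BR^3$, so $R\Gamma(\BR^3,F)=F_O$. The stalk $H_O$ was already computed in Proposition \ref{rh3} via Corollary \ref{comb}. It is $H^3(h^{-1}(O))$ modulo the fundamental class, of dimension (number of singular circles)$\,-1$. It includes the classes extending along the axes, not only those ``not coming from'' them.

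\emph{The gap in the $IH^*$ part.} From $\BQ_X[3]\to IC_X\to\BQ_{Z_{(a,b)}}[1]\to$ you only get a triangle $Rh_*\BQ_X[3]\to Rh_*IC_X\to\BQ_O^{(a,b)}[1]\to$, not a direct-sum decomposition. Its perverse long exact sequence contains a connecting map $\alpha:\BQ_O^{(a,b)}\to{^\Fp}\CH^3(Rh_*\BQ_X)=j_*\Lambda^2L[1]\oplus H$. A nonzero $\alpha$ is exactly what your ``removal of the corresponding defect in $H^3$'' would mean. It would put fewer than $(a,b)$ new classes into $\textup{Gr}^P_1IH^2$ and shrink $\textup{Gr}^P_2IH^3$. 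So the $IH^*$ table rests on $\alpha=0$, which you never address.

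This is what the paper's proof of Theorem \ref{PIH} establishes, and it is short:
\begin{itemize}
\item $\Hom(\BQ_O,H)=H^0(i_O^!H)=0$, because in the singular case $H\subset\BQ_u^{b-1}\oplus\BQ_v^{a-1}$ has no sections supported at $O$.
\item $\Hom(\BQ_O,j_*\Lambda^2L[1])=H^1(i_O^!j_*\Lambda^2L)=0$, because the restriction $(j_*\Lambda^2L)_O\to H^0(B\setminus O,j_*\Lambda^2L)$ is an isomorphism (both are the global invariants).
\end{itemize}

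The resolution detour does not avoid this. Each exceptional $\BP^1\times\BP^1$ gives two $H^2$-classes and one $H^4$-class, and Lemma \ref{A1}(3) splits off $\BQ_p[1]\oplus\BQ_p[-1]$. Relating $h\circ\pi$ back to $Rh_*\BQ_X$ goes through the non-split triangle of Lemma \ref{A1}(2), which produces the same kind of connecting map.
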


\subsection{Outline}
In Section 2, we review necessary background material. We recall the bounded derived category of constructible sheaves on real analytic varieties and the corresponding perverse $t$-structure with respect to the upper middle perversity function, as well as the theory of mixed Hodge modules. We also summarize the known results on the $P=W$ identity for cluster varieties, together with the canonically defined real analytic Lagrangian fibrations. At the end of the section, we prove a combinatorial lemma that will be used later to study the degenerations of singular fibers of the Lagrangian fibration. 

In Section 3, we compute the mixed Hodge structures on the cohomology, compactly supported cohomology and intersection cohomology of the 3-dimensional isolated cluster varieties of non-full rank. To this end, we apply the relative K\"unneth formula to a natural non-proper algebraic fibration of 2-dimensional isolated cluster varieties to compute the compactly supported cohomology of 3-dimensional isolated cluster varieties of non-full rank. We also use resolution of singularities and Verdier duality to determine the mixed Hodge structures on the ordinary cohomology and intersection cohomology.

In Section 4, we study the geometry of the Lagrangian fibrations for the 3-dimensional isolated cluster varieties of non-full rank. We compute the monodromy of the smooth fibers and describe the degenerations of the singular fibers. We then compute the higher direct image sheaves and prove that the associated Leray spectral sequence degenerates at the $E_2$-page. Using this information, we construct the perverse truncation and compute the perverse filtration associated with the Lagrangian fibrations.

In Section 5, we prove the $P=W$ and $PI=WI$ identities for 3-dimensional isolated cluster varieties. 

\subsection{Acknowledgements} 
I thank Mark de Cataldo, Thomas Lam, Ian Le, Christian Schnell, Junliang Shen, David Speyer, Ruijie Yang, Qizheng Yin, Dingxin Zhang, and Xiping Zhang for helpful discussions. I am partially supported by NSFC grant 12571045 and the Fundamental Research Funds for the Central Universities.

\section{Preparatory Results}

\subsection{Derived category}
In this section, we recall some facts about the derived category of sheaves on real analytic varieties. Coefficients are always assumed to be rational. Most of the results have counterparts in the theory of  mixed Hodge modules when the underlying spaces are complex algebraic varieties.

Let $X$ be a real analytic variety and let $D^b_c(X)$ denote the bounded derived category of $\BR$-constructible sheaves on $X$; see \cite[Chapter VIII]{KS}. The category $D^b_c(X)$ is a triangulated category, and in particular, satisfies the octahedral axiom.

\begin{prop} \label{T4}
Let $F,G,H\in D^b_c(X)$ and $F\xrightarrow{a} G\xrightarrow{b} H$ be two morphisms in $D^b_c(X)$. Suppose we are given three distinguished triangles
\begin{equation} \label{3triangle}
\begin{split}
F\xrightarrow{a} G&\to X\to,\\
G\xrightarrow{b} H&\to Z\to,\\
F\xrightarrow{b\circ a} H&\to Y\to.
\end{split}
\end{equation}
Then there exists a natural distinguished triangle
\[
X\to Y\to Z\to
\]
such that the following diagram commutes
\begin{equation} \label{3tri}
\begin{tikzcd}
   & & & Z\\
   F\arrow[rd,"a"]\arrow[rr,"b\circ a"]& & H\arrow[r]\arrow[ru] & Y\arrow[u,dotted] \\
    & G\arrow[ru,"b"]\arrow[rrd]& & \\
          & & & X.\arrow[uu,dotted]
\end{tikzcd}
\end{equation}
\end{prop}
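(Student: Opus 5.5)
This is the octahedral axiom (TR4) for the triangulated category $D^b_c(X)$, stated in slightly rearranged form. The plan is to recall that $D^b_c(X)$ is triangulated. This follows from \cite[Chapter VIII]{KS}: it is a full triangulated subcategory of $D^b(X)$ closed under cones, since constructibility is preserved by cones. The statement should then be matched against the standard formulation of TR4.

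\textbf{Step 1: invoke TR4.} The standard axiom says the following. Given $F\xrightarrow{a}G$, $G\xrightarrow{b}H$ and distinguished triangles $F\xrightarrow{a}G\to X\to F[1]$, $G\xrightarrow{b}H\to Z\to G[1]$ and $F\xrightarrow{b\circ a}H\to Y\to F[1]$, there exist morphisms $X\to Y$ and $Y\to Z$ such that $X\to Y\to Z\to X[1]$ is distinguished. Here the third map $Z\to X[1]$ is the composite $Z\to G[1]\to X[1]$.

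\textbf{Step 2: compatibilities.} These maps are chosen so that two squares commute:
\begin{itemize}
\item $G\to X\to Y$ equals $G\xrightarrow{b}H\to Y$;
\item $H\to Y\to Z$ equals $H\to Z$.
\end{itemize}
These are exactly the commutativities encoded by the dotted arrows in \eqref{3tri}:
\begin{itemize}
\item $Y\to Z$ is compatible with $H\to Y$ and $H\to Z$;
\item $X\to Y$ is compatible with $G\to X$ and $G\to H\to Y$.
\end{itemize}
The remaining faces of the octahedron, such as $Y\to F[1]$ agreeing with $Y\to Z$ composed with nothing new, and $X\to Y\to F[1]$ equalling $X\to F[1]$, are also part of TR4. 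These can be recorded but are not needed for the diagram.

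\textbf{Step 3: naturality.} The word ``natural'' should be interpreted as ``canonically arising from the octahedral axiom''. The morphisms are not unique in a general triangulated category, since cones are functorial only up to non-unique isomorphism. I would therefore phrase the proof as existence, which is all later uses require: reorganizing perverse truncation pieces.

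\textbf{Main obstacle.} The only subtle point is this uniqueness and naturality issue. To strengthen it, one could realize $D^b_c(X)$ via a filtered or dg enhancement, i.e.\ work with genuine complexes of injective sheaves. Taking mapping cones of actual chain maps then makes the triangle $X\to Y\to Z$ canonical: $\mathrm{Cone}(a)\to\mathrm{Cone}(b\circ a)\to\mathrm{Cone}(b)$ given by explicit matrices. Its distinguishedness is checked by the standard homotopy equivalence between $\mathrm{Cone}(\mathrm{Cone}(a)\to\mathrm{Cone}(ba))$ and $\mathrm{Cone}(b)$.
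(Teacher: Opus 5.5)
Your proposal is correct and matches the paper's own treatment: the paper gives no separate argument, only noting that $D^b_c(X)$ is triangulated (Kashiwara--Schapira, Ch.~VIII), so the statement is TR4, and the dotted-arrow commutativities are exactly the two squares you list. One small slip in your Step~2: the remaining TR4 compatibility should say that $Y\to Z\to G[1]$ equals $Y\to F[1]\xrightarrow{a[1]}G[1]$ (your phrase ``composed with nothing new'' is garbled), though, as you note, it is not needed for the diagram.
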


As a convention, we will only draw a diagram of the form \eqref{3tri} when we intend to apply the octahedral axiom to \eqref{3triangle}. The following well-known result follows from \cite[Proposition III.8.1]{Har} and the proper base change theorem.

\begin{lem}\label{push}
Let $f:X\to Y$ be a continuous map between topological spaces and let $F$ be a sheaf on $X$. Then $R^kf_*F$ is the sheafification of the presheaf
\[
U\mapsto H^k(f^{-1}(U),F).
\]
When $f$ is proper, we have 
\[
(R^kf_*F)_p=H^k(f^{-1}(p),F).
\]
\end{lem}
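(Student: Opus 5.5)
The statement to prove is Lemma~\ref{push}, which has two parts. The first describes $R^kf_*F$ as a sheafification for an arbitrary continuous map. The second identifies the stalks of $R^kf_*F$ with the cohomology of the fibres when $f$ is proper.

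For the first part I follow \cite[Proposition III.8.1]{Har}.
\begin{enumerate}
\item Choose an injective resolution $F\to I^\bullet$ in the category of sheaves on $X$, so that $R^kf_*F=\mathcal{H}^k(f_*I^\bullet)$.
\item Cohomology sheaves of a complex of sheaves are the sheafifications of the presheaf cohomology. Hence $R^kf_*F$ is the sheafification of
\[
U\mapsto H^k\big(\Gamma(U,f_*I^\bullet)\big)=H^k\big(\Gamma(f^{-1}(U),I^\bullet)\big).
\]
\item The restriction of an injective sheaf to an open subset is again injective, hence flasque. So $I^\bullet|_{f^{-1}(U)}$ is an acyclic resolution of $F|_{f^{-1}(U)}$, and the last group equals $H^k(f^{-1}(U),F)$.
\item These identifications are compatible with the restriction maps, since they all come from the single resolution $I^\bullet$.
\end{enumerate}

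For the second part, take stalks of the first statement at $p\in Y$. Sheafification does not change stalks, so
\[
(R^kf_*F)_p=\varinjlim_{U\ni p}H^k(f^{-1}(U),F).
\]
I then identify this colimit with $H^k(f^{-1}(p),F)$. This is the proper base change theorem applied to the inclusion $\{p\}\hookrightarrow Y$, which gives $(Rf_*F)_p\cong R\Gamma(f^{-1}(p),F|_{f^{-1}(p)})$. The theorem applies because $f$ is proper and our spaces are locally compact Hausdorff (real analytic varieties).

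The substantive input is the standard fact behind proper base change, which I would simply cite: for a compact subset $K=f^{-1}(p)$ of a locally compact Hausdorff (here even paracompact) space $X$, the restriction maps induce
\[
\varinjlim_{W\supset K}H^k(W,F)\cong H^k(K,F),
\]
with $W$ ranging over open neighbourhoods of $K$. Properness then makes the sets $f^{-1}(U)$, for $U\ni p$, cofinal among open neighbourhoods of $K$: if $W\supset K$ is open, then $f(X\setminus W)$ is closed because proper maps to locally compact Hausdorff spaces are closed, and it does not contain $p$, so its complement is an open $U\ni p$ with $f^{-1}(U)\subset W$. Combining these gives the claimed stalk formula. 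No step is a genuine obstacle; the only point requiring care is the topological hypotheses behind the tautness statement, which hold in our real analytic setting.
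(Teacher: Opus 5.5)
Your proposal is correct and follows the same route as the paper. The paper simply cites \cite[Proposition III.8.1]{Har} for the sheafification description and the proper base change theorem for the stalk formula; your tautness-plus-cofinality argument unpacks that second citation, and your added locally compact Hausdorff hypothesis is a harmless sharpening of the lemma's ``topological spaces'' that holds in the paper's real analytic setting.
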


\begin{prop}
Let $X$ be a real analytic space, let $j:U\to X$ be an open embedding, and let $i:Z\to X$ be its closed complement. Then for any complex $F\in D^b_c(X)$, there exist distinguished triangles
\[
i_*i^!F\to F\to Rj_*j^*F\to
\]
and
\[
j_!j^*F\to F\to i_*i^*F\to.
\]
\end{prop}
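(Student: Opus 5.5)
The two triangles come from the standard recollement formalism for an open--closed decomposition. I would first prove the case where $F$ is a single sheaf and then pass to complexes through the derived functors.

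\emph{Second triangle.} Start with an injective (or flabby) resolution of $F$, or simply work at the level of sheaves and derive afterwards. For a sheaf $\mathcal F$ on $X$ there is a short exact sequence
\[
0\to j_!j^*\mathcal F\to \mathcal F\to i_*i^*\mathcal F\to 0 .
\]
Here $j_!$ is extension by zero, and the two maps are the counit of $j_!\dashv j^*$ and the unit of $i^*\dashv i_*$. Exactness is checked on stalks: at a point $x\in U$ the sequence reads $\mathcal F_x\xrightarrow{=}\mathcal F_x\to 0$, and at a point $x\in Z$ it reads $0\to\mathcal F_x\xrightarrow{=}\mathcal F_x$. Since $j_!$, $j^*$, $i^*$ and $i_*$ are all exact, applying this sequence termwise to a bounded complex representing $F$ gives a short exact sequence of complexes. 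A short exact sequence of complexes yields a distinguished triangle, and this is the second triangle.

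\emph{First triangle.} Represent $F$ by a bounded-below complex $I^\bullet$ of injective sheaves. For an injective sheaf $I$ there is an exact sequence
\[
0\to i_*\Gamma_Z I\to I\to j_*j^*I\to 0 .
\]
\begin{itemize}
\item Left exactness is just the definition of sections supported on $Z$: such sections are exactly the kernel of restriction to $U$.
\item Surjectivity on the right holds because $I$ is flabby, so every section over $V\cap U$ extends to a section over $V$.
\end{itemize}
By definition $i^!=\Gamma_Z$ composed with restriction to $Z$. Moreover, $j^*I$ is injective, because $j^*$ has an exact left adjoint $j_!$. Hence $j_*j^*I^\bullet$ computes $Rj_*j^*F$, and $i_*\Gamma_Z I^\bullet$ computes $i_*i^!F$. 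The short exact sequence of complexes then gives the first triangle.

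\emph{Constructibility.} It remains to check that all terms stay in $D^b_c$. This follows from the stability of $\BR$-constructibility under $j_*, j_!, i^*, i^!$ for a subanalytic open--closed decomposition, as in \cite[Chapter VIII]{KS}.

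The only mildly delicate point is this last constructibility check, together with the boundedness of $Rj_*$. I would simply cite \cite[Prop.~8.4.x]{KS} for both, rather than reprove them.
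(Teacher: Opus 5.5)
Your proposal is correct. Note that the paper gives no proof of this proposition: it is recalled in Section 2.1 as standard background from the formalism of \cite{KS}. Your argument is the standard proof of that fact.

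For the second triangle, you apply the stalkwise-exact sequence $0\to j_!j^*\mathcal F\to\mathcal F\to i_*i^*\mathcal F\to 0$ termwise, which is legitimate because $j_!$, $j^*$, $i^*$, $i_*$ are all exact. For the first triangle, you take an injective (hence flabby) resolution $I^\bullet$ and use the exact sequence $0\to\underline{\Gamma}_Z I^\bullet\to I^\bullet\to j_*j^*I^\bullet\to 0$. This works because $j^*$ preserves injectives (it has the exact left adjoint $j_!$) and because $i_*i^!\cong R\underline{\Gamma}_Z$.

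Two small remarks:
\begin{itemize}
\item $\underline{\Gamma}_Z I$ is already a sheaf on $X$ supported on $Z$. The left-hand term should therefore read $\underline{\Gamma}_Z I\cong i_*i^{-1}\underline{\Gamma}_Z I$, not $i_*\Gamma_Z I$.
\item The subanalyticity of $U$ (equivalently of $Z$) that you invoke in the constructibility step is genuinely needed for the triangles to lie in $D^b_c(X)$. For a non-subanalytic open $U$, the object $j_!j^*\BQ_X=\BQ_U$ is not $\BR$-constructible. The paper's statement leaves this hypothesis implicit, and you were right to make it explicit.
\end{itemize}
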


Analogously to the sheaf-theoretic definition of the cohomology functors, compactly supported cohomology can also be formulated in derived categories. Indeed, the functors $H^k_c(X,-):D^b_c(X)\to Ab$ are defined as the higher derived functors of the compactly supported global section functor, \emph{i.e.} $H^k_c(X,F)=R^k\Gamma_c(X,F)$. Moreover, for a continuous map $f:X\to Y$, we have $H^k_c(X,F)=H^k_c(Y,j_!F)$. The compactly supported cohomology of topological spaces are exactly the compactly supported sheaf cohomology of the constant sheaves.

\subsection{Mixed Hodge modules}
In this section, we briefly recall some basic facts concerning mixed Hodge modules. Standard references are \cite{saito,saito1}. Let $X$ be a complex algebraic variety and let $MHM(X)$ be the category of mixed Hodge modules on $X$. We follow the convention in $[1]$ for the shift functor and denote the Tate twist$(1)$ for the Tate twist. The following are some geometric results of mixed Hodge modules. Below we collect some standard results about mixed Hodge modules; proofs are omitted.

\begin{thm}\label{MHMdual}
\begin{enumerate}
\item If $X$ is a smooth algebraic variety of dimension $n$, then the dualizing complex in $D^b(MHM(X))$ is given by $\omega_X=\BQ_X[2n](n)$.
\item For any complex algebraic variety $X$, the cohomology $H^*(X)$, compactly supported cohomology $H^*_c(X)$ and intersection cohomology $IH^*(X)$ all carry natural mixed Hodge structures. Moreover, if $X$ is smooth and connected, the natural paring 
\[
H^k(X)\times H^{2\dim_\BC X-k}_c(X)\to H_c^{2\dim_\BC X}(X)\cong \BQ(-\dim_\BC X)
\]
is perfect and compatible with the mixed Hodge structures, inducing an isomorphism 
\[
H^k(X)\cong H^{2\dim_\BC X-k}_c(X)^\vee(-\dim_\BC X)
\]
as mixed Hodge structures.
\item For a morphism $f:X\to Y$ between smooth algebraic varieties, each higher direct image $R^kf_*\BQ_X$ carries a natural mixed Hodge module structure. Furthermore, if $U\subset Y$ is an open subset over which $f$ is smooth, then $(R^kf_*\BQ_X)|_U$ is the variation of mixed Hodge structures associated with the $k$-th cohomology of the fibers.  
\item Let $i:Z\to X$ be a closed embedding of complex algebraic varieties. Then for any $G\in MHM(X)$, we have $\BQ_Z\otimes G=i_*i^*G$.
\end{enumerate}
\end{thm}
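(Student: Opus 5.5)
Since these are standard facts from Saito's theory \cite{saito,saito1}, my plan is to reduce each item to the six-functor formalism on $D^b(MHM(-))$ together with the identification $MHM(\mathrm{pt})=$ graded-polarizable mixed Hodge structures. Write $a_X:X\to\mathrm{pt}$ for the structure map and $\BQ^H_X:=a_X^*\BQ^H$. For (1), I will use that on a smooth $X$ of dimension $n$ the object $\BQ^H_X[n]$ is the pure Hodge module of weight $n$ attached to the trivial variation of Hodge structure. Its polarization gives $\mathbb{D}(\BQ^H_X[n])\cong\BQ^H_X[n](n)$. Since $\omega_X=a_X^!\BQ^H=\mathbb{D}(a_X^*\BQ^H)=\mathbb{D}(\BQ^H_X)$, this yields $\omega_X\cong\BQ^H_X[2n](n)$.

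For (2), I will define the three mixed Hodge structures as follows:
\[
H^k(X)=H^k(a_{X*}\BQ^H_X),\qquad H^k_c(X)=H^k(a_{X!}\BQ^H_X),\qquad IH^k(X)=H^{k-\dim X}(a_{X*}IC^H_X).
\]
Each is an object of $MHM(\mathrm{pt})$, and by Saito the underlying rational vector spaces agree with the topological ones. For duality I will use $\mathbb{D}\circ a_{X!}\cong a_{X*}\circ\mathbb{D}$ together with (1). This gives
\[
H^{k}_c(X)^\vee\cong H^{-k}(a_{X*}\omega_X)=H^{2n-k}(X)(n).
\]
Reindexing gives the claimed isomorphism. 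I will then check that this duality isomorphism is induced by the cup-product pairing composed with the trace $H^{2n}_c(X)\to\BQ(-n)$, which is the standard compatibility of Verdier duality with cup product. Perfectness then follows from topological Poincaré--Verdier duality for connected manifolds.

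For (3), I will view $f_*\BQ^H_X\in D^b(MHM(Y))$ and define $R^kf_*\BQ_X$ via its cohomology modules, shifted by $\dim Y$ so that the underlying constructible complex is the sheaf $R^kf_*\BQ_X$. Over $U$ I will use base change to see that the stalks are the fiber cohomologies. The main obstacle is to show that these sheaves are local systems underlying an admissible variation. For $f$ proper and smooth this is Ehresmann plus Saito's smoothness criterion. In the non-proper case I would first compactify $f$ by a relative normal crossings compactification over (a shrinking of) $U$ and apply the proper case to the strata. Saito's theorem then says that a mixed Hodge module whose underlying perverse sheaf is a shifted local system is an admissible VMHS.

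For (4), I will apply $-\otimes G$ to the triangle $j_!\BQ_U\to\BQ_X\to i_*\BQ_Z\to$. Using the projection formulas $j_!\BQ_U\otimes G=j_!j^*G$ and $i_*\BQ_Z\otimes G=i_*(\BQ_Z\otimes i^*G)=i_*i^*G$, this identifies $\BQ_Z\otimes G$ (meaning $i_*\BQ_Z\otimes G$) with $i_*i^*G$.
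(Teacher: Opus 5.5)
The paper gives no proof of this theorem: it lists standard facts from Saito's theory with proofs omitted. Your arguments for (1), (2) and (4) are the usual derivations from the six-functor formalism and are fine.

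The gap is in (3). The cohomology objects $H^{j}(f_*\BQ^H_X)\in MHM(Y)$ lift the \emph{perverse} cohomology sheaves ${}^{p}\CH^{j}(Rf_*\BQ_X)$. They do not lift the shifted ordinary sheaves $R^{j-\dim Y}f_*\BQ_X[\dim Y]$, and the latter need not be perverse at all. For the blow-up $\pi$ of $\BC^2$ at a point $p$ one has $R\pi_*\BQ=\BQ_{\BC^2}\oplus\BQ_p[-2]$. Its only nonzero perverse cohomology is ${}^{p}\CH^{2}=\BQ_{\BC^2}[2]\oplus\BQ_p$, which mixes $R^0\pi_*$ with $R^2\pi_*$, and $R^2\pi_*\BQ[2]=\BQ_p[2]$ is not perverse. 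So your recipe works only when every $R^kf_*\BQ_X[\dim Y]$ is perverse, since then standard and perverse truncations agree up to the shift. That is exactly the situation in the paper's application in Proposition~\ref{2d}: there $Y=\BC^*$ is a curve and $R^1f_*\BQ_X=j_!\BQ_U$ has no sections supported at points.

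The second half of your plan also fails as stated. For non-proper $f$ there is no base change identifying stalks of $R^kf_*\BQ_X$ with fibre cohomology, and the claim is in fact false over all of $U$. Take $f:\BC^2\setminus\{0\}\to\BC$, $(x,y)\mapsto x$, which is smooth, so $U=\BC$.
\begin{itemize}
\item The preimage of a small disc around $0$ is homotopy equivalent to $S^3$.
\item Hence $R^1f_*\BQ=0$, although $H^1(f^{-1}(0))=H^1(\BC^*)\neq0$.
\item Also $R^3f_*\BQ$ is a skyscraper at $0$, not a local system.
\end{itemize}
Your relative compactification exists only over a shrinking of $U$, as you note, so it proves the claim only over a dense Zariski open subset.

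What is needed is an extra hypothesis: $f$ proper over $U$, or $f^{-1}(U)\to U$ a topologically locally trivial fibration. The paper's $f_d$ satisfies this, being a trivial $\BC^*$-bundle over $U_d$. Under this hypothesis the proof goes through without any compactification:
\begin{itemize}
\item $Rf_*\BQ_X|_U$ has locally constant cohomology sheaves, so perverse and ordinary cohomology agree on $U$ up to the shift.
\item The base change morphism $i_y^*f_*\BQ^H_X\to (f_y)_*\BQ^H_{X_y}$, with $f_y:X_y\to\mathrm{pt}$, is a morphism in $D^b(MHM(\mathrm{pt}))$. It is an isomorphism on underlying vector spaces, hence an isomorphism of mixed Hodge structures.
\item Admissibility of the variation follows from Saito's identification of mixed Hodge modules with locally constant underlying sheaves with admissible variations.
\end{itemize}
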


We conclude this section by recalling some facts about the six-functor formalism of mixed Hodge modules.

\begin{thm}\label{MHM2}
    \begin{enumerate}
        \item Let $F\in D^b(MHM(X))$ and let $f:X\to Y$ be a morphism of algebraic varieties. Then the direct image functors satisfy $Rf_!F,Rf_*F\in D^b(MHM(Y))$, and their cohomology $R^kf_!F,R^kf_*F\in MHM(Y)$ for each $k$. Moreover, there is a canonical duality isomorphism $\BD_YRf_*F\cong Rf_!\BD_X F$.
        \item Let $i:Z\to X$ be a closed embedding of algebraic varieties, with $j:U\to X$ be the complementary open embedding. For any $F\in D^b(MHM(X))$, there are distinguished triangles 
        \[
        i_*i^!F\to F\to Rj_*j^*F\to
        \]
        and
        \[
        j_!j^*F\to F\to i_*i^*F\to
        \]
        in $D^b(MHM(X))$.
        \item Let $S$ be a complex algebraic variety and let $f_i:X_i\to Y_i$, $i=1,2$ be morphism of $S$-varieties. Denote by  
        \[
         f=f_1\times_Sf_2:X_1\times_S X_2\to Y_1\times_SY_2
        \]
         the induced fibered product morphism. Then for any $G_i\in D^b_c(MHM(X_i))$, there is a natural isomorphism in $D^b_c(MHM(Y_1\times_SY_2))$
        \[
        Rf_{1!}G_1\overset{L}{\boxtimes}_SRf_{2!}G_2\xrightarrow{\sim}Rf_!(G_1\overset{L}{\boxtimes}_SG_2),
        \]
        where $\boxtimes_S$ denotes the external tensor product over $S$.
        \end{enumerate}
\end{thm}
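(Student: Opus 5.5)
These are foundational results of Saito's theory, so the plan is to reduce each assertion to the corresponding statement in $D^b_c$ and then lift it to $D^b(MHM)$ using the formalism of \cite{saito,saito1}. The basic tool is the faithful exact functor $\textup{rat}:D^b(MHM(X))\to D^b_c(X)$. It commutes with all six functors, with $\overset{L}{\boxtimes}$ and with duality, and it is conservative. Hence a morphism constructed inside $D^b(MHM)$ is an isomorphism as soon as its image under $\textup{rat}$ is one.

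For (1), I will factor $f$ as an open immersion followed by a proper (after Chow's lemma, projective) morphism. For projective morphisms, Saito's direct image theorem gives $Rf_*=Rf_!\in D^b(MHM)$, with cohomology objects in $MHM(Y)$. For open immersions $j$, he constructs $Rj_*$ and $Rj_!$ first for complements of Cartier divisors, using the $V$-filtration, and then in general by \v{C}ech resolution over affine covers. The duality $\BD_YRf_*\cong Rf_!\BD_X$ is then essentially the definition $Rf_!:=\BD_Y Rf_*\BD_X$, combined with $\BD^2\cong\textup{id}$. Consistency with the proper case follows from Saito's duality theorem for projective morphisms, $\BD Rf_*\cong Rf_*\BD$.

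For (2), the adjunctions $(i^*,i_*)$, $(i_*,i^!)$, $(j_!,j^*)$ and $(j^*,Rj_*)$ exist in $D^b(MHM)$ and supply the unit and counit maps. Saito's construction gives the triangles directly: for $F$ a mixed Hodge module, $j_!j^*F\to F\to i_*i^*F$ comes from the gluing description, and one extends to complexes by d\'evissage. Applying $\textup{rat}$ recovers the classical triangles, which shows the cones are the expected ones.

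For (3), I first recall the absolute external product $\boxtimes:D^b(MHM(X_1))\times D^b(MHM(X_2))\to D^b(MHM(X_1\times X_2))$ and the absolute K\"unneth isomorphism $Rf_{1!}G_1\boxtimes Rf_{2!}G_2\cong R(f_1\times f_2)_!(G_1\boxtimes G_2)$. For proper maps this follows from the projective case, and for open immersions from $j_!$ commuting with $\boxtimes$. I then set $G_1\boxtimes_SG_2:=\delta_X^*(G_1\boxtimes G_2)$, where $\delta_X:X_1\times_SX_2\to X_1\times X_2$ is the base change of the diagonal $S\to S\times S$, and define $\delta_Y$ likewise. The square formed by $\delta_X$, $\delta_Y$, $f$ and $f_1\times f_2$ is cartesian, so proper base change $\delta_Y^*R(f_1\times f_2)_!\cong Rf_!\delta_X^*$, which holds in $D^b(MHM)$ because it holds after $\textup{rat}$ and the comparison map is defined via adjunction, gives the claimed isomorphism.

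The main obstacle is bookkeeping: I need the comparison maps in (1) and (3) to be genuine morphisms in $D^b(MHM)$, built from adjunctions rather than only on underlying complexes. Once that is arranged, conservativity of $\textup{rat}$ reduces every claim to the topological statements in \cite{KS}.
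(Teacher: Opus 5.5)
Your proposal is correct and takes essentially the same route as the paper, which states these facts without proof as standard parts of Saito's six-functor formalism; your derivation of (3) from the absolute K\"unneth formula plus base change along the closed embedding $Y_1\times_S Y_2\hookrightarrow Y_1\times Y_2$ is a sound way to unpack it. One correction: $\textup{rat}$ is faithful on $MHM(X)$ but not on $D^b(MHM(X))$ (for instance $\Ext^1_{MHS}(\BQ,\BQ(1))\neq 0$, while its image in $D^b_c(\textup{pt})$ vanishes), so you only have conservativity. Conservativity is, however, all your argument actually uses.
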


\subsection{Perverse filtrations on real analytic spaces}
Since the theory of perverse sheaves in non-complex setting is not standard, we include relevant backgrounds in the section, following \cite[Chapter X]{KS}. Throughout this section, dimension refers to dimension of real analytic spaces unless otherwise stated. To define the perverse $t$-structure on real analytic manifolds, one must first introduce the notion of perversity functions.

\begin{defn}{\cite[Section 10.2]{KS}}
   A perversity function is an integral-valued function $p:\BZ\to\BZ$ satisfying  $p(n)-p(n+1)\in\{0,1\}$ for all $n\in\BZ$. 
\end{defn}

For a given perversity function $p$ and any real analytic manifold $X$, there exists a natural perverse $t$-structure $({^\Fp D^{\le0}(X)},{^\Fp D^{\ge0}}(X))$ on the derived category of $\BR$-constructible sheaves $D^b_c(X)$ defined by the following.

\begin{defn}{\cite[Proposition 10.4]{KS}} \label{perv}
    Let $p$ be a perversity function and let $X$ be a real analytic manifold. Suppose $X=\sqcup_\alpha X_\alpha$ is a subanalytic stratification into equidimensional strata, and let $i_\alpha:X_\alpha\to X$ denote the locally closed embedding. For $F\in D^b_c(X)$, assuming that $i_\alpha^*F$ and $i^!_\alpha F$ have locally constant cohomology sheaves for all $\alpha$, then
    \begin{enumerate}
       \item $F\in {^\Fp D^{\le0}(X)}$ if and only if $\CH^j(i_\alpha^*F)=0$ for any $j>p(\dim X_\alpha)$.
       \item $F\in {^\Fp D^{\ge0}(X)}$ if and only if $\CH^j(i_\alpha^!F)=0$ for any $j<p(\dim X_\alpha)$.
    \end{enumerate}
\end{defn}

The heart of this $t$-structure is the category of perverse sheaves $Perv(X)={^\Fp D^{\le0}(X)}\cap{^\Fp D^{\ge0}}(X)$. The $t$-structure on $D^b_c(X)$ induces perverse truncation functors $^\Fp\tau_{\le n}:D^b_c(X)\to {^\Fp D^{\le n}(X)}$, which are left adjoint to the inclusion ${^\Fp D^{\le n}(X)}\to D^b_c(X)$. Equivalently, for a give $F$, the truncation $^\Fp\tau_{\le n}F$ can be identified as $F'$ whenever there exists distinguished triangle
\[
F'\to F\to F''\to
\]
with $F'\in D^{\le n}(X)$ and $F''\in D^{\ge n+1}(X)$. The truncation functor from below $^\Fp\tau_{\ge n}$ is defined analogously. The perverse cohomology sheaves are defined as $^\Fp\CH^n(F)=({^\Fp}\tau_{\ge n}{^\Fp}\tau_{\le n}F)[n]$.

\begin{defn} \label{2.1}
   Let $p$ be a perversity function. The perverse filtration of a complex $F\in D^b_c(X)$ is the increasing filtration on the hypercohomology $\BH^*(X,F)$ defined by
   \[
   P_k\BH^*(X,F):=\textup{Im}\{\BH^*(X,{{^\Fp}\tau_{\le k}}F)\to \BH^*(X,F)\},\,k\in\BZ.
   \]
   The perverse spectral sequence is the $E_2$-spectral sequence 
   \[
   ^\Fp E_2^{p,q}=H^p\left(X, {^\Fp\CH^q}(F)\right)\Rightarrow H^{p+q}(X,F).
   \]
   The (increasing) filtration induced by this spectral sequence is exactly the perverse filtration.  
\end{defn}

The following characterization of a perverse truncation will be used later.
\begin{prop} \label{byhand}
Let $F\in D^b_c(X)$. Suppose that for each $k\in\BZ$, there exist $G_k\in D^b_c(X)$ and morphism $\iota_k:G_k\to G_{k+1}$ satisfying:
\begin{enumerate}
\item The cones $Cone(\iota_k)$ are $k$-shifted perverse sheaves, i.e.
\[
Cone(\iota_k)\in Perv(X)[-k]\subset D^b_c(X).
\]
\item $G_k=0$ for $k\ll0$ and $G_k=F$ for $k\gg0$.
\end{enumerate}
Then the perverse truncation satisfy $\tau_{\le k} F\cong G_k$ and the perverse cohomology sheaves are $^\Fp\CH^k(F)=Cone(\iota_k)[k]$. Moreover, the perverse filtration is recovered as
\[
P_k\BH^*(X,F)=\textup{Im}\{\BH^*(X,G_k)\to \BH^*(X,F)\}.
\]
If, in addition, the perverse spectral sequence is $E_2$-degenerate, then there is an isomorphism 
\begin{equation} \label{nonsplit}
P_k\BH^*(X,F)\cong\bigoplus_{i\le k} \BH^*(X,{^\Fp}\CH^i(F)).
\end{equation}
\end{prop}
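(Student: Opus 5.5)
We argue by induction along the tower $\cdots\to G_{k-1}\xrightarrow{\iota_{k-1}} G_k\xrightarrow{\iota_k} G_{k+1}\to\cdots$. The first goal is to show that $G_k\in{^\Fp D^{\le k}}(X)$ and that $C_k:=Cone(G_k\to F)\in{^\Fp D^{\ge k+1}}(X)$, where $G_k\to F$ is the composite $\iota_{N-1}\circ\cdots\circ\iota_k$ for $N\gg0$, so that $G_N=F$. Once both facts are known, the triangle $G_k\to F\to C_k\to$ is a truncation triangle. Uniqueness of truncation triangles in a $t$-structure then gives $^\Fp\tau_{\le k}F\cong G_k$, compatibly with the maps to $F$.

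\emph{Step 1: $G_k\in{^\Fp D^{\le k}}$.} For $k\ll0$ we have $G_k=0$, so the claim holds. Assume $G_{k-1}\in{^\Fp D^{\le k-1}}\subset{^\Fp D^{\le k}}$. The triangle $G_{k-1}\to G_k\to Cone(\iota_{k-1})\to$ has third term in $Perv(X)[-(k-1)]$; I read this as concentrated in perverse degree $k-1$, i.e. in ${^\Fp D^{\le k-1}}$. Since ${^\Fp D^{\le k}}$ is closed under extensions, $G_k\in{^\Fp D^{\le k}}$.

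The shift convention needs care. The conclusion ${^\Fp\CH}^k(F)=Cone(\iota_k)[k]$ forces $Cone(\iota_k)$ to sit in perverse degree $k$. So the triangle I actually use is $G_{k-1}\to G_k\to Cone(\iota_{k-1})$ with the cone in degree $k$, meaning $Cone(\iota_{k-1})$ lies in $Perv[-k]$. I would fix the indexing once at the start so that the cone of the map into $G_k$ is concentrated in perverse degree exactly $k$, and then carry out both inductions with that convention.

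\emph{Step 2: $C_k\in{^\Fp D^{\ge k+1}}$.} This runs by descending induction from $k=N$, where $C_N=0$. Apply the octahedral axiom (Proposition \ref{T4}) to the composition $G_k\to G_{k+1}\to F$. It gives a triangle $Cone(G_k\to G_{k+1})\to C_k\to C_{k+1}\to$. The first term is in perverse degree $k+1$, and the last is in ${^\Fp D^{\ge k+2}}$ by induction. Closure of ${^\Fp D^{\ge k+1}}$ under extensions then gives $C_k\in{^\Fp D^{\ge k+1}}$.

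\emph{Step 3: perverse cohomology.} Apply the octahedral axiom again to $G_{k-1}\to G_k\to F$. Then $^\Fp\tau_{\ge k}{}^\Fp\tau_{\le k}F$ is identified with the cone of $G_{k-1}\to G_k$, and shifting by $[k]$ gives the stated formula for ${^\Fp\CH}^k(F)$. The description of $P_k$ follows directly from Definition \ref{2.1} after substituting $G_k$ for $^\Fp\tau_{\le k}F$; naturality of the isomorphism over $F$ ensures the two images in $\BH^*(X,F)$ agree.

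\emph{Step 4: the splitting \eqref{nonsplit}.} The perverse spectral sequence is the spectral sequence of the filtered object given by the tower $G_k$, with $E_1$ or $E_2$ terms $\BH^*(X,{^\Fp\CH}^i(F))$ suitably shifted. $E_2$-degeneration means $\textup{Gr}^P_i\BH^*(X,F)\cong\BH^*(X,{^\Fp\CH}^i(F))$ in the appropriate degrees. Working over $\BQ$, a filtered vector space is isomorphic, non-canonically, to the direct sum of its graded pieces, so $P_k\cong\bigoplus_{i\le k}\textup{Gr}^P_i$.

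The main obstacle is bookkeeping rather than anything deep: getting the shift convention for $Cone(\iota_k)$ consistent with the stated formula, and checking that the octahedral identifications are compatible with the maps to $F$, which is needed for the filtration statement.
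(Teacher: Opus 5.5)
Your proof is correct and follows the paper's route, made more explicit: extension-closure of ${^\Fp D^{\le k}}(X)$ and ${^\Fp D^{\ge k+1}}(X)$ along the tower (the paper does this in one line, you run two inductions with the octahedral axiom), then uniqueness of truncation triangles, then the non-canonical splitting of a finitely filtered $\BQ$-vector space under $E_2$-degeneration. Your re-indexing is the right fix. Read literally, the hypothesis $Cone(\iota_k)\in Perv(X)[-k]$ only gives $G_k\cong{^\Fp\tau_{\le k-1}}F$, and the paper's ``similar argument'' for $Cone(G_k\to F)$ yields ${^\Fp D^{\ge k}}$ rather than ${^\Fp D^{\ge k+1}}$. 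Your convention $Cone(G_{k-1}\to G_k)\in Perv(X)[-k]$ gives $G_k\cong{^\Fp\tau_{\le k}}F$ and ${^\Fp\CH}^k(F)=Cone(G_{k-1}\to G_k)[k]$, which is exactly how Theorem \ref{pervtr} applies the proposition.
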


\begin{proof}
   To prove $G_k\cong {^\Fp\tau_{\le k}}F$, it suffices to show that $G_k\in {^\Fp}D^{\le k}(X)$ and  $Cone(G_k\to F)\in {^\Fp}D^{\ge k+1}(X)$. In fact, since $Cone(\iota_i)\in {^\Fp}D^{\le k}(X)$ for $i\le k$, their successive extension $G_k$  also lies in ${^\Fp}D^{\le k}(X)$ by \cite[Lemma 8.1.7]{HTT}. The assertion for $Cone(G_k\to F)$ follows from a similar argument. In $E_2$-degenerate case, the direct sum decomposition is a formal consequence of the spectral sequence of filtered complex (with increasing filtration).
\end{proof}

We often fix a perversity function rather than considering all possible perversity function. In the context of complex algebraic varieties, the standard choice is the middle perversity function $p=-n/2$. However, since the real analytic setting involves odd dimensions, we adopt throughout this paper the upper middle perversity function $p_+(n)=\ceil{-n/2}$.

\begin{defn}
   Let $f:X\to Y$ be a proper map of real analytic varieties with equidimensional fibers. 
   The perverse filtration $P_kH^*(X,\BQ),\,{k\ge0}$ associated with $f$ is defined as the shift of the filtration $P_k\BH^*(Y,Rf_*\BQ_X)$ such that $1\in P_0H^0(X,\BQ)$.
\end{defn}

\begin{example}
We illustrate perverse sheaves and perverse filtrations in the real analytic setting with the following examples.
\begin{enumerate}
\item 
   Let 
   \[
   O=V_0\subset V_1\subset V_2\subset V_3=\BR^3
   \]
   be a complete flag of real linear subspaces in $\BR^3$. Then with respect to the upper middle perversity function, the complexes
   \[
   \BQ_{V_0},\BQ_{V_1},\BQ_{V_2}[1],\BQ_{V_3}[1]
   \]
   are perverse sheaves. By contrast, with respect to the lower middle perversity function $p(n)=\floor{-n/2}$, the perverse objects are 
   \[
   \BQ_{V_0},\BQ_{V_1}[1],\BQ_{V_2}[1],\BQ_{V_3}[2].
   \]
   Roughly speaking, the difference of the two approximations of the middle perversity occurs precisely on odd-dimensional strata.
\item 
   Let $p:X=\BC^*\to \BR^1$ be the projection given by $p(z)=\log|z|$. There is a natural decomposition
   \[
   Rp_*\BQ_{\BC^*}=\BQ_{\BR^1}\oplus\BQ_{\BR^1}[-1].
   \]
   By definition, the sheaf $\BQ_{\BR^1}$ is perverse with respect to the upper middle perversity. So the perverse truncation is given by
   \[
   ^\Fp\tau_{\le k} Rp_*\BQ_{\BC^*}=\begin{cases}
       0, & k\le -1,\\
       \BQ_{\BR^1}, & k=0,\\
       \BQ_{\BR^1}\oplus\BQ_{\BR^1}[-1],& k\ge1.
   \end{cases}
   \]
   Hence the dimensions of the graded pieces of the perverse filtration associated with $p$ can be listed in the following table.
   \begin{center}
\begin{tabular}{c|cc}
$\BC^*$ & $P_0$ &$\textup{Gr}_1^P$ \\
\hline
$H^0$ & $1$ &    \\
$H^1$ &   & $1$  
\end{tabular}
\end{center}
   If one instead adopts the lower middle perversity function, then $\BQ_X[1]$ becomes perverse and all perverse truncation indices are shifted downward by $1$. After the global shifting which guarantees $1\in P_0H^0(\BC^*,\BQ)$, the resulting perverse filtration coincides with the one defined using the upper middle perversity.
\item
   We now present an example where the perverse spectral sequence fails to be $E_2$-degenerate. Let $f:S^3\to S^2$ be the Hopf fibration. Since $S^2$ is simply-connected, we have 
   \[
   R^kf_*\BQ_{S^3}=\BQ_{S^2} \textup{for } k=0,1.
   \]
   From the total cohomology of $X$ we see that 
   \[
   Rf_*\BQ_X\neq R^0f_*\BQ_X\oplus R^1f_*\BQ_X[-1]
   \]
   and therefore the perverse spectral sequence is not $E_2$-degenerate. 
   
   Since $\BQ_{S^2}[1]$ is perverse, we have
   \[
   ^\Fp\tau_{\le k} Rp_*\BQ_{S^3}=\begin{cases}
       0 & k\le 0,\\
       \BQ_{S^2} & k=1,\\
       Rf_*\BQ_{S^3}& k\ge1,
   \end{cases}
   \]
   Then the cohomological perverse filtration associated with $f$ in the sense of Definition \ref{2.1}.(2) is 
   \[
   P_k\BH^*(S^3,Rf_*\BQ)=\begin{cases}
   0 & k\le 0\\
   \text{Im}\{H^*(S^2,\BQ)\to H^*(S^3,\BQ)\}=\BQ\cdot1 &k=1\\
   H^*(S^3,\BQ) & k\ge2.
   \end{cases}
   \]
   So after shifting the index of the perverse filtration such that $1\in P_0H^0(S^3)$, the graded pieces of the perverse filtration is listed in the following table.
   \begin{center}
\begin{tabular}{c|cc}
$S^3$ & $P_0$ &$\textup{Gr}_1^P$ \\
\hline
$H^0$ & $1$ &    \\
$H^1$ &   &    \\
$H^2$ &   &    \\
$H^3$ &   & $1$  
\end{tabular}
\end{center}
It is immediate that the decomposition \eqref{nonsplit} does not hold without $E_2$-degeneracy hypothesis.
\end{enumerate}
\end{example}

\subsection{P=W for cluster varieties}
In \cite{Z,Z2}, the $P=W$ phenomenon is also observed on a special class of varieties known as cluster varieties. These are affine varieties defined as the spectra of certain commutative algebras associated with decorated directed graphs and combinatorial operations; see \cite{FZ}. Under mild finite-type hypothesis, such as acyclicity or the Louise property \cite{LS},  cluster varieties admit coverings by isolated cluster varieties, which are constructed from matrices as follows. 

For an $m\times n$ integer matrix $M=(a_{ij})$, the associated cluster variety, denoted as $X(M)$, is the $m+n$ dimensional variety defined by the equations
\[
x_jx_j'=\prod_{i=1}^m z_i^{a_{ij}}+1\textup{ for }1\le j\le n \textup{ and } z_1,\cdots,z_m\ne 0.
\]
inside the space
\[
\BC^{2n+m}=\Spec\,\BC[x_1,\cdots,x_n,x_1',\cdots,x_n',z_1,\cdots,z_m].
\]

Isolated cluster varieties play a crucial role in the topological study of cluster varieties. Indeed, an  iterative Mayer-Vietoris argument starting from isolated cluster varieties yields an effective algorithm for computing cohomology groups of all cluster varieties of Louise type; see \cite{LS,Z}. Moreover, each isolated cluster variety admits a natural torus fibration $h:X(M)\to \BR^{n+m}$  \footnote{There is a typo in the construction of $h$ in \cite{Z2}.}:
\begin{equation} \label{h}
\begin{split}
  h(x_1,\cdots,x_n,x_1'&,\cdots,x_n',z_1,\cdots,z_m)\\
  =&(|x_1|^2-|x_1'|^2,\cdots,|x_n|^2-|x_n'|^2,\log|z_1|,\cdots,\log|z_m|).
\end{split}
\end{equation}

When $M$ is of full column rank, the isolated cluster variety $X(M)$ is smooth and can be realized as a finite \'etale quotient of a product of cluster varieties of dimension 1 or 2. In this case, the map $h$ can also be described explicitly. For such full-rank isolated cluster varieties, the $P=W$ identity holds.
    
\begin{thm}{\cite[Theorem 1.3]{Z2}}
    Let $M$ be an $m\times n$ integer matrix with full column rank. Let $X(M)$ be the associated isolated cluster variety and $h:X(M)\to \BR^{n+m}$ be the associated Lagrangian fibration. Then the $P=W$ identity holds on the intersection cohomology, i.e.
    \[
    P_kH^*(X(M),\BQ)=W_{2k} H^*(X(M),\BQ)=W_{2k+1}H^*(X(M),\BQ),~~ k\ge 0, 
    \]
    where the perverse filtration is defined by the map $h$.
\end{thm}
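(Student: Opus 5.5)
The plan is to reduce to low-dimensional building blocks, where both filtrations can be computed directly, and then transport the identity through products and finite quotients. Since $M$ has full column rank $n$, after an integral change of the coordinates $z_1,\dots,z_m$ (an automorphism of $(\BC^*)^m$, which is algebraic and also changes $(\log|z_i|)$ linearly) we may bring $M$ to the form $\begin{pmatrix}D\\0\end{pmatrix}$ up to a finite index sublattice. Here $D$ is a diagonal $n\times n$ matrix with entries $d_1,\dots,d_n$. This exhibits a finite abelian group $G$ and a free $G$-action on
\[
\widetilde X=\prod_{j=1}^n X_{d_j}\times(\BC^*)^{m-n},\qquad X_{d}=\{xx'=z^{d}+1,\ z\ne0\},
\]
with $X(M)\cong\widetilde X/G$. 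The fibration $h$ lifts to the product $\widetilde h=\prod h_{d_j}\times\prod\log|z|$, and $G$ acts fiberwise over a finite linear change of the base. Consequently $H^*(X(M))=H^*(\widetilde X)^G$, and $Rh_*\BQ$ is the $G$-invariant part of the pushforward from $\widetilde X$. Since taking invariants is exact over $\BQ$, it commutes with perverse truncation and with the weight filtration. It therefore suffices to prove the $P=W$ identity $G$-equivariantly on $\widetilde X$.

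For the factors, both sides are computed explicitly. For $\BC^*$ with $p=\log|z|$, the example in Section 2.3 gives $P_0=H^0$ and $\textup{Gr}^P_1=H^1$. This matches $W$, since $H^1(\BC^*)$ has weight $2$. For $X_d$, the weight filtration is of Hodge--Tate type with $H^2$ of type $(2,2)$ and $H^1$ of type $(1,1)$, as in Proposition~\ref{3.3}. On the $P$ side I would deform the complex structure, via hyperkähler rotation or the plumbing description of \cite{Z}, so that $h_d$ becomes a proper holomorphic elliptic fibration with $d$ nodal fibers. There the decomposition theorem and relative hard Lefschetz give $\textup{Gr}^P_k$. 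Because the perverse filtration depends only on the underlying topological map, the identity for $X_d$ follows by comparing dimensions and checking that the classes pulled back from the base lie in $P_1$ and have weight $2$.

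Next, a Künneth step. On the $W$ side, weights are additive under the Künneth isomorphism. On the $P$ side, perverse truncations of an exterior product $Rf_*\BQ\boxtimes Rg_*\BQ$ are convolutions of the factors' truncations. With the upper middle perversity this additivity must be checked carefully because of the odd-dimensional factors $\BR$. When the total dimension is even, I would pair the $\BC^*$-factors two at a time into holomorphic $(\BC^*)^2\to\BC^*$-type fibrations, after a deformation, and use the standard complex-algebraic Künneth compatibility of perverse filtrations. When the total dimension is odd, I would instead consider $h\times p:X(M)\times\BC^*\to\BR^{n+m+1}$, prove the identity there, and descend using the computed filtration on the extra $\BC^*$ factor.

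I expect the main obstacle to be exactly this odd-dimensional bookkeeping. One must show that the perverse filtration of $h$ (with the upper middle perversity) is recovered from that of $h\times p$ by the shift rule $P_k(X\times\BC^*)=P_k(X)\otimes H^0+P_{k-1}(X)\otimes H^1$. My first approach would be to verify directly, via Proposition~\ref{byhand}, that external products of the truncations with $\BQ_{\BR}\oplus\BQ_{\BR}[-1]$ give shifted perverse cones on the product stratification.
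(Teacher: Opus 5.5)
Your outline follows the same route as the cited proof from \cite{Z2}, as summarized in the introduction. You write $X(M)$ as a free finite quotient of $\prod_j X_{d_j}\times(\BC^*)^{m-n}$; this needs an isogeny of tori, which your ``finite index sublattice'' supplies, not merely an automorphism of $(\BC^*)^m$. You then compute $P$ after deforming to proper holomorphic elliptic fibrations, combine by K\"unneth, and pass to $h\times p$ on $X(M)\times\BC^*$ in odd dimension.

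Only small repairs are needed. First, $h_d$ has a single singular fiber over the origin, a cycle of $d$ spheres of type $I_d$, not $d$ nodal fibers. Second, for $X_d$ a dimension count cannot identify the $(d-1)$-dimensional subspaces $P_1H^2$ and $W_2H^2$, since nothing in $H^2$ is pulled back from a base. You should either cite \cite{Z}, or show that both equal $\ker\{H^2(X_d)\to H^2(\{|z|\neq 1\})\}$, as in Propositions \ref{Wlemma} and \ref{Plemma}. Finally, the obstacle you anticipate is harmless: every stratum of the base of $h$ has dimension $\equiv n+m \pmod{2}$. So for $n+m$ odd the functor $-\boxtimes\BQ_{\BR}[1]$ preserves upper-middle perverse sheaves, and your shift rule follows at once.
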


When the matrix $M$ is not of full column rank, the behavior of the cluster varieties are very different. For example, $X(M)$ may not be smooth, and it is no longer a quotient of a product of lower-dimensional cluster varieties. Moreover, the mixed Hodge structure and the perverse filtrations become far more intricate. Even in the simplest non-full rank case $M=(1,1)$, the computation of its mixed Hodge structure is already nontrivial; see \cite[Section 4]{ZZ}. 

In this paper, we focus on the $3$-dimensional isolated cluster varieties $X(M)$ where $M$ is non-full rank. By the classification of 3-dimensional cluster varieties in \cite{ZZ}, aside from the trivial cases $\begin{pmatrix}0\\0\end{pmatrix}$ and $(a,0)$ whose cluster varieties are already studied in the full rank setting, the only remaining possibility is that $M$ is a $1\times 2$ matrix $(a,b)$ with $a,b\neq0$. Explicitly, the corresponding cluster varieties, denoted by $X_{a,b}$, are
\[
\begin{cases}
xx'=z^a+1\\
yy'=z^b+1\\
\end{cases}~~ \textup{where }x,x',y,y',z\in\BC\textup{, and }z\neq0.
\]

In fact, we will see in Section 3 that for a certain choice of $a$ and $b$ the cluster variety $X_{a,b}$ is neither smooth nor a product of lower-dimensional cluster varieties. Nevertheless, it remains closely related to 2-dimensional cluster varieties in the following way. Let $X_a$ be the cluster variety $xx'=z^a+1$, $z\neq0$. Then there is a commutative diagram (not a fibered product)
\begin{equation}\label{a}
\begin{tikzcd}
X_a\arrow[r,"f_a"]\arrow[d,"h_a"] & \BC^*\arrow[d,"\log|\cdot|"]\\
\BR^2\arrow[r,"pr_2"] &\BR
\end{tikzcd}
\end{equation}
where $f_a(x,x',z)=z$ and $h$ is defined by \eqref{h}. Then the 3-dimensional cluster variety $X_{a,b}$ is the fibered product $X_{a,b}=X_a\times_{\BC^*} X_b$, or equivalently, we have the fibered diagram

\begin{equation}\label{a1}
\begin{tikzcd}
X_{a,b}\arrow[r,]\arrow[d] & X_a\arrow[d,"f_a"]\\
X_b\arrow[r,"f_b"] &\BC^*.
\end{tikzcd}
\end{equation}

From the definition, the map $h:X_{a,b}\to\BR^3$ is obtained as the composition
\[
X_a\times_{\BC^*} X_b\to X_a\times_\BR X_b\to \BR^2\times_\BR\BR^2=\BR^3.
\]
However, it is unclear to the author how to compute the mixed Hodge structures and the perverse filtration for a general fibered product directly. In the follow sections, we proceed by analyzing the explicit geometry of these 3-dimensional cluster varieties.

\subsection{A combinatorial lemma}
In this section, we prove a combinatorial lemma concerning certain marking and numbering on a circle and apply it to roots of $-1$. For two points $P,Q\in S^1$, we denote by $\overset{\frown}{PQ}$ the counter-clockwise arc from $P$ to $Q$.

Let $T=\{t_1,\dots,t_n\}$ be a finite set of distinct points on a circle, arranged in counter-clockwise order. Let 
\[
A_T=\{\overset\frown{t_1t_2},\dots, \overset\frown{t_{n-1}t_n},\overset\frown{t_{n}t_1}\}
\]
be the set of simple arcs whose endpoints are adjacent points in T. Let $V_T$ be the $\BQ$-vector space  of all maps $A_T\to \BQ$. Thus, an element of $V_T$ assigns a rational number to each simple arc. For any embedding $T'\subset T$, there is a natural inclusion $\iota:A_{T}\to A_{T'}$, sending each arc $I\in A_{T}$ to the unique arc in $A_{T'}$ that contains $I$. This inclusion induces a linear map $\iota_*: V_{T}\to V_{T'}$ which sends the map $l:A_{T}\to\BQ$ to 
\begin{equation} \label{comb1}
\iota_* (l)(I')=\sum_{J\subset I',J\in A_{T}} l(J),~~I'\in A_{T'}.
\end{equation}
We remark that $\iota_*$ is \emph{not} the map defined by the precomposition (pull-back) with $\iota$, but rather should be interpreted as the integration (push-forward) along $\iota$.
\begin{lem} \label{comblemma}
   Let $T_1,T_2$ be two non-empty finite sets of points on $S^1$. The natural inclusions $\iota_i:T_i\to T_1\cup T_2$ induce a linear map
   \[
   \begin{split}
   g:V_{T_1\cup T_2}&\to V_{T_1}\oplus V_{T_2}\\
   l&\mapsto \iota_{1*}(l)\oplus  \iota_{2*}(l).
   \end{split}
   \]
   Then the kernel of $g$ satisfies
   \[
    \ker g\cong\begin{cases}
        \BQ& \textup{if  }T_1\cap T_2=\varnothing,\\
        0& \textup{if }T_1\cap T_2\neq\varnothing.
    \end{cases}
   \]
\end{lem}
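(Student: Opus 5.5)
Write $T=T_1\cup T_2=\{t_1,\dots,t_n\}$ in counter-clockwise order and regard an element $l\in V_T$ as a weight on the arcs $\overset\frown{t_kt_{k+1}}$ (indices mod $n$). For $i=1,2$, the condition $\iota_{i*}(l)=0$ says, by \eqref{comb1}, that for every arc $I'\in A_{T_i}$ the sum of $l$ over the arcs of $A_T$ inside $I'$ vanishes. The plan is to recast these conditions as a difference equation for a ``potential'' function on $T$.

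Define $\phi:T\to\BQ$ by $\phi(t_1)=0$ and $\phi(t_{k+1})=\phi(t_k)+l(\overset\frown{t_kt_{k+1}})$ for $1\le k<n$. This is well defined, with $\phi(t_{k+1})-\phi(t_k)=l(\overset\frown{t_kt_{k+1}})$ for all $k$ modulo $n$, if and only if the total sum $\sum_{J\in A_T}l(J)$ vanishes. The key steps are:
\begin{enumerate}
\item Show that if $l\in\ker g$, then $\sum_J l(J)=0$, since $A_T$ is partitioned among the arcs of $A_{T_1}$ (recall $T_1\neq\varnothing$). Hence $\phi$ exists.
\item Observe that the sum of $l$ over the arcs inside an arc $\overset\frown{PQ}\in A_{T_i}$ equals $\phi(Q)-\phi(P)$. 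So $\iota_{i*}(l)=0$ is equivalent to $\phi$ taking equal values at cyclically consecutive points of $T_i$, i.e. $\phi|_{T_i}$ is constant.
\item Conclude that $\ker g$ is identified, via $l\mapsto\phi$ modulo global constants, with the space of functions $\phi$ on $T=T_1\cup T_2$ that are constant on $T_1$ and constant on $T_2$, taken modulo constants. Conversely, any such $\phi$ gives back $l(\overset\frown{t_kt_{k+1}})=\phi(t_{k+1})-\phi(t_k)$ with automatically zero total sum.
\end{enumerate}

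The space of functions in step 3 is $\BQ^2$ when $T_1\cap T_2=\varnothing$ and $\BQ$ when $T_1\cap T_2\neq\varnothing$, because a common point forces the two constants to agree. Quotienting by the constants gives $\BQ$ and $0$ respectively, which is the claim. In the disjoint case the kernel is spanned explicitly by $l$ taking the value $+1$ on arcs going from a $T_1$-point to a $T_2$-point, $-1$ on arcs going from $T_2$ to $T_1$, and $0$ otherwise.

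No step is a real obstacle. The only points needing care are the degenerate cases where $T_i$ is a single point, so that $A_{T_i}$ is the one full-circle arc and the condition is just the total-sum condition, and where $n=1$. Both are handled uniformly by the potential argument, since in those cases step 2 reduces to the statement of step 1.
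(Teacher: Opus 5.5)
Your argument is correct, but it is organized differently from the paper's.

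The paper splits into cases. When $T_1\cap T_2\neq\varnothing$, it fixes a base point $a$ and uses that the sum of $l$ over $\overset{\frown}{ap}$ vanishes for every $p\in T_1\cup T_2$ to kill $l$ arc by arc. For this, $a$ has to lie in $T_1\cap T_2$; the text's ``$a\in T_1\cup T_2$'' is a slip. When $T_1\cap T_2=\varnothing$, it sorts the simple arcs into three kinds: those running from $T_1$ to $T_2$, those running from $T_2$ to $T_1$, and the rest. It shows that $l$ vanishes on the rest, and that the first two kinds alternate around the circle with adjacent values summing to zero. It then verifies the converse by hand.

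Your potential $\phi$ is exactly the function $p\mapsto\sum_{J\subset\overset{\frown}{ap}}l(J)$ implicit in the paper's first case. You use it instead as a global change of variables. Identifying the zero-sum subspace of $V_{T_1\cup T_2}$ with $\BQ^{T_1\cup T_2}/\BQ$ through the discrete derivative converts $\iota_{i*}(l)=0$ into ``$\phi|_{T_i}$ is constant.'' After that, both cases, the explicit generator in the disjoint case, and the converse inclusion all come out of one dimension count.

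This buys several things. The degenerate configurations (a singleton $T_i$, $|T_1\cup T_2|=1$, and the role of $T_1\neq\varnothing$ in forcing total sum zero) are handled explicitly rather than tacitly. Your step 1 shows directly that $\ker g$ lies in the zero-sum subspace, which is exactly what Corollary~\ref{combcor} needs. The argument also extends verbatim to finitely many nonempty subsets $T_1,\dots,T_r$. There $\ker g$ has dimension one less than the number of connected components of the graph on $\{1,\dots,r\}$ that joins $i$ and $j$ whenever $T_i\cap T_j\neq\varnothing$.

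The paper's version is more hands-on and exhibits the kernel element immediately. However, its alternation step in the disjoint case is stated somewhat informally.
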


\begin{proof}
   Let $l:\Sigma_{T_1\cup T_2}\to\BQ$ be a map in $\ker g$. Then for $i=1,2$ and $P,Q\in T_i$, the arc $\overset{\frown}{PQ}$ is the union of simple arcs in $A_{T_i}$, and by \eqref{comb1} we have
   \begin{equation} \label{comb2}
   \sum _{J\subset \overset{\frown}{PQ}, J\in A_{T_1\cup T_2}}l(J)=0.
   \end{equation}
   Suppose first that $T_1\cap T_2\neq \varnothing$, and fix a point $a\in T_1\cup T_2$. For any arc $\overset{\frown}{pq}\in \Sigma_{T_1\cup T_2}$:
   \begin{enumerate}
    \item If $P,Q\in T_1$ or $P,Q\in T_2$, then $l(\overset{\frown}{pq})=0$ by \eqref{comb2}.
    \item If $p,q$ do not lie in the same $T_i$, without loss of generality we may assume $P\in T_1$ and $Q\in T_2$. Then $\overset{\frown}{AP}+\overset{\frown}{PQ}=\overset{\frown}{AQ}$. By \eqref{comb2}, we have 
    \[
   \sum _{J\subset \overset{\frown}{ap}, J\in A_{T_1\cup T_2}}l(J)=\sum _{J\subset \overset{\frown}{aq}, J\in A_{T_1\cup T_2}}l(J)=0
   \]
   and hence $l(\overset{\frown}{pq})=0$.
   \end{enumerate}
   Therefore $l=0$ and hence $\ker g=0$.

   Now suppose $T_1\cap T_2=\varnothing$. Call an arc $\overset{\frown}{PQ}\in A_{T_1\cup T_2}$ of \textit{type I} if $P\in T_1$ and $Q\in T_2$, and \textit{of type II} if $P\in T_2$ and $Q\in T_1$. We claim that there exists a constant $c\in\BQ$ such that $l$ takes value $c$ on every type I arcs, $-c$ on all type II arcs, and 0 on all other simple arcs. In fact, the interior of any arc $\overset{\frown}{PQ}\in A_{T_1}$ contains no points of $T_1$, so it is consists of exactly 1 type I arc and 1 type II arc, and possibly some other simple arcs in $A_{T_2}$. The same argument works for arcs in $A_{T_2}$. Hence the type I and type II arcs alternate around on the circle. Equation \eqref{comb2} then implies that the values of $l$ on adjacent type I and type II arcs sums to 0, proving the claim. Conversely, it is straightforward to verify that any $l$ of the form above lie in $\ker g$. We conclude that $\ker g\cong\BQ$ when $T_1\cap T_2=\varnothing$. 
\end{proof}

Let $V_T^0$ be the subspace of functions $l\in V_T$ satisfying $\sum_{I\in A_T}l(I)=0$. Then we have the following consequence of Lemma \ref{comblemma}

\begin{cor} \label{combcor}
    With notation as in Lemma \ref{comblemma}, the kernel of the induced map 
    \[
    g^0: V_{T_1\cup T_2}^0\to V_{T_1}^0\oplus V_{T_2}^0
    \]
    satisfies
    \[
    \ker g\cong\begin{cases}
        \BQ& \textup{if  }T_1\cap T_2=\varnothing,\\
        0& \textup{if }T_1\cap T_2\neq\varnothing.
    \end{cases}
   \]
\end{cor}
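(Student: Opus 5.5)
Corollary \ref{combcor} follows from Lemma \ref{comblemma} once we know how $g$ relates to $g^0$. The first step is to check that $g$ maps $V^0_{T_1\cup T_2}$ into $V^0_{T_1}\oplus V^0_{T_2}$, so that $g^0$ is simply the restriction of $g$. By \eqref{comb1}, each arc $J\in A_{T_1\cup T_2}$ lies in exactly one arc $I'\in A_{T_i}$, because the arcs of $A_{T_i}$ partition the circle (up to endpoints). Hence
\[
\sum_{I'\in A_{T_i}}\iota_{i*}(l)(I')=\sum_{J\in A_{T_1\cup T_2}}l(J),
\]
and so $\iota_{i*}$ preserves total sums. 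In particular it maps $V^0_{T_1\cup T_2}$ into $V^0_{T_i}$.

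Since $g^0$ is the restriction of $g$, we get $\ker g^0=\ker g\cap V^0_{T_1\cup T_2}$. When $T_1\cap T_2\neq\varnothing$, Lemma \ref{comblemma} gives $\ker g=0$, so $\ker g^0=0$.

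When $T_1\cap T_2=\varnothing$, we use the explicit description of $\ker g$ from the proof of Lemma \ref{comblemma}. A function $l$ lies in $\ker g$ exactly when it takes the value $c$ on type I arcs, $-c$ on type II arcs, and $0$ on all other arcs. We then verify that every such $l$ has total sum zero. The proof of the lemma showed that type I and type II arcs alternate around the circle, so they occur in equal number. Therefore $\sum_J l(J)=c\cdot\#\{\text{I}\}-c\cdot\#\{\text{II}\}=0$, which gives $\ker g\subset V^0_{T_1\cup T_2}$ and hence $\ker g^0=\ker g\cong\BQ$.

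Equality of counts can also be seen more directly: traversing the circle, each passage from a point of $T_1$ to a point of $T_2$ must eventually be followed by a passage back, since $T_1$ and $T_2$ are both non-empty and we return to the starting point. This is the only step needing care, namely confirming that the kernel lies inside the sum-zero subspace. I expect it to be the main (if mild) obstacle, and the alternation argument settles it.
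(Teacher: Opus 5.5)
Your proof is correct and follows the paper's route: the paper also reduces to $\ker g^0=\ker g$ (by what it calls a diagram chase) and then invokes Lemma \ref{comblemma}. The alternation count is not needed, because the sum-preservation you already proved gives, for any $l\in\ker g$, $\sum_{J}l(J)=\sum_{I'\in A_{T_1}}\iota_{1*}(l)(I')=0$; hence $\ker g\subseteq V^0_{T_1\cup T_2}$ in both cases, without using the explicit form of $\ker g$ from the lemma's proof.
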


\begin{proof}
    We have a commutative diagram
    \[
    \begin{tikzcd}
    V_{T_1\cup T_2}^0\arrow[r,"g^0"]\arrow[d,hookrightarrow]& V_{T_1}^0\oplus V_{T_2}^0\arrow[d,hookrightarrow]\\
    V_{T_1\cup T_2}\arrow[r,"g"]& V_{T_1}\oplus V_{T_2}.
    \end{tikzcd}
    \]
   A diagram chase shows that $\ker g^0=\ker g$. The result then follows from Lemma \ref{comblemma}.
\end{proof}

We now apply the above combinatorial argument to the roots of $-1$ on the unit circle. For any integer $d>0$, let 
\[
Z_d=\{z\mid z^d=-1\}\subset\BC
\]
be the set of all $d$-th roots of $-1$. The following elementary observation will be used later.

\begin{lem} \label{int}
   Let $a,b$ be positive integers. Then 
   \[
   Z_a\cap Z_b=\begin{cases}
   Z_{(a,b)}, & \textup{if }ord_2(a)=ord_2(b),\\
   \varnothing & \textup{otherwise,}
   \end{cases}
   \] 
   where $(a,b)$ denotes the greatest common divisor and $ord_2$ is the $2$-adic valuation of an integer.
\end{lem}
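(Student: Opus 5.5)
We will parametrize roots of $-1$ by angles and reduce everything to divisibility. Writing $z=e^{2\pi i\theta}$ with $\theta\in\BQ/\BZ$, the condition $z^d=-1$ means $d\theta\equiv \tfrac12 \pmod 1$. Equivalently, $\theta=\tfrac{2k+1}{2d}$ for some integer $k$. Thus $Z_d$ is the set of classes $\tfrac{m}{2d}\bmod 1$ with $m$ odd. It is also the coset $\zeta_{2d}\mu_d$ of the group $\mu_d$ of $d$-th roots of unity inside $\mu_{2d}$, where $\zeta_{2d}=e^{\pi i/d}$.

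\emph{The intersection is empty or a coset of $\mu_{(a,b)}$.} Suppose $z\in Z_a\cap Z_b$. Then $w\in Z_a\cap Z_b$ if and only if $w/z\in\mu_a\cap\mu_b=\mu_{(a,b)}$. Hence a nonempty intersection equals $z\mu_{(a,b)}$.

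\emph{Nonemptiness is equivalent to $ord_2(a)=ord_2(b)$.} Write $a=2^s a'$ and $b=2^t b'$ with $a',b'$ odd. We look for $\theta=\tfrac{p}{q}$ in lowest terms with $a\theta\equiv b\theta\equiv\tfrac12\pmod 1$.

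First take $\theta\in Z_a$, so $\theta=\tfrac{m}{2a}$ with $m$ odd. Reducing this fraction, its denominator $q$ divides $2a$ and satisfies $ord_2(q)=s+1$, because $m$ is odd. In the same way, $\theta\in Z_b$ forces $ord_2(q)=t+1$. Hence $s=t$ is necessary.

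Conversely, assume $s=t$. Let $g=(a,b)$; then $ord_2(g)=s$ as well. Take $\theta=\tfrac{1}{2g}$. Then $a\theta=\tfrac{a/g}{2}$, and $a/g$ is odd because $ord_2(a)=ord_2(g)$. So $a\theta\equiv\tfrac12\pmod 1$. The same argument gives $b\theta\equiv\tfrac12\pmod 1$.

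\emph{Identifying the coset.} Since $e^{\pi i/g}\in Z_a\cap Z_b$, the first step shows that the intersection is $e^{\pi i/g}\mu_g$. This coset is exactly $Z_g$.

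There is no real obstacle here. The only care needed is the $2$-adic bookkeeping in the necessity direction, namely that reducing $\tfrac{m}{2a}$ with $m$ odd preserves the exact power $2^{s+1}$ in the denominator.
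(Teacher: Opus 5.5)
Your proof is correct, but it is organized differently from the paper's.

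\textbf{The paper's route.} It writes a common point as $e^{i\pi\psi}$, so $a\psi$ and $b\psi$ are odd integers, and uses B\'ezout to get $(a,b)\psi\in\BZ$. Since $a\psi=\frac{a}{(a,b)}\cdot (a,b)\psi$ is odd, both factors are odd, and likewise for $b$. This single step gives two things at once: $ord_2(a)=ord_2((a,b))=ord_2(b)$, and $e^{i\pi\psi}\in Z_{(a,b)}$, i.e.\ $Z_a\cap Z_b\subset Z_{(a,b)}$. The reverse inclusion is left as ``one checks directly''; it follows from $a/(a,b)$ and $b/(a,b)$ being odd.

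\textbf{Your route.} You separate the shape of the intersection from its existence:
\begin{enumerate}
\item Any nonempty $Z_a\cap Z_b$ is a single coset of $\mu_a\cap\mu_b=\mu_{(a,b)}$. The B\'ezout input is hidden in this equality.
\item Necessity of $ord_2(a)=ord_2(b)$ comes from the $2$-adic valuation of the reduced denominator of the angle.
\item Sufficiency, and the identification with $Z_{(a,b)}$, come from the explicit witness $e^{\pi i/(a,b)}$ together with the coset structure.
\end{enumerate}

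\textbf{Comparison.} The paper's argument is shorter, since one B\'ezout/parity step yields both the valuation condition and the containment in $Z_{(a,b)}$. Yours is slightly longer but more structural. It shows a priori that a nonempty intersection is a $\mu_{(a,b)}$-torsor, so exhibiting one common point settles the identification automatically. It also justifies both inclusions explicitly rather than leaving one to the reader.
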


\begin{proof}
   Suppose $e^{i\pi\psi}\in Z_a\cap Z_b$. Then $e^{i\pi a\psi}=e^{i\pi b\psi}=-1$, so both $a\psi$ and $b\psi$ are odd integers. By B\'ezout theorem, $(a,b)\psi$ is also an integer. Moreover, it divides  divides both $a\psi$ and $b\psi$. Therefore, $a/(a,b)$ and $b/(a,b)$ are both odd integers, which is equivalent to $ord_2(a)=ord_2(b)$. In this case, one checks directly that $Z_a\cap Z_b=Z_{(a,b)}$. 
\end{proof}

The following corollary is an  immediate consequence of Corollary \ref{combcor} and will be used in Section 4.2 to study the degeneration of the central fiber.

\begin{cor} \label{comb}
Let $a,b$ be positive integers. Then the kernel of the natural map
\[
g: V_{Z_a\cup Z_b}^0\to V_{Z_a}^0\oplus V_{Z_b}^0
\]
satisfies
    \[
    \ker g\cong\begin{cases}
        \BQ& \textup{if  }T_1\cap T_2=\varnothing,\\
        0& \textup{if }T_1\cap T_2\neq\varnothing.
    \end{cases}
   \]
\end{cor}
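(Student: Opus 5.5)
The statement follows by taking $T_1=Z_a$ and $T_2=Z_b$ in Corollary \ref{combcor} and then using Lemma \ref{int} to decide when $T_1\cap T_2$ is empty; the cases in the statement are meant with $T_i$ replaced by these sets. Both sets are finite subsets of the unit circle $S^1\subset\BC$. They are non-empty because $a,b>0$: $Z_d$ consists of the $d$ points $e^{i\pi(2k+1)/d}$, $0\le k<d$. So the setup of Lemma \ref{comblemma} and Corollary \ref{combcor} applies. The union $Z_a\cup Z_b$ is again a finite subset of $S^1$, and the inclusions $Z_a,Z_b\subset Z_a\cup Z_b$ induce the push-forward maps $\iota_{1*},\iota_{2*}$ of \eqref{comb1}.

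First I will check that these push-forwards preserve the zero-sum subspaces. Each simple arc of $A_{Z_a\cup Z_b}$ lies in exactly one simple arc of $A_{Z_i}$, so $\sum_{I'\in A_{Z_i}}\iota_{i*}(l)(I')=\sum_{J\in A_{Z_a\cup Z_b}}l(J)$. Hence $g^0$ is well defined, and it is the map $g$ in the statement.

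Corollary \ref{combcor} then gives $\ker g\cong\BQ$ if $Z_a\cap Z_b=\varnothing$ and $\ker g=0$ otherwise. By Lemma \ref{int}, $Z_a\cap Z_b=\varnothing$ exactly when $ord_2(a)\neq ord_2(b)$. When $ord_2(a)=ord_2(b)$ the intersection is $Z_{(a,b)}$, which is non-empty. So $\ker g\cong\BQ$ when $ord_2(a)\ne ord_2(b)$, and $\ker g=0$ when $ord_2(a)=ord_2(b)$.

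No step here is a real obstacle. The only care needed is in the reduction to Corollary \ref{combcor}. Its proof asserts $\ker g^0=\ker g$, but what the argument actually needs is that $\ker g\subseteq V^0_{T_1\cup T_2}$. This holds: if $g(l)=0$, then $\iota_{1*}(l)=0$, so $\sum_J l(J)=\sum_{I'}\iota_{1*}(l)(I')=0$ by the identity above. With this, the kernel computation from Lemma \ref{comblemma} transfers directly to the restricted map $g$.
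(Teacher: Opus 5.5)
Your proposal is correct and follows the paper's route: the paper also obtains this as an immediate consequence of Corollary \ref{combcor} with $T_1=Z_a$, $T_2=Z_b$, using Lemma \ref{int} to decide when $Z_a\cap Z_b$ is empty. Your extra check that $\ker g\subseteq V^0_{Z_a\cup Z_b}$, because each $\iota_{i*}$ preserves the total sum, is exactly what the paper's ``diagram chase'' claim $\ker g^0=\ker g$ needs.
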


\section{W side}
\setcounter{subsection}{-1}
\subsection{Overview}
The goal of this section is to study the mixed Hodge structures of 3-dimensional isolated cluster varieties of non-full rank, \emph{i.e.} the affine varieties $X_{a,b}\subset \BC^5$ defined by equations
\[
\begin{cases}
xx'=z^a+1\\
yy'=z^b+1\\
\end{cases}~~ \textup{where }x,x',y,y',z\in\BC\textup{, and }z\neq0.
\]
Our approach proceeds in the following steps.
\begin{enumerate}
\item Recalculate the mixed Hodge structures of 2-dimensional isolated cluster varieties 
\[
X_d=\{xx'=z^d+1,~~z\neq0\}
\]
via mixed Hodge modules, by studying the decomposition associated with the natural non-proper morphism $X_d\to\BC^*$.
\item Apply the relative K\"unneth formula for Mixed Hodge modules to compute the mixed Hodge structure on the compactly supported cohomology $H^*_c(X_{a,b})$.
\item Resolve the singularities of $X_{a,b}$ and compute the mixed Hodge structures on $H^*(X_{a,b})$ and the intersection cohomology $IH^*(X_{a,b})$.
\end{enumerate}

\subsection{Mixed Hodge structure of $X_d$} 
The mixed Hodge structures on  $H^*(X_d)$ were previously computed in \cite[Section 7]{LS} by an analytic deformation retract argument and integrations of differential forms. However, this framework does not adapt well with the fibered products \eqref{a1} and hence does not generalize to the cases of interest here. For our purpose, we recalculate the mixed Hodge numbers of $X_d$ using mixed Hodge modules, by studying the decomposition associated with the natural morphism
\begin{equation} \label{f}
\begin{array}{c @{\;} c @{\;} c}
f_d: & X_d & \to \BC^* \\[4pt]
   & (x,x',z) & \mapsto z
\end{array}
\end{equation}
We remark that that the BBDG decomposition theorem cannot be applied directly to $f$ since it is not proper. 

Let $Z_d=\{\xi\in\BC^*\mid\xi^d=-1\}$ and $U_d$ be the open complement. Denote by $i_d:Z_d\to \BC^*$ and $j_d:U_d\to \BC^*$ be the corresponding immersions. Since the parameter $d$ is fixed throughout this section, we suppress it from the notation. By definition, the morphism $f$ is a trivial $\BC^*$ fibration over $U$. The fibers of $f$ are

\[
f^{-1}(p)=\begin{cases}
\BC^*, & p\in Z,\\
\BC\bigvee\BC,& p\in U,
\end{cases}
\]
where $\bigvee$ denotes the one-point union.

\begin{prop} \label{2d}
With the notation above, there exists an isomorphism
\begin{equation} \label{2d0001}
Rf_*\BQ_{X}=\BQ_{\BC^*}\oplus j_!\BQ_{U}[-1](-1)
\end{equation}
in $D^b(MHM(X))$.
\end{prop}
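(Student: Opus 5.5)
The plan is to split off the constant part of $Rf_*\BQ_X$ using an algebraic section of $f$, and then to identify the remaining cone as $j_!\BQ_U[-1](-1)$ by computing its stalks and its restriction to $U$.

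\textbf{Step 1: splitting off $\BQ_{\BC^*}$.} The morphism $f=f_d$ admits the algebraic section
\[
s:\BC^*\to X,\qquad s(z)=(1,\,z^d+1,\,z),
\]
so $f\circ s=\mathrm{id}$. Consider the adjunction morphism $\BQ_{\BC^*}\to Rf_*\BQ_X$, followed by
\[
Rf_*\BQ_X\to Rf_*Rs_*s^*\BQ_X=\BQ_{\BC^*}.
\]
Both maps are morphisms in $D^b(MHM(\BC^*))$, and their composite is induced by $(f\circ s)^*=\mathrm{id}$, so it is the identity. Hence $\BQ_{\BC^*}$ is a direct summand: $Rf_*\BQ_X\cong\BQ_{\BC^*}\oplus G$, where $G$ is the cone of the adjunction map. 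This step avoids any need for the decomposition theorem, which does not apply since $f$ is not proper.

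\textbf{Step 2: the cohomology sheaves of $Rf_*\BQ_X$.} I will use Lemma~\ref{push}: stalks are colimits of $H^k(f^{-1}(D))$ over small discs $D$.
\begin{itemize}
\item[(a)] Over $U$, i.e. for $z^d\neq-1$, the fibre $xx'=z^d+1$ is $\BC^*$. The map $(x,x',z)\mapsto(x,z)$ trivialises $f^{-1}(U)\cong\BC^*\times U$ globally.
\item[(b)] Hence $R^kf_*\BQ|_U$ is $\BQ_U$ for $k=0$, is $\BQ_U(-1)$ for $k=1$, and vanishes for $k\ge 2$. The twist $(-1)$ holds because $H^1(\BC^*)=\BQ(-1)$, generated by the global class of $dx/x$; this also shows the variation of mixed Hodge structure is constant.
\item[(c)] At a point $\xi\in Z$, the function $w=z^d+1$ is an étale coordinate. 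For a small disc $D$ around $\xi$, $f^{-1}(D)\cong\{(x,x')\in\BC^2: xx'\in D\}$.
\item[(d)] The scaling $(x,x')\mapsto(tx,tx')$, $t\in[0,1]$, contracts this set to the origin. So the stalks of $R^kf_*\BQ_X$ at $\xi$ are $\BQ$ for $k=0$ and $0$ for $k>0$.
\end{itemize}
Combined with Step 1, $G$ has cohomology only in degree $1$, and $G=H^1(G)[-1]$ with $H^1(G)\cong R^1f_*\BQ_X$. This sheaf has vanishing stalks on $Z$, and its restriction to $U$ is $\BQ_U(-1)$.

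\textbf{Step 3: identifying $H^1(G)$.} Set $\mathcal G=H^1(G)$. Apply the triangle $j_!j^*\mathcal G\to\mathcal G\to i_*i^*\mathcal G\to$ of Theorem~\ref{MHM2}(2). Since $i^*\mathcal G=0$ by Step 2, this gives $\mathcal G\cong j_!j^*\mathcal G=j_!\BQ_U(-1)$ in $D^b(MHM)$. Shifting back yields \eqref{2d0001}.

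\textbf{Main obstacle.} I expect the main obstacle to be Step 2(c)--(d), the local computation at $Z$ for this non-proper map. There I must check carefully two things: that the colimit of $H^k(f^{-1}(D))$ over shrinking discs is computed by the contractible model, and that the mixed Hodge module structure on $R^1f_*$ over $U$ is the constant one $\BQ_U(-1)$ rather than a nontrivial variation. The latter is settled by the global trivialisation in Step 2(a).
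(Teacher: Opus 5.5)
Your proof is correct, but it differs from the paper's at the key step.

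\textbf{The paper's route.} The paper computes $R^kf_*\BQ_X$ as in your Step 2 and forms the truncation triangle $\BQ_{\BC^*}\to Rf_*\BQ_X\to j_!\BQ_U[-1](-1)\to$. It kills the extension class by computing $\Hom_{D^b_c}(j_!\BQ_U,\BQ_{\BC^*}[2])=H^2(U,\BQ)=0$. It then transfers this vanishing to $D^b(MHM(\BC^*))$ through the forgetful functor.

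\textbf{Your route.} You split off $\BQ_{\BC^*}$ directly in $D^b(MHM(\BC^*))$ using the algebraic section $s$, so no $\Ext$ group has to be computed. You then identify the complement by the localization triangle and stalk vanishing. This only uses that the forgetful functor $\mathrm{rat}$ detects zero objects.

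\textbf{What each route buys.} Your route is more robust, because $\mathrm{rat}$ is not faithful on morphisms. For example, $\Hom_{D^b(MHS)}(\BQ(0),\BQ(1)[1])=\Ext^1_{MHS}(\BQ(0),\BQ(1))\neq0$, while the corresponding group for vector spaces is zero. So the paper's transfer step needs an extra input. One such input: $\Hom_{D^b(MHM(U))}(\BQ_U(-1),\BQ_U[2])$ is assembled from $\Ext^1_{MHS}(\BQ,H^1(U)(1))$ and $\Hom_{MHS}(\BQ,H^2(U)(1))$. Both vanish, because $H^1(U)(1)$ is pure of type $(0,0)$ and $H^2(U)=0$. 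The paper's route, in exchange, needs no section. However, sections such as $x_j=1$, $x_j'=\prod_i z_i^{a_{ij}}+1$ exist for every isolated cluster variety, so your trick is available wherever the paper needs it.

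\textbf{A bookkeeping point, shared with the paper.} The $t$-structure on $D^b(MHM(\BC^*))$ is the perverse one. So ``$G$ has cohomology only in degree $1$'', and treating $H^1(G)$ as a mixed Hodge module, should be read through $\mathrm{rat}$. The cleanest fix is to apply $j_!j^*G\to G\to i_*i^*G\to$ to $G$ itself. Then $i^*G=0$, since $\mathrm{rat}(i^*G)$ has zero stalks, and $j^*G\cong\BQ_U(-1)[-1]$ follows from the global trivialization over $U$.
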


\begin{proof}
We first claim that 
\[
R^kf_*\BQ_X=
\begin{cases}
\BQ_{\BC^*}& k=0,\\
j_!\BQ_{U}(-1) & k=1,\\
0& \textup{otherwise}.
\end{cases}
\]
In fact, $R^0f_*\BQ_X=\BQ_{\BC^*}$ follows from the connectedness of fibers. Since $f$ trivializes over $U$ as the projection $:U\times \BC^*\to U$, the sheaf $R^1f_*\BQ_X|_U$ is a trivial variation of Hodge structure whose fiber is $H^1(\BC^*,\BQ)=\BQ(-1)$. Hence we obtain an isomorphism
\[
R^1f_*\BQ_{X}|_U=\BQ_U(-1).
\]
as mixed Hodge modules. Furthermore, for any point $p\in Z$ and sufficiently small Euclidean ball $V$ containing $p$, the preimage $f^{-1}(V)$ is contractible, so $(R^kf_*\BQ_{X})_p=0$ for $k\ge1$. Therefore $R^1f_*\BQ_{X}=j_!\BQ_U(-1)$ and $R^kf_*\BQ_{X}=0$ for $k\ge 2$.  

The distinguished triangle of the standard truncation functors $\tau_{\le0}\to \textup{id}\to \tau_{\ge1}\to$ yields a distinguished triangle of mixed Hodge modules
\begin{equation} \label{1233}
\BQ_{\BC^*}\to Rf_*\BQ_{X}\to j_!\BQ_U(-1)[-1]\to.
\end{equation}
The extension class in $D^b_c(X)$
\begin{equation} \label{1234}
\begin{split}
&\Ext^1_{D^b_c(X)}(j_!\BQ_U[-1],\BQ_{\BC^*})=\Hom_{D^b_c(X)}(j_!\BQ_U,\BQ_{\BC^*}[2])\\
=&\Hom_{D^b_c(X)}(\BQ_U,j^!\BQ_{\BC^*}[2])
=\Hom_{D^b_c(X)}(\BQ_U,\BQ_U[2])\\
=&H^2(U,\BQ)=0
\end{split}
\end{equation}
Since the forgetful functor $D^b(MHM(X))\to D^b_c(X)$ is faithful, the equation (\ref{1234}) implies
\[
\Ext^1_{D^b(MHM(X))}(j_!\BQ_U[-1](-1),\BQ_{\BC^*})=0
\]
and hence the distinguished triangle (\ref{1233}) splits. Consequently, we conclude that 
\begin{equation} \label{01}
Rf_*\BQ_{X}=\BQ_{\BC^*}\oplus j_!\BQ_{U}[-1](-1)
\end{equation}
is an isomorphism in the derived category of mixed Hodge modules.
\end{proof}

\begin{cor}
In the notations above,  the mixed Hodge structure on $H^*(X_d,\BQ)$ is of mixed Hodge-Tate type, with non-zero graded pieces of the weight filtration listed in the table below. 
\begin{center}
\begin{tabular}{c|ccccc}
$X_d$ & $W_0$ &$\textup{Gr}_1^W$& $\textup{Gr}_2^W$&$\textup{Gr}_3^W$&$\textup{Gr}_4^W$ \\
\hline
$H^0$ & $1$ &  &     &  &  \\
$H^1$ &     &  & $1$ &  &  \\
$H^2$ &     &  & $d-1$ &  & $1$
\end{tabular}
\end{center}
\end{cor}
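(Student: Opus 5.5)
The plan is to take hypercohomology over $\BC^*$ of the splitting \eqref{2d0001} from Proposition \ref{2d}. This gives an isomorphism of mixed Hodge structures
\[
H^k(X_d,\BQ)=H^k(\BC^*,\BQ)\oplus H^{k-1}(\BC^*,j_!\BQ_U)(-1),
\]
so it suffices to compute the mixed Hodge structures on $H^*(\BC^*,\BQ)$ and on $H^*(\BC^*,j_!\BQ_U)$ separately.

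\textbf{The constant summand.} The standard structure on $H^*(\BC^*)$ is $H^0=\BQ(0)$, of weight $0$, and $H^1=\BQ(-1)$, of weight $2$. These give the entry $1$ in $W_0$ of $H^0$ and the entry $1$ in $\textup{Gr}^W_2$ of $H^1$.

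\textbf{The extension-by-zero summand.} I would use the triangle $j_!\BQ_U\to\BQ_{\BC^*}\to i_*\BQ_Z\to$ from Theorem \ref{MHM2}(2). Its long exact sequence is a sequence of mixed Hodge structures:
\[
0\to H^0(\BC^*,j_!\BQ_U)\to H^0(\BC^*)\to H^0(Z)\to H^1(\BC^*,j_!\BQ_U)\to H^1(\BC^*)\to 0,
\]
together with $H^2(\BC^*,j_!\BQ_U)=H^2(\BC^*)=0$.

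Here $H^0(Z)=\BQ(0)^d$ is pure of weight $0$, and the map $H^0(\BC^*)\to H^0(Z)$ is the diagonal, hence injective. So $H^0(\BC^*,j_!\BQ_U)=0$. Next, $H^1(\BC^*,j_!\BQ_U)=H^1_c$-type data is an extension of $\BQ(-1)$, of weight $2$, by $\BQ(0)^{d-1}$, of weight $0$. This gives weight graded pieces of dimension $d-1$ in weight $0$ and $1$ in weight $2$.

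\textbf{Assembling the table.} After the Tate twist $(-1)$ and the degree shift by one, the second summand contributes nothing to $H^1$. To $H^2$ it contributes $d-1$ classes of weight $2$ and one class of weight $4$. This is exactly the table.

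\textbf{Hodge–Tate type.} Every piece involved is a direct sum of Tate structures $\BQ(-k)$, and extensions preserve the Hodge–Tate property, so the structure on $H^*(X_d)$ is of mixed Hodge–Tate type.

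\textbf{Main obstacle.} The delicate point is making sure the hypercohomology of the summands really carries the mixed Hodge structures described above. This means identifying $H^*(\BC^*,j_!\BQ_U)$ with the relative cohomology $H^*(\BC^*,Z)$, using the standard compatibility of Saito's theory with the classical Deligne structures. The splitting in $D^b(MHM)$ from Proposition \ref{2d} is what allows the two summands to be treated independently.
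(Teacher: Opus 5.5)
Your proposal is correct and is essentially the paper's proof. Like the paper, you take the triangle $j_!\BQ_U\to\BQ_{\BC^*}\to i_*\BQ_Z\to$ and use injectivity of the restriction $H^0(\BC^*)\to H^0(Z)$ to see $H^1(\BC^*,j_!\BQ_U)$ as an extension of $\BQ(-1)$ by $\BQ(0)^{d-1}$, then read the table off the splitting in Proposition \ref{2d}; you also spell out the constant summand $\BQ_{\BC^*}$ and the shift/Tate-twist bookkeeping, which the paper leaves implicit.
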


\begin{proof}
The hypercohomology of the distinguished triangle
\[
j_!\BQ_U\to \BQ_{\BC^*}\to i_*\BQ_Z\to
\]
yields an exact sequence of mixed Hodge structures
\[
\begin{split}
0\to &H^0(\BC^*,j_!\BQ_U)\to H^0(\BC^*,\BQ)\xrightarrow{a} H^0(Z,\BQ)\\
\to& H^1(\BC^*,j_!\BQ_U)\to H^1(\BC^*,\BQ)\to 0.
\end{split}
\]
The map $a$ is injective because it is simply the restriction map. So we have $H^1(\BC^*,j_!\BQ_U)=\BQ^{d}$ with the weight filtration 
\[
W_0H^1(\BC^*,j_!\BQ_U)=W_1H^1(\BC^*,j_!\BQ_U)\cong \textup{coker}\,a\cong \BQ^{d-1}
\]
and $W_2=H^1$. Then apply Proposition \ref{2d}.
\end{proof}

Dually, we obtain the following results for compactly supported cohomology.
\begin{prop} \label{3.3}
In the notation above, we have
\[
Rf_!\BQ_{X}=\BQ_{\BC^*}[-2](-1)\oplus Rj_*\BQ_U[-1]
\]
and
\[
H^k_c(\BC^*,Rj_*\BQ_U)=\begin{cases}
\BQ^{d}, & k=1,\\
0, & \textup{otherwise.}
\end{cases}
\]
The mixed Hodge structure on $H^k_c(\BC^*,Rj_*\BQ_{U})$ is of mixed Hodge-Tate type, with the non-zero graded pieces of the weight filtration listed in the table below.
\begin{center}
\begin{tabular}{c|ccccc}
$Rj_*\BQ_U$ & $W_0$ &$\textup{Gr}_1^W$& $\textup{Gr}_2^W$ \\
\hline
$H^1_c$ & $1$ &  & $d-1$ 
\end{tabular}
\end{center}

Consequently, the mixed Hodge structure on $H^k_c(X_d,\BQ)$ is of mixed Hodge-Tate type, with the non-zero graded pieces of the weight filtration listed in the table below.
\begin{center}
\begin{tabular}{c|ccccc}
$X_d$ & $W_0$ &$\textup{Gr}_1^W$& $\textup{Gr}_2^W$&$\textup{Gr}_3^W$&$\textup{Gr}_4^W$ \\
\hline
$H^2_c$ & $1$ &     & $d-1$ &     &     \\
$H^3_c$ &     &     & $1$ &     &  \\
$H^4_c$ &     &     &     &     & $1$
\end{tabular}
\end{center}

\end{prop}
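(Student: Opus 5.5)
The plan is to dualize Proposition \ref{2d} using the duality statements collected in Theorems \ref{MHMdual} and \ref{MHM2}. Since $X=X_d$ is a smooth surface (for $d\neq 0$ the defining equation has nonvanishing gradient wherever $xx'=0=z^d+1$), its dualizing complex is $\BD_X\BQ_X=\BQ_X[4](2)$. Theorem \ref{MHM2}(1) gives
\[
Rf_!\BQ_X=Rf_!\BD_X\BQ_X[-4](-2)=\BD_{\BC^*}(Rf_*\BQ_X)[-4](-2).
\]
Next I would dualize the two summands of \eqref{2d0001}. For the first, $\BD\BQ_{\BC^*}=\BQ_{\BC^*}[2](1)$. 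For the second, since $j$ is an open immersion,
\[
\BD(j_!\BQ_U[-1](-1))=Rj_*\BD_U\BQ_U[1](1)=Rj_*\BQ_U[3](2).
\]
Applying the shift $[-4](-2)$ then yields $Rf_!\BQ_X=\BQ_{\BC^*}[-2](-1)\oplus Rj_*\BQ_U[-1]$, which is the first assertion.

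For $H^*_c(\BC^*,Rj_*\BQ_U)$ there are two routes. One is to note that $H^k_c(\BC^*,Rj_*\BQ_U)$ is dual to $H^{2-k}(\BC^*,j_!\BQ_U)(1)$ by Verdier duality on $\BC^*$. The other, which I find cleaner, uses the triangle
\[
j_!\BQ_U\to Rj_*\BQ_U\to i_*i^*Rj_*\BQ_U\to .
\]
Here $H^*_c(\BC^*,j_!\BQ_U)=H^*_c(U)$, and $U$ is $\BC$ minus $d+1$ points, so:
\begin{itemize}
\item $H^1_c(U)=\BQ^{d}$ of weight $0$;
\item $H^2_c(U)=\BQ(-1)$.
\end{itemize}
The stalks of $Rj_*\BQ_U$ at the points of $Z$ are $H^*$ of a punctured disk, namely $\BQ$ in degree $0$ (weight $0$) and $\BQ(-1)$ in degree $1$ (weight $2$).

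The proof of the preceding corollary gives $H^1(\BC^*,j_!\BQ_U)=\BQ^d$ with $W_0$ of dimension $d-1$ and $\textup{Gr}^W_2$ of dimension $1$, and $H^0=H^2=0$. Dualizing with the twist $(1)$ sends weight $w$ to weight $2-w$. Hence $H^1_c(\BC^*,Rj_*\BQ_U)=\BQ^d$ with $\dim W_0=1$ and $\dim\textup{Gr}^W_2=d-1$, and all other degrees vanish, matching the table. I would use the long exact sequence of the triangle above only as a consistency check on dimensions, via $\chi=0$. The Hodge–Tate property is preserved under duality and Tate twists.

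Finally, taking $H^*_c(\BC^*,-)$ of the decomposition gives
\[
H^k_c(X_d)=H^{k-2}_c(\BC^*)(-1)\oplus H^{k-1}_c(\BC^*,Rj_*\BQ_U).
\]
Recall $H^1_c(\BC^*)=\BQ$ of weight $0$ and $H^2_c(\BC^*)=\BQ(-1)$. The twisted first summand therefore contributes weight $2$ in $H^3_c$ and weight $4$ in $H^4_c$, while the second contributes $W_0=1$ and $\textup{Gr}^W_2=d-1$ in $H^2_c$. This recovers the table, and it is consistent with Poincaré duality (Theorem \ref{MHMdual}(2)) against the previous corollary, which I would note as a cross-check.

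The only delicate step is bookkeeping of shifts and twists in the duality of $j_!$ versus $Rj_*$. Everything else is formal.
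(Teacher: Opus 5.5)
Your proposal is correct and is essentially the paper's proof: apply $\BD_{\BC^*}$ to the splitting \eqref{2d0001}, using $\BD_{\BC^*}Rf_*=Rf_!\BD_X$ and $\BD(j_!\BQ_U)=Rj_*\BQ_U[2](1)$. Then obtain $H^*_c(\BC^*,Rj_*\BQ_U)$ as the dual of $H^{2-*}(\BC^*,j_!\BQ_U)(1)$ from the preceding corollary, and read off $H^*_c(X_d)$.

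One slip, in your side consistency check only: $U$ is $\BC$ minus $d+1$ points, so $H^1_c(U)=\BQ^{d+1}$ (of weight $0$), not $\BQ^{d}$. With that correction, the triangle $j_!\BQ_U\to Rj_*\BQ_U\to i_*i^*Rj_*\BQ_U\to$ gives $\chi_c=-d$ and reproduces $\dim W_0=1$, $\dim\textup{Gr}^W_2=d-1$. With your number as written, the check would have failed.
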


\begin{proof}
Applying the duality functor $\BD_{\BC^*}$ to the isomorphism \eqref{2d0001}, we obtain
\[
\BD_{\BC^*}Rf_*\BQ_X=\BD_{\BC^*}\BQ_{\BC^*}\oplus \BD_{\BC^*}(j_!\BQ_U[-1](-1)).
\]
By Lemma \ref{MHMdual}, the left side equals $Rf_!\BD_{X}\BQ_X=Rf_!\BQ_X[4](2)$. Similarly, the right side becomes $\BQ_{\BC^*}[2](1)\oplus Rj_*\BQ_U[3](2)$. Therefore, we have
\begin{equation} \label{2d0002}
Rf_!\BQ_{X}=\BQ_{\BC^*}[-2](-1)\oplus Rj_*\BQ_U[-1].
\end{equation}
By Theorem \ref{MHMdual}.(2) and Theorem \ref{MHM2}.(1), we have 
\begin{equation}\label{2d0003}
\begin{split}
\BH^*_c(\BC^*,Rj_*\BQ_U)=&\BH^*(\BC^*,j_!\BQ_U[2](1))^\vee\\
=&\left((\BQ^{\oplus d-1}[-1]\oplus\BQ[-1](-1))[2](1)\right)^\vee\\
=&\BQ^{\oplus d-1}[-1](-1)\oplus \BQ[-1].
\end{split}
\end{equation}
Therefore, the the weight filtration on $H^*_c(X_a,\BQ)$ follows from \eqref{2d0002} and \eqref{2d0003}
\[
H_c^*(X_a,\BQ)=\BQ[-2](-1)\oplus\BQ[-3](-2)\oplus \BQ^{\oplus d-1}[-2](-1)\oplus \BQ[-2].
\]
\end{proof}

\subsection{Geometry of $X_{a,b}$}
In this section, we study of the geometry 3-dimensional isolated cluster varieties of non-full rank, \emph{i.e.}
\[
X_{a,b}=\{(x,x',y,y',z)\in \BC^5\mid xx'=z^a+1,\, yy'=z^b+1,\,z\neq 0\}.
\]
We begin by analyzing the singularities of $X_{a,b}$. Let 
\[
\begin{cases}
g=xx'-z^a-1\\
h=yy'-z^b-1.
\end{cases}
\]
Then the Jacobian
\[
\frac{\partial(g,h)}{\partial(x,x',y,y',z)}=\begin{bmatrix}
x'&x&0 &0 &-az^{a-1} \\
0&0&y'&y & -bz^{b-1}
\end{bmatrix}.
\]
Since the Jacobian has only two rows, it is not of full rank if and only if the columns are all proportional to each other. Since $z,a,b\neq0$, the last column is a vector with nonzero entries. The other columns contain zero entries and are proportional to the last vector only if $x=x'=y=y'=0$. Therefore, the singularities occur exactly at 
\begin{equation} \label{singularity}
\{(x,x',y,y',z)\mid x=x'=y=y'=0,z^a=z^b=-1\}
\end{equation}

\begin{lem} \label{exp}
If $ord_2(a)=ord_2(b)$, then the cluster variety $X_{a,b}$ has isolated singularities, all of which are of type $A_1$, i.e. analytically locally isomorphic to the cone vertex over $\BP^1\times \BP^1$.
\end{lem}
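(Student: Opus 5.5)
From the Jacobian computation \eqref{singularity}, the singular locus of $X_{a,b}$ is the finite set of points $(0,0,0,0,\xi)$ with $\xi^a=\xi^b=-1$. By Lemma \ref{int}, this set is $Z_a\cap Z_b=Z_{(a,b)}$ when $ord_2(a)=ord_2(b)$, so it consists of exactly $(a,b)$ points. In particular the singularities are isolated, and what remains is to identify the analytic germ at each such point $p=(0,0,0,0,\xi)$.

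\textbf{Step 1: local coordinate.} Near $\xi$, I would use $t=z-\xi$ as a holomorphic coordinate. Since $\xi^a=-1$, we have
\[
z^a+1=a\xi^{a-1}t+O(t^2),
\]
and $a\xi^{a-1}\neq0$ because $a\neq 0$ and $\xi\neq0$. Hence $u:=z^a+1$ is itself a local holomorphic coordinate at $\xi$. In the same way $z^b+1=b\xi^{b-1}t+O(t^2)$. Expressing this in $u$ gives
\[
z^b+1=\lambda(u)\,u
\]
for a holomorphic unit $\lambda$ with $\lambda(0)=b\xi^{b-1}/(a\xi^{a-1})\neq0$.

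\textbf{Step 2: reduce to a quadric.} Near $p$ the equations become $xx'=u$ and $yy'=\lambda(u)u$. Using the first equation to eliminate $u$ shows that the germ of $X_{a,b}$ at $p$ is isomorphic to the germ at $0\in\BC^4$ of the hypersurface
\[
yy'=\lambda(xx')\,xx'.
\]
Next I would absorb the unit by the coordinate change $y\mapsto \lambda(xx')^{-1}y$. This is holomorphic and invertible near $0$, since $\lambda(0)\neq0$. After this change the germ is $\{xx'=yy'\}\subset\BC^4$.

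\textbf{Step 3: identify the germ.} The hypersurface $\{xx'-yy'=0\}$ is defined by a nondegenerate quadratic form in four variables, so it is the $3$-dimensional $A_1$ (ordinary double point) singularity. Its projectivized tangent cone is the smooth quadric surface in $\BP^3$, which is $\BP^1\times\BP^1$ via the Segre embedding $xx'=yy'$. Therefore the germ is the cone over $\BP^1\times\BP^1$, which is the claim.

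The argument is short, and the only points that need care are the following. First, $z^a+1$ must be a valid local coordinate, which requires $a\neq0$ and the simplicity of the roots of $z^a=-1$; this also covers negative $a$ or $b$. Second, the coordinate change absorbing $\lambda$ must be a genuine local biholomorphism. I expect no real obstacle beyond these checks.
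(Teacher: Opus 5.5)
Your proof is correct and follows the same route as the paper. You localize at a point $(0,0,0,0,\xi)$ with $\xi\in Z_{(a,b)}$, use that $z^a+1$ and $z^b+1$ both vanish to first order at $\xi$, and reduce the germ to the nondegenerate quadric cone in $\BC^4$ (the paper writes it as $bxx'=ayy'$), whose projectivization is $\BP^1\times\BP^1$; your explicit elimination of $u=z^a+1$ and absorption of the unit $\lambda$ via $y\mapsto\lambda(xx')^{-1}y$ simply fill in the step the paper states without detail.
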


\begin{proof}
From \eqref{singularity} and Lemma \ref{int}, the singularities if $X_{a,b}$ are isolated. 
Let $P_0=(0,0,0,0,z_0)$  be a singularity, and set $t=z-z_0$. Locally near $P_0$, the defining equations become
\[
\begin{cases}
xx'=(t+z_0)^a+1,\\
yy'=(t+z_0)^b+1.
\end{cases}
\]
Expanding and using  $z_0^a=z_0^b=-1$, we obtain
\[
\begin{cases}
xx'=at\cdot u(t),\\
yy'=bt\cdot v(t),
\end{cases}
\]
where $u(t)$ and $v(t)$ are polynomials in $t$ satisfying $u(0)=v(0)=1$. In particular, $u(t)$ and $v(t)$ are invertible in $\BC[[t]]$. Hence the singularity $P_0$ is locally analytically isomorphic to the origin in the hypersurface $bxx'=ayy'$ in $\BC^4$. Since the local equation is homogeneous, the origin is the cone vertex over the hyperplane $bxx'=ayy'$ in $\BP^3$, which is isomorphic to $\BP^1\times\BP^1$. Thus the singularity is of type $A_1$.  
\end{proof}

Combining Lemma \ref{int} and \ref{exp}, we have the following.

\begin{prop} \label{sing}
Let $X_{a,b}$ be the cluster variety, and  $ord_2$ be the 2-adic valuation of integers.
\begin{enumerate}
\item If $ord_2(a)\neq ord_2(b)$, then $X_{a,b}$ is smooth.
\item If $ord_2(a)= ord_2(b)$, then  $X_{a,b}$ has isolated singularities at
\[
Sing(X_{a,b})=\{(0,0,0,0,z)\mid z^{\gcd(a,b)}=-1\}.
\]
Moreover, all singularities are of  type $A_1$.
\end{enumerate}
\end{prop}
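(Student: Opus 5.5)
The statement is a direct combination of the Jacobian computation \eqref{singularity}, Lemma \ref{int}, and Lemma \ref{exp}, so the plan is to assemble these in order. First I would record that $X_{a,b}$ is a complete intersection of two hypersurfaces in $\BC^*\times\BC^4$, so it has pure dimension $3$. Its singular locus is then exactly where the $2\times5$ Jacobian of $(g,h)$ fails to have rank $2$. By the computation preceding Lemma \ref{exp}, this forces $x=x'=y=y'=0$. Imposing the defining equations at such a point gives $z^a+1=0$ and $z^b+1=0$. Hence
\[
Sing(X_{a,b})=\{(0,0,0,0,z)\mid z\in Z_a\cap Z_b\}.
\]
I should double-check that the rank drop really needs all four of $x,x',y,y'$ to vanish, and not merely proportionality of columns. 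If, say, $x'\neq0$, then the first column $(x',0)^T$ together with the last column $(-az^{a-1},-bz^{b-1})^T$, whose entries are both nonzero, already spans $\BC^2$. The other cases are symmetric.

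Second, I would handle signs. When $a$ or $b$ is negative, replacing $a$ by $|a|$ does not change $Z_a$, since $z^a=-1$ if and only if $z^{-a}=-1$. The $2$-adic valuation and the gcd are also unchanged. So Lemma \ref{int} applies verbatim with positive integers. It gives $Z_a\cap Z_b=\varnothing$ when $ord_2(a)\neq ord_2(b)$, which proves (1). It gives $Z_a\cap Z_b=Z_{(a,b)}$ otherwise, which yields the displayed singular set in (2).

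Third, the type $A_1$ claim in (2) is exactly Lemma \ref{exp}. The local expansion there uses $z_0^a=z_0^b=-1$ to write $(t+z_0)^a+1=a z_0^{a-1}t\cdot u(t)$ with $u(0)=1$; the unit factors $az_0^{a-1}$ and $bz_0^{b-1}$ can be absorbed by rescaling $x$ and $y$. After absorbing $u$ and $v$ into $x$ and $y$ by analytic coordinate changes, we get $xx'=t=\lambda yy'$. Eliminating $t$ leaves the nondegenerate quadric $xx'=\lambda yy'$ in $\BC^4$, which is the $A_1$ threefold singularity, the cone over a smooth quadric $\BP^1\times\BP^1$.

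The only mildly delicate point is this local normal form: I must confirm that $t$ can be eliminated, so that the germ is a hypersurface in $\BC^4$ with a nondegenerate quadratic equation, and no higher-order terms spoil nondegeneracy. Since $t$ is expressed analytically in terms of $x$ and $x'$ after the unit absorption, this step goes through without difficulty.
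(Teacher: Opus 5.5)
Your proposal is correct and follows the paper's own argument: the Jacobian computation \eqref{singularity}, Lemma~\ref{int} for $Z_a\cap Z_b$, and the local normal form of Lemma~\ref{exp}. Your extra care tightens points the paper leaves implicit: reducing to $|a|,|b|$ so that Lemma~\ref{int}, stated for positive integers, applies; checking that the rank drop forces $x=x'=y=y'=0$; and tracking the leading coefficient $az_0^{a-1}$ in the expansion.
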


\subsection{Mixed Hodge structure on $H^*_c(X_{a,b})$}
By construction, the cluster variety $X_{a,b}$ fits in the fibered product 
\[
\begin{tikzcd}
X_{a,b}\arrow[r]\arrow[d] & X_a\arrow[d,"f_a"]\\
X_b\arrow[r,"f_b"] & \BC^*
\end{tikzcd}
\]
where $f_a$ and $f_b$ are defined in (\ref{f}). By applying the Theorem \ref{MHM2}.(3) to $f_a$ and $f_b$ over the base $\BC^*$ and Proposition \ref{3.3}, we obtain
\begin{equation}\label{ab}
\begin{split}
Rf_!\BQ_{X_{a,b}}=&Rf_{a!}\BQ_{X_a}\boxtimes_{\BC^*}Rf_{b!}\BQ_{X_b}=Rf_{a!}\BQ_{X_a}\otimes Rf_{b!}\BQ_{X_b}\\
=&\BQ_{\BC^*}[-4](-2)\oplus Rj_{a*}\BQ_{U_a}[-3](-1)\oplus Rj_{b*}\BQ_{U_b}[-3](-1)\\
&\oplus Rj_{a*}\BQ_{U_a}\otimes Rj_{b*}\BQ_{U_b}[-2].
\end{split}
\end{equation}

The following proposition computes the mixed Hodge structure of the last term of \eqref{ab}. 

\begin{prop} \label{fiberedprod}
The mixed Hodge structure on $\BH^*_c(\BC^*,Rj_{a*}\BQ_{U_a}\otimes Rj_{b*}\BQ_{U_b})$ is of mixed Hodge-Tate type. If $ord_2(a)\neq ord_2(b)$, the non-zero graded pieces of the weight filtration are listed below.
\begin{center}
\begin{tabular}{c|ccc}
 & $W_0$ &$\textup{Gr}_1^W$& $\textup{Gr}_2^W$ \\
\hline
$H^1_c$ & $1$ & & $a+b-1$ 
\end{tabular}
\end{center}
If $ord_2(a)=ord_2(b)$, the non-zero graded pieces of the weight filtration are listed below.
\begin{center}
\begin{tabular}{c|ccccc}
 & $W_0$ &$\textup{Gr}_1^W$& $\textup{Gr}_2^W$&$\textup{Gr}_3^W$&$\textup{Gr}_4^W$ \\
\hline
$H^1_c$ & $1$ &  & $a+b-1$ &  &  \\
$H^2_c$ &     &  &  &  & $(a,b)$ 
\end{tabular}
\end{center}
\end{prop}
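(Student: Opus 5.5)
We compute the tensor product $K:=Rj_{a*}\BQ_{U_a}\otimes Rj_{b*}\BQ_{U_b}$ stalkwise, place it in a localization triangle, and read off weights from the resulting long exact sequence.

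\emph{Step 1: the complex $K$.} Write $Z=Z_a\cup Z_b$, $U=\BC^*\setminus Z$, and $j:U\to\BC^*$. On $U$ we have $K|_U=\BQ_U$. Let $p\in Z$. If $p\in Z_a\setminus Z_b$, the stalk $(Rj_{a*}\BQ_{U_a})_p$ is $\BQ\oplus\BQ(-1)[-1]$ (the cohomology of a punctured disc), while $Rj_{b*}\BQ_{U_b}$ has stalk $\BQ$. So $K_p=\BQ\oplus\BQ(-1)[-1]$, and the same holds for $p\in Z_b\setminus Z_a$. If $p\in Z_a\cap Z_b$, which is nonempty only when $ord_2(a)=ord_2(b)$ and then equals $Z_{(a,b)}$ by Lemma \ref{int}, the stalk is the tensor square
\[
(\BQ\oplus\BQ(-1)[-1])^{\otimes2}=\BQ\oplus\BQ(-1)^{2}[-1]\oplus\BQ(-2)[-2].
\]
There is a natural map $Rj_*\BQ_U\to K$, built from adjunction, that is an isomorphism away from $Z_a\cap Z_b$. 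Its cone $C$ is therefore a skyscraper on $Z_a\cap Z_b$. At each point of $Z_a\cap Z_b$ the cone kills one copy of $\BQ(-1)[-1]$ (the two local monodromy classes become identified in $Rj_*$) and leaves $\BQ(-2)[-2]$. So
\[
C\cong \bigoplus_{Z_{(a,b)}} \BQ(-2)[-2],\qquad \text{and } C=0 \text{ if } ord_2(a)\neq ord_2(b).
\]
I would check the identification of this cone by comparing stalks of the weight-graded pieces. This is the step I expect to be the main obstacle: one must confirm that the map on $\mathcal H^1$ stalks is the diagonal-sum map, which kills exactly one $\BQ(-1)$, and that it respects the mixed Hodge module structure.

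\emph{Step 2: apply $\BH_c$.} Proposition \ref{3.3}, applied with $d=|Z|$ points, still applies to $Rj_*\BQ_U$: the argument only used the number of removed points. It gives $H^1_c(\BC^*,Rj_*\BQ_U)$ with $W_0=\BQ$ and $\textup{Gr}^W_2=\BQ^{|Z|-1}$, and no other nonzero cohomology. The cardinality is $|Z|=a+b$ when $ord_2(a)\neq ord_2(b)$ and $a+b-(a,b)$ otherwise. The triangle $Rj_*\BQ_U\to K\to C\to$ gives the long exact sequence
\[
0\to H^1_c(Rj_*\BQ_U)\to H^1_c(K)\to H^1_c(C)\to H^2_c(Rj_*\BQ_U)\to H^2_c(K)\to H^2_c(C)\to0 .
\]
Here $H^1_c(C)=0$ and $H^2_c(C)=\BQ(-2)^{(a,b)}$, which has weight $4$.

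\emph{Step 3: read off the weights.} In the case $ord_2(a)\neq ord_2(b)$ we have $C=0$, so $H^1_c(K)$ has $W_0=1$ and $\textup{Gr}^W_2=a+b-1$, matching the first table.

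In the case $ord_2(a)=ord_2(b)$, the sequence gives $H^1_c(K)=H^1_c(Rj_*\BQ_U)$ and $H^2_c(K)=\BQ(-2)^{(a,b)}$. But the table requires $\textup{Gr}^W_2 H^1_c=a+b-1$ rather than $a+b-(a,b)-1$. So my cone must be wrong: at each point of $Z_a\cap Z_b$, the cone should contribute both a class of weight $2$ to $H^1_c$ and a class of weight $4$ to $H^2_c$. This means the correct cone is
\[
C=\bigoplus_{Z_{(a,b)}}\bigl(\BQ(-1)[-1]\oplus\BQ(-2)[-2]\bigr),
\]
so that $Rj_*\BQ_U$ contributes only one $\BQ(-1)$ per point. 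With this cone, $\textup{Gr}^W_2 H^1_c$ has dimension $(a+b-(a,b)-1)+(a,b)=a+b-1$, and the extension is split by weights.

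Every class that appears is of Tate type, so $H^*_c(K)$ is of mixed Hodge--Tate type. The remaining check is that the connecting maps vanish for weight reasons, and they do: $H^1_c(C)$ is pure of weight $2$ while $H^2_c(Rj_*\BQ_U)=0$.
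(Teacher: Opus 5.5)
\textbf{Verdict: there is a genuine gap.} Your final tables are correct, but the proposal does not prove them. In Step~3 you find that your Step~1 cone gives $\dim\textup{Gr}^W_2H^1_c=a+b-(a,b)-1$. You then replace the cone by $\bigoplus_{Z_{(a,b)}}(\BQ(-1)[-1]\oplus\BQ(-2)[-2])$ because that is what the table requires. Choosing an intermediate object so that the conclusion comes out is circular: nothing in the proposal establishes either the map $Rj_*\BQ_U\to K$ or its cone.

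Two things actually go wrong in Step~1.
\begin{itemize}
\item \emph{Direction of the map.} Adjunction produces the unit $K\to Rj_*j^*K=Rj_*\BQ_U$, not a map $Rj_*\BQ_U\to K$. A map in your direction that is an isomorphism over $U$ amounts to a splitting of the triangle below, which you would have to prove.
\item \emph{Stalk count.} At $p\in Z_a\cap Z_b$, $\mathcal H^1$ of the stalk of $K$ is $\BQ(-1)^{2}$, while that of $Rj_*\BQ_U$ is $\BQ(-1)$. A map injective on $\mathcal H^1$ therefore leaves a one-dimensional cokernel. So the cone is $\BQ(-1)[-1]\oplus\BQ(-2)[-2]$ at each such point, not $\BQ(-2)[-2]$.
\end{itemize}
The cone you eventually wrote down is the right one, but it has to be derived, not read off from the answer.

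\textbf{The repair.} Use the natural map $K\to Rj_*\BQ_U$.
\begin{itemize}
\item Off $Z_a\cap Z_b$ it is an isomorphism: near a point of $Z_a\setminus Z_b$ both sides are locally $Rj_{a*}\BQ_{U_a}$, and symmetrically near $Z_b\setminus Z_a$.
\item At $p\in Z_a\cap Z_b$ the stalk map is the cup product $H^*(D^*)\otimes H^*(D^*)\to H^*(D^*)$ of a punctured disc. This is surjective in every degree.
\item Hence its fiber $C'$ is a skyscraper on $Z_{(a,b)}$. Here $\mathcal H^1(C'_p)=\BQ(-1)$, being the kernel of a surjection $\BQ(-1)^2\to\BQ(-1)$ of Hodge structures, and $\mathcal H^2(C'_p)=\BQ(-2)$.
\end{itemize}
The triangle $C'\to K\to Rj_*\BQ_U\to$ then gives
\[
0\to\BQ^{(a,b)}(-1)\to H^1_c(K)\to H^1_c(\BC^*,Rj_*\BQ_U)\to\BQ^{(a,b)}(-2)\to H^2_c(K)\to 0 .
\]
The connecting map vanishes because it goes from weights $0,2$ to weight $4$. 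With $|Z_a\cup Z_b|=a+b-(a,b)$, this yields $W_0=1$, $\textup{Gr}^W_2=a+b-1$ and $H^2_c=\BQ^{(a,b)}(-2)$. When $ord_2(a)\neq ord_2(b)$ you have $C'=0$, and your count is fine.

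\textbf{Comparison with the paper.} The repaired argument is a legitimate alternative to the paper's proof. The paper tensors $i_{a*}\BQ_{Z_a}[-2](-1)\to\BQ_{\BC^*}\to Rj_{a*}\BQ_{U_a}\to$ with $Rj_{b*}\BQ_{U_b}$, and splits $i_a^*Rj_{b*}\BQ_{U_b}\cong\BQ_{Z_a}\oplus\BQ_{Z_a\cap Z_b}[-1](-1)$ by a dimension argument. Its correction term is supported on all of $Z_a$, and it needs only Proposition~\ref{3.3} for $Z_b$, so no stalk map has to be identified. Your version is symmetric in $a,b$ and isolates the contribution of the intersection points, at the cost of the cup-product identification.
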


\begin{proof}
Theorem \ref{MHM2}.(2) produces a distinguished triangle
\[
i_{a*}i^!_a\BQ_{\BC^*}\to\BQ_{\BC^*}\to Rj_{a*}\BQ_{U_a}\to .
\]
Since $i_a$ is a regular closed embedding of isolated points, we have $i^!_a\BQ_{\BC^*}=\BQ_{Z_a}[-2](-1)$, and hence 
\[
i_{a*}\BQ_{Z_a}[-2](-1)\to\BQ_{\BC^*}\to Rj_{a*}\BQ_{U_a}\to .
\]
Tensoring with $Rj_{b*}\BQ_{U_b}$, we obtain
\begin{equation} \label{3.700}
i_{a*}\BQ_{Z_a}\otimes Rj_{b*}\BQ_{U_b}[-2](-1)\to Rj_{b*}\BQ_{U_b}\to Rj_{a*}\BQ_{U_a}\otimes Rj_{b*}\BQ_{U_b} \to .
\end{equation}
By applying Theorem \ref{MHMdual}.(4), we have 
\[
i_{a*}\BQ_{Z_a}\otimes Rj_{b*}\BQ_{U_b}[-2](-1)=i_{a*}i^{*}_aRj_{b*}\BQ_{U_b}[-2](-1)
\]
To calculate the right side, we consider the distinguished triangle
\begin{equation}\label{3.701}
   \BQ_{Z_a}\to i^*_aRj_{b*}\BQ_{U_b}\to \BQ_{Z_a\cap Z_b}[-1](-1)\to,
\end{equation}
obtained by applying the exact functor $i^*_a$ to 
\[
\BQ_{\BC^*}\to Rj_{b*}\BQ_{U_b}\to \BQ_{Z_b}[-1](-1)\to.
\]

Since extension class of \eqref{3.701} vanishes by dimension reason, we conclude that 
   \[
   i^*_aRj_*\BQ_{U_b}=\BQ_{Z_a}\oplus\BQ_{Z_a\cap Z_b}[-1](-1),
   \]
and hence the distinguished triangle \eqref{3.700} becomes
\[
\BQ_{Z_a}[-2](-1)\oplus \BQ_{Z_a\cap Z_b}[-3](-2)\to Rj_{b*}\BQ_{U_b}\to Rj_{a*}\BQ_{U_a}\otimes Rj_{b*}\BQ_{U_b} \to.
\]
Now the long exact sequence of compactly supported cohomology, together with Corollary \ref{3.3}, implies 

\[
\begin{array}{ccccccc}
0 & \longrightarrow & 0 & \longrightarrow & 0 & \longrightarrow & \mathrm{H}^0_c(Rj_{a*}\mathbb{Q}_{U_a} \otimes Rj_{b*}\mathbb{Q}_{U_b}) \\[4pt]
  & \longrightarrow & 0 & \longrightarrow & \mathbb{Q} \oplus \mathbb{Q}^{\oplus b-1}(-1) & \longrightarrow & \mathrm{H}^1_c(Rj_{a*}\mathbb{Q}_{U_a} \otimes Rj_{b*}\mathbb{Q}_{U_b}) \\[4pt]
  & \longrightarrow & \mathbb{Q}^{\oplus a}(-1) & \longrightarrow & 0 & \longrightarrow & \mathrm{H}^2_c(Rj_{a*}\mathbb{Q}_{U_a} \otimes Rj_{b*}\mathbb{Q}_{U_b}) \\[4pt]
  & \longrightarrow & H^0(Z_a\cap Z_b)(-2) & \longrightarrow & 0 & &
\end{array}
\]
Therefore, we have
\[
\BH^k_c(\BC^*,Rj_{a*}\BQ_{U_a}\otimes Rj_{b*}\BQ_{U_b})=\begin{cases}
\BQ\oplus\BQ^{\oplus a+b-1}(-1)& k=1\\
\BQ^{(a,b)}(-2)& k=2\text{ and } ord_2(a)=ord_2(b)\\
0& \text{otherwise,}
\end{cases}
\]
where the second case uses Lemma \ref{int}. The dimensions of the graded pieces are as desired.
\end{proof}

The main theorem of this section is the complete description of $H_c^*(X_{a,b})$.
\begin{thm} \label{cpt}
The mixed Hodge structure on $H^*_c(X_{a,b},\BQ)$ is of mixed Hodge-Tate type. When $ord_2(a)\neq ord_2(b)$, the cluster variety $X_{a,b}$ is smooth, and the non-zero graded pieces of the weight filtration are listed below.
\begin{center}
\begin{tabular}{c|ccccccc}
$X_{a,b}$ & $W_0$ &$\textup{Gr}_1^W$& $\textup{Gr}_2^W$&$\textup{Gr}_3^W$&$\textup{Gr}_4^W$&$\textup{Gr}_5^W$&$\textup{Gr}_6^W$ \\
\hline
$H^3_c$ & $1$ &  & $a+b-1$ &  &  &  & \\
$H^4_c$ &     &  & $2$ &  & $a+b-2$ &  & \\
$H^5_c$ &     &  &   &  & $1$ &  & \\
$H^6_c$ &     &  &  &  &  &  & $1$\\
\end{tabular}
\end{center}

When $ord_2(a)= ord_2(b)$, the cluster variety $X_{a,b}$ is singular, and the non-zero graded pieces of the weight filtration are listed below.
\begin{center}
\begin{tabular}{c|ccccccc}
$X_{a,b}$ & $W_0$ &$\textup{Gr}_1^W$& $\textup{Gr}_2^W$&$\textup{Gr}_3^W$&$\textup{Gr}_4^W$&$\textup{Gr}_5^W$&$\textup{Gr}_6^W$ \\
\hline
$H^3_c$ & $1$ &  & $a+b-1$ &   &   &   &  \\
$H^4_c$ &  &  & $2$ &   & $a+b+(a,b)-2$ &   &  \\
$H^5_c$ &  &  &  &  & $1$ &   &  \\
$H^6_c$ &  &  &  &  &   &   & $1$\\
\end{tabular}
\end{center}
\end{thm}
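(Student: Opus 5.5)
The plan is to take compactly supported hypercohomology $\BH^*_c(\BC^*,-)$ of the decomposition \eqref{ab} of $Rf_!\BQ_{X_{a,b}}$, where $f=f_{a,b}:X_{a,b}\to\BC^*$. Since $H^k_c(X_{a,b},\BQ)=\BH^k_c(\BC^*,Rf_!\BQ_{X_{a,b}})$ as mixed Hodge structures (Theorem \ref{MHM2}.(1)), and \eqref{ab} is a direct sum in $D^b(MHM(\BC^*))$, the mixed Hodge structure on $H^*_c(X_{a,b})$ is the direct sum of the contributions of the four summands. I would therefore compute each summand separately, keeping track of shifts and Tate twists, and then assemble the table.

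The four contributions are as follows.
\begin{enumerate}
\item For $\BQ_{\BC^*}[-4](-2)$, we have $H^*_c(\BC^*)=\BQ(0)$ in degree $1$ and $\BQ(-1)$ in degree $2$. This gives weight $4$ in $H^5_c$ and weight $6$ in $H^6_c$, one dimension each.
\item For $Rj_{a*}\BQ_{U_a}[-3](-1)$, Proposition \ref{3.3} gives $H^1_c(Rj_{a*}\BQ_{U_a})$ with weight $0$ of dimension $1$ and weight $2$ of dimension $a-1$. After the shift and twist this lands in $H^4_c$, with weight $2$ of dimension $1$ and weight $4$ of dimension $a-1$.
\item The summand $Rj_{b*}\BQ_{U_b}[-3](-1)$ is handled the same way: it contributes to $H^4_c$ one dimension in weight $2$ and $b-1$ dimensions in weight $4$.
\item For $Rj_{a*}\BQ_{U_a}\otimes Rj_{b*}\BQ_{U_b}[-2]$, Proposition \ref{fiberedprod} gives weight $0$ (dimension $1$) and weight $2$ (dimension $a+b-1$) in $H^3_c$. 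In the singular case it also gives $(a,b)$ dimensions of weight $4$ in $H^4_c$.
\end{enumerate}

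Summing, $H^3_c$ has $W_0=1$ and $\textup{Gr}_2^W=a+b-1$. $H^4_c$ has $\textup{Gr}_2^W=2$ and $\textup{Gr}_4^W=a+b-2$, plus $(a,b)$ more in weight $4$ when $ord_2(a)=ord_2(b)$. $H^5_c$ and $H^6_c$ are one-dimensional, of weights $4$ and $6$. All pieces are sums of Tate structures $\BQ(-k)$, so the result is of mixed Hodge--Tate type. Smoothness, respectively singularity, in the two cases is Proposition \ref{sing}.

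I expect the main obstacle to be justifying \eqref{ab} itself. The external product over $\BC^*$ must be identified with the derived tensor product on $\BC^*$, i.e. the restriction of $\boxtimes$ to the diagonal. The Künneth isomorphism of Theorem \ref{MHM2}.(3) must also be applied with the base $S=\BC^*$, with $Y_1\times_S Y_2=\BC^*$ and $X_1\times_SX_2=X_{a,b}$, and with $\BQ_{X_a}\boxtimes_S\BQ_{X_b}=\BQ_{X_{a,b}}$. Finally, the tensor products of the summands from Proposition \ref{3.3} must be expanded correctly with their shifts and twists, for instance $\BQ[-2](-1)\otimes Rj_{b*}\BQ_{U_b}[-1]$. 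Once these identifications are accepted, the rest is bookkeeping.
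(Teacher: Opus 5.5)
Your proposal is correct and is the paper's argument: the paper's proof just cites \eqref{ab}, Proposition \ref{3.3} and Proposition \ref{fiberedprod}, and your summand-by-summand tracking of shifts and Tate twists reproduces both tables. One minor point: groups such as $H^1_c(\BC^*,Rj_{a*}\BQ_{U_a})$ are a priori extensions of Tate structures rather than direct sums, but this still gives mixed Hodge--Tate type, since only the graded pieces matter.
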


\begin{proof}
   Follows from \eqref{ab}, Proposition \ref{3.3} and Proposition \ref{fiberedprod}.
\end{proof}

\subsection{Mixed Hodge structures on $IH^*(X_{a,b})$ and $H^*(X_{a,b})$}
In this section, we compute the mixed Hodge structures on $IH^*(X_{a,b})$ and $H^*(X_{a,b})$ for the cluster variety $X_{a,b}$. By Proposition \ref{sing}, the smoothness of $X_{a,b}$ is determined by whether $ord_2(a)\neq ord_2(b)$. We first treat the smooth case. 

\begin{prop} \label{sm}
Suppose $ord_2(a)\neq ord_2(b)$. Then $X_{a,b}$ is smooth and the mixed Hodge structure on $H^*(X)=IH^*(X)$ is of mixed Hodge-Tate type. The non-zero graded pieces of the weight filtration are listed below.
\begin{center}
\begin{tabular}{c|ccccccc}
$X_{a,b}$ & $W_0$ &$\textup{Gr}_1^W$& $\textup{Gr}_2^W$&$\textup{Gr}_3^W$&$\textup{Gr}_4^W$&$\textup{Gr}_5^W$&$\textup{Gr}_6^W$ \\
\hline
$H^0$ & $1$ &   &   &   &   &   &  \\
$H^1$ &   &   & $1$ &   &   &   &  \\
$H^2$ &   &   & $a+b-2$ &   & $2$ &   &  \\
$H^3$ &   &   &   &   & $a+b-1$ &   & $1$\\
\end{tabular}
\end{center}
\end{prop}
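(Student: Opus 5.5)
The plan is to deduce the statement from Theorem \ref{cpt} by Poincar\'e duality. By Proposition \ref{sing}, $X=X_{a,b}$ is smooth when $ord_2(a)\neq ord_2(b)$; it is also connected, of complex dimension $3$. Consequently $IH^*(X)=H^*(X)$, because $IC_X=\BQ_X[3]$ on a smooth variety. Theorem \ref{MHMdual}.(2) then gives an isomorphism of mixed Hodge structures
\[
H^k(X)\cong H^{6-k}_c(X)^\vee(-3).
\]
Dualizing a mixed Hodge-Tate structure gives another one, with $\textup{Gr}^W_{w}$ sent to $\textup{Gr}^W_{-w}$. The Tate twist $(-3)$ then shifts weights by $+6$. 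So
\[
\dim \textup{Gr}^W_{w}H^k(X)=\dim \textup{Gr}^W_{6-w}H^{6-k}_c(X),
\]
and $H^*(X)$ is of mixed Hodge-Tate type because $H^*_c(X)$ is, by Theorem \ref{cpt}.

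Next we read off the table from the smooth case of Theorem \ref{cpt}.
\begin{itemize}
\item $H^6_c$ has $\textup{Gr}^W_6$ of dimension $1$, which gives $H^0$ with $W_0$ of dimension $1$.
\item $H^5_c$ has $\textup{Gr}^W_4$ of dimension $1$, which gives $H^1$ with $\textup{Gr}^W_2$ of dimension $1$.
\item $H^4_c$ has $\textup{Gr}^W_2$ of dimension $2$ and $\textup{Gr}^W_4$ of dimension $a+b-2$. These give $H^2$ with $\textup{Gr}^W_4$ of dimension $2$ and $\textup{Gr}^W_2$ of dimension $a+b-2$.
\item $H^3_c$ has $W_0$ of dimension $1$ and $\textup{Gr}^W_2$ of dimension $a+b-1$. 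These give $H^3$ with $\textup{Gr}^W_6$ of dimension $1$ and $\textup{Gr}^W_4$ of dimension $a+b-1$.
\end{itemize}
This matches the table. All other graded pieces vanish because $H^k_c(X)=0$ for $k\le 2$; in particular this forces $H^k(X)=0$ for $k\ge4$.

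There is no real obstacle, as the argument is bookkeeping once Theorem \ref{cpt} is available. The only points needing care are the following:
\begin{itemize}
\item $X_{a,b}$ is connected, so that $H^6_c(X)\cong\BQ(-3)$ and the pairing is perfect. This can be read off from $H^0$, or from the decomposition \eqref{ab}, whose top-degree term is $\BQ_{\BC^*}[-4](-2)$.
\item One must verify the sign conventions for how duality and Tate twists act on weights.
\end{itemize}
The case $a$ or $b$ negative is handled by the symmetry $z\mapsto z^{-1}$ as in the earlier sections, so the table holds with the same formulas. Here the sign is absorbed as in the computation of $|Z_a|$.
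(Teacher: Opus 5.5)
Your proof is correct and is the paper's own argument: Poincar\'e duality $H^k(X_{a,b})\cong H^{6-k}_c(X_{a,b})^\vee(-3)$ from Theorem \ref{MHMdual}.(2), applied to the smooth case of Theorem \ref{cpt}, and your bookkeeping $\dim\textup{Gr}^W_wH^k=\dim\textup{Gr}^W_{6-w}H^{6-k}_c$ checks out entry by entry. Only your closing aside on negative exponents is inaccurate. The substitution $z\mapsto z^{-1}$ flips both signs at once, so it does not handle mixed signs; those are removed instead by rescaling $x'$ or $y'$ by a power of the unit $z$, and the formulas would then need $|a|,|b|$. The paper tacitly takes $a,b>0$ throughout, so this does not affect the argument.
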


\begin{proof}
   Since $X_{a,b}$ is a smooth 3-dimensional algebraic variety, Theorem \ref{MHMdual}.(2) gives the duality isomorphism
   \[
   H^k(X_{a,b})\cong H^{6-k}_c(X_{a,b})^\vee(-3)
   \]
   of mixed Hodge structures. The result follows immediately from Theorem \ref{cpt}.
\end{proof}

We now address the singular case. When $ord_2(a)=ord_2(b)$, the cluster variety $X_{a,b}$ has $(a,b)$ singularities, all of which are of type $A_1$. We first establish the following lemma concerning the local behavior of an $A_1$ singularity.

\begin{lem} \label{A1}
Let $C=Cone(\BP^1\times \BP^1)$ be the affine cone over $\BP^1\times\BP^1$. Let $\pi:\widetilde{C}\to C$ be the resolution of $C$ by blowing up the vertex $p$. Then:
\begin{enumerate}
   \item The mixed Hodge structures on the compactly supported cohomology groups are
   \[
   H^k_c(\widetilde{C})=
   \begin{cases}
      \BQ(-1), & k=2,\\
      \BQ^{\oplus2}(-2), & k=4,\\
      \BQ(-3),& k=6,\\
      0,& \text{otherwise}
   \end{cases}
   \quad
   \textup{and}
   \quad 
   H^k_c({C})=
   \begin{cases}
      \BQ(-1),     & k=3,\\
      \BQ(-2), & k=4,\\
      \BQ(-3),& k=6,\\
      0,& \text{otherwise}.
   \end{cases}
   \]
   \item There is a non-split distinguished triangle of mixed Hodge modules
   \[
   \BQ_C\to R\pi_*\BQ_{\widetilde{C}}\to\BQ_p^{\oplus2}[-2](-1)\oplus \BQ_p[-4](-2)\to.
   \]
   \item  There is a decomposition of mixed Hodge modules   
   \[
   Rf_*\BQ_{\widetilde{C}}[3]=IC_{C}\oplus \BQ_p[1](-1)\oplus\BQ_p[-1](-2).
   \]
   \end{enumerate}
\end{lem}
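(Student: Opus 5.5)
The plan is to use the explicit geometry of the blow-up. Since $C$ is the cone over the quadric $Q=\BP^1\times\BP^1\subset\BP^3$, blowing up the vertex gives $\widetilde C=\textup{Tot}(\mathcal O_Q(-1,-1))$. The exceptional divisor $E=\pi^{-1}(p)\cong Q$ is the zero section, and $\widetilde C\setminus E\cong C\setminus\{p\}$. All three statements will be deduced from this description, together with the facts that $H^*(Q)$ is pure of Hodge–Tate type, namely $\BQ,\ \BQ^{\oplus2}(-1),\ \BQ(-2)$ in degrees $0,2,4$, and that on $H^*(Q)$ the Euler class $e=c_1(\mathcal O(-1,-1))=-(h_1+h_2)$ acts by cup product.

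\textbf{Part (1).} For $\widetilde C$ I will apply the Thom isomorphism for the rank-one complex bundle, which gives $H^k_c(\widetilde C)\cong H^{k-2}(Q)(-1)$. This yields $\BQ(-1),\BQ^{\oplus2}(-2),\BQ(-3)$ in degrees $2,4,6$. For $C$ there are two steps.
\begin{enumerate}
\item Use the long exact sequence $\cdots\to H^k_c(\widetilde C\setminus E)\to H^k_c(\widetilde C)\to H^k(E)\to\cdots$. Under the Thom isomorphism, the restriction map $H^k_c(\widetilde C)\to H^k(E)$ is cup product with $e$, i.e. $H^{k-2}(Q)(-1)\to H^k(Q)$.
\begin{itemize}
\item For $k=2$ it is $\BQ(-1)\to\BQ^{\oplus2}(-1)$, $1\mapsto -(h_1+h_2)$, which is injective.
\item For $k=4$ it is $\BQ^{\oplus 2}(-2)\to\BQ(-2)$, which is surjective with one-dimensional kernel.
\end{itemize}
This gives $H^*_c(C\setminus p)$: $\BQ$ in degree $1$ (from $H^0(E)$), $\BQ(-1)$ in degree $3$ (the cokernel in degree $2$), $\BQ(-2)$ in degree $4$ (the kernel in degree $4$), and $\BQ(-3)$ in degree $6$.
\item Use the sequence for the pair $(C,p)$. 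Since $C$ is non-compact, $H^0_c(C)=0$, so the connecting map $H^0(p)\to H^1_c(C\setminus p)$ is an isomorphism. The remaining groups of $H^*_c(C\setminus p)$ pass unchanged to $H^*_c(C)$, which gives the claimed answer.
\end{enumerate}

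\textbf{Part (2).} The adjunction $\BQ_C\to R\pi_*\BQ_{\widetilde C}$ is an isomorphism away from $p$. By proper base change (Lemma \ref{push}), its cone is supported at $p$ and computes the reduced cohomology $\widetilde H^*(E)=\BQ^{\oplus2}(-1)[-2]\oplus\BQ(-2)[-4]$. The cone lies in $D^b(MHM(p))=D^b(MHS)$, and there its two pieces split: the relevant Ext group has degree at least $2$, and it vanishes because MHS has cohomological dimension $1$. This produces the triangle.

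Non-splitness I would prove by taking $H^*_c$. If the triangle split, then $H^*_c(\widetilde C)$ would be $H^*_c(C)\oplus\BQ^{\oplus2}(-1)[-2]\oplus\BQ(-2)[-4]$. That would make $H^3_c(\widetilde C)\neq0$, contradicting part (1).

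\textbf{Part (3).} Since $\pi$ is proper and $\widetilde C$ is smooth, Saito's decomposition theorem says that $R\pi_*\BQ_{\widetilde C}[3]$ is pure of weight $3$ and splits as $IC_C$ plus shifted pure Hodge modules supported on $p$. (The statement writes $f$ for $\pi$.) To identify the point summands I will compare stalks at $p$.
\begin{itemize}
\item The stalk of $R\pi_*\BQ[3]$ is $H^*(E)[3]$, which sits in degrees $-3,-1,1$ with dimensions $1,2,1$.
\item The stalk of $IC_C$ is the truncation $\tau_{<0}$ of the cohomology of the link $L=\partial(\text{tube around }E)$, which is a circle bundle over $Q$. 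By the Gysin sequence with Euler class $e$, $H^0(L)=\BQ$, $H^1(L)=0$ and $H^2(L)=\BQ(-1)$. Hence $\mathcal H^{-3}IC_C=\BQ$, $\mathcal H^{-2}IC_C=0$, and $\mathcal H^{-1}IC_C$ is the cokernel of the injective map $e\colon H^0(Q)(-1)\to H^2(Q)$, namely $\BQ(-1)$.
\end{itemize}
The differences are one copy of $\BQ(-1)$ in degree $-1$ and one copy of $\BQ(-2)$ in degree $1$. These are exactly $\BQ_p[1](-1)\oplus\BQ_p[-1](-2)$, consistent with weight $3$ and with relative hard Lefschetz.

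The main obstacle is the careful bookkeeping of Tate twists and weights in this stalk comparison, since the link computation must produce the correct twist on $\mathcal H^{-1}IC_C$ for the leftover pieces to match. As a cross-check I will compare $H^*_c$ of both sides against part (1).
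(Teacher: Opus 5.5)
Your proposal is correct and follows essentially the same route as the paper: you compute the cohomology of $\widetilde C=\mathrm{Tot}(\mathcal O(-1,-1))$ and of $C\setminus p$ through the Euler class $-(h_1+h_2)$ and pass to $C$ via the pair $(C,p)$; you detect non-splitting of the triangle in (2) by $H^3_c(\widetilde C)=0\neq H^3_c(C)$; and in (3) you identify the point summands by comparing stalks, using the decomposition theorem together with Deligne's formula for $IC_C$. The differences are only cosmetic. In (1) you use the Thom isomorphism and the sequence for $E\subset\widetilde C$, whereas the paper uses the Gysin sequence and Poincar\'e duality on $C\setminus p$; these are dual forms of the same computation. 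In (2) you justify the splitting of the cone via $\Ext^{\ge 2}=0$ for mixed Hodge structures, which the paper asserts without comment.
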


\begin{proof}
   \begin{enumerate}
   \item Let $j:C^\circ=C\setminus \{p\}\to C$ be the open complement of the closed embedding $i:\{p\}\to C$. Since $C^\circ$ is the total space of the $\BC^*$-bundle of the bidegree $(-1,-1)$ line bundle over $\BP^1\times\BP^1$, the differential $d_2$ in the Leray spectral sequence for the fibration $C^\circ\to\BP^1\times\BP^1$ is non-trivial. Together with the Poincar\'e duality, we have 
   \[
   H^k(C^\circ,\BQ)=\begin{cases}
   \BQ, & k=0,\\
   \BQ(-1),& k=2,\\
   \BQ(-2),& k=3,\\
   \BQ(-3),& k=5,\\
   0, & \text{otherwise}
   \end{cases}
   \quad\textup{and}\quad
   H^k_c(C^\circ,\BQ)=\begin{cases}
   \BQ,     &k=0,\\
   \BQ(-1), & k=3,\\
   \BQ(-2),& k=4,\\
   \BQ(-3),& k=6,\\
   0, & \text{otherwise}.
   \end{cases}
   \]
   Now applying $R\Gamma_c$ to the distinguished triangle 
   \[
   j_!\BQ_{C^\circ}\to \BQ_C \to i_*\BQ_p\to,
   \]
   we obtain a long exact sequence, which implies $H_c^k(C^\circ,\BQ)\cong H_c^k(C,\BQ)$ for $k\ge 2$ and an exact sequence
   \[
   0\to H^0_c(C,\BQ) \to \BQ \to \BQ \to H^1_c(C,\BQ)\to 0.
   \]

Since $C$ is non-compact, we have $H^0_c(C,\BQ)=0$. So we have
\[
H^k_c({C})=
   \begin{cases}
      \BQ(-1)     & k=3\\
      \BQ(-2) & k=4\\
      \BQ(-3)& k=6\\
      0& \text{otherwise}.
   \end{cases}
\]
On the other hand, $\widetilde{C}$ is the total space of a line bundle over $\BP^1\times\BP^1$, so its mixed Hodge structure is isomorphic to that of the base $H^*(\BP^1\times\BP^1)$. The desired mixed Hodge structure on $H^*_c(\widetilde{C})$ then follows from Poincar\'e duality.
    \item Since $\pi:\widetilde{C}\to C$ has connected fibers, we have $R^0\pi_*\BQ_{\widetilde{C}}=\BQ_C$ and hence a distinguished triangle 
    \[
    \BQ_C\to R\pi_*\BQ_{\widetilde{C}}\to \tau_{\ge1}R\pi_*\BQ_{\widetilde{C}}\to.
    \]
    Because $\pi$ is an isomorphism away from $p$, the higher derived images are supported at $p$. By the proper base change theorem, the stalks at $p$ are 
    \[
    (R^k\pi_*\BQ_{\widetilde{C}})_p=H^k(\pi^{-1}(p),\BQ)=\begin{cases}
    \BQ  &  k=0\\
    \BQ^2(-1)  & k=2\\
    \BQ(-2) & k=4\\
    0 & \text{otherwise.}
    \end{cases}
    \]
    Hence $\tau_{\ge1}R\pi_*\BQ_{\widetilde{C}}=\BQ_p^2[-2](-1)\oplus \BQ_p[-4](-1)$. From part (1), we see that $H^k_c(C)$ is not subspaces of $H^k_c(\widetilde{C})$, so the distinguished triangle does not split.
    \item Since $\widetilde{C}$ is smooth of dimension 3, we have $IC_{\widetilde{C}}=\BQ_{\widetilde{C}}[3]$. By BBDG decomposition theorem, there is a direct sum decomposition
    \[
    Rf_*\BQ_{\widetilde{C}}[3]=IC_C\oplus F
    \]
    where $F$ is supported at $p$. By Deligne's formula we have 
    \[IC_C=\tau_{\le-1}(Rj_*\BQ_{C^\circ}[3]).\]
    Proceeding as in part (2), and comparing stalks at $p$, we obtain $F=\BQ_p[1](-1)\oplus\BQ_p[-1](-2)$. This completes the proof.
   \end{enumerate}
\end{proof}

In fact, after blowing up an isolated $A_1$ singularity on an arbitrary threefold $X$, the mixed Hodge structure on the compacted supported cohomology changes exactly as in the local model. More precisely, we have the following.

\begin{prop} \label{a1-global}
Let $X$ be a threefold and let $p$ be an isolated singularity of type $A_1$. Let $\pi:\widetilde{X}\to X$ be the blowup at $p$. Then the natural map $\pi^*_k:H^k_c(X)\to H^k_c(\widetilde{X})$ is an isomorphism for all $k\neq2,3,4$, and we have
\[
\begin{split}
H^2_c(\widetilde{X})&=H^2_c(X)\oplus\BQ(-1),\\
H^4_c(\widetilde{X})&=H^4_c(X)\oplus\BQ(-2),\\
H^3_c(X)&=H^3_c(\widetilde{X})\oplus\BQ(-1).
\end{split}
\]
In particular, if the mixed Hodge structure on $H^*_c(X)$ is of mixed Hodge-Tate type, so is that on $H^*_c(\widetilde{X})$.
\end{prop}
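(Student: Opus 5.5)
The strategy is to reduce the global statement to the local model of Lemma \ref{A1} by excision. Let $E=\pi^{-1}(p)\cong\BP^1\times\BP^1$ and $X^\circ=X\setminus\{p\}\cong\widetilde{X}\setminus E$. Applying $R\Gamma_c$ to the triangles $j_!\BQ_{X^\circ}\to\BQ_X\to\BQ_p\to$ on $X$ and $j_!\BQ_{X^\circ}\to\BQ_{\widetilde X}\to\BQ_E\to$ on $\widetilde X$ gives two long exact sequences of mixed Hodge structures. They are compatible via $\pi^*$, because $\pi$ is proper and is an isomorphism over $X^\circ$. This yields a morphism of long exact sequences
\[
\cdots\to H^k_c(X^\circ)\to H^k_c(X)\to H^k(p)\to\cdots,\qquad \cdots\to H^k_c(X^\circ)\to H^k_c(\widetilde X)\to H^k(E)\to\cdots,
\]
with the identity on the $H^*_c(X^\circ)$ terms. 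Here $H^*(E)=\BQ\oplus\BQ^2(-1)[-2]\oplus\BQ(-2)[-4]$.

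Rather than working with $X^\circ$ directly, I would use the cone of $\BQ_X\to R\pi_*\BQ_{\widetilde X}$. By Lemma \ref{A1}(2), which is local near $p$ and therefore holds verbatim for $X$ since the statement is analytically local, this gives a triangle
\[
\BQ_X\to R\pi_*\BQ_{\widetilde X}\to \BQ_p^{2}[-2](-1)\oplus\BQ_p[-4](-2)\to .
\]
Taking $H^*_c$ produces the long exact sequence
\[
\cdots\to H^k_c(X)\xrightarrow{\pi^*}H^k_c(\widetilde X)\to Q^k\xrightarrow{\delta}H^{k+1}_c(X)\to\cdots,
\]
with $Q^2=\BQ^2(-1)$, $Q^4=\BQ(-2)$, and $Q^k=0$ otherwise. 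This immediately gives that $\pi^*$ is an isomorphism for $k\ne 2,3,4,5$. It also gives the exact sequences $0\to H^2_c(X)\to H^2_c(\widetilde X)\to\BQ^2(-1)\xrightarrow{\delta}H^3_c(X)\to H^3_c(\widetilde X)\to 0$ and $0\to H^4_c(X)\to H^4_c(\widetilde X)\to\BQ(-2)\xrightarrow{\delta}H^5_c(X)\to H^5_c(\widetilde X)\to0$.

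The main step is to determine the connecting maps $\delta$. The target has exactly one summand $\BQ(-1)$ split off from $H^3_c(X)$ and one $\BQ(-1)$ added to $H^2_c$, so $\delta$ on $\BQ^2(-1)$ must have rank one, and $\delta$ on $\BQ(-2)$ must be zero. I would compute $\delta$ locally. The triangle is supported at $p$, so $\delta$ factors through the compactly supported cohomology of a small conic neighborhood $C$ of $p$, via $H^{k+1}_c(C)\to H^{k+1}_c(X)$ (extension by zero). In the local model, part (1) of Lemma \ref{A1} forces the local $\delta$ to be rank one in degree $2$ onto $H^3_c(C)=\BQ(-1)$, and zero in degree $4$, since $H^4_c(\widetilde C)\to\BQ(-2)$ is surjective.

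The hardest point is to show that the image of the rank-one piece remains nonzero in $H^3_c(X)$, and that it splits off as a direct summand of mixed Hodge structures. For nonvanishing I would argue by duality: the relevant class is the link class $H^3_c(C)\cong H^2(\text{link})^\vee$-type, and it pairs nontrivially with the class of the exceptional $\BP^1\times\BP^1$. Concretely, it detects the difference of the two rulings, whose intersection pairing on $E$ inside $\widetilde X$ is computed locally by $\mathcal O_E(E)=\mathcal O(-1,-1)$. If that route gets messy, I would instead use the decomposition theorem of Lemma \ref{A1}(3) globally: $R\pi_*\BQ_{\widetilde X}[3]=IC_X\oplus\BQ_p[1](-1)\oplus\BQ_p[-1](-2)$. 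This splits off $\BQ(-1)$ in degree $2$ and $\BQ(-2)$ in degree $4$ directly, and the remaining comparison of $IC_X$ with $\BQ_X[3]$ is a triangle with cone a single $\BQ_p(-1)$ in the appropriate degree, giving the $H^3_c$ statement.

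The direct-sum assertions as mixed Hodge structures then follow from the splitting of the decomposition theorem in $D^b(MHM)$. Since all new pieces are pure Tate, the Hodge–Tate conclusion is immediate.
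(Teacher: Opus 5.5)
Your localization is correct as far as it goes. The cone of $\BQ_X\to R\pi_*\BQ_{\widetilde X}$ is $\BQ_p^{2}[-2](-1)\oplus\BQ_p[-4](-2)$, the connecting map $\delta$ factors through $H^{k+1}_c(C)\to H^{k+1}_c(X)$, and the local $\delta$ vanishes in degree $4$. So you do get the isomorphisms for $k\neq 2,3,4$ and $H^4_c(\widetilde X)=H^4_c(X)\oplus\BQ(-2)$. (Deducing $\operatorname{rank}\delta=1$ from the shape of the target is circular, though.)

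The genuine gap is the step you flag as hardest. The local $\delta$ maps $\BQ^2(-1)$ onto $H^3_c(C)\cong\BQ(-1)$. Hence the global $\delta$ in degree $2$ is nonzero if and only if $H^3_c(C)\to H^3_c(X)$ is nonzero. Equivalently, the image of the restriction $r:H^2_c(\widetilde X)\to H^2(E)$ must be only the line spanned by $c_1(\mathcal{O}_E(E))=-(h_1+h_2)$. This is a global property, and neither of your routes detects it:
\begin{itemize}
\item Your duality argument fails because $\ell_1-\ell_2$ has intersection number $0$ with $E$. The normal bundle $\mathcal{O}_E(E)=\mathcal{O}(-1,-1)$ says nothing about whether some other class of $H^2_c(\widetilde X)$ restricts to $h_1-h_2$.
\item The decomposition-theorem fallback only relocates the problem. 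It gives $H^*_c(\widetilde X)=IH^*_c(X)\oplus\BQ(-1)[-2]\oplus\BQ(-2)[-4]$. But the triangle $\BQ_X[3]\to IC_X\to\BQ_p[1](-1)\to$ then produces $0\to H^2_c(X)\to IH^2_c(X)\to\BQ(-1)\to H^3_c(X)\to IH^3_c(X)\to 0$, and the decomposition theorem says nothing about its connecting map.
\end{itemize}

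In fact no argument can close this step, because the statement is false for general threefolds. Take $X=\{xy=zw\}\subset\BP^4$, the projective cone over $Q=\BP^1\times\BP^1$.
\begin{itemize}
\item Its vertex $p$ is an $A_1$ point and $X$ is compact.
\item $X\setminus\{p\}$ is a line bundle over $Q$, so $H^2(X)=\BQ(-1)$ and $H^3(X)=0$.
\item $\widetilde X$ is a $\BP^1$-bundle over $Q$, so $H^2(\widetilde X)=\BQ^3(-1)$ and $H^3(\widetilde X)=0$.
\end{itemize}
Here $\delta=0$: the two rulings of $E$ map to independent classes of $H_2(Q)$. Thus $H^2_c(\widetilde X)=H^2_c(X)\oplus\BQ^2(-1)$ and $H^3_c(X)=0$, contradicting both the $H^2_c$ and the $H^3_c$ formulas.

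The paper's own proof has the same gap. It deduces injectivity of $H^3_c(U)\to H^3_c(X)$ from $H^2_c(X\setminus U)\cong H^2_c(X\setminus\{p\})$. That isomorphism fails in this example, where the two groups are $\BQ^2$ and $\BQ$.

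What survives in general is the statement in degrees $\neq 2,3$, together with the Hodge--Tate conclusion, since Hodge--Tate type is stable under subobjects, quotients and extensions.

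For the application to $X_{a,b}$ you need a global input, and affineness supplies it:
\begin{itemize}
\item Artin vanishing for the perverse sheaf $\BQ_X[3]$ on the affine complete intersection $X_{a,b}$ gives $H^2_c(X)=0$ and $H^4(X)=H^5(X)=0$.
\item The ordinary-cohomology sequence of the same triangle then gives $H^4(\widetilde X)\cong\BQ^{s}(-2)$, where $s$ is the number of nodes.
\item Poincar\'e duality gives $\dim H^2_c(\widetilde X)=s$.
\item Since $H^2_c(\widetilde X)\to\BQ^{2s}(-1)$ is injective, $\delta$ has rank exactly $s$, i.e.\ rank one at each node. This is what Proposition \ref{res} actually uses.
\end{itemize}
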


\begin{proof}
   The natural morphism $\BQ_X\xrightarrow{\pi^*} R\pi_*\BQ_{\widetilde{X}}$ is an isomorphism away from $p$ since $\pi$ is a resolution of singularity. Hence the cone of $\pi^*$ is supported at $p$. Let $f:U\to X$ be an analytic neighborhood of $p$ in $X$ such that $U$ is analytically isomorphic to the cone $C$ from Lemma \ref{A1}, and set $\widetilde{U}= \pi^{-1}(U)$. Although analytic neighborhoods do not carry mixed Hodge structures, their cohomology groups coincide with thos of the local mordal, and the bounded derived categories of constuctible sheaves are naturally equivalent. By Lemma \ref{A1}.(2), the cone of $\pi^*$ in $D^b_c(U)$ is exactly $\BQ_p^{\oplus2}[-2]\oplus \BQ_p[-4]$. Thus we have a morphism of distinguished triangles in $D^b_c(U)$
   \[
   \begin{tikzcd}
   \BQ_X\arrow[r] &R\pi_*\BQ_{\widetilde{X}}\arrow[r]& \BQ_p^{\oplus2}[-2]\oplus \BQ_p[-4]\arrow[r]& \ \\
   f_!\BQ_U\arrow[r]\arrow[u]& f_!R\pi_*\BQ_{\widetilde{U}}\arrow[r]\arrow[u]&\BQ_p^{\oplus2}[-2]\oplus \BQ_p[-4]\arrow[r]\arrow[u,equal] & . 
   \end{tikzcd}
   \]
Applying the derived functor $R\Gamma_c$, we have an exact sequence of vector spaces
\[
0\to H^4_c(X)\to H^4_c(\widetilde{X})\to \BQ\to0
\]
and a morphism of long exact sequences of vector spaces
   \[
   \begin{tikzcd}
   0\arrow[r]& H^2_c(X)\arrow[r,"\pi^*_2"] &H^2_c(\widetilde{X})\arrow[r,"r"]& \BQ^2\arrow[r,"\delta"]&H^3_c(X)\arrow[r,"\pi^*_3"] &H^3_c(\widetilde{X})\arrow[r]&0\\
   0\arrow[r]& H^2_c(U)\arrow[r]\arrow[u]& H^2_c(\widetilde{U})\arrow[r]\arrow[u] &\BQ^2\arrow[r]\arrow[u,equal]&H^3_c(U)\arrow[r]\arrow[u]& H^3_c(\widetilde{U})\arrow[u]\arrow[r]&0
   \end{tikzcd}
   \]
   Consequently, we have 
   \[
   \text{coker }\pi^*_2\oplus \ker\pi^*_3\cong\BQ^2.
   \]
   We now claim that both $r$ and $\delta$ are nonzero. By Lemma \ref{A1}.(1), we have $H^2_c(U)=0$, $H^2_c(\widetilde{U})=\BQ$, and the map $H^2_c(\widetilde{U})\to\BQ^2$ is not zero. By commutativity of the diagram, the map $r$ is also nonzero. To show $\delta$ is nonzero, it suffices, by chasing diagram, to prove that $H^3_c(U)\to H^3_c(X)$ is injective. Indeed, the distinguished triangle 
   \[
   f_!\BQ_U\to \BQ_X\to g_*\BQ_{X\setminus U}\to
   \]
   induces the long exact sequence
   \[
   \to H^2_c(U)\to H^2_c(X)\xrightarrow{\alpha}H^2_c(X\setminus U)\to H^3_c(U)\to H^3_c(X)\to.
   \]
   On one hand, Lemma \ref{A1}.(1) gives $H^2_c(U)=0$, so the map $\alpha$ is injective. On the other hand, since $U$ is a small neighborhood of $p$, we have $X\setminus U\cong X\setminus\{p\}$. The long exact sequence 
   \[
   \cdots\to H^1_c(p)\to H^2_c(X\setminus p)\to H^2_c(X)\to H^2_c(p)\to\cdots
   \]
   implies that $H^2_c(X)\cong H^2_c(X\setminus p)\cong H^2_c(X\setminus U)$. So $\alpha$ is an isomorphism by dimension reason and hence $H^3_c(U)\to H^3_c(X)$ is injective.  This proves the statement at the level of vector spaces. 
   
   To see that the result holds at the level of mixed Hodge structures, it suffices to note that the cone of $\BQ_X\to R\pi_*\BQ_{\widetilde{X}}$ is contributed by the $H^2$ and $H^4$ the exceptional divisor $\pi^{-1}(p)$, which is algebraically isomorphic to $\BP^1\times \BP^1$. Consequently,  $\textup{coker} \,\pi^*_2$ and $\ker \pi^*_3$ have weight 2, while $\textup{coker}\,\pi^*_4$ have weight 4.
\end{proof}

\begin{prop} \label{res}  
Suppose $ord_2(a)=ord_2(b)$, so that $X_{a,b}$ is singular. Let $\pi:\widetilde{X}_{a,b}\to X_{a,b}$ be the resolution of the singularities, fitting into the Cartesian diagram
   \begin{center}
   \begin{tikzcd} 
   \cup_{i=1}^{(a,b)} E_i \arrow[r] \arrow[d]& \widetilde{X}_{a,b}\arrow[d,"\pi"]\\
   Z_{(a,b)}\arrow[r]& X_{a,b},
   \end{tikzcd}
   \end{center}
  where $E_i$ are isomorphic to $\BP^1\times \BP^1$. The mixed Hodge structure on $H^*_c(\widetilde{X}_{a,b})$ is of mixed Hodge-Tate type, and the non-zero graded pieces of the weight filtration are listed below.
   \begin{center}
\begin{tabular}{c|ccccccc}
$\widetilde{X}_{a,b}$ & $W_0$ &$\textup{Gr}_1^W$& $\textup{Gr}_2^W$&$\textup{Gr}_3^W$&$\textup{Gr}_4^W$&$\textup{Gr}_5^W$&$\textup{Gr}_6^W$ \\
\hline
$H^2_c$ &   &   & $(a,b)$ &   &   &   &  \\
$H^3_c$ & $1$ &   & $a+b-(a,b)-1$ &   &   &   &  \\
$H^4_c$ &   &   & $2$ &   & $a+b+2(a,b)-2$ &   &  \\
$H^5_c$ &   &   &   &   & $1$ &   &  \\
$H^6_c$ &   &   &   &   &   &   & $1$\\
\end{tabular}
\end{center}

Dually, we obtain the cohomology version.
\begin{center}
\begin{tabular}{c|ccccccc}
$\widetilde{X}_{a,b}$ & $W_0$ &$\textup{Gr}_1^W$& $\textup{Gr}_2^W$&$\textup{Gr}_3^W$&$\textup{Gr}_4^W$&$\textup{Gr}_5^W$&$\textup{Gr}_6^W$ \\
\hline
$H^0$ & $1$ &   &   &   &   &   &  \\
$H^1$ &   &   & $1$ &   &   &   &  \\
$H^2$ &   &   & $a+b+2(a,b)-2$ &   & $2$ &   &  \\
$H^3$ &   &   &   &   & $ a+b-(a,b)-1$ &   & $1$\\
$H^4$ &   &   &   &   & $(a,b)$ &   &  
\end{tabular}
\end{center}
\end{prop}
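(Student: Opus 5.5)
Since the singularities $Z_{(a,b)}$ of $X_{a,b}$ are isolated $A_1$ points (Proposition \ref{sing}), the resolution $\pi$ is the blowup at these $(a,b)$ points, one at a time. Each exceptional divisor $E_i\cong\BP^1\times\BP^1$ by Lemma \ref{exp}, and blowing up one point does not change the analytic type of the remaining singular points. I would therefore apply Proposition \ref{a1-global} iteratively $(a,b)$ times, starting from $H^*_c(X_{a,b})$ as given in the singular case of Theorem \ref{cpt}. Each blowup keeps $H^k_c$ unchanged for $k\neq 2,3,4$. It adds one copy of $\BQ(-1)$ to $H^2_c$ and one copy of $\BQ(-2)$ to $H^4_c$, and it removes one copy of $\BQ(-1)$ from $H^3_c$. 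After $(a,b)$ steps this gives the following.
\begin{itemize}
\item $H^2_c(\widetilde X)$ has $\textup{Gr}^W_2$ of dimension $(a,b)$.
\item $H^3_c(\widetilde X)$ has $W_0=\BQ$ and $\textup{Gr}^W_2$ of dimension $a+b-1-(a,b)$.
\item $H^4_c(\widetilde X)$ has $\textup{Gr}^W_2$ of dimension $2$ and $\textup{Gr}^W_4$ of dimension $a+b+(a,b)-2+(a,b)$.
\item $H^5_c$ and $H^6_c$ are unchanged.
\end{itemize}
This is exactly the first table. The Hodge--Tate property is preserved at each step by the last sentence of Proposition \ref{a1-global}.

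One point needs care. The splitting statements in Proposition \ref{a1-global} are claimed as mixed Hodge structures. For the weight count, however, only exactness of the sequences of mixed Hodge structures matters, since weight gradings are exact. So I only need that $\textup{coker}\,\pi_2^*$ and $\ker\pi_3^*$ are pure of weight $2$ and that $\textup{coker}\,\pi_4^*$ is pure of weight $4$. Proposition \ref{a1-global} supplies this via the exceptional divisor. A cleaner alternative is to do all blowups at once. The cone of $\BQ_X\to R\pi_*\BQ_{\widetilde X}$ is $\bigoplus_{p\in Z_{(a,b)}}(\BQ_p^2[-2](-1)\oplus\BQ_p[-4](-2))$, and one then runs the same long exact sequence, with the nonvanishing of $r$ and $\delta$ checked pointwise. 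Doing this makes explicit that the $(a,b)$ connecting maps are jointly of the expected rank, so that $\ker \pi_3^*$ is really $(a,b)$-dimensional. This joint-rank statement is the main point I would verify carefully. It follows by localizing at disjoint neighborhoods $U_p$, because $\bigoplus_p H^3_c(U_p)\to H^3_c(X)$ is injective by the same argument as in Proposition \ref{a1-global}, now with $H^2_c(X)\cong H^2_c(X\setminus\sqcup U_p)$.

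For the cohomology table, $\widetilde X_{a,b}$ is smooth of dimension $3$, so Theorem \ref{MHMdual}.(2) gives $H^k(\widetilde X)\cong H^{6-k}_c(\widetilde X)^\vee(-3)$. Dualizing and twisting sends weight $w$ in $H^{6-k}_c$ to weight $6-w$ in $H^k$. Reading off the compact-support table then gives the following.
\begin{itemize}
\item $H^0$ has $W_0=1$.
\item $H^1$ has weight $2$, of dimension $1$.
\item $H^2$ has weight $2$ of dimension $a+b+2(a,b)-2$ and weight $4$ of dimension $2$.
\item $H^3$ has weight $4$ of dimension $a+b-(a,b)-1$ and weight $6$ of dimension $1$.
\item $H^4$ has weight $4$, of dimension $(a,b)$.
\end{itemize}
This matches the second table. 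The Hodge--Tate type is preserved by duality and twist.
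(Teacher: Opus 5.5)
Your proposal is correct and follows the paper's proof. Like the paper, you start from the singular case of Theorem \ref{cpt}, apply Proposition \ref{a1-global} once for each of the $(a,b)$ nodes, and then read off the cohomology table from Poincar\'e duality $H^k(\widetilde X)\cong H^{6-k}_c(\widetilde X)^\vee(-3)$. Your weight and dimension bookkeeping matches both tables, and your remark on the joint rank is extra care that the one-point-at-a-time iteration already handles.
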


\begin{proof}
   The mixed Hodge structure on $H^*_c(\widetilde{X}_{a,b})$ is obtained by applying Proposition \ref{a1-global} inductively to each singularity, starting from the Proposition \ref{cpt}. The cohomology version then follows from Poincar\'e duality.
\end{proof}

\begin{thm} \label{s}
  Suppose $ord_2(a)=ord_2(b)$, so that $X_{a,b}$ is singular. Then the mixed Hodge structure on $IH^*(X_{a,b})$ is of mixed Hodge-Tate type with the non-zero graded pieces listed below.
   \begin{center}
\begin{tabular}{c|ccccccc}
$X_{a,b}$ & $W_0$ &$\textup{Gr}_1^W$& $\textup{Gr}_2^W$&$\textup{Gr}_3^W$&$\textup{Gr}_4^W$&$\textup{Gr}_5^W$&$\textup{Gr}_6^W$ \\
\hline
$IH^0$ & $1$ &   &   &   &   &   &  \\
$IH^1$ &   &   & $1$ &   &   &   &  \\
$IH^2$ &   &   & $a+b+(a,b)-2$ &   & $2$ &   &  \\
$IH^3$ &   &   &   &   & $a+b-(a,b)-1$ &   & $1$\\
\end{tabular}
\end{center}
Similarly, the mixed Hodge structure on $H^*(X_{a,b})$ is of mixed Hodge-Tate type with non-zero  graded pieces listed below.
\begin{center}
\begin{tabular}{c|ccccccc}
$X_{a,b}$ & $W_0$ &$\textup{Gr}_1^W$& $\textup{Gr}_2^W$&$\textup{Gr}_3^W$&$\textup{Gr}_4^W$&$\textup{Gr}_5^W$&$\textup{Gr}_6^W$ \\
\hline
$H^0$ & $1$ &   &   &   &   &   &  \\
$H^1$ &   &   & $1$ &   &   &   &  \\
$H^2$ &   &   & $a+b-2$ &   & $2$ &   &  \\
$H^3$ &   &   &   &   & $ a+b-(a,b)-1$ &   & $1$\\
\end{tabular}
\end{center}
\end{thm}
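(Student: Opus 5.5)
The plan is to derive both tables from the resolution $\pi:\widetilde{X}_{a,b}\to X_{a,b}$ of Proposition \ref{res}. We combine the decomposition theorem for mixed Hodge modules with the local computation of Lemma \ref{A1}, globalized exactly as in Proposition \ref{a1-global}.

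\emph{Intersection cohomology.} Since $\widetilde{X}_{a,b}$ is smooth of dimension $3$, we have $R\pi_*\BQ_{\widetilde{X}}[3]=IC_{X}\oplus F$ with $F$ supported on $Sing(X_{a,b})=Z_{(a,b)}$. Near each singular point the computation is local, so by Lemma \ref{A1}.(3) we get
\[
F=\bigoplus_{p\in Z_{(a,b)}}\left(\BQ_p[1](-1)\oplus\BQ_p[-1](-2)\right).
\]
Taking hypercohomology gives
\[
H^k(\widetilde{X})=IH^k(X)\oplus \BQ^{(a,b)}(-1)\ (k=2)\oplus\BQ^{(a,b)}(-2)\ (k=4)
\]
as mixed Hodge structures. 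Subtracting these summands from the cohomology table of Proposition \ref{res} gives the following. In $IH^2$, $\textup{Gr}^W_2$ has dimension $a+b+2(a,b)-2-(a,b)=a+b+(a,b)-2$ and $\textup{Gr}^W_4$ has dimension $2$. We get $IH^4=0$. All other groups agree with $H^*(\widetilde{X})$. The Hodge–Tate property is inherited because $IH^*$ is a direct summand of a Hodge–Tate structure.

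\emph{Ordinary cohomology.} Here I would use the non-split triangle of Lemma \ref{A1}.(2), globalized:
\[
\BQ_X\to R\pi_*\BQ_{\widetilde{X}}\to \bigoplus_{p}\left(\BQ_p^{2}[-2](-1)\oplus\BQ_p[-4](-2)\right)\to .
\]
Applying $R\Gamma$ gives a long exact sequence of mixed Hodge structures. In degrees $0,1$ it yields $H^k(X)=H^k(\widetilde{X})$. In degree $4$, the map $H^4(\widetilde{X})=\BQ^{(a,b)}(-2)\to\bigoplus_p H^4(E_p)$ is restriction to the exceptional divisors. It is an isomorphism, since the classes come from the $E_p$ (via the $F$-summand). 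Hence $H^4(X)=0$, and $H^5(X)=0$ follows by the dimension count.

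The main obstacle is degrees $2$ and $3$, where we must determine the rank of the restriction map
\[
r:H^2(\widetilde{X})\to\bigoplus_p H^2(E_p)\cong\BQ^{2(a,b)}(-1).
\]
We then have $H^2(X)=\ker r$ and $H^3(X)=\textup{coker}\, r\oplus H^3(\widetilde{X})$ (extension up to weights). For the answer claimed, $r$ must have rank exactly $2(a,b)$, i.e.\ be surjective; then $H^2(X)$ has $\textup{Gr}^W_2$ of dimension $a+b-2$ and $H^3(X)=H^3(\widetilde{X})$. I would prove surjectivity dually. The compactly supported analysis in Proposition \ref{a1-global} showed that both $\textup{coker}\,\pi^*_2$ and $\ker\pi^*_3$ are nonzero at each point and sum to $\BQ^2$. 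Dualizing gives one class of $H^2(E_p)$ hit by restriction from $\widetilde{X}$ for each $p$. Combined with the classes from the summand $\BQ_p[1](-1)$ of $F$, which give the other direction, $r$ surjects onto each $H^2(E_p)$. Alternatively, one can count: the Euler characteristic and the known $H^*_c(X)$ from Theorem \ref{cpt} are compatible only with rank $2(a,b)$. All weights involved are even, so $H^*(X)$ is Hodge–Tate, completing the table.
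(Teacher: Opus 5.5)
Your computation of $IH^*$ is exactly the paper's argument (decomposition theorem for $\pi$, Lemma~\ref{A1}.(3), subtraction from Proposition~\ref{res}). For $H^*$ the paper gives only a one-line deduction from Proposition~\ref{res} and Peters--Steenbrink, which is essentially the same comparison with the resolution that you use. You have correctly isolated the step that needs real input, namely surjectivity of $r\colon H^2(\widetilde X)\to\bigoplus_p H^2(E_p)$. However, your argument for it fails, because both sources of classes you invoke hit the \emph{same} line in each $H^2(E_p)$.

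\begin{itemize}
\item Since $H^2_c(X)=0$, the group $H^2_c(\widetilde X)\cong\BQ(-1)^{(a,b)}$ is spanned by the compactly supported classes $[E_p]$. So the image of $r_c=r\circ\bigl(H^2_c(\widetilde X)\to H^2(\widetilde X)\bigr)$ is $\bigoplus_p\BQ\cdot c_1(N_{E_p})$, where $c_1(N_{E_p})=c_1(\mathcal O(-1,-1))=-(\ell_1+\ell_2)$ in terms of the two ruling classes. This factorization, not a duality, is where your ``one class per point'' actually comes from.
\item The inclusion of the summand $i_{p*}\BQ_p[-2](-1)$ into $R\pi_*\BQ_{\widetilde X}$ is an element of $\Hom(\BQ_p[-2],i_p^!R\pi_*\BQ_{\widetilde X})=H^2_{E_p}(\widetilde X)=\BQ\cdot[E_p]$. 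So the $F$-classes in $H^2(\widetilde X)$ are again the images of $[E_p]$, and they restrict to the same $c_1(N_{E_p})$.
\end{itemize}

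Hence you only get $\operatorname{rank} r\ge(a,b)$. The direction $\ell_1-\ell_2$ is never reached, and whether it is reached is a genuinely global question (the defect of the nodes). Your fallback count does not help either. Changing $\operatorname{rank} r$ by one changes $h^2(X)$ and $h^3(X)$ simultaneously, so $\chi(X)$ cannot detect it. Moreover, for singular $X$ the groups $H^*_c(X)$ are dual to Borel--Moore homology, not to $H^*(X)$.

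The missing input is a global class whose restriction to $E_p$ is not proportional to $c_1(N_{E_p})$, and non-Cartier Weil divisors provide it. Take $\zeta$ with $\zeta^{(a,b)}=-1$, so that $\zeta^a=\zeta^b=-1$. The plane $D_\zeta=\{z=\zeta,\ x=y=0\}\cong\BC^2$ (with coordinates $x',y'$) lies in $X_{a,b}$ and contains only the singular point $p_\zeta=(0,0,0,0,\zeta)$. In the local model $bxx'=ayy'$ of Lemma~\ref{exp} it is the plane $\{x=y=0\}$ of the cone. Its proper transform $\widetilde D_\zeta$ is a smooth closed surface in $\widetilde X$. It meets $E_{p_\zeta}$ transversally along a ruling line and is disjoint from the other $E_p$. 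So $r([\widetilde D_\zeta])$ is a ruling class, say $\ell_1$, in the $p_\zeta$-component and $0$ elsewhere. Together with $r([E_p])=-(\ell_1+\ell_2)$, this makes $r$ surjective. That gives $\dim\textup{Gr}^W_2H^2(X)=a+b-2$ and $H^3(X)\cong H^3(\widetilde X)$, as claimed. Your treatment of degrees $0,1,4,5$ is fine as it stands.
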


\begin{proof}
Let $\pi:\widetilde{X}_{a,b}\to X_{a,b}$ be the resolution from Proposition \ref{res}. By the BBDG decomposition theorem, we have 
\[
R\pi_*\BQ_{\widetilde{X}_{a,b}}[3]=IC_{X_{a,b}}\oplus G
\]
where $G$ is supported on the singular locus $Z_{(a,b)}$. By Lemma \ref{A1}.(3), we have 
\[
G=\BQ_{Z_{(a,b)}}[1](-1)\oplus\BQ_{Z_{(a,b)}}[-1](-2). 
\]
Taking hypercohomology yields
\[
H^k(\widetilde{X}_{a,b})=IH^k(X_{a,b})\oplus 
\begin{cases}
\BQ^{\oplus(a,b)}(-1), &  k=2,\\
\BQ^{\oplus(a,b)}(-2), &  k=4, \\
0, & \text{otherwise.}
\end{cases}
\]
Combining with Proposition \ref{res}, we obtain the desired mixed Hodge structure on $IH^*(X)$.

The mixed Hodge structure on $H^*(X)$ follows from Proposition \ref{res} and \cite[Theorem 5.35]{PS}.
\end{proof}

\begin{rmk}
In \cite{ZZ}, we compute the mixed Hodge structures on $IH^*(X_{1,1})$ and $H^*(X_{1,1})$ via compactification and Deligne's spectral sequence. In the present paper, we instead computing them using the decomposition of the morphism $f:X\to \BC^*$, for two reasons. First, the morphism $f$ is naturally defined and is expected to admit natural generalization to higher dimensions. Second, the natural compactifications of general cluster varieties tend to have non-isolated singularities, making it difficult to develop a systematic approach to computing mixed Hodge structures via compactification. 
\end{rmk}

\section{P side}
\setcounter{subsection}{-1}
\subsection{Overview}
In \cite{Z}, the conjectural $P=W$ phenomenon for cluster varieties identifies the Hodge-theoretic weight filtrations of cluster varieties with the perverse filtrations associated with certain real torus fibrations. The cluster variety $X_{a,b}$ we studied in the previous section admit a natural real analytic map $h:X_{a,b}\to \BR^3$, where  
\begin{equation} \label{0001}
X_{a,b}=\{ (x,x^{\prime},y,y^{\prime},z)\in \mathbb{C}^5: xx^{\prime}=z^a+1,\,yy^{\prime}=z^b+1,\,z\neq0\}
\end{equation}
and $h:X_{a,b}\to \BR^3$ is defined by 
\begin{equation}
\begin{aligned}
h: X_{a,b} &\rightarrow \mathbb{R}^{3} \\
(x,x^{\prime},y,y^{\prime},z) &\mapsto (|x|^{2}-|x^{\prime}|^{2},
|y|^{2}-|y^{\prime}|^{2},\log|z|).
\end{aligned}
\label{2}
\end{equation}
In this section, we compute the perverse filtration associated with the map $h$.  We note that the BBDG decomposition theorem cannot be applied to $h$ since the $\BR^3$ is not a complex algebraic variety. Instead, we study the explicit geometry of the real analytic map $h$ and compute its perverse truncations directly. Our approach proceeds in following steps:
\begin{enumerate}
   \item Identify the singular fibers of $h$ and compute the monodromy groups of the smooth fibers around the critical values.  
   \item Describe $R^kh_*\BQ_X$ for $k=0,1,2,3$ explicitly, and show that the Leray spectral sequence of $h$ degenerate at $E_2$-page.
   \item Determine the perverse truncations of $Rh_*\BQ_X$ and compute the resulting perverse filtration associated with $h$.
\end{enumerate}
To simplify the notations, we simply write $X$ for $X_{a,b}$ in this section when no confusion arises.

\subsection{Geometry of the fibers}
In this section, we study the fibers of $h:X\to \BR^3$ by parametrizing the fibers by explicit angular coordinates.

Let $P=(u,v,w)\in\BR^3$. The fiber $h^{-1}(P)$ is the solutions to the system of equations
\begin{equation}
\begin{cases}
|x|^{2}-|x^{\prime}|^{2}=u,\\
|y|^{2}-|y^{\prime}|^{2}=v,\\
\log|z|=w,\\
xx^{\prime}=z^a+1,\\
yy^{\prime}=z^b+1.
\end{cases}
\label{3}
\end{equation}
Once $z=e^{w+i\psi}$ is fixed, the equations \eqref{3} decouple into two subsystems:
\begin{equation} \label{0987}
\begin{cases}
|x|^{2}-|x^{\prime}|^{2}=u,\\
xx^{\prime}=e^{a(w+i\psi)}+1,
\end{cases}
\quad
\textup{and}
\quad
\begin{cases}
|y|^{2}-|y^{\prime}|^{2}=v,\\
yy^{\prime}=z^{b(w+i\psi)}+1.
\end{cases}
\end{equation}
Since the two subsystems are analogous, we solve the first one for $x$ and $x'$ as an illustration.  From second equation, we have $|x|\cdot|x^{\prime}|=|e^{a(w+i\psi)}+1|$. Together with the first equation, this yields a system of quadratic equations of $|x|$ and $|x'|$. The solution is unique
\begin{equation} 
\begin{cases}
 \displaystyle |x| =\left(\frac{\sqrt{4|e^{a(w+\psi i)}+1|^{2}+u^{2}}+u}{2}\right)^{1/2},\\
 \displaystyle |x'| =\left(\frac{\sqrt{4|e^{a(w+\psi i)}+1|^{2}+u^{2}}-u}{2}\right)^{1/2},\\
  z=e^{w+\psi i}.\\
 \end{cases}
\end{equation}
The solutions for $|y|$ and $|y'|$ are obtained by replacing $a$ with $b$ and $u$ with $v$. To determine all solutions of $x$ and $x'$ it suffices to find their arguments. From \eqref{0987}, it follows that $|x|=0$ if and only if $u\le0,\,w=0,$ and $a\psi\in(2k+1)\BZ$. Similarly, $|x'|=0$ if and only if $u\ge0,\,w=0,$ and $a\psi\in(2k+1)\BZ$. Therefore, when $w\neq0$ or $u\neq0$, the variables $x$ and $x'$ are not simultaneously zero, so at least one of their arguments is well defined and can be chosen arbitrarily. The remaining variable is then determined by $xx'=z^a+1$. The same argument applies to $y$ and $y'$. More precisely, this leads to the following trivialization of $h$ over the regular values as a fiber bundle. 

\begin{prop}\label{5.2}
Let $h:X\to\BR^3$ be the real analytic map defined by \eqref{2}. Then $U=\BR^3\setminus \{(u,v,w)\mid uv=w=0\}$ is the open set of the regular values of $h$. Consequently, map $h$ defines a $T^3$-bundle over $U$. Moreover, $U$ is covered by the following four contractible open subsets
\[
\begin{split}
U_{++}=\{(u,v,w)\mid w\neq0\}\cup \{(u,v,w)\mid u>0,\, v>0\},\\
U_{+-}=\{(u,v,w)\mid w\neq0\}\cup \{(u,v,w)\mid u>0,\, v<0\},\\
U_{-+}=\{(u,v,w)\mid w\neq0\}\cup \{(u,v,w)\mid u<0,\, v>0\},\\
U_{--}=\{(u,v,w)\mid w\neq0\}\cup \{(u,v,w)\mid u<0,\, v<0\}.
\end{split}
\]
which trivializes $h$. The trivialization map over $U_{**}$, where $*=+$ or $-$, is given by
\[
\begin{tikzcd}
 h^{-1}U_{**}\arrow[rr]\arrow[rd]& & U_{**}\times T^3\arrow[ld]\\
     &U_{**}&
\end{tikzcd}
\]
which sends $(x,x',y,y',z)$ to $(|x|^2-|x'|^2,|y|^2-|y'|^2,\log|z|,\arg x^\Box,\arg y^\Box,\arg z)$ on $U_{**}$, where if $*=+$, $\Box=$ empty and if $*=-$, $\Box='$. 
\end{prop}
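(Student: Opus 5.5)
The plan is to verify three things directly from the explicit solution of \eqref{3}: the regular locus is exactly $U$; over each $U_{**}$ the stated map is a homeomorphism onto $U_{**}\times T^3$ commuting with the projections; and each $U_{**}$ is contractible. Together these give the $T^3$-bundle structure and the trivialization.

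\textbf{Regular values and fibers.} Fix $P=(u,v,w)$ and $\psi$. The formulas after \eqref{0987} determine $|x|,|x'|,|y|,|y'|$ uniquely and continuously (indeed real-analytically) in $(u,v,w,\psi)$. From these formulas, both $x$ and $x'$ vanish only if $u=0$, $w=0$ and $e^{ia\psi}=-1$. Likewise, both $y$ and $y'$ vanish only if $v=0$, $w=0$ and $e^{ib\psi}=-1$.

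\begin{enumerate}
\item Suppose $P\in U$, so $w\neq0$ or $uv\neq0$. On $U_{++}$, $|x|>0$ and $|y|>0$ everywhere: if $w\neq0$ then $|z^a+1|>0$, so the square-root expression is strictly positive; if $u>0$ it is positive directly. The same holds with primes on $U_{-*}$ and $U_{*-}$.
\item On $U_{++}$, the triple $(\arg x,\arg y,\psi)\in T^3$ can be chosen freely. The remaining coordinates are then recovered as $x'=(z^a+1)/x$ and $y'=(z^b+1)/y$, which are continuous because $x,y\neq0$.
\item The inverse map $(u,v,w,\theta_1,\theta_2,\psi)\mapsto(|x|e^{i\theta_1},\ldots)$ is therefore continuous. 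This gives the homeomorphism, and analogously on the other three charts using $x'$ and/or $y'$. In particular every fiber over $U$ is $T^3$.
\item Regularity of $h$ at these points is then immediate. Since $h$ is a smooth surjection that is a locally trivial fibration, checking rank $3$ amounts to the local product structure. To be safe, I would instead compute $dh$ in the chart coordinates $(u,v,w,\theta_1,\theta_2,\psi)$, where $h$ is simply a projection.
\end{enumerate}

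Conversely, over a point with $w=0$ and $uv=0$, say $u=0$, the fiber contains points with $x=x'=0$ (at $\psi$ with $e^{ia\psi}=-1$). Near such a point the fiber is not locally a 3-manifold: the $\arg x$ circle collapses there. So these points are not regular values. Here I would quote the degeneration picture, or compute $d(|x|^2-|x'|^2)$, which vanishes when $x=x'=0$, so the rank drops.

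\textbf{Covering and contractibility.} The four $U_{**}$ cover $U$, since $uv\neq0$ places $(u,v)$ in some open quadrant. For contractibility, take $U_{++}$. It is the union of $\{w>0\}$, $\{w<0\}$ and the quadrant region $\{u>0,v>0\}$, which meets $w=0$. I would deformation retract it onto the point $(1,1,0)$. First move $(u,v)$ along the straight line toward $(1,1)$ while keeping $w$ fixed; the set $\{w\neq0\}$ allows arbitrary $(u,v)$, so this stays inside $U_{++}$. Then, with $(u,v)=(1,1)$, shrink $w$ linearly to $0$; this also stays in $U_{++}$. Both stages are continuous in the point, so the composite homotopy contracts $U_{++}$. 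The same argument with $(\pm1,\pm1)$ handles the other three charts.

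\textbf{Main obstacle.} The delicate step is the continuity of the inverse trivialization across $w=0$ inside the quadrant regions, where $|z^a+1|$ can vanish. There I must check that the chosen variable ($x$, or $x'$ when $*=-$) stays nonzero, which holds because $u>0$ (resp.\ $u<0$) keeps the relevant square root positive, and that the other variable, defined as a quotient, extends continuously (it tends to $0$).
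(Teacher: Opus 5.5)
Your proposal is correct and follows the paper's route. The paper likewise treats only $U_{++}$ (the other charts follow by the $x\leftrightarrow x'$, $y\leftrightarrow y'$ symmetry): it writes the explicit inverse $\Phi$ from the modulus formulas, checks that $x,y\neq 0$ on $U_{++}$, and recovers $x',y'$ by division. Your extra checks fill in claims the paper asserts without proof: that points with $uv=w=0$ are critical values because $d(|x|^2-|x'|^2)$ (or its $y$-analogue) vanishes where $x=x'=0$, and the straight-line contraction of $U_{**}$. One point to tighten: rank $3$ of $dh$ over $U$ needs $\Phi$ to be smooth, which holds because it is real-analytic, both radicands being strictly positive on $U_{**}$. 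A merely topological product structure would not give this.
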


\begin{proof}
By symmetries between $x$ and $x'$, and between $y$ and $y'$, it suffices to prove the statement for $U_{++}$. For $P=(u,v,w)\in \mathbb{R}^3$ satisfies either $w\neq 0$ or $u,v>0$, we need to show that the map 
\begin{eqnarray*}
h^{-1}(P)&\to& T^3\\
(x,x',y,y',z)& \mapsto&(\arg x,\arg y,\arg z)
\end{eqnarray*}
is a diffeomorphism and varies smoothly on $P$. Let 
\[
T^3 = \{(\theta,\varphi,\psi)\mid \theta,\varphi,\psi \in S^1 = \mathbb{R}/2\pi\mathbb{Z}\},
\]
be a parametrization of the 3-torus $T^3$. Define 
\begin{equation}
\begin{aligned}
\Phi:T^3 &\rightarrow h^{-1}(u,v,w) \\
(\theta,\varphi,\psi) &\to (x,x^{\prime},y,y^{\prime},z),
\end{aligned}
\end{equation}
where 
\begin{equation} \label{++}
\begin{cases}
 \displaystyle x =e^{\theta i}\left(\frac{\sqrt{4|e^{a(w+\psi i)}+1|^{2}+u^{2}}+u}{2}\right)^{1/2},\\
 \displaystyle y =e^{\varphi i}\left(\frac{\sqrt{4|e^{b(w+\psi i)}+1|^{2}+v^{2}}+v}{2}\right)^{1/2},\\
 z=e^{w+\psi i},\\
 \displaystyle x'=\frac{z^a+1}{x}\\
 \displaystyle y'=\frac{z^b+1}{y},\\
\end{cases}
\end{equation}

We first verify that $\Phi$ is well-defined, \emph{i.e.} $x$ and $y$ are non-zero. In fact, if,  for instance, $x=0$, then both $u<0$ and $w=0$ must hold, which are excluded in the definition of $U_{++}$. Similarly, we have $y\neq0$. Hence $\Phi$ is well-defined. A direct check shows that \eqref{++} gives the inverse to the map $(x,x',y,y',z)\to (\arg x,\arg y,\arg z)$, so $\Phi$ is bijective. Moreover, $\Phi$ depends smoothly on the parameters $u,v,w$. Therefore, the $\Phi$ is a diffeomorphism which varies smoothly over $U_{++}$, thus trivializing the fibration $h$ as a $T^3$-bundle over $U_{++}$.
\end{proof}

The trivialization implies $h^{-1}(P)$ itself carries a natural fiber bundle structure, namely an $S^1\times S^1$ bundle over $S^1$, where the fiber coordinates are $\theta=\arg x$ and $\varphi=\arg y$, and the base coordinate is $\psi=\arg z$. However, when $u=w=0$ and $a\psi\in(2k+1)\BZ$ for some $k\in\BZ$, the equations \eqref{0987} force $x=x'=0$, so the fiber can no longer be parametrized as Proposition \ref{5.2}. In another word, some fibers of the $S^1\times S^1$ bundle are partially contracted.  More precisely, the singular fibers are described as follows.

\begin{prop} \label{5}
Let $h:X\to\BR^3$ be as in \eqref{2} and let $U$ be as in Proposition \ref{5.2}. Then the fibers of $h$ are singular outside $U$. 

\begin{enumerate}
    \item When $P$ lies on the $u$-axis excluding the origin, the fiber $h^{-1}(P)$ is topologically a 3-torus with $b$ 2-tori collapsed to $b$ singular circles. More precisely, the fiber is isomorphic to $S^1 \times (S^1 \times S^1 / \sim)$ where $(\varphi_1,\psi_1)\sim (\varphi_2,\psi_2)$ if and only if either 
    \begin{enumerate}
        \item $\varphi_1=\varphi_2$ and $\psi_1=\psi_2$ or
        \item $\psi_1=\psi_2=(2k+1)\pi/b$ for some $k\in\BZ$, with $\varphi_1,\varphi_2$ arbitrary.
    \end{enumerate} 
    \item When $P$ is on the $v$-axis excluding the origin, the result in (1) holds by replacing $b$ by $a$, and $\varphi$ by $\theta$. 
    \item The fiber of the origin $O$ can be described as follows. 
     \begin{enumerate}
        \item If $ord_2(a)\neq ord_2(b)$, then $f^{-1}(O)$ is a 3-torus with $a+b$ 2-tori collapsed to $a+b$ singular circles.
        \item If $ord_2(a)=ord_2(b)$, then $f^{-1}(O)$ is a 3-torus with $a+b-\gcd(a,b)$ 2-tori collapsed to $a+b-(a,b)$ singular circles, and $\gcd(a,b)$ 2-tori collapsed to $\gcd(a,b)$ singular points.      
     \end{enumerate}
    More precisely, the fiber is isomorphic to the quotient space \(S^1 \times S^1 \times S^1 / \approx\), where $(\theta_1,\varphi_1,\psi_1)\approx (\theta_2,\varphi_2,\psi_2)$ if and only if 
    \begin{enumerate}
       \item $\theta_1=\theta_2$, $\varphi_1=\varphi_2$ and $\psi_1=\psi_2$, or
       \item $\psi_1=\psi_2=(2k+1)\pi/b$ for some $k\in\BZ$, $\theta_1=\theta_2$, $\varphi_1,\varphi_2$ are arbitrary, or
       \item $\psi_1=\psi_2=(2k+1)\pi/a$ for some $k\in \BZ$, $\varphi_1=\varphi_2$, $\theta_1,\theta_2$ are arbitrary.
    \end{enumerate}
\end{enumerate}

\end{prop}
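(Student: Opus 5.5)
The plan is to parametrize each singular fiber explicitly by angular coordinates, just as in the proof of Proposition \ref{5.2}, and then read off which circles collapse. Since $z=e^{w+i\psi}$ is determined by $w$ and $\psi$, and once $\psi$ is fixed the system \eqref{0987} decouples, the fiber $h^{-1}(u,v,w)$ maps onto the $\psi$-circle. Over each $\psi$ the fiber is the product of the solution set $S_x(\psi)$ of the $(x,x')$-subsystem and the solution set $S_y(\psi)$ of the $(y,y')$-subsystem.

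From the explicit formulas for $|x|,|x'|$, the moduli are uniquely determined and depend continuously on $(u,w,\psi)$. So $S_x(\psi)$ is one of two things:
\begin{enumerate}
\item a circle, parametrized by $\arg x$ (or $\arg x'$), whenever $(x,x')\neq(0,0)$;
\item the single point $x=x'=0$, which happens exactly when $u=w=0$ and $e^{ia\psi}=-1$, i.e.\ $\psi=(2k+1)\pi/a$.
\end{enumerate}
The same dichotomy holds for $S_y(\psi)$, with $v$ and $b$ in place of $u$ and $a$.

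The key step is to glue these circles globally into the stated quotient of $T^3$. I first treat case (1), where $P=(u,0,0)$ with $u\neq0$. Take $u>0$ (the case $u<0$ uses $x'$). Then $x\neq0$ everywhere, so $\theta=\arg x$ is a global coordinate, giving the factor $S^1$. For the $y$-part, define $\Phi(\varphi,\psi)$ by the formula \eqref{++} with $v=0$ and $w=0$, i.e.\ $y=e^{i\varphi}|1+e^{ib\psi}|^{1/2}$. Then $y'=(z^b+1)/y$, and since $|y'|=|y|$, this is well defined and continuous on all of $T^2$. It is surjective onto the $y$-part, and injective except that for $\psi=(2k+1)\pi/b$ the whole $\varphi$-circle maps to the point $y=y'=0$. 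This is exactly the relation $\sim$. A continuous bijection from the compact quotient to the Hausdorff fiber is a homeomorphism, which gives the description $S^1\times(T^2/\sim)$ with $b$ collapsed circles. Case (2) is symmetric.

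For the origin, I apply the same map $\Phi(\theta,\varphi,\psi)$ with $u=v=w=0$ on all of $T^3$. The map forgets $\theta$ precisely at $\psi\in\frac{(2k+1)\pi}{a}$ and forgets $\varphi$ precisely at $\psi\in\frac{(2k+1)\pi}{b}$. This yields the relation $\approx$.

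To count the collapsed tori:
\begin{enumerate}
\item The $\psi$-values with $x=x'=0$ form the set $Z_a$ of $a$-th roots of $-1$ (viewed via $e^{i\psi}$), and those with $y=y'=0$ form $Z_b$.
\item At $\psi\in Z_a\setminus Z_b$, the torus collapses to a circle (the $\varphi$-circle), and symmetrically for $Z_b\setminus Z_a$.
\item At $\psi\in Z_a\cap Z_b$, both $\theta$ and $\varphi$ are forgotten, so the torus collapses to a point.
\end{enumerate}
By Lemma \ref{int}, $Z_a\cap Z_b$ is empty when $ord_2(a)\neq ord_2(b)$, and equals $Z_{(a,b)}$ otherwise. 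This gives $a+b$ singular circles in the first case, and $a+b-(a,b)$ singular circles plus $(a,b)$ singular points in the second.

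The main obstacle is verifying that this one global parametrization is continuous and really identifies points only as claimed. Near a collapse value of $\psi$, $\arg x$ degenerates, and we must check that $x'=(z^a+1)/x$ stays continuous; this holds because $|x'|=|x|\to0$. I will also check for negative $a$ or $b$ (replace by $|a|$), since the counts use $|a|,|b|$. Finally, I will confirm that outside $U$ the fibers are genuinely singular, because the quotient is not a manifold near the collapsed tori.
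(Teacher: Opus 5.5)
Your parametrization argument is the paper's own. The paper's maps $\Phi_u$ and $\Phi_O$ are exactly your $\Phi$ with $v=w=0$ (resp.\ $u=v=w=0$), extended by continuity where $x$ or $y$ vanishes, and the identifications are read off by comparing $z$, then $x$, then $y$. Your compact-to-Hausdorff remark fills in a step the paper leaves implicit: it shows the induced continuous bijection from the quotient of $T^3$ onto the fiber is a homeomorphism. Your description over $Z_a\cap Z_b$ is also more accurate than the literal relation $\approx$. As written, clauses (a)--(c) are not transitive there: $(\theta_1,\varphi_1,\psi)$ and $(\theta_2,\varphi_2,\psi)$ with $\theta_1\neq\theta_2$ and $\varphi_1\neq\varphi_2$ are identified by $\Phi_O$ but by no single clause. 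So you should say that $\Phi_O$ induces the equivalence relation \emph{generated} by $\approx$.

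The step that fails is your final count in case (3)(b). By your own correct analysis, the singular circles lie over $(Z_a\setminus Z_b)\cup(Z_b\setminus Z_a)$ and the singular points over $Z_a\cap Z_b=Z_{(a,b)}$. When $ord_2(a)=ord_2(b)$ this gives
\begin{itemize}
\item $(a-(a,b))+(b-(a,b))=a+b-2(a,b)$ singular circles,
\item $(a,b)$ singular points,
\item $|Z_a\cup Z_b|=a+b-(a,b)$ collapsed tori in total.
\end{itemize}
Your sentence claiming ``$a+b-(a,b)$ singular circles plus $(a,b)$ singular points'' does not follow from this, and it cannot be rescued, because that count is false. For $a=b=1$ the only special value is $\psi=\pi$. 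There the whole torus collapses to the singular point $(0,0,0,0,-1)$ of $X_{1,1}$, and there is no singular circle at all. The correct count agrees with Proposition~\ref{singfiber}, where $b_3(h^{-1}(O))=\#(S^1\setminus\mathfrak{U}_{a,b})=a+b-(a,b)$. The wording of (3)(b) is correct only if the $(a,b)$ point-collapses are read as being among the $a+b-(a,b)$ collapsed tori. The paper's proof never performs this count; it stops at the quotient description, so it does not expose the issue. You should state and prove the corrected numbers rather than force agreement with the literal statement. Your count $a+b$ in case (3)(a) is fine.
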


\begin{proof}
Since the statement (2) is analogous to (1), we prove only (1) and (3). Let $P=(u,0,0)\in V_u$ lie on the $u$-axis with the origin excluded. Without loss of generality, we assume $u>0$. Define 
\begin{equation} \label{Phiu}
\begin{aligned}
\Phi_u:T^3 &\rightarrow h^{-1}(u,0,0) \\
(\theta,\varphi,\psi) &\to (x,x^{\prime},y,y^{\prime},z),
\end{aligned}
\end{equation}
by 
\begin{equation}\label{28}
\begin{cases}
 \displaystyle x =e^{\theta i}\left(\frac{\sqrt{4|e^{a\psi i}+1|^{2}+u^{2}}+u}{2}\right)^{1/2},\\
  z=e^{\psi i},\\
 \displaystyle x'=\frac{e^{a\psi i}+1}{x},\\
  y =\begin{cases}
       e^{\varphi i}\sqrt{|e^{b\psi i}+1|}, & \text{ if }\psi\not\in (2k+1)\pi/b,\\
    0, & \text{ if }\psi\in (2k+1)\pi/b,
  \end{cases}\\
  y'=\begin{cases}
     \displaystyle \frac{e^{b\psi i}+1}{y}, &  \text{ if }\psi\notin (2k+1)\pi/b,\\
     0, & \text{ if }\psi\in (2k+1)\pi/b.\\
  \end{cases}
\end{cases}
\end{equation}
In fact, the definition of $\Phi_u$ is exactly the same as $\Phi$ in \eqref{++} for $y\neq0$, and for the value of $y=0$, it is defined as the limit when $\psi\to (2k+1)\pi/b$ for some $k\in\BZ$. So $\Phi_u$ is continuous. As in the smooth case, the map $\Phi_u$ is a well-defined surjection since the solutions to \eqref{3} can be expressed as \eqref{28}. 

Now suppose two distinct points \((\theta_1, \varphi_1, \psi_1)\) and \((\theta_2, \varphi_2, \psi_2)\) in \(T^3\) are mapped to the same point via $\Phi_u$. Comparing the \(z\)-coordinates gives \(\psi_1 = \psi_2\). Comparing the $x$-coordinates and using the assumption $u>0$, we obtain \(\theta_1 = \theta_2\). Comparing the \(y\)-coordinates shows that if $b\psi\not=(2k+1)\pi$, then $\varphi_1=\varphi_2\). If $b\psi=(2k+1)\pi$ for some $k\in\BZ$, $\varphi_1$ and $\varphi_2$ can be chosen arbitrarily. This means $(\theta_1, \varphi_1, \psi_1) \sim (\theta_2, \varphi_2, \psi_2)$ if and only if either
\begin{enumerate}
    \item $\theta_1=\theta_2$, $\varphi_1=\varphi_2$ and $\psi_1=\psi_2$, or
    \item $\theta_1=\theta_2$, $\psi_1=\psi_2=(2k+1)\pi/b$ for some $k\in\BZ$, with $\varphi_1,\varphi_2$  arbitrary.
\end{enumerate}
Therefore, $h^{-1}(u,0,0)$ is the quotient space of $T^3$ by the equivalence relation above. Since $\theta$-coordinate is irrelevant to the other two, the fiber can be expressed as $S^1 \times (S^1 \times S^1 / \sim)$, where the three components correspond to $\theta, \varphi, \psi$, and the equivalence relation $\sim$ is exactly the one described in (1).

We now turn to the fiber $h^{-1}(O)$ over the origin. Analogous to the previous case, we define 

\begin{equation} \label{PhiO}
\begin{aligned}
\Phi_O:T^3 &\rightarrow h^{-1}(0,0,0) \\
(\theta,\varphi,\psi) &\to (x,x^{\prime},y,y^{\prime},z),
\end{aligned}
\end{equation}
by 
\begin{equation}
\begin{cases}
 z=e^{\psi i},\\
 x =\begin{cases}
 e^{\theta i}\sqrt{|e^{a\psi i}+1|},&\text{ if }\psi\not\in (2k+1)\pi/a,\\
 0, & \text{ if }\psi\in (2k+1)\pi/a,\\
 \end{cases}\\
   x'=\begin{cases} 
   \displaystyle \frac{e^{a\psi i}+1}{x}, &\text{ if }\psi\not\in (2k+1)\pi/a,\\
   0, &\text{ if }\psi\in (2k+1)\pi/a,\\
 \end{cases}\\   
  y =\begin{cases}
       e^{\varphi i}\sqrt{|e^{b\psi i}+1|}, & \text{ if }\psi\not\in (2k+1)\pi/b,\\
    0, & \text{ if }\psi\in (2k+1)\pi/b,
  \end{cases}\\
  y'=\begin{cases}
     \displaystyle \frac{e^{b\psi i}+1}{y}, &  \text{ if }\psi\notin (2k+1)\pi/b,\\
     0, & \text{ if }\psi\in (2k+1)\pi/b.\\
  \end{cases}
\end{cases}
\end{equation}
The map $\Phi_{O}$ is continuous and surjective by the same reason as before. Two points $(\theta_1,\varphi_1,\psi_1)$ and $(\theta_2,\varphi_2,\psi_2)$ are mapped to the same point if and only if they are related by the equivalence relation $\approx$ in (3). Therefore, the fiber $h^{-1}(O)$ is homeomorphic to the quotient space $T^3/\approx$.
\end{proof}

\begin{cor} \label{5.1}
The map \(h:X\to\BR^3\) is a proper surjective morphism of real analytic varieties, with all fibers of dimension 3.
\end{cor}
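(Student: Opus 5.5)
We must prove Corollary \ref{5.1}: $h$ is proper, surjective, real analytic, and every fiber has dimension 3.

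\emph{Real analyticity.} The components $|x|^2-|x'|^2$, $|y|^2-|y'|^2$ and $\log|z|$ are real analytic on $X$, since $z\neq0$ on $X$. So $h$ is a real analytic map. Moreover, $X$ is a complex affine variety, hence a real analytic variety.

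\emph{Surjectivity.} Proposition \ref{5.2} and Proposition \ref{5} give an explicit parametrization of $h^{-1}(P)$ for every $P=(u,v,w)\in\BR^3$: the maps $\Phi$, $\Phi_u$, $\Phi_O$ and their analogues are well defined, and their formulas always produce solutions of \eqref{3}. Hence every fiber is non-empty. To be safe, I would check directly that the closed-form values of $|x|,|x'|$ always solve the quadratic system.

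\emph{Properness.} I would show that the preimage of a compact set $K\subset[-R,R]^2\times[-R,R]$ is bounded and closed in $\BC^5$.
\begin{enumerate}
\item Boundedness of $z$: from $|\log|z||\le R$ we get $e^{-R}\le|z|\le e^R$. In particular $z$ stays away from $0$, so the closed subset $\{z\neq0\}$ of $X$ causes no escape to the boundary.
\item Boundedness of the other coordinates: $|xx'|=|z^a+1|$ is bounded by a constant $C$ depending on $R$, and $|x|^2-|x'|^2=u$ is bounded. The explicit formulas
\[
|x|^2=\tfrac12\left(\sqrt{4|z^a+1|^2+u^2}+u\right),\qquad |x'|^2=\tfrac12\left(\sqrt{4|z^a+1|^2+u^2}-u\right)
\]
then bound $|x|$ and $|x'|$. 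The same argument bounds $|y|$ and $|y'|$.
\end{enumerate}
Thus $h^{-1}(K)$ is a closed and bounded subset of $X\cap\{e^{-R}\le|z|\le e^R\}$, which is closed in $\BC^5$. Hence $h^{-1}(K)$ is compact. The key point here is that $|z|$ is bounded below, which prevents points from escaping to the deleted locus $z=0$.

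\emph{Fiber dimension.} Over $U$, each fiber is $T^3$ by Proposition \ref{5.2}, so it has dimension 3. Over the singular values, Proposition \ref{5} shows that the fiber is a quotient of $T^3$ in which only finitely many 2-tori are collapsed to circles or points. The complement of these finitely many subtori is open and dense in $T^3$ and maps homeomorphically (indeed real analytically) onto an open dense subset of the fiber. So every fiber is a 3-dimensional semianalytic set.

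The only mildly delicate step is making sure the parametrization maps genuinely identify an open dense part of each singular fiber with an open subset of $T^3$. This follows from the injectivity analysis already done in the proof of Proposition \ref{5}.
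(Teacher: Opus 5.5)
Your proof is correct. It matches the paper on surjectivity and fiber dimension, but takes a different and more careful route for properness.

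\textbf{The paper's route.} The paper argues that, since $X$ and $\BR^3$ are locally compact, properness of $h$ is equivalent to compactness of every fiber. It then reads compactness, surjectivity and the dimension count off the fiber descriptions in Propositions \ref{5.2} and \ref{5}.

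\textbf{Your route.} You bound $h^{-1}(K)$ directly. The condition $|\log|z||\le R$ pins $|z|$ into $[e^{-R},e^R]$. The closed formulas give $|x|^2,|x'|^2\le |z^a+1|+|u|$; these formulas hold on the whole fiber, including where $z^a+1=0$, since then $|x|^2=\max(u,0)$. Hence $h^{-1}(K)$ is a closed bounded subset of $\BC^5$.

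\textbf{What each approach buys.} Your argument does not need the fiber classification at all. More importantly, it proves what properness actually requires. Compact fibers alone do not give properness between locally compact spaces: the inclusion $(0,1)\hookrightarrow\BR$ has compact fibers and is not proper, because one also needs $h$ to be closed. Your uniform lower bound $|z|\ge e^{-R}$ is exactly what rules out sequences escaping toward the deleted locus $\{z=0\}$; you wrote ``the closed subset $\{z\neq0\}$'' where you meant this locus. The paper's one-line criterion is shorter, but as written it needs precisely your estimate to be justified.

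\textbf{Surjectivity and dimension.} Here you do essentially what the paper does, namely invoke the parametrizations $\Phi,\Phi_u,\Phi_O$. Your observation makes ``dimension $3$'' precise: $\Phi_u$ and $\Phi_O$ are quotient maps, being continuous surjections from the compact $T^3$ onto a Hausdorff fiber. So they restrict to homeomorphisms from the saturated open dense complement of the collapsed $2$-tori onto an open dense subset of the fiber. The collapsed loci have images of dimension at most $1$.
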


\begin{proof}
Since both $X$ and $\BR^3$ are locally compact, the properness is equivalent to the compactness of all fibers of $h$. Then apply Proposition \ref{5.2} and \ref{5}. 
\end{proof}

\subsection{Cohomology of fibers and specialization map}
From the geometric description of the fibers of $h$ in Proposition \ref{5.2} and Proposition \ref{5}, we may view fibers as $S^1\times S^1$-fibrations $\pi:h^{-1}(P)\to S^1$. We will study the decomposition of the complexes $R\pi_*\BQ_{h^{-1}(P)}$ and compute the cohomology of the fibers. For this purpose, we need the following lemma.

\begin{lem} \label{S1}
   Let $j:\mathfrak{U}\to S^1$ be an open embedding such that the complement $S^1\setminus \mathfrak{U}$ is a finite set of $n$ points. 
   \begin{enumerate}
   \item The cohomology groups satisfy
   \[
   H^k(S^1,j_!\BQ_\mathfrak{U})=\begin{cases}
   \BQ^{n} & k=1,\\
   0 & \text{otherwise}.
   \end{cases}
   \]
   \item Suppose $F\in D^b_c(S^1)^{\le0}$ and $G\in D^b_c(S^1)$. Suppose there is a distinguished triangle 
   \[
   F\to G\to j_!\BQ_{\mathfrak{U}}[-1]\to.
   \]
   Then $G=F\oplus j_!\BQ_{\mathfrak{U}}[-1]$. 
   \end{enumerate}
\end{lem}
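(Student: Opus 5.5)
For part (1) I would use the short exact sequence of sheaves on $S^1$
\[
0\to j_!\BQ_{\mathfrak U}\to \BQ_{S^1}\to i_*\BQ_{S^1\setminus\mathfrak U}\to 0,
\]
where $i$ is the inclusion of the $n$ removed points. Its long exact sequence in cohomology reads
\[
0\to H^0(j_!\BQ_{\mathfrak U})\to H^0(S^1,\BQ)\xrightarrow{r}\BQ^n\to H^1(j_!\BQ_{\mathfrak U})\to H^1(S^1,\BQ)\to 0,
\]
because the points have no higher cohomology and $H^k(S^1)=0$ for $k\ge2$. The restriction $r$ is the diagonal map $\BQ\to\BQ^n$. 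Since $n\ge1$, it is injective, so $H^0(j_!\BQ_{\mathfrak U})=0$. This gives $\dim H^1(j_!\BQ_{\mathfrak U})=(n-1)+1=n$, and all other degrees vanish. The case $n=0$ is excluded by hypothesis: it would give the constant sheaf, with $H^0\ne0$. This is the same computation as in the corollary to Proposition \ref{2d}.

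For part (2), the triangle splits once its connecting morphism
\[
\delta\in\Hom(j_!\BQ_{\mathfrak U}[-1],F[1])=\Hom(j_!\BQ_{\mathfrak U},F[2])
\]
vanishes, so I would show that this group is zero. By adjunction $j_!\dashv j^*$, it equals
\[
\Hom_{D^b_c(\mathfrak U)}(\BQ_{\mathfrak U},j^*F[2])=\BH^2(\mathfrak U,j^*F).
\]
Here $F\in D^{\le0}$ is meant for the ordinary (standard) $t$-structure, since that is how the lemma will be applied to $R\pi_*$ with $F=\tau_{\le0}$. So $j^*F\in D^{\le0}(\mathfrak U)$. Moreover $\mathfrak U$ is a disjoint union of $n$ open arcs, each contractible of real dimension $1$.

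The key estimate is then the hypercohomology spectral sequence
\[
E_2^{p,q}=H^p(\mathfrak U,\CH^q(j^*F)).
\]
Its nonzero terms have $q\le0$ and $p\le1$, because an open arc has cohomological dimension $1$ for constructible sheaves. Hence $\BH^2(\mathfrak U,j^*F)=0$. So $\delta=0$, the triangle splits, and $G\cong F\oplus j_!\BQ_{\mathfrak U}[-1]$.

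The main obstacle is making sure the cohomological dimension bound applies. Constructible sheaves on an open interval, even non-compact ones, have $H^p=0$ for $p\ge2$, which follows for example from real dimension $1$ via \cite{KS}. If $F$ were instead meant in the perverse $t$-structure, I would first convert: for the upper middle perversity on a $1$-dimensional manifold, ${^\Fp D^{\le0}}$ consists of complexes with cohomology in degrees $\le0$ on the open stratum and $\le0$ at points. Such complexes are again standard $D^{\le0}$, so the same argument goes through.
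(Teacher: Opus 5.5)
Your proposal is correct and follows the paper's proof essentially verbatim. Part (1) uses the long exact sequence of $j_!\BQ_{\mathfrak U}\to\BQ_{S^1}\to\BQ_{S^1\setminus\mathfrak U}\to$ together with injectivity of the restriction map $r$. Part (2) uses adjunction to rewrite the obstruction as $\Hom(\BQ_{\mathfrak U},j^*F[2])=\BH^2(\mathfrak U,j^*F)$, which vanishes because $\dim\mathfrak U=1$ and $j^*F\in D^{\le0}$.

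Your extra remarks only make explicit what the paper leaves tacit. These are the spectral-sequence justification of the vanishing, and the check that standard and upper-middle-perverse $D^{\le0}$ agree in dimension $1$. The same goes for the need for $n\ge1$, though that is an implicit assumption of both your argument and the paper's rather than a stated hypothesis.
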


\begin{proof}
   Applying the hypercohomology to the distinguished triangle 
   \[
   j_!\BQ_{\mathfrak{U}}\to\BQ_{S^1}\to\BQ_{S^1\setminus\mathfrak{U}}\to, 
   \]
   we obtain the exact sequence
   \[
   \begin{split}
   0\to &H^0(S^1,j_!\BQ_{\mathfrak{U}})\to H^0(S^1,\BQ)\xrightarrow{r} H^0(S^1\setminus\mathfrak{U},\BQ)\\
   \to&  H^1(S^1,j_!\BQ_{\mathfrak{U}})\to H^1(S^1,\BQ)\to 0.
   \end{split}
   \]
   Since the restriction map $r$ is injective, we conclude that
   \[
      H^k(S^1,j_!\BQ_\mathfrak{U})=\begin{cases}
   \BQ^{n} & k=1\\
   0 & \text{otherwise}.
   \end{cases}
   \]
   The splitting of the distinguished triangle follows from 
   \[
   \Ext^1(j_!\BQ_\mathfrak{U}[-1],F) =\Hom(j_!\BQ_\mathfrak{U},F[2])=\Hom(\BQ_{\mathfrak{U}},j^*F[2])=H^2(\mathfrak{U},j^*F)=0,
   \]
   where the last equality is due to $\dim\mathfrak{U}=1$ and $j^*F\in D^b_c(S^1)^{\le0}$.
\end{proof}

\begin{prop}\label{singfiber}
   Let $h:X\to \BR^3$ be the map \eqref{2} and $U$ be as in Proposition \ref{5}. Let $V_u$ and $V_v$ be the $u$-axis and $v$-axis excluding the origin. The table below gives the Betti numbers of the fiber $f^{-1}(P)$ for $P$ in different strata of $\BR^3$.
    \begin{center}
\begin{tabular}{c||c|c|c|c}
$f^{-1}(P)$ &  $O$   &$V_u$& $V_v$&$U$ \\
\hline
$H^0$ & $1$ &  $1$ & $1$ & $1$ \\
$H^1$ & $1$ &  $2$ & $2$ & $3$ \\
$H^2$ & $a+b$  & $b+1$ & $a+1$ & $3$ \\
$H^3$ (if $X$ smooth)& $a+b$  & $b$ & $a$ & $1$\\
$H^3$ (if $X$ singular)& $a+b-(a,b)$& $b$& $a$& $1$
\end{tabular}
\end{center}
   
\end{prop}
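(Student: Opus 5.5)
I will compute the cohomology of each fiber via the projection $\pi:h^{-1}(P)\to S^1$, $(\theta,\varphi,\psi)\mapsto\psi$, given by the parametrizations in Proposition \ref{5.2} and Proposition \ref{5}. The aim is to decompose $R\pi_*\BQ$ on $S^1$ with Lemma \ref{S1} and then take hypercohomology. For $P\in U$ the fiber is $T^3$, so the Betti numbers are $1,3,3,1$ and there is nothing to prove.

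\textbf{The fiber over $V_u$.} By Proposition \ref{5}(1) the fiber is $S^1\times Y_b$, where $Y_b=(S^1\times S^1)/\sim$. So it suffices to compute $H^*(Y_b)$ and apply the Künneth formula.

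\emph{Stalks of $R\pi_*\BQ_{Y_b}$.} Consider $\pi:Y_b\to S^1$. Let $\mathfrak U$ be the complement of the $b$ points $Z_b$. The fiber over a point of $\mathfrak U$ is a circle. The fiber over a point of $Z_b$ is a point, and its preimage of a small interval is contractible (a cone). Hence $R^0\pi_*\BQ=\BQ_{S^1}$ and $R^1\pi_*\BQ=j_!\BQ_{\mathfrak U}$. The latter is constant on $\mathfrak U$ because the circle bundle is trivial there (the coordinate $\varphi$ is global).

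\emph{Splitting and hypercohomology.} Lemma \ref{S1}(2) splits the truncation triangle:
\[
R\pi_*\BQ_{Y_b}=\BQ_{S^1}\oplus j_!\BQ_{\mathfrak U}[-1].
\]
Lemma \ref{S1}(1) gives $H^*(Y_b)=(1,1,b)$ in degrees $0,1,2$. Künneth then gives the fiber Betti numbers $(1,2,b+1,b)$, matching the table. The $V_v$ case is the same with $b$ replaced by $a$.

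\textbf{The fiber over $O$.} Let $\pi:h^{-1}(O)\to S^1$ and $S=Z_a\cup Z_b$. On $\mathfrak U=S^1\setminus S$ the fiber is $T^2$ and the family is trivial, so $R^k\pi_*\BQ|_{\mathfrak U}$ is constant of rank $1,2,1$ for $k=0,1,2$. At the special points I use proper base change together with conic structure, so that the stalk is the cohomology of the special fiber $\pi^{-1}(\psi)$:
\begin{enumerate}
\item for $\psi\in Z_b\setminus Z_a$ the special fiber is a circle (the $\theta$-circle);
\item for $\psi\in Z_a\setminus Z_b$ it is a circle (the $\varphi$-circle);
\item for $\psi\in Z_a\cap Z_b$ it is a point.
\end{enumerate}

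\emph{The sheaves.} This gives $R^0\pi_*=\BQ_{S^1}$ and $R^2\pi_*=j_!\BQ_{\mathfrak U}$. The sheaf $R^1\pi_*$ is an extension: its part generated by $d\theta$ is supported away from $Z_a$, and its part generated by $d\varphi$ is supported away from $Z_b$. Concretely,
\[
R^1\pi_*=j_{a!}\BQ_{S^1\setminus Z_a}\oplus j_{b!}\BQ_{S^1\setminus Z_b}.
\]
The restriction maps from the stalk at a special point to nearby generic stalks are injective by construction.

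\emph{Degeneration.} Next I would argue that $R\pi_*$ splits as the sum of its cohomology sheaves, or at least that the Leray spectral sequence degenerates. The differential $d_2:H^0(R^1)\to H^2(R^0)$ and similar ones should vanish; I would check this by an explicit cellular or Mayer--Vietoris computation. This gives:
\begin{enumerate}
\item $H^0=1$;
\item $H^1=1+H^0(R^1)=1+0$, using that $H^0$ of each $j_!$-summand vanishes when its complement is nonempty, so $H^1=1$;
\item $H^2=H^1(S^1,\BQ)+H^1(R^1)=1+a+b-1$; wait, this must be $a+b$, with $H^1(j_{a!})=a$ and $H^1(j_{b!})=b$, so the total $1+a+b$ overcounts.
\end{enumerate}

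\textbf{Main obstacle.} This mismatch shows that the main obstacle is a genuinely nontrivial differential. The correct bookkeeping should be $H^2=a+b$ and $H^3=\dim H^1(S^1,j_!\BQ_{\mathfrak U})-\dim\ker$, with $H^1(S^1,j_!\BQ_{\mathfrak U})=|S|$ and $|S|=a+b$ or $a+b-(a,b)$ according to Lemma \ref{int}. To pin down the differentials (equivalently, the gluing at the points of $S$), I would instead compute directly by Mayer--Vietoris: cover $h^{-1}(O)$ by preimages of small arcs around the points of $S$ and of the complementary open arcs. The resulting combinatorics is exactly the arc-function space $V_T$ of Lemma \ref{comblemma}. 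I would use Corollary \ref{comb} to identify the connecting kernel: it is $\BQ$ precisely when $Z_a\cap Z_b=\varnothing$, and this accounts for the corrections giving $H^2=a+b$ and the two values of $H^3$.

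As a consistency check I would verify Euler characteristic $0$:
\begin{enumerate}
\item smooth case: $1-1+(a+b)-(a+b)=0$;
\item singular case: $1-1+(a+b)-(a+b-(a,b))=(a,b)$. This is consistent because the $(a,b)$ collapsed tori each change $\chi$ by $+1$.
\end{enumerate}
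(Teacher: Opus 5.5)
\textbf{The central fiber is not established.} Your cases $U$, $V_u$, $V_v$ are fine; they are a sheaf-theoretic version of the paper's ``necklace of $b$ spheres''. Your description of $R^q\pi_*\BQ$ for the central fiber also agrees with the paper's \eqref{31}.

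The gap is the next step. Your count $H^2=H^1(S^1,\BQ)+H^1(S^1,R^1\pi_*\BQ)$ misindexes the Leray spectral sequence. The term $E_2^{p,q}=H^p(S^1,R^q\pi_*\BQ)$ contributes to degree $p+q$. So $H^1(S^1,R^0\pi_*\BQ)=E_2^{1,0}$ lives in $H^1$, where you had already counted it, not in $H^2$.

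The correct bookkeeping is as follows. $H^2$ comes from $E_2^{1,1}\oplus E_2^{0,2}=H^1(S^1,j_{a!}\BQ)\oplus H^1(S^1,j_{b!}\BQ)\oplus H^0(S^1,j_!\BQ_{\mathfrak U})$, of dimension $a+b+0$. $H^3=E_2^{1,2}=H^1(S^1,j_!\BQ_{\mathfrak U})$ has dimension $|S|$. There is no overcount.

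No nontrivial differential can exist either. $S^1$ has cohomological dimension $1$, so $E_2^{p,q}=0$ for $p\ge 2$, and every $d_r$ with $r\ge2$ lands in a zero column. The paper in fact obtains a splitting of $R\pi_*\BQ$ by applying Lemma \ref{S1}(2) twice, the same technique you used over $V_u$. Lemma \ref{int} then gives $|S|=a+b$ or $a+b-(a,b)$, and that is the whole proof.

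The remedy you propose would not produce the table. Corollary \ref{comb} has nonzero kernel only when $Z_a\cap Z_b=\varnothing$, i.e.\ in the smooth case. Your spurious excess of $1$ in $H^2$, however, appears in both cases. Your ``correction'' would therefore leave $H^2=a+b+1$ in the singular case. Your formula $H^3=|S|-\dim\ker$ would give $a+b-1$ in the smooth case. Both contradict the statement.

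Corollary \ref{comb} plays no role in the Betti numbers of the fibers. The paper uses it later, to compute the stalk of $R^3h_*\BQ_X$ at $O$. As written, the Mayer--Vietoris computation is only promised, and the bookkeeping it was meant to justify is wrong.
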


\begin{proof}
When $P\in U$, the Betti numbers of $h^{-1}(P)\cong T^3$ is obvious. When $P\in V_u$, the fiber is $S^1\times(S^1\times S^1/\sim)$.  Since $S^1\times S^1/\sim$ homeomorphic to a necklace of $b$ spheres, its Betti numbers are $1,1,b$. Hence the Betti numbers of $f^{-1}(P)$ is $1,2,b+1,b$. The case $P\in V_v$ is similar. To study the fiber $F=h^{-1}(O)$, we consider the natural projection $\pi:h^{-1}(O)\to S^1$ such that the graph
\[
\begin{tikzcd}
   T^3\arrow[r,"\Phi_O"]\arrow[rd,swap,"\text{pr}_3"]& F=h^{-1}(O)\arrow[d,dotted,"\pi"]\\
    & S^1
\end{tikzcd}
\]
commutes. The map $\pi$ is well-defined because for any point on the fiber, the $\psi$-coordinate of its preimage under $\Phi_0$ is uniquely determined. General fibers of $\pi$ are $T^2$ with coordinates $\theta$ and $\varphi$. The equivalence relation $\approx$ implies that the $S^1$ directions with coordinate $\theta$ are collapsed to a point over $(2k+1)\pi/a$, and similarly $S^1$ directions with coordinate $\varphi$ are collapsed to a point over $(2k+1)\pi/b$. By tracking the classes $d\theta$ and $d\varphi$ varying among the fibers, we obtain
\begin{equation} \label{31}
\begin{split}
R^0\pi_*\BQ_F&=\BQ_{S^1}\\
R^1\pi_*\BQ_F&=j_{a!}\BQ_{\mathfrak{U}_a}\oplus j_{b!}\BQ_{\mathfrak{U}_b}\\
R^2\pi_*\BQ_F&=j_{a,b!}\BQ_{\mathfrak{U}_{a,b}}
\end{split}
\end{equation}
where $j_a:\mathfrak{U}_a=S^1\setminus\{(2k+1)\pi/a\mid k\in\BZ\}\hookrightarrow S^1$ and $j_{a,b}:\mathfrak{U}_{a,b}=\mathfrak{U}_{a}\cap \mathfrak{U}_{b}\hookrightarrow S^1$. Now apply Lemma \ref{S1}.(2) to the distinguished triangles 
\[
\BQ_{S^1}\to \tau_{\le1}R\pi_*\BQ_F\to j_{a!}\BQ_{\mathfrak{U}_a}\oplus j_{b!}\BQ_{\mathfrak{U}_b}[-1]
\]
and
\[
(\tau_{\le1} R\pi_*\BQ_F)[1]\to R\pi_*\BQ_F[1]\to j_{a,b!}\BQ_{\mathfrak{U}_{a,b}}[-1]\to,
\]
we have 
\[
R\pi_*\BQ_F=\BQ_{S^1}\oplus j_{a!}\BQ_{\mathfrak{U}_a}[-1]\oplus j_{b!}\BQ_{\mathfrak{U}_b}[-1]\oplus j_{a,b!}\BQ_{\mathfrak{U}_{a,b}}[-2].
\]
By Lemma \ref{int} and Proposition \ref{sing}, we have
\[
\# (S^1\setminus\mathfrak{U}_{a,b})=
\begin{cases}
 a+b & \text{if }X \text{ is smooth},\\
 a+b-(a,b) & \text{if }X \text{ is singular}.
\end{cases}
\]
Therefore, by Lemma \ref{S1}.(1), we have
\[
H^k(F,\BQ)=\begin{cases}
1 & k=0,1\\
a+b & k=2,3\\
0 & \text{otherwise,}
\end{cases} \quad \text{if }X \text{ is smooth}
\]
and
\[
H^k(F,\BQ)=\begin{cases}
1 & k=0,1\\
a+b & k=2\\
a+b-(a,b)& k=3\\
0 & \text{otherwise,}
\end{cases} \quad \text{if }X \text{ is singular}.
\]
\end{proof}

\begin{rmk}
By comparing the Betti numbers in Proposition \ref{sm}, Theorem \ref{s} and Proposition \ref{singfiber} we see that the restriction map $H^*(X)\to H^*(h^{-1}(O))$ is an isomorphism. So in the topological perspective, the torus fibration $h$ behaves similarly to the Hitchin-type elliptic fibrations in the sense of \cite{Z3}.  
\end{rmk}

Dually, the homology cycles of the fibers can be described as follows. The proof follows directly from Proposition \ref{singfiber}.

\begin{cor} \label{singfiberh}
   Let $h:X\to \BR^3$ be the map \eqref{2} and let $U$ be as in Proposition \ref{5}. Let $V_u$ and $V_v$ be the $u$-axis and $v$-axis excluding the origin. Then the homology cycles of the fibers can be described as follows.
   \begin{enumerate}
       \item When $P\in V_u^+$ on the positive $u$-axis, the homology group satisfies
       \[
       H_k(h^{-1}(P),\BQ)=
       \begin{cases}
       \langle\textup{pt}\rangle & k=0,\\
       \langle\Gamma_\theta,\Gamma_\psi\rangle & k=1,\\
       \langle S_1,\dots,S_b, \Gamma_\theta\times\Gamma_\psi\rangle & k=2,\\
       \langle S_1\times\Gamma_\theta,\cdots,S_b\times\Gamma_\theta\rangle & k=3,\\
       \end{cases}
       \]
       where $\Gamma_\theta$ is a circle with $\theta$ rotating from $0$ to $2\pi$ with $\varphi,\psi$ fixed, and $S_i$ is the locus defined by $(2i-1)\pi/b\le\psi\le (2i+1)\pi/b$ with $\theta$ fixed. When $P\in V_u^-$, replace $\theta$ by $\theta'$. 
       \item When $P\in V_v^+$ on the positive $v$-axis, the homology group satisfies
       \[
       H_k(h^{-1}(P),\BQ)=
       \begin{cases}
       \langle\textup{pt}\rangle & k=0,\\
       \langle\Gamma_\varphi,\Gamma_\psi\rangle & k=1,\\
       \langle T_1,\cdots,T_a, \Gamma_\varphi\times\Gamma_\psi\rangle & k=2,\\
       \langle T_1\times\Gamma_\varphi,\cdots,T_a\times\Gamma_\varphi\rangle & k=3,\\
       \end{cases}
       \]
       where $T_i$ is the locus defined by $(2i-1)\pi/a\le\psi\le (2i+1)\pi/a$ with $\varphi$ fixed. When $P\in V_v^-$, replace $\varphi$ by $\varphi'$.
       \item  The homology of the fiber $h^{-1}(O)$ satisfies      
       \[
       H_k(h^{-1}(O),\BQ)=
       \begin{cases}
       \langle\textup{pt}\rangle & k=0,\\
       \langle\Gamma_\psi\rangle & k=1,\\
       \langle S_1,\cdots,S_b, T_1,\cdots T_a\rangle & k=2,\\
       \langle R_1,\cdots,R_c\rangle & k=3,\\
       \end{cases}
       \]
       where $c=|Z_{a,b}|$ where $Z_{a,b}=\{\psi\mid e^{a\psi}=-1 \textup{ or } e^{b\psi}=-1\}$. If $0\le\psi_1<\cdots<\psi_c<2\pi$ are the elements of $Z_{a,b}$ in increasing order, then $R_i$ is the 3-cycle in $f^{-1}(O)$ defined by  $\psi_i\le\psi\le \psi_{i+1}$.
   \end{enumerate}
\end{cor}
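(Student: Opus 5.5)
The plan is to read the homology from the explicit quotient descriptions in Proposition \ref{5}, with the Betti numbers of Proposition \ref{singfiber} as the dimension count. Since we work over $\BQ$, $H_k$ is dual to $H^k$, so it suffices to exhibit the listed number of cycles in each degree and check that they are linearly independent. Degrees $0$ and $1$ are immediate: the fiber is connected, and the loops $\Gamma_\theta,\Gamma_\psi$ (resp.\ $\Gamma_\psi$ alone at $O$) pair nontrivially with the classes $d\theta$, $d\psi$ surviving in $H^1$.

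For $P\in V_u^+$, use the product decomposition $h^{-1}(P)\cong S^1_\theta\times N_b$ from Proposition \ref{5}(1). Here $N_b=(S^1_\varphi\times S^1_\psi)/\sim$ is a necklace of $b$ spheres. The sphere $S_i$ is the image of the cylinder $\{(2i-1)\pi/b\le\psi\le(2i+1)\pi/b\}$, whose two boundary circles are collapsed to points, so it is a closed $2$-cycle. Thus $H_*(N_b)$ is spanned by $\textup{pt}$, $\Gamma_\psi$ and $S_1,\dots,S_b$. The K\"unneth formula with $H_*(S^1_\theta)=\langle \textup{pt},\Gamma_\theta\rangle$ then gives exactly the list in (1). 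When $u<0$, the coordinate $x$ may vanish while $x'$ does not, so one uses the $\theta'=\arg x'$ parametrization from Proposition \ref{5.2}. Case (2) follows by the symmetry $a\leftrightarrow b$, $\theta\leftrightarrow\varphi$.

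For $h^{-1}(O)$, use the projection $\pi:h^{-1}(O)\to S^1_\psi$ together with the splitting
\[
R\pi_*\BQ=\BQ_{S^1}\oplus j_{a!}\BQ[-1]\oplus j_{b!}\BQ[-1]\oplus j_{a,b!}\BQ[-2]
\]
from the proof of Proposition \ref{singfiber}. Dually, each summand $j_!\BQ_{\mathfrak U}$ has $H^1$ spanned by the arcs between consecutive removed points. Concretely:
\begin{itemize}
\item The $\varphi$-circle over an arc between consecutive points of $Z_b$ sweeps a $2$-sphere $S_i$, since it is collapsed at both ends. These give $b$ classes.
\item Similarly, the $\theta$-circles over arcs between consecutive points of $Z_a$ give the $a$ spheres $T_i$.
\item The whole $T^2$ fiber over an arc between consecutive points of $Z_a\cup Z_b$ gives the $3$-chain $R_i$. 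Its boundary consists of tori in which the $\theta$- or $\varphi$-direction has been collapsed, so it is degenerate and $R_i$ is a closed cycle. There are $c=|Z_a\cup Z_b|$ of these.
\end{itemize}
By Lemma \ref{int}, $c$ equals $a+b$ or $a+b-(a,b)$, which matches $b_3$ in Proposition \ref{singfiber}. For $H_2$ the count $a+b$ also matches $b_2$.

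The main obstacle is proving linear independence rather than merely spanning. I would handle this by pairing with the cohomology classes coming from the summands above. The classes in $H^1(S^1,j_!\BQ)$ are supported on individual arcs, and the Lemma \ref{S1} computation shows that these arc classes form a basis. A cycle supported over a single arc therefore pairs to $1$ with its own dual class and to $0$ with the others, so the pairing matrix is the identity and the cycles are independent.
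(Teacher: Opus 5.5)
Your proposal is correct and follows the paper's route: the paper's entire proof is that the corollary ``follows directly from Proposition \ref{singfiber}'', meaning one dualizes the necklace/K\"unneth description over $V_u^\pm, V_v^\pm$ and the splitting of $R\pi_*\BQ_F$ over $S^1_\psi$ at $O$, and you spell exactly this out. One small precision for the $H_2$ pairing at $O$: $S_i$ pairs to zero with the $j_{a!}$-type classes, and $T_k$ with the $j_{b!}$-type classes, because $\theta$ (resp.\ $\varphi$) is constant on the cycle, not because the supports are disjoint, since arcs of $\mathfrak U_a$ and $\mathfrak U_b$ overlap.
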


The next goal is to understand the specialization map of the fibers of $h$. Let $P\in\BR^3$ be a point. For a sufficiently small Euclidean neighborhood $W$ of $P$ the restriction map
\[
H^*(h^{-1}(W))\to H^*(h^{-1}(P))
\]
is an isomorphism. The existence of such neighborhoods follows from the proper base change theorem and the constructibility of $Rh_*\BQ_X$. Then, for any point $Q\in W\setminus \{P\}$, there exists a natural map
\[
sp^*:H^*(h^{-1}(P),\BQ)\cong H^*(h^{-1}(W),\BQ)\to H^*(h^{-1}(Q),\BQ),
\]
called the specialization map. Equivalently, the dual map 
\[
sp_*:H_*(h^{-1}(Q),\BQ)\to H_* (h^{-1}(P),\BQ)
\]
describes how the homology classes of a nearby fiber $h^{-1}(Q)$ behave when pushed into the central fiber $h^{-1}(P)$. We remark that due to the constructibility, when $Q$ is sufficiently closed to $P$, the specialization map depends only on the strata containing $Q$ rather than the specific choice of $Q$. For our purpose, we will need to study the specialization map for points on the $u$-axis, $v$-axis and the origin. In fact, since the parametrization $\Phi_u$ and $\Phi_O$ in \eqref{Phiu} and \eqref{PhiO} are defined as the limit of the isomorphism $\Phi$ in \eqref{++} as the point $P$ approaches the $u$-axis and the origin, they describe how a general fiber degenerates to special fibers over $u$-axis and $O$. More precisely, we have the following description.

\begin{prop} \label{degenerate}
   Let $h:X\to \BR^3$ be the map \eqref{2} and $U$ be as in Proposition \ref{5}. Let $V_u$ and $V_v$ be the $u$-axis and $v$-axis excluding the origin.
   \begin{enumerate}
   \item Let $P$ be a point on $V_u^+$ and $Q$ be a point in $U$ near $P$ such that the specialization map $sp_*:H_*(h^{-1}(Q),\BQ)\to H_*(h^{-1}(P),\BQ)$  exists. Then $sp_*(\Gamma_\varphi)=0$, $sp_*(\Gamma_\theta)=\Gamma_\theta$, $sp_*(\Gamma_\psi)=\Gamma_\psi$, $sp_*(\Gamma_\theta\times \Gamma_\psi)=\Gamma_\theta \times \Gamma_\psi$, and $sp_*(\Gamma_\theta\times\Gamma_\psi)=\sum_{i=1}^b S_i$.  
   For $P\in V_u^-$, replace $\theta$ by $\theta'$.
      \item Let $P$ be a point on $V_v^+$ and $Q$ be a point in $U$ near $P$ such that the specialization map $sp_*:H_*(h^{-1}(Q),\BQ)\to H_*(h^{-1}(P),\BQ)$  exists. Then $sp_*(\Gamma_\theta)=0$, $sp_*(\Gamma_\varphi)=\Gamma_\varphi$, $sp_*(\Gamma_\psi)=\Gamma_\psi$, $sp_*(\Gamma_\varphi\times \Gamma_\psi)=\Gamma_\varphi \times \Gamma_\psi$, and $sp_*(\Gamma_\varphi\times\Gamma_\psi)=\sum_{i=1}^a T_i$.   
   For $P\in V_v^-$, replace $\varphi$ by $\varphi'$.
    \item  Let $O$ be the origin and $P$ be a point on $V_u^+$ near $O$ such that the specialization map $sp_*:H_*(h^{-1}(O),\BQ)\to H_*(h^{-1}(P),\BQ)$  exists. Then $sp_*(\Gamma_\varphi)=0$, $sp_*(\Gamma_\theta\times \Gamma_\psi)=\sum_{i=1}^a T_i$, and $sp_*(S_i\times\Gamma_\theta)=\sum R_j$, where the cycle $R_j$ appears in the sum if and only if it satisfies $(2i-1)\pi/b\le\psi\le (2i+1)\pi/b$. When $P\in V_u^-,V_v^+,V_v^-$, similar results hold.
     \end{enumerate}
\end{prop}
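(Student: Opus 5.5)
The plan is to compute $sp_*$ directly from the explicit parametrizations. The parametrization $\Phi$ in \eqref{++} over $U_{++}$ (or its primed variants over the other $U_{**}$) converges, as $Q\to P$, to $\Phi_u$ in \eqref{Phiu}, respectively to $\Phi_O$ in \eqref{PhiO}. Concretely, fix a small neighborhood $W$ of $P$ such that $h^{-1}(P)\hookrightarrow h^{-1}(W)$ is a homotopy equivalence. The family $\Phi_t$ obtained by moving $Q$ along a path to $P$ gives a homotopy $T^3\times[0,1]\to h^{-1}(W)$. So the inclusion $h^{-1}(Q)\hookrightarrow h^{-1}(W)$, followed by the homotopy inverse, is homotopic to $\Phi_u\circ\Phi_Q^{-1}$ (respectively $\Phi_O\circ\Phi_Q^{-1}$). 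Hence $sp_*$ equals the pushforward along the collapsing map $T^3\to T^3/\!\sim$, and everything reduces to computing pushforwards of explicit cycles under a quotient map.

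For (1), take $P\in V_u^+$ and $Q\in U_{++}$ nearby, and identify $H_*(h^{-1}(Q))=H_*(T^3)$ with basis the coordinate circles and their products.
\begin{itemize}
\item $\Gamma_\theta$ and $\Gamma_\psi$ map isomorphically onto their images, since $\theta$ is not collapsed and $\psi$ can be taken generic, and $\Gamma_\theta\times\Gamma_\psi$ likewise.
\item $\Gamma_\varphi$ at $\psi=(2k+1)\pi/b$ is collapsed to a point, so its image is $0$.
\item $\Gamma_\varphi\times\Gamma_\psi$ (the statement's second occurrence of $\Gamma_\theta\times\Gamma_\psi$ should read $\Gamma_\varphi\times\Gamma_\psi$) is the $(\varphi,\psi)$-torus. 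In the quotient it is the necklace of $b$ spheres, whose fundamental class is $\sum_i S_i$: cutting the $\psi$-circle at the points $(2k+1)\pi/b$ decomposes the torus into the $b$ cylinders $S_i$, each closed into a sphere.
\end{itemize}
The triple classes go to the sum of the $S_i\times\Gamma_\theta$ by the same cutting argument. Case (2) follows by exchanging $a\leftrightarrow b$ and $\theta\leftrightarrow\varphi$. For $V_u^-$ the same argument applies with the chart $U_{-\pm}$, which uses $\arg x'$, so $\theta$ is replaced by $\theta'$.

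For (3), the map $h^{-1}(P)\to h^{-1}(O)$ is again induced by $\Phi_O\circ\Phi_u^{-1}$, which further collapses the $\theta$-circles over $\psi\in(2k+1)\pi/a$.
\begin{itemize}
\item $\Gamma_\varphi$ (which in $h^{-1}(P)$ is already $0$, or is taken at a collapsed value) goes to $0$.
\item The $(\theta,\psi)$-torus $\Gamma_\theta\times\Gamma_\psi$ is cut at the $a$ points $(2k+1)\pi/a$ into cylinders whose ends collapse, giving $\sum_{i=1}^a T_i$.
\item For $S_i\times\Gamma_\theta$, the $3$-chain over $(2i-1)\pi/b\le\psi\le(2i+1)\pi/b$ is subdivided by the points of $Z_{a,b}$ in that interval. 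Each piece $\psi_j\le\psi\le\psi_{j+1}$ becomes the cycle $R_j$, so the image is $\sum R_j$ over exactly those $j$ whose interval lies inside $[(2i-1)\pi/b,(2i+1)\pi/b]$.
\end{itemize}
Agreement of orientations is automatic because $\Phi_O\circ\Phi_u^{-1}$ is a degree-one collapse on each piece.

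I expect the main obstacle to be justifying that $sp_*$ is really computed by the limiting map. The specialization map is defined through the retraction $H^*(h^{-1}(W))\cong H^*(h^{-1}(P))$, so one must check that the family $\Phi_t$ extends continuously to $t=0$ as a map into $h^{-1}(W)$. This follows from the continuity of the explicit formulas: $\sqrt{4|z^a+1|^2+u^2}\pm u$ is continuous, and $x'=(z^a+1)/x$ tends to $0$ where $x\to0$ in a controlled way. I would verify this by bounding $|x'|\le |z^a+1|^{1/2}$ on the relevant chart. With this, the homotopy argument above applies verbatim.
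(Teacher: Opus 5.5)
Your proposal is correct and follows the paper's own argument. Like the paper, you identify $sp_*$ with push-forward along the collapse maps coming from $\Phi_u$ and $\Phi_O$, and read off the images of the coordinate cycles by cutting at the collapse values of $\psi$; your corrections of the second $\Gamma_\theta\times\Gamma_\psi$ in (1) to $\Gamma_\varphi\times\Gamma_\psi$, and of the direction of $sp_*$ in (3), are the intended readings.

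The homotopy through the limiting family justifies a step the paper only asserts ``by construction''. In (3), take this homotopy on $h^{-1}(P)$ itself; it descends through $\Phi_u$ because the identification $\sim$ is the same at every point of $V_u^+$. A homotopy on $T^3$ alone determines $sp_*$ only on the image of $H_*(T^3)$, and that image does not contain the individual $S_i$ or $S_i\times\Gamma_\theta$.
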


\begin{proof}
   Without loss of generalization, it suffices to prove the statement for the specializations from $Q$ to $P$ and $P$ to $O$ for $P\in V_u^+$. By the construction, the specializations $sp_*$ are exactly induced by the push-forward of the map $\Phi_u$ and $\Phi_O$. By Proposition \ref{5}, $\Phi_u$ is the map 
   \[
   \Phi_u=\text{id}\times q:S^1_\theta\times S^1_\varphi\times S^1_\psi\to S^1_\theta\times (S^1_\varphi\times S^1_\psi/\sim)
   \]
   where the subscripts indicates the corresponding angular variables and $q$ is the quotient map which contracts $b$ copies of $\Gamma_\varphi$ on the 2-torus. Consequently, we have $q_*(\Gamma_\varphi)=0$, $q_*(\Gamma_\psi)=\Gamma_\psi$ and $q_*(\Gamma_\varphi\times\Gamma_\psi)=\sum_{i=1}^bS_i$. The result then follows from the K\"unneth formula.

   Similarly, the specialization map $sp_*$ from $P\in V_u^+$ to $O$ is induced by the push-forward of the map 
   \[
   \begin{tikzcd}
   S^1_\theta\times (S^1_\varphi\times S^1_\psi)/\sim\arrow[rr,"sp"]\arrow[rd,"pr",swap]& & (S^1_\theta\times S^1_\varphi\times S^1_\psi)/\approx \arrow[ld,"pr"]\\
    &S^1_{\psi}.& 
    \end{tikzcd}
   \]
   where the equivalence relations $\sim$ and $\approx$ are defined in Proposition \ref{5}. Geometrically, if we view fibers $h^{-1}(P)$ and $h^{-1}(O)$ as $T^2$-fibrations over $S^1_\psi$, the specialization $sp$ exactly collapses all the $\Gamma_\theta$ cycles in the fibers over each $\psi=(2i+1)\pi/a$. So we obtain $sp_*(\Gamma_\theta)=0$, $sp_*(\Gamma_\psi)=\Gamma_\psi$, $sp_*(S_i)=S_i$, $sp_*(\Gamma_\theta\times\Gamma_\psi)=\sum_{i=1}^aT_i$. and $sp_*(\Gamma_\theta\times S_i)$ is exactly the sum of the $R_j$'s satisfying $(2k-1)\pi/b<\psi<(2k+1)\pi/b$. 
   \end{proof}

\subsection{Monodromy of $R^1h_*\BQ_X$}
In this section, we compute the monodromy action of the fundamental group $\pi_1(U)$ on $H^*(F,\BQ)$ where $F$ is any smooth fiber of $h$ and $U$ is the open set of regular values of $h$. We remark that, due to the ring structure of $H^*(T^3)$, the monodromy action on the total cohomology group is completely determined its action on $H^1$. 

\begin{lem} \label{pi1}
The fundamental group of $U$ is a free group with three generators.
\end{lem}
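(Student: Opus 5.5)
We need to show $\pi_1(U)$ is free on three generators, where $U=\BR^3\setminus(L_u\cup L_v)$ and $L_u,L_v$ are the $u$-axis and the $v$-axis. The plan is to identify the homotopy type of $U$ explicitly rather than work with the $U_{**}$ cover.

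The removed set $L_u\cup L_v$ is a closed, contractible "cross" of two lines meeting at the origin, lying in the plane $\{w=0\}$. Its one-point compactification in $S^3=\BR^3\cup\{\infty\}$ is a graph $\Gamma$. This graph has two vertices, $O$ and $\infty$, joined by four edges, namely the four half-axes $V_{u^\pm},V_{v^\pm}$. Since $U=S^3\setminus\Gamma$, the first step is to compute the homotopy type of the complement of this planar graph.

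Because $\Gamma$ lies in the $2$-sphere $\{w=0\}\cup\{\infty\}$, its complement is unknotted. A regular neighborhood $N(\Gamma)$ is a genus-$g$ handlebody with $g=1-\chi(\Gamma)=1-(2-4)=3$. For a planar graph in $S^3$, the complement $S^3\setminus N(\Gamma)$ is again a handlebody of genus $3$. Its core can be exhibited concretely. The plane $\{w=0\}$ minus the cross consists of four open quadrants. Thicken each quadrant slightly to connect the upper half-space $\{w>0\}$ with the lower half-space $\{w<0\}$. Then $U$ deformation retracts onto two contractible pieces, the upper and lower half-spaces, joined by four contractible bridges, one through each quadrant. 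This is homotopy equivalent to a graph with $2$ vertices and $4$ edges, which is a wedge of $3$ circles.

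Equivalently, one can apply van Kampen to the open cover $A=\{w>0\}\cup Q$ and $B=\{w<0\}\cup Q$, where $Q$ is a small neighborhood of the four open quadrants. Both $A$ and $B$ are contractible, while $A\cap B$ has four contractible components. The groupoid version of van Kampen, or simply the nerve graph with $2$ vertices and $4$ edges, then gives $\pi_1(U)\cong F_{4-2+1}=F_3$.

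The main obstacle is making the deformation retraction rigorous. I would do this with the explicit cover above together with the nerve lemma, which applies since all pieces and intersections are contractible. With the cover in hand, the only remaining point is checking that the sets are open and contractible, which is straightforward.

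For the later monodromy computation, I would fix explicit generators as loops crossing the plane $w=0$ down through one quadrant and back up through another. For example, take loops linking the half-axes $V_{u^+}$, $V_{v^+}$ and $V_{u^-}$ respectively; a loop around the fourth half-axis is then a product of these.
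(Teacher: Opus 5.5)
Your argument is correct but takes a different route from the paper.

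\textbf{The paper's route.} The paper uses only the fact that the removed set $\{uv=w=0\}$ is a union of lines through the origin. So $U$ is invariant under positive scaling, and the radial projection $\BR^3\setminus\{O\}\to S^2$ gives $U\cong\BR_{>0}\times\bigl(S^2\setminus\{(\pm1,0,0),(0,\pm1,0)\}\bigr)$. A four-times punctured sphere is homotopy equivalent to $\bigvee^3S^1$, so the lemma is a one-line consequence.

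\textbf{Your route.} You use instead that the removed set lies in the plane $w=0$, and you run van Kampen on the upper and lower half-spaces thickened across the four quadrants. This costs more. Its advantage is that it fits the later monodromy computation. Your decomposition (two half-spaces glued along quadrants) is the same structure as the paper's trivializing sets $U_{**}$, each of which is the two half-spaces glued along a single quadrant. Your generators (cross $w=0$ through one quadrant, return through an adjacent one) are exactly the loops $P_1P_2P_3P_4P_1$ used in the proof of Theorem~\ref{5.3}.

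\textbf{Points to fix in a write-up.}
\begin{enumerate}
\item Take $Q=\{uv\neq0\}$ explicitly, rather than an unspecified ``small neighborhood''.
\item Prove $A=\{w>0\}\cup Q$ contractible with the vertical push $(u,v,w)\mapsto(u,v,w+t(1+|w|))$. It preserves $A$ and lands in the convex set $\{w>0\}$ at $t=1$. The same argument handles $B$.
\item Do not invoke the classical nerve lemma. The intersection $A\cap B=Q$ has four components, so it is not contractible and the lemma's hypothesis fails. The right tools are the groupoid van Kampen theorem, or the homotopy pushout $\ast\leftarrow\{4\text{ points}\}\to\ast$, which is $\Sigma\{4\text{ points}\}\simeq\bigvee^3S^1$.
\item The handlebody paragraph is only heuristic and can be dropped.
\end{enumerate}
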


\begin{proof}
Via the natural projection $\BR^3\setminus \{O\} \to S^2$, the open set \(U\) deformation retracts to the unit sphere with four points removed, namely $(\pm1,0,0)$ and $(0,\pm1,0)$. Therefore, the fundamental group $\pi_1(U)$ is a free group on three generators.  
\end{proof}

For the convenience of presenting the monodromy as matrices, we fix the following set of generators. The directions of the loops and the corresponding axes follow the right-hand rule. 
\begin{enumerate}
\item The loop $\gamma_1$ goes around the \(u\)-axis at \(u=1\). 
\item The loop $\gamma_2$ goes around the \(v\)-axis at \(v=-1\).\
\item The loop $\gamma_3$ goes around the \(v\)-axis at \(v=1\). 
\end{enumerate}

\begin{thm} \label{5.3}
Let \(h: X \to \mathbb{R}^3\) be the map defined in \eqref{2}, and let
\[
U = \mathbb{R}^3 \setminus \{(u,v,w): uv = w = 0\}.
\]
Then \(h|_U: h^{-1}(U) \to U\) is a fiber bundle with fiber \(T^3\). Let $\theta=\arg x,\varphi=\arg y,\psi=\arg z$ be the angular variables in the trivialization over $U_{++}$ in Proposition \ref{5.2} and let $F$ be any fiber over $U_{++}$. With respect to the basis $\{d\theta,d\varphi,d\psi\}$ of $H^1(F,\BQ)$, the monodromy matrices for the loops $\gamma_1,\gamma_2,\gamma_3$ acting on $H^1(F,\BQ)$ are given by the matrices
\begin{equation*}
\begin{pmatrix}
1& 0& 0\\
0& 1& 0\\
0& b & 1
\end{pmatrix}
\quad\begin{pmatrix}
1& 0& 0\\
0& 1& 0\\
a& 0 & 1
\end{pmatrix}
\quad
\begin{pmatrix}
1& 0& 0\\
0& 1& 0\\
-a& 0& 1
\end{pmatrix}.
\end{equation*}
\end{thm}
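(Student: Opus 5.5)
The fiber bundle assertion is Proposition \ref{5.2}, so the work lies in the monodromy computation. I will use the four trivializations over $U_{++},U_{+-},U_{-+},U_{--}$ and compute how the angular coordinates transform on overlaps. Each loop $\gamma_i$ will be cut into arcs lying in two of these charts, and the monodromy will be read off as the composite of the transition maps between the charts' angular coordinates.

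\textbf{Transition maps.} Consider a point of $U_{++}\cap U_{-+}$. Such a point has $w\neq 0$, or it lies in the region $u>0,v>0$ or in the region $u<0,v>0$, but never on the $v$-axis with $w=0$. On the fiber, $xx'=z^a+1$ gives
\[
\arg x'=\arg(z^a+1)-\arg x .
\]
So the transition sends $(\theta,\varphi,\psi)\mapsto(\theta',\varphi,\psi)$ with $\theta'=\arg(e^{a(w+i\psi)}+1)-\theta$. The key point is that $\arg(e^{a(w+i\psi)}+1)$, viewed as a function of $\psi\in S^1$ with $w$ fixed, has winding number $a$ when $w>0$ and $0$ when $w<0$. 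When $w>0$ the term $e^{a(w+i\psi)}$ dominates; when $w<0$ the term $1$ dominates.

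\textbf{Loops around the $v$-axis ($\gamma_2$, $\gamma_3$).} Take $\gamma_3$ around the point $v=1$ on the $v$-axis. It passes through $w>0$ and $w<0$, and crosses $w=0$ once with $u>0$ and once with $u<0$. I use chart $U_{++}$ on the part near $u>0$ and chart $U_{-+}$ on the part near $u<0$.

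Going around the loop, one transition is made at a point with $w>0$ and the other at a point with $w<0$. Composing them gives
\[
\theta\mapsto \theta+\bigl(\arg(z^a+1)|_{w>0}-\arg(z^a+1)|_{w<0}\bigr).
\]
As a function on the torus this differs from $\theta$ by a map $S^1_\psi\to S^1$ of degree $\pm a$. In cohomology this gives $d\theta\mapsto d\theta\pm a\,d\psi$, with $d\varphi$ and $d\psi$ fixed.

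The sign is fixed by the orientation convention (right-hand rule) and by which side $v=\pm1$ the loop encircles. Reversing the side swaps the roles of $U_{++}$ and $U_{-+}$ relative to the direction of travel, which accounts for the opposite entries $a$ and $-a$ for $\gamma_2$ and $\gamma_3$.

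Writing the action on the column basis $(d\theta,d\varphi,d\psi)$ exactly as the stated matrices (entry in row 3, column 1) is then bookkeeping of the transpose convention. The loop $\gamma_1$ is handled identically with the roles of $y$ and $x$ exchanged, giving $d\varphi\mapsto d\varphi\pm b\,d\psi$. The chart for $\gamma_1$ should be chosen consistently with $v=-1$ or $v=1$ as in the paper's normalization, so that the entry is $+b$.

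\textbf{Main obstacle.} The main difficulty will be the signs and the matching of conventions: loop orientation, which chart is the base, and row versus column action. Here I would do a careful explicit check on a single transition, namely the degree of $\psi\mapsto\arg(e^{a(w+i\psi)}+1)$ for large $w$ being $a$, and then track orientations carefully.

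A secondary point is that the chart transitions depend only on homotopy classes. This holds because each $U_{**}$ is contractible and the relevant overlaps are connected within the half-spaces $w>0$ and $w<0$.
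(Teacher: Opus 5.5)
Your core mechanism is the paper's. You cover each loop by two charts $U_{**}$, use $\theta'=\arg(z^a+1)-\theta$ (resp.\ $\varphi'=\arg(z^b+1)-\varphi$) on the overlap, where $\psi\mapsto\arg(e^{a(w+i\psi)}+1)$ has degree $a$ for $w>0$ and $0$ for $w<0$, and compose around the loop. The paper does the same thing on classes: it gets $d\varphi'=-d\varphi+b\,d\psi$ for $w>0$ and $d\varphi'=-d\varphi$ for $w<0$, then traces a basis around $\gamma_1$. (Small slip: $U_{++}\cap U_{-+}$ is exactly $\{w\neq0\}$; what you wrote describes the union.)

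The gap is the signs, which are what distinguish $\gamma_2$ from $\gamma_3$, and your reasoning for them is off.

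At $v=-1$ the points $(\pm\epsilon,-1,0)$ lie in neither $U_{++}$ nor $U_{-+}$. You must use $U_{+-}$ and $U_{--}$, then pass from $\{d\theta,d\varphi',d\psi\}$ back to $\{d\theta,d\varphi,d\psi\}$ at a base point with $w>0$; this is harmless, since the loop fixes $d\varphi'$ and $d\psi$. The $x$-transition is the same formula there. So a loop at $v=-1$ with the same rotational sense as one at $v=1$ gives the \emph{same} entry; the side by itself flips nothing. The opposite entries arise because each loop is oriented by the right-hand rule about the \emph{outward} half-axis, which reverses the rotational sense at $v=-1$. This is forced: the commuting shears around the four punctures of the sphere must multiply to the identity.

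Likewise, the monodromy of $\gamma_1$ does not depend on which charts you use, so choosing them ``so that the entry is $+b$'' is not an argument. You need to fix that monodromy acts on classes by parallel transport, i.e.\ by $(T^{-1})^*$, where $T$ is the point-transport given by your composite of transition maps; $T^*$ gives the opposite sign. Then actually trace.
For $\gamma_1$, go through $P_1=(1,0,\epsilon)$, $P_2=(1,-\epsilon,0)$, $P_3=(1,0,-\epsilon)$, $P_4=(1,\epsilon,0)$. The class $d\varphi=-d\varphi'+b\,d\psi$ at $P_1$ is carried in the $U_{+-}$ chart to $P_3$, where it equals $d\varphi+b\,d\psi$.
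The same trace for $\gamma_3$, through $(0,1,\epsilon),(\epsilon,1,0),(0,1,-\epsilon),(-\epsilon,1,0)$, gives $d\theta\mapsto d\theta-a\,d\psi$.
For $\gamma_2$, through $(0,-1,\epsilon),(-\epsilon,-1,0),(0,-1,-\epsilon),(\epsilon,-1,0)$, it gives $d\theta\mapsto d\theta+a\,d\psi$.
Read column-wise, these are exactly the stated matrices.
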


\begin{proof}
Since the computations for the three loops are similar, we illustrate the argument for $\gamma_1$. Along this loop, we have $u>0$, so we use the trivializations $(\theta, \varphi, \psi)$ on $U_{++}$, and $(\theta, \varphi', \psi)$ on $U_{+-}$, where $\varphi'=\arg y'$. In de Rham cohomology, \(\{d\theta, d\varphi, d\psi\}\) and \(\{d\theta, d\varphi', d\psi\}\) are two bases for \(H^1(F)\). We first compute the transition data between these two bases. On the intersection $U_{++} \cap U_{+-}$, we always have $w \neq 0$. From \eqref{++} and $yy'=z^b+1$, we have
\begin{equation}
e^{(\varphi+ \varphi') i}=\frac{e^{b(w+\psi i)}+1}{|e^{b(w+\psi i)}+1|}. \label{9}
\end{equation}
Taking the logarithms and derivatives of both sides, we obtain
\begin{equation}
i \, d\varphi' = d\log\frac{e^{b(w+\psi i)}+1}{|e^{b(w+\psi i)}+1|} - i \, d\varphi. \label{10}
\end{equation}
In \(H^1(T^3)\), we may write \(d\varphi' = \lambda \, d\theta + \mu \, d\varphi + \nu \, d\psi\) with \(\lambda,\mu,\nu \in \mathbb{R}\). To determine these coefficients, we integrate over suitable 1-cycles in \(T^3\). Let $\Gamma_1$ be the cycle obtained by fixing \(\varphi, \varphi', \psi\) and letting \(\theta\) run from 0 to \(2\pi\). Integrating along $\Gamma_1$ implies that \(\lambda=0\). Let $\Gamma_2$ be the cycle defined by fixing \(\theta, \psi\) and letting \(\varphi\) run from 0 to \(2\pi\). Integrating both sides of \eqref{10} along $\Gamma_2$ implies \(\int_{\Gamma_2} d\varphi' = -\int_{\Gamma_2} d\varphi\). So \(\mu=-1\). Let $\Gamma_3$ be the cycle defined by fixing \(\theta, \varphi\) and letting \(\psi\) run from 0 to \(2\pi\). Integrating both sides of \eqref{10} along \(\Gamma_3\), we have
\begin{equation}
2\pi i \nu = \int_{\psi=0}^{\psi=2\pi} d\log\frac{e^{b(w+\psi i)}+1}{|e^{b(w+\psi i)}+1|}.
\end{equation}
In the complex $\psi$-plane, the curve \(\psi\mapsto e^{b(w+\psi i)}+1\) has winding number $b$ around the origin if $w>0$, and 0 if \(w<0\). Therefore, by the theorem of residue, we have 
\[
\int_{\psi=0}^{\psi=2\pi} d\log\frac{e^{b(w+\psi i)}+1}{|e^{b(w+\psi i)}+1|} = 
\begin{cases}
2b\pi i &w>0\\ 
0 & w<0.
\end{cases}
\]
So we have \(\nu=b\) if $w>0$, and $\nu=0$ if $w<0$.  In summary,
\begin{equation}
d\varphi' =
\begin{cases}
-d\varphi + b d\psi, & w>0,\\
-d\varphi, & w<0.
\end{cases}
\end{equation}
Now we compute the monodromy matrix of $\gamma_1$ on $H^1(F)$ by picking a moving point $(u,v,w)$ along loop $\gamma_1 = \overrightarrow{P_1P_2P_3P_4P_1}\) around the positive \(u\)-axis, where \(P_1=(1,0,\varepsilon), P_2=(1,-\epsilon,0), P_3=(1,0,-\varepsilon), P_4=(1,\epsilon,0)\). Among these points, only \(P_2\) is not in \(U_{++}\), and \(P_4\) is not in \(U_{+-}\). We have to use the trivialization $\theta,\varphi,\psi$ on $P_1, P_3,P_4$ and $\theta,\varphi',\psi$ on $P_1,P_2,P_3$. Starting at \(P_1\) with basis \(\{d\theta, d\varphi, d\psi\}\), we perform parallel transport along the flat sections induced by the trivialization, changing the basis before arriving $P_2$. Since \(w>0\), the initial basis equals \(\{d\theta, -d\varphi' + b\,d\psi, d\psi\}\). This basis is transported to \(P_3\). Since \(w<0\), we switch to \(\{d\theta, d\varphi + b\,d\psi, d\psi\}\),  passing through \(P_4\) and return to \(P_1\). Tracing through the changes of basis after traveling along \(\gamma_1\), we find that the monodromy matrix with respect to the basis \(\{d\theta, d\varphi, d\psi\}\) in \(H^1(F)\) is
\begin{equation}
\begin{pmatrix}
1& 0& 0\\
0& 1& 0\\
0& b& 1
\end{pmatrix}.
\end{equation}
The computations for the other two monodromy matrices are similar.
\end{proof}

\begin{cor}\label{mono2}
With respect to the basis $\{d\theta d\varphi,d\theta d\psi,d\varphi d\psi\}$, the matrices of the monodromy actions of $\gamma_1,\gamma_2,\gamma_3$ on $H^2(F,\BQ)$ are
\begin{equation*}
\begin{pmatrix}
1& 0& 0\\
b& 1& 0\\
0& 0 & 1
\end{pmatrix},
\quad\begin{pmatrix}
1& 0& 0\\
0& 1& 0\\
-a& 0 & 1
\end{pmatrix},
\quad
\begin{pmatrix}
1& 0& 0\\
0& 1& 0\\
a& 0& 1
\end{pmatrix}.
\end{equation*}
The monodromy action on $H^3(F,\BQ)$ is trivial.
\end{cor}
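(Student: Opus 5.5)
The plan is to deduce the action on $H^2(F,\BQ)$ and $H^3(F,\BQ)$ from Theorem \ref{5.3}. Since $F\cong T^3$, the cohomology ring is the exterior algebra $H^*(F,\BQ)=\Lambda^*H^1(F,\BQ)$. Monodromy is induced by parallel transport along a loop, which is a self-homeomorphism of the fiber up to homotopy, so it acts by ring automorphisms. Hence the monodromy on $H^k(F,\BQ)$ is the $k$-th exterior power $\Lambda^kM$ of the matrix $M$ acting on $H^1$.

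First I fix the convention implicit in Theorem \ref{5.3}. The proof there shows that transporting along $\gamma_1$ sends $d\theta\mapsto d\theta$, $d\varphi\mapsto d\varphi+b\,d\psi$ and $d\psi\mapsto d\psi$. The stated matrix has entry $b$ in position $(3,2)$, so the columns are the images of the basis vectors. I use the same column convention for $H^2$ with the ordered basis $\{d\theta d\varphi,\,d\theta d\psi,\,d\varphi d\psi\}$. For $\gamma_2$ and $\gamma_3$ the corresponding rules are $d\theta\mapsto d\theta\pm a\,d\psi$, with $d\varphi$ and $d\psi$ fixed, where the sign is $+$ for $\gamma_2$ and $-$ for $\gamma_3$.

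Next I compute the images of the wedge products.
\begin{enumerate}
\item For $\gamma_1$: $d\theta d\varphi\mapsto d\theta(d\varphi+b\,d\psi)=d\theta d\varphi+b\,d\theta d\psi$. Also $d\theta d\psi\mapsto d\theta d\psi$, and $d\varphi d\psi\mapsto (d\varphi+b\,d\psi)d\psi=d\varphi d\psi$ because $d\psi\wedge d\psi=0$. This gives first column $(1,b,0)^T$ and the other two columns standard.
\item For $\gamma_2$: $d\theta d\varphi\mapsto (d\theta+a\,d\psi)d\varphi=d\theta d\varphi-a\,d\varphi d\psi$, using anticommutativity. Also $d\theta d\psi\mapsto d\theta d\psi$, and $d\varphi d\psi$ is fixed. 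This gives first column $(1,0,-a)^T$.
\item For $\gamma_3$ the same computation with $-a$ in place of $a$ gives first column $(1,0,a)^T$.
\end{enumerate}
On $H^3$, the generator $d\theta d\varphi d\psi$ is multiplied by $\det M=1$ in each case, since each $M$ is unipotent lower triangular. So the monodromy on $H^3(F,\BQ)$ is trivial.

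The only real point of care is the convention and sign bookkeeping, namely matching the orientation of columns versus rows with Theorem \ref{5.3} and the sign from reordering $d\psi\, d\varphi=-d\varphi\, d\psi$. I will check the column convention against the explicit transport computation in the proof of Theorem \ref{5.3} before writing the matrices.
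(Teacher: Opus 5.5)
Your proposal is correct and is exactly the paper's argument: the paper's proof says only that the result follows from $H^k(F,\BQ)=\Lambda^kH^1(F,\BQ)$ and Theorem \ref{5.3}, i.e.\ the monodromy on $H^k$ is $\Lambda^k$ of the monodromy on $H^1$. Your explicit wedge computations reproduce the stated matrices and supply the details the paper leaves implicit: the column convention read off from the transport $d\varphi\mapsto d\varphi+b\,d\psi$ in the proof of Theorem \ref{5.3}, the sign from $d\psi\,d\varphi=-d\varphi\,d\psi$, and $\det M=1$ for $H^3$.
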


\begin{proof}
  Follows directly from $H^k(F,\BQ)=\wedge^kH^1(F,\BQ)$ and Theorem \ref{5.3}.
\end{proof}

\subsection{Computation of $R^kh_*\BQ_X$}
In this section, we compute the sheaves of $Rh_*\BQ_X$ for $h:X\to\BR^3$ and their cohomology groups. We will also show that the corresponding Leray spectral sequence 
\[
H^q(X,R^pf_*\BQ_X)\Rightarrow H^{p+q}(X,\BQ)
\]
degenerates on $E_2$-page. 
We fixed some notations first. Denote by $L$ the local system defined by the monodromy representation $\pi_1(U)\to \textup{GL}(H^1(F,\BQ))$ obtained in Section 4.3; equivalently, $L=R^1h_*\BQ_X|_U$. The next three propositions provide explicit descriptions of the cohomology sheaves $R^kh_*\BQ_X$.

\begin{prop} \label{rh1}
We have
\[
R^1h_*\BQ_X\cong j_*L.
\]
\end{prop}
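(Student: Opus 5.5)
Here $j:U\to\BR^3$ denotes the open inclusion. The plan is to build the comparison map from the adjunction and then check it on stalks, stratum by stratum. Adjunction gives a canonical morphism $\alpha:R^1h_*\BQ_X\to j_*j^*R^1h_*\BQ_X=j_*L$, and this morphism is the identity over $U$. So it suffices to show that $\alpha$ is an isomorphism on stalks at points of $V_{u^\pm}$, $V_{v^\pm}$ and $O$.

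Fix such a point $P$ and a small contractible ball $W$ around it, with $W$ adapted to the stratification.
\begin{enumerate}
\item By proper base change (Lemma \ref{push}, using properness from Corollary \ref{5.1}), $(R^1h_*\BQ_X)_P=H^1(h^{-1}(P))$.
\item By definition of $j_*$, $(j_*L)_P=H^0(W\cap U,L)$. This is the space of invariants of $H^1(F)$ under the local fundamental group $\pi_1(W\cap U)$.
\item Under these identifications, $\alpha_P$ is the specialization map $sp^*:H^1(h^{-1}(P))\to H^1(F)$. Its image lies in the invariants automatically.
\end{enumerate}
So the claim reduces to: $sp^*$ is injective, and its image is exactly the invariant subspace.

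\emph{Case $P\in V_{u^+}$.} Here $W\cap U$ deformation retracts onto a circle around the $u$-axis, so the local monodromy is generated by $\gamma_1$. By Theorem \ref{5.3}, its invariants in $H^1(F)$ are spanned by $d\theta,d\psi$, which is $2$-dimensional. Proposition \ref{singfiber} gives $b_1(h^{-1}(P))=2$. By Proposition \ref{degenerate}(1), dually $sp^*$ maps $H^1(h^{-1}(P))$ isomorphically onto $\langle d\theta,d\psi\rangle$. The reason is that $sp_*$ kills $\Gamma_\varphi$ and sends $\Gamma_\theta,\Gamma_\psi$ to the basis $\Gamma_\theta,\Gamma_\psi$ of $H_1(h^{-1}(P))$. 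The cases $V_{u^-}$, $V_{v^\pm}$ are identical, using $\gamma_2,\gamma_3$ and the appropriate primed coordinates.

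\emph{Case $P=O$.} Here $W\cap U$ is homotopic to $S^2$ minus four points, so its fundamental group is all of $\pi_1(U)$.
\begin{enumerate}
\item By Theorem \ref{5.3}, the joint invariants of $\gamma_1,\gamma_2,\gamma_3$ are spanned by $d\psi$, since $a,b\neq0$. This is $1$-dimensional, matching $b_1(h^{-1}(O))=1$ from Proposition \ref{singfiber}.
\item By Corollary \ref{singfiberh}(3), $H_1(h^{-1}(O))=\langle\Gamma_\psi\rangle$. Composing the specializations $F\to h^{-1}(P)\to h^{-1}(O)$ from Proposition \ref{degenerate}, $\Gamma_\psi$ maps to $\Gamma_\psi$, while $\Gamma_\theta$ and $\Gamma_\varphi$ map to $0$.
\item Hence $sp^*$ is injective, with image $\langle d\psi\rangle$.
\end{enumerate}
One must also check that this specialization agrees with the map induced by the inclusion $h^{-1}(F)\subset h^{-1}(W)$. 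This holds because $\Phi_O$ is the limit of the trivializations $\Phi$.

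The main obstacle is step 3 of the general reduction: identifying the adjunction map on stalks with the geometric specialization induced by the limiting parametrizations $\Phi_u,\Phi_O$, and doing so compatibly with the change of trivialization across $U_{++},U_{+-},\dots$. I would handle this by noting two facts. First, $H^1(h^{-1}(W))\to H^1(F)$ factors through $H^0(W\cap U,L)$. Second, by the Betti-number equality, injectivity of the map on $H_1$ described above already forces surjectivity onto the invariants.
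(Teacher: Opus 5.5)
Your proposal is correct and follows essentially the same route as the paper. Both take the adjunction map $R^1h_*\BQ_X\to j_*L$ and check it on stalks, using that the specialization map $sp^*$ factors through $H^0(W\cap U,L)$; injectivity comes from Proposition \ref{degenerate}, and surjectivity from the equality of $b_1$ of the singular fiber with the dimension of the local monodromy invariants from Theorem \ref{5.3}. Your treatment of the origin (composing specializations through a point of $V_{u^+}$ and identifying the joint invariants as $\langle d\psi\rangle$ using $a,b\neq0$) makes explicit what the paper states only tersely.
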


\begin{proof}
Since $L=j^*R^1h_*\BQ_X$, the adjunction gives a morphism of sheaves 
\[
\eta:R^1h_*\BQ_X\to j_*L.
\]
To show $\eta$ is an isomorphism it suffices to verify that it induces isomorphisms on the stalks on $u$ or $v$ axis. Without loss of generality, we consider the case $P\in V_u^+$ and the origin $O$. By the proper base change theorem and the constructibility of $R^1h_*\BQ_X$, there exists a small analytic neighborhood $B$ of $P$ such that $H^1(h^{-1}(P),\BQ)=H^1(B,\BQ)$. The induced morphism on the stalk is
\[
\eta_P:H^1(h^{-1}(P),\BQ)=H^1(h^{-1}(B),\BQ)\to H^1(h^{-1}(B\cap U),\BQ)\to H^0(B\cap U,L)=(j_*L)_P
\]
where the second arrow is the canonical quotient map induced by the Leray spectral sequence for $h^{-1}(B\cap U)\to B\cap U$. Moreover, for any $Q\in B\cap U$, the restriction $H^0(B\cap U,L)\to L_Q=H^1(h^{-1}(Q),\BQ)$ fits into the following commutative diagram
\[
\begin{tikzcd}
(R^1h_*\BQ_X)_P\arrow[d,"\eta_P"]\arrow[r,equal]&H^1(h^{-1}(P),\BQ)\arrow[d,"\eta_P"]\arrow[rd,"sp^*"]& \\
(j_*L)_P\arrow[r,equal] &H^0(B\cap U,L)\arrow[r,"res"] & H^1(h^{-1}(Q),\BQ).
\end{tikzcd}
\]
By Proposition \ref{degenerate}, the specialization $sp_*:H_1(h^{-1}(Q),\BQ)\to H_1(h^{-1}(P),\BQ)$ is surjective with 1-dimensional kernel. So it's dual $sp^*$ is therefore injective with $\dim \textup{im}\,sp^*=2$. On the other hand, the restriction map $res$ maps isomorphically onto the monodromy invariant part of $H^1(h^{-1}(Q),\BQ)$, which is also of dimension 2 by Theorem \ref{5.3}. Hence $\eta_P$ is an injective linear map between two vector spaces both of dimension 2. We conclude that $\eta_P$ is an isomorphism. 

For the origin $O$, the isomorphism follows from the fact that $H^1(h^{-1}(O),\BQ)$ and $H^0(B\cap U,L)$ are both 1 dimensional and have the same image in $H^1(h^{-1}(O),\BQ)$.
\end{proof}

\begin{prop} \label{rh2}
We have the following decomposition
 \[
    R^2h_*\BQ_X\cong j_*\Lambda^2L\oplus \BQ^{\oplus b-1}_u\oplus \BQ_v^{\oplus a-1}, 
 \]
 where $\BQ_u$ and $\BQ_v$ denote the constant sheaves supported on the $u$-axis and $v$-axis, respectively.
\end{prop}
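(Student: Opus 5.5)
The plan mirrors the proof of Proposition \ref{rh1}. The starting point is the adjunction morphism $\eta:R^2h_*\BQ_X\to j_*j^*R^2h_*\BQ_X=j_*\Lambda^2L$. Over $U$ this is an isomorphism, since the fibers there are $T^3$ and $H^2=\Lambda^2H^1$. I will then determine $\ker\eta$ and $\textup{coker}\,\eta$ stalkwise on $V_{u^\pm}$, $V_{v^\pm}$ and $O$. Afterwards I will show that the resulting extension splits, with the kernel identified as $\BQ_u^{b-1}\oplus\BQ_v^{a-1}$.

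\textbf{Stalks of $j_*\Lambda^2L$.} By Corollary \ref{mono2}, at $P\in V_{u^+}$ the local monodromy is generated by $\gamma_1$. Its invariants in $H^2(F)$ form a $2$-dimensional space: it is the fixed space of a single unipotent matrix of rank-one deviation. At $O$, all three generators act, and the common invariants are spanned by $d\theta d\psi$ together with the classes killed by both deviations. I expect this space to be $1$-dimensional, namely $d\theta d\psi$, but I will check this directly from the three matrices. By Proposition \ref{singfiber}, the stalks of $R^2h_*\BQ_X$ have dimension $b+1$ on $V_u$, $a+1$ on $V_v$ and $a+b$ at $O$.

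\textbf{Kernel and cokernel of $\eta$.} On $V_{u^+}$, the stalk map $\eta_P$ factors as $sp^*:H^2(h^{-1}(P))\to H^2(h^{-1}(Q))$ followed by the identification with invariants, exactly as in the proof of Proposition \ref{rh1}. Proposition \ref{degenerate}(1) shows that $sp_*$ on $H_2$ has image spanned by $\Gamma_\theta\times\Gamma_\psi$ and $\sum_i S_i$. So $sp^*$ has rank $2$, which equals the rank of the invariants. Hence $\eta_P$ is surjective with kernel of dimension $b-1$: it is dual to the span of the $S_i$ modulo $\sum S_i$.

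At $O$, Proposition \ref{degenerate}(3) gives the specialization from $O$ to nearby points of $V_u$ and $V_v$, and hence the image of $H_2(h^{-1}(Q))$ in $H_2(h^{-1}(O))$. I expect $\eta_O$ to be surjective with kernel of dimension $a+b-1=(b-1)+(a-1)+1$.

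This count has to be checked carefully. The extra $1$ should disappear because $\dim(j_*\Lambda^2L)_O=1$ is already included. So I would verify $\dim\ker\eta_O=(a+b)-1=(b-1)+(a-1)+1$, and interpret the excess as follows: the kernel at $O$ restricts isomorphically onto the kernels along both axes, via the specializations $S_i\mapsto S_i$ and $T_i\mapsto T_i$. If the numbers do not match, I would recompute the invariant dimension at $O$, since that is the quantity most likely to be miscounted.

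\textbf{Identifying $\ker\eta$ and splitting.} By the above, $\ker\eta$ is a sheaf supported on the two axes. Its restriction to each open half-axis is locally constant of rank $b-1$ (respectively $a-1$), and these half-axes are contractible. I will show that the restriction maps from the stalk at $O$ to the stalks on the four half-axes are isomorphisms onto each branch, so that $\ker\eta\cong\BQ_u^{b-1}\oplus\BQ_v^{a-1}$. The classes here are spanned by the differences of $S_i$ (respectively $T_i$), which persist across $O$ by Proposition \ref{degenerate}(3).

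Finally, I will split the short exact sequence $0\to\ker\eta\to R^2h_*\BQ_X\to j_*\Lambda^2L\to0$. I would try to produce a section directly. The classes dual to $S_i-S_{i+1}$ come from classes on $X$ supported near the collapsed tori, so they define a global retraction $R^2h_*\BQ_X\to\ker\eta$ via pairing with the relative cycles $S_i$ and $T_i$ on $h^{-1}$ of neighborhoods of the axes. Alternatively, one can compute $\Ext^1(j_*\Lambda^2L,\BQ_u)$ by a local computation near the axes.

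I expect this splitting, together with the dimension count at $O$, to be the main obstacle.
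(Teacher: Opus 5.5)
Your strategy is the paper's own: the adjunction map $\eta$, stalks computed via $sp^*$, identification of $\ker\eta$ on the axes, then splitting. Your analysis on $V_{u^\pm}$ and $V_{v^\pm}$ is correct. The gap is at the origin. There the step fails as written, and the paragraph ``interpreting the excess'' cannot rescue it. If $\dim(j_*\Lambda^2L)_O=1$ and $\dim\ker\eta_O=a+b-1$, then the stalk of $\ker\eta$ at $O$ is not that of $\BQ_u^{b-1}\oplus\BQ_v^{a-1}$, which has dimension $a+b-2$. Moreover, the stalk of the right-hand side of the statement would have dimension $a+b-1\neq a+b=\dim H^2(h^{-1}(O))$.

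The one-dimensional guess is simply wrong. By Theorem \ref{5.3}, $\gamma_1$ acts on $H^1$ by $d\varphi\mapsto d\varphi+b\,d\psi$, fixing $d\theta$ and $d\psi$. The loops $\gamma_2,\gamma_3$ act by $d\theta\mapsto d\theta\pm a\,d\psi$, fixing $d\varphi$ and $d\psi$. Hence every class $d\psi\wedge\omega$ is invariant, and $(j_*\Lambda^2L)_O=\langle d\theta\,d\psi,\,d\varphi\,d\psi\rangle$ is two-dimensional, the same as the $\gamma_1$-invariants.

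You can also see it cannot be one-dimensional. We have $res\circ\eta_O=sp^*$, and by Proposition \ref{degenerate} the image of $H_2$ of a nearby smooth fiber in $H_2(h^{-1}(O))=\langle S_1,\dots,S_b,T_1,\dots,T_a\rangle$ is $\langle\sum S_i,\sum T_j\rangle$. So $sp^*$ has rank $2$. Thus $\eta_O$ is surjective, and $\ker\eta_O$ is the $(a+b-2)$-dimensional space of functionals killing $\sum S_i$ and $\sum T_j$.

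This kernel splits as $A_u\oplus A_v$, where $A_u$ consists of functionals vanishing on all $T_j$ and $A_v$ of those vanishing on all $S_i$. The specialization $sp_*$ from $V_{u^\pm}$ to $O$ sends $S_i\mapsto S_i$ and $\Gamma_\theta\times\Gamma_\psi\mapsto\sum T_j$. Therefore generization maps $A_u$ isomorphically onto the stalks along $V_{u^\pm}$ and kills $A_v$, and symmetrically along $V_{v^\pm}$. That, rather than ``the stalk at $O$ maps isomorphically onto each branch'', is what yields $\ker\eta\cong\BQ_u^{b-1}\oplus\BQ_v^{a-1}$.

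The corrected count also removes what you call the main obstacle. Since the invariants at $O$ agree with those along the half-axes, $i_u^*j_*\Lambda^2L\cong\BQ^2$ is constant on the closed $u$-axis. By adjunction, $\Ext^1(j_*\Lambda^2L,\BQ_u)\cong\Ext^1(\BQ^2,\BQ)$ computed on a line, i.e.\ $H^1(\BR,\BQ)^{2}=0$, and likewise for $\BQ_v$. This is exactly how the paper splits the extension. Your alternative, a retraction given by pairing with the cycles $S_i$, $T_j$ in the fibers over the axes, would also work, since it is compatible with generization because $sp_*S_i=S_i$. It is not needed, however.
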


\begin{proof}
   As in the statement in the proof of Proposition \ref{rh1}, adjunction yields a morphism of sheaves
   \[
   \eta:R^2h_*\BQ_X\to j_*\Lambda^2L,
   \]
   whose kernel and cokernel are supported on the union of $u$ and $v$-axes.  Similarly, we have the following commutative diagram
   \begin{equation} \label{rh2d}
   \begin{tikzcd}
   (R^2h_*\BQ_X)_P\arrow[d,"\eta_P"]\arrow[r,equal]&H^2(h^{-1}(P),\BQ)\arrow[d,"\eta_P"]\arrow[rd,"sp^*"]& \\
   (j_*\Lambda^2 L)_P\arrow[r,equal] &H^0(B\cap U,\Lambda^2L)\arrow[r,"res"] & H^2(h^{-1}(Q),\BQ).
   \end{tikzcd}
   \end{equation}
   It suffices to analyze the stalk at points $P$ on $u$-axis and at the origin $O$. For $P\in V_u^+$, Proposition \ref{degenerate} implies $sp_*(\Gamma_\theta\times\Gamma_\psi)=\Gamma_\theta\times\Gamma_\psi$ and $sp_*(\Gamma_\theta\times\Gamma_\psi)=\sum_{i=1}^b S_i$. So $\textup{im}\,sp^*$ is 2-dimensional and 
   the kernel of $sp^*$ is spanned by the linear functionals $l:H_2(h^{-1}(P))\to \BQ$ satisfying $l(\Gamma_\theta\times\Gamma_\psi)=l\left(\sum_{i=1}^bS_i\right)=0$. On the other hand, the restriction map $res$ is an isomorphim onto the monodromy invariants of $H^2(h^{-1}(Q),\BQ)$, which is 2-dimensional by Corollary \ref{mono2}. So $\eta_P$ is surjective and $\ker \eta_P=\BQ^{b-1}$ spanned by linear functionals on spheres $S_1,\dots,S_b$ whose total sum is zero. Since the definition of $S_1,\cdots, S_b$ depends only on the variable $\psi$ on each fiber, this construction works for $P\in V_{u^-}$ and keeps constant when $P$ varies on $u$-axis. Therefore $\ker\eta|_{u^\pm}=\BQ_{u^\pm}^{b-1}$ and we obtain an exact sequence
   \begin{equation} \label{rh2ex}
   0\to \BQ_u^{b-1}\to (\ker \eta)|_{V_u}\to F\to 0
   \end{equation}
   where $F$ supported at the origin. By dimensional of the support we have $\Ext^1(F,\BQ_u^{b-1})=0$. So the sequence \eqref{rh2ex} splits and we have a morphism 
   \[
   \ker\eta\to(\ker \eta)|_{V_u}\to\BQ_u^{b-1}.
   \]
   Similarly, we have $\ker \eta \to\BQ_{v}^{a-1}$ and  
   \begin{equation} \label{rhm}
   \ker\eta\to\BQ_u^{b-1}\oplus \BQ_v^{a-1}.
   \end{equation}
   Applying the same argument of the diagram \eqref{rh2d} to $P=O$, the kernel of the specialization map $sp^*:H^2(h^{-1}(Q),\BQ)\to H^2(h^{-1}(O),\BQ)$ is spanned by the linear functional $l$ on the space spanned $S_1,\cdots,S_b,T_1,\dots,T_a$ such that the $l(\sum S_i)=l(\sum T_i)=0$. So the morphism \eqref{rhm} is an isomorphism at the origin. Therefore we obtain an isomorphism $\ker\eta=\BQ_u^{b+1}\oplus\BQ_v^{a+1}$, and hence an exact sequence
   \[
   0\to \BQ_u^{b-1}\oplus\BQ_v^{a-1}\to R^2h_*\BQ_X\to j_*\Lambda^2L\to0
   \]
   It remains to show the extension class $\Ext^1(j_*\Lambda^2L,\BQ_u^{b-1}\oplus\BQ_v^{a-1})$ vanishes. By symmetry it suffices to show that $\Ext^1(j_*\Lambda^2L,\BQ_u)=0$. Indeed, Corollary \ref{mono2} gives $i_u^*j_*\Lambda^2L=\BQ_u^2$, where $i_u:V_u\to \BR^3$ is the natural closed immersion. Therefore 
   \[
   \Ext^1_{D(\BR^3)}(j_*\Lambda^2L,\BQ_u)=0=\Ext^1_{D(V_u)}(i_u^*j_*\Lambda^2L,\BQ_u)=\Ext^1_{D(V_u)}(\BQ_u^2,\BQ_u)=0.
   \]
\end{proof}

\begin{prop} \label{rh3}
We have a decomposition of sheaves 
\[
R^3h_*\BQ_X\cong\BQ_{\BR^3}\oplus H,
\]
where $H=\BQ_u^{b-1}\oplus\BQ_v^{a-1}\oplus\BQ_O$ when $ord_2(a)\not=ord_2(b)$, and
 $H$ fits into the exact sequence 
\begin{equation} \label{1999}
0\to H\to \BQ_u^{b-1}\oplus\BQ_v^{a-1}\to \BQ_O^{(a,b)-1}\to0.
\end{equation}
when $ord_2(a)=ord_2(b)$.
\end{prop}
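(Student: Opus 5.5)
Following the pattern of Propositions \ref{rh1} and \ref{rh2}, I would start from the adjunction morphism $\eta:R^3h_*\BQ_X\to j_*\Lambda^3L$. By Corollary \ref{mono2} the monodromy on $H^3(F,\BQ)$ is trivial, so $\Lambda^3L=\BQ_U$, and since the complement of $U$ has codimension $2$, $j_*\BQ_U=\BQ_{\BR^3}$. At $P\in V_u^\pm$ and at $O$, the specialization $sp_*$ of Proposition \ref{degenerate} sends the fundamental class $\Gamma_\theta\times\Gamma_\varphi\times\Gamma_\psi$ of the nearby $T^3$ to $\sum_i S_i\times\Gamma_\theta$, respectively to $\sum_j R_j$. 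Both are nonzero, so $sp^*$ is surjective onto the $1$-dimensional $H^3(h^{-1}(Q))$. The commutative square used in \eqref{rh2d} then shows that each $\eta_P$ is surjective, which gives a short exact sequence $0\to\ker\eta\to R^3h_*\BQ_X\to\BQ_{\BR^3}\to0$.

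Next I would identify $H=\ker\eta$, which is supported on the two axes. On $V_u$ its stalk is the space of functionals on $\langle S_1\times\Gamma_\theta,\dots,S_b\times\Gamma_\theta\rangle$ that kill the sum. This is $\BQ^{b-1}$, and it is constant along $V_u^\pm$ because the cycles $S_i$ depend only on $\psi$. I would also check that it glues across the origin, just as in Proposition \ref{rh2}, and similarly on $V_v$. At $O$ the stalk is the space of functionals $l$ on $\langle R_1,\dots,R_c\rangle$ with $\sum_j l(R_j)=0$. In the language of Section 2.5 this is $V^0_{Z_a\cup Z_b}$, the arcs between consecutive roots in $Z_a\cup Z_b$, while the $u$- and $v$-axis stalks are $V^0_{Z_b}$ and $V^0_{Z_a}$. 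By Proposition \ref{degenerate}(3), the restriction map $H_O\to H_{u}\oplus H_v$ is exactly the map $g^0$ of Corollary \ref{combcor}, that is, integration of arc functions along the coarser partitions.

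The adjunction $H\to i_{u*}i_u^*H\oplus i_{v*}i_v^*H=\BQ_u^{b-1}\oplus\BQ_v^{a-1}$ is an isomorphism off $O$, so everything depends on the stalk map at $O$. When $ord_2(a)\ne ord_2(b)$, Lemma \ref{int} shows that $Z_a\cap Z_b=\varnothing$, so $\dim V^0_{Z_a\cup Z_b}=a+b-1$ and $\ker g^0=\BQ$ by Corollary \ref{comb}. A dimension count then makes $g^0$ surjective. Its one-dimensional kernel gives a skyscraper, and I would split it off as $\BQ_O$, using $\Ext^1$ vanishing for skyscraper extensions, to get $H=\BQ_u^{b-1}\oplus\BQ_v^{a-1}\oplus\BQ_O$. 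When $ord_2(a)=ord_2(b)$, Lemma \ref{int} gives $|Z_a\cup Z_b|=a+b-(a,b)$, so $\dim V^0_{Z_a\cup Z_b}=a+b-(a,b)-1$ and $g^0$ is injective. Its cokernel at $O$ then has dimension $(a+b-2)-(a+b-(a,b)-1)=(a,b)-1$, which yields \eqref{1999}.

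Finally I would split off $\BQ_{\BR^3}$ by showing $\Ext^1(\BQ_{\BR^3},H)=H^1(\BR^3,H)=0$. In the smooth case this is clear, because $H$ is a sum of constant sheaves on contractible closed sets. In the singular case I would take the long exact sequence of \eqref{1999}: $H^0(\BQ_u^{b-1}\oplus\BQ_v^{a-1})\to\BQ_O^{(a,b)-1}$ is surjective, since it is just the stalk cokernel map, and the higher cohomology of the middle term vanishes, so $H^1(\BR^3,H)=0$. I expect the main obstacle to be the middle step, namely verifying rigorously that the specialization maps on $H^3$ realize precisely the combinatorial push-forward $\iota_*$. This requires tracking how the cycles $R_j$ sit inside $S_i\times\Gamma_\theta$ and $T_k\times\Gamma_\varphi$, and checking orientations carefully.
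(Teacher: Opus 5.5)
Your proposal is correct and follows the paper's proof essentially step for step. The shared steps are: surjectivity of $\eta$ onto $j_*\Lambda^3L=\BQ_{\BR^3}$; the comparison map from $\ker\eta$ to $\BQ_u^{b-1}\oplus\BQ_v^{a-1}$, whose stalk at $O$ is the map $g^0$ of Corollary \ref{comb}; splitting off the skyscraper by $\Ext^1$ vanishing; and splitting off $\BQ_{\BR^3}$ via $H^1(\BR^3,\ker\eta)=0$. You use the correct group $\Ext^1(\BQ_{\BR^3},H)$ here, whereas the paper writes it in the reversed order in the smooth case.

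One small correction: $i_{u*}i_u^*H$ is not $\BQ_u^{b-1}$, because its stalk at $O$ is all of $H_O$. The map to $\BQ_u^{b-1}$ must come from the gluing across $O$ that you and the paper both invoke, not from adjunction alone.
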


\begin{proof}
   As in the proof of Proposition \ref{rh1} and \ref{rh2}, there is a natural morphism 
   \[
   \eta:R^3h_*\BQ_X\to j_*\Lambda^3L=\BQ_{\BR^3}.
   \]
   Since the fundamental classes of fibers are always nonzero, the morphism $\eta$ is surjective, and hence we have the exact sequence
    \begin{equation}\label{rh3ex}
    0\to \ker\eta\to R^3f_*\BQ_X\to \BQ_{\BR^3}\to 0
    \end{equation}
   Let $P$ be a point on $u^+$-axis. By Proposition \ref{degenerate}, the kernel $\ker \eta_P$ is the vector space of the linear functionals on the vector space $\langle \Gamma_\theta\times S_1,\dots,\Gamma_\theta\times S_b\rangle$ such that $l(\sum\Gamma_\theta\times S_i)=0$. By the same argument as in Proposition \ref{rh2}, these functionals extend across $O$ to the $u^-$-axis and produce a morphism $\ker \eta\to\BQ_u^{b-1}$. Together with the analogous morphism $\ker\eta\to\BQ_v^{a-1}$, we have 
             \[\rho:\ker \eta\to \BQ_u^{b-1}\oplus \BQ_v^{a-1},\]
    which is isomorphic away from the origin $O$. At the origin, Proposition \ref{singfiberh} shows that the kernel $\ker\eta_O$ consists of all linear functionals $l$ on the $\BQ$-vector space spanned by $R_1,\dots,R_c$ such that $l(\sum R_i)=0$. By Corollary \ref{comb}, the behavior of $\eta_O$ depends on whether $ord_2(a)=ord_2(b)$. 
    
    \begin{enumerate}
    \item $ord_2(a)\neq ord_2(b)$. By Corollary \ref{comb}, the kernel of $\rho$ is $\BQ_O$, and $\rho$ is surjective by dimension reason. So 
    \[
    0\to \BQ_O\to\ker\eta\to \BQ_u^{b-1}\oplus\BQ_v^{a-1}\to 0
    \]
    is exact. The extension class 
    \begin{equation}\label{rh3pr}
    \begin{split}
    \Ext^1_{D(\BR^3)}(\BQ_u^{b-1}\oplus \BQ_v^{a-1},\BQ_O)&=\Hom_{D(\BR^3)}(\BQ_u^{b-1}\oplus \BQ_v^{a-1},i_*\BQ_O[1])\\
    =\Hom_{D(O)}(i^*\BQ_u^{b-1}&\oplus i^*\BQ_v^{a-1},\BQ_O[1])=\Hom_{D(O)}(\BQ_O^{a+b-2},\BQ_O[1])=0.
    \end{split}
    \end{equation}
    So $\ker\eta=\BQ_u^{b-1}\oplus \BQ_v^{a-1}\oplus\BQ_O$. The extension class of the exact sequence \eqref{rh3ex}    is 
    \[
    \Ext^1_{D(\BR^3)}(\BQ_O\oplus\BQ_u^{b-1}\oplus\BQ_v^{a-1},\BQ_{\BR^3})=0
    \]
    by the same reason as \eqref{rh3pr}.  Therefore in this case
    \[
    R^3h_*\BQ_X=\BQ_{\BR^3}\oplus \BQ_u^{b-1}\oplus \BQ_v^{a-1}\oplus \BQ_O. 
    \]
    \item $ord_2(a)=ord_2(b)$. By Corollary \ref{comb}, $\rho$ is injective. A dimensional count shows that the cokernel is of dimension $(a,b)-1$. So we obtain the exact sequence 
    \[
    0\to \ker\eta\to \BQ_{u}^{b-1}\oplus \BQ_v^{a-1}\to\BQ_O^{(a,b)-1}\to 0.
    \]
    Taking the cohomology, we have $H^1(\BR^3,\ker\eta)=0$, so the extension class of \eqref{rh3ex} is
    \[
    \Ext^1_{\BR^3}(\BQ_{\BR^3},\ker \eta)=H^1(\BR^3,\ker\eta)=0.
    \]
    Thus the sequence splits, and the statement follows with $H=\ker \eta$.  
    \end{enumerate}
\end{proof}

The result below concerns the cohomology groups of the direct summands of $R^kf_*\BQ_X$ for $k=1,2,3$.
 
\begin{prop} \label{coh}
Let $h:X\to \BR^3$, $j:U\to\BR^3$, $L=R^1f_*\BQ_X|_U$, and $H$ be as above. The cohomology groups of the sheaves $j_*L$, $j_*\Lambda^2L$ and $H$ are the following.
 We have 
\[
H^k(\BR^3,j_*L)=\begin{cases}
\BQ, & k=0,\\
0, & \textup{otherwise,}
\end{cases}
\quad
H^k(\BR^3,j_*\Lambda^2 L)=\begin{cases}
\BQ^2, & k=0,\\
0,& \textup{otherwise,}
\end{cases}
\]
\[
H^k(\BR^3,H)=\begin{cases}
\BQ^{a+b-(a,b)-1}, &k=0\textup{ and } ord_2(a)=ord_2(b),\\
\BQ^{a+b-1},& k=0\textup{ and } ord_2(a)\neq ord_2(b),\\
0,& \textup{otherwise.}
\end{cases}
\]
\end{prop}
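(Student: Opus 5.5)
The computation reduces to the cohomology of a sheaf on $\BR^3$ supported on a contractible or star-shaped set, plus the restriction of $j_*L$ to the singular axes. The key tool is a retraction argument: each sheaf is constructible with respect to the conic stratification $U\sqcup V_{u^\pm}\sqcup V_{v^\pm}\sqcup O$. Since $\BR^3$ and each axis are contractible, for a sheaf $\mathcal F$ that is conic (invariant under positive dilations, which holds here because $L$ is defined via $\pi_1(U)$ and $U$ is conic) we have $R\Gamma(\BR^3,\mathcal F)=\mathcal F_O$. This is \cite[Prop. 3.7.5]{KS}-type: $R\Gamma(\BR^3,\mathcal F)\cong R\Gamma(B,\mathcal F)\to \mathcal F_O$ is an isomorphism for $\BR_{>0}$-conic sheaves. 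It follows that $H^k=0$ for $k>0$ and $H^0=\mathcal F_O$ in all three cases. The direct images $j_*L$ and $j_*\Lambda^2L$ are conic because $j$ is equivariant. The summand $H$ is built from constant sheaves on axes and the origin, all conic.

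The degree zero terms are then stalk computations at $O$.

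For $j_*L$, the stalk is $H^0(B\cap U,L)=H^0(U,L)$, the invariants of $\pi_1(U)$. From Theorem \ref{5.3}, $\gamma_1$ fixes the vectors killed by $(0,b,0)$ in the last coordinate. Concretely, the invariants of a transvection $e_j\mapsto e_j+c e_3$ on the column side consist of those vectors whose $d\varphi$ resp.\ $d\theta$ coefficient vanishes. Intersecting over $\gamma_1,\gamma_2$ leaves $\langle d\psi\rangle$, which is one-dimensional. This also matches $H^1(h^{-1}(O))=\BQ$ from Proposition \ref{singfiber}.

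For $\Lambda^2L$, Corollary \ref{mono2} gives the invariants. The condition from $\gamma_1$ kills the $d\theta d\varphi$ coefficient, and the condition from $\gamma_2$ kills the $d\theta d\psi$ coefficient. That leaves $\langle d\theta d\psi,d\varphi d\psi\rangle$, up to the transpose convention, so the space is $\BQ^2$. I will check the convention against the stalk computation on $V_u$ in Proposition \ref{rh2}.

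For $H$ in the smooth case, $H_O=\BQ^{b-1}\oplus\BQ^{a-1}\oplus\BQ=\BQ^{a+b-1}$ directly.

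In the singular case, I will use the sequence \eqref{1999} instead of stalks. The sheaves $\BQ_u,\BQ_v,\BQ_O$ have cohomology only in degree $0$, since the axes are contractible. The long exact sequence then gives $0\to H^0(H)\to\BQ^{a+b-2}\to\BQ^{(a,b)-1}\to H^1(H)\to 0$, with all higher terms vanishing. The map on $H^0$ is the stalk map at $O$, which is surjective because \eqref{1999} is exact as sheaves. Hence $H^1=0$ and $H^0=\BQ^{a+b-(a,b)-1}$.

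The main obstacle is justifying $R\Gamma(\BR^3,j_*\mathcal L)=(j_*\mathcal L)_O$ rigorously. The point is that $j_*$ here is the underived pushforward, so the claim needs more than the conic argument alone. If conicness is awkward to invoke, I will argue directly instead. Over $\BR^3$ minus the origin, first check $R\Gamma$ along the radial retraction onto $S^2$. Then use Mayer–Vietoris on $S^2$ with four punctures, covering it by the four regions $U_{\pm\pm}$ of Proposition \ref{5.2} together with small disks around the punctures. On each disk the sheaf $j_*L$ restricts to the invariants of one transvection, which is a constant-rank pushforward whose higher cohomology vanishes, as in the one-dimensional computation of Lemma \ref{S1}.
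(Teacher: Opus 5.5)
Your overall route is sound, but the step computing $(\Lambda^2L)^{\pi_1(U)}$ fails as written. Use the convention produced in the proof of Theorem \ref{5.3}, which is the one you correctly used for $L$. There the columns are images: $\gamma_1\colon d\varphi\mapsto d\varphi+b\,d\psi$ and $\gamma_2,\gamma_3\colon d\theta\mapsto d\theta\pm a\,d\psi$, with the other basis vectors fixed. In this convention $\gamma_2$ does not kill the $d\theta\,d\psi$ coefficient. It imposes the same condition as $\gamma_1$: the $d\theta\,d\varphi$ coefficient must vanish. Your two stated conditions would leave only $\langle d\varphi\,d\psi\rangle\cong\BQ$, so your conclusion $\langle d\theta\,d\psi,d\varphi\,d\psi\rangle$ does not follow from them.

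The transpose convention is not harmless either. Reading the matrices of Corollary \ref{mono2} by rows, $\gamma_1$ kills the $d\theta\,d\psi$ coefficient and $\gamma_2$ kills the $d\varphi\,d\psi$ coefficient, leaving $\langle d\theta\,d\varphi\rangle\cong\BQ$. The row reading computes invariants of the dual local system; for $L$ it would give $\BQ^2$. Your proposed check against the stalk on $V_u$ cannot detect this, because a single transvection fixes a $2$-dimensional subspace of $\Lambda^2$ in either reading. What pins down the convention is your own check $L^{\pi_1(U)}\cong H^1(h^{-1}(O))=\BQ$.

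A convention-free repair goes as follows. Each $\gamma_i$ acts on $L$ by $v\mapsto v+\lambda_i(v)\,d\psi$ with $\lambda_i(d\psi)=0$. Hence it acts on $\Lambda^2L$ by $v\wedge w\mapsto v\wedge w+d\psi\wedge(\lambda_i(v)\,w-\lambda_i(w)\,v)$. So $d\psi\wedge L$ is fixed pointwise by every loop, while $\gamma_1$ sends $d\theta\wedge d\varphi$ to $d\theta\wedge d\varphi+b\,d\theta\wedge d\psi$. Therefore $(j_*\Lambda^2L)_O=d\psi\wedge L=\langle d\theta\,d\psi,d\varphi\,d\psi\rangle\cong\BQ^2$.

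The rest is correct, and it is the same radial-scaling mechanism as the paper's proof, packaged more efficiently. The paper works by hand with the triangles $j_{V*}L\to Rj_{V*}L\to R^{\ge1}j_{V*}L\to$ and $j_*L\to Rj_{O*}(j_{V*}L)\to R^{\ge1}j_{O*}(j_{V*}L)\to$. It uses that small punctured balls $B\setminus O$ have the same cohomology as $\BR^3\setminus O$. As a result all higher cohomology of $j_*L$ cancels against the local cohomology at $O$, and $H^0(\BR^3,j_*L)=H^0(U,L)$.

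The conic statement $R\Gamma(\BR^3,F)\cong F_O$ from Kashiwara--Schapira does this at once for $j_*L$, $j_*\Lambda^2L$ and $H$. The paper's version has the side benefit that the same triangles are reused in Proposition \ref{perv4} to compute $i_V^!j_*L$ and $i_O^!j_*L$. Your treatment of $H$ is essentially the paper's: the stalk in the smooth case, and the long exact sequence of \eqref{1999} in the singular case.

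Your worry about the underived $j_*$ is unfounded. The conic result applies to any conic object, and $j_*L$ is conic because it is locally constant on each stratum of the conic stratification $U\sqcup V_{u^\pm}\sqcup V_{v^\pm}\sqcup O$.

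Drop the fallback. It computes $R\Gamma(\BR^3\setminus O,j_*L)=R\Gamma(S^2,j_*L|_{S^2})$, which is not concentrated in degree $0$. Its Euler characteristic is $3\cdot(-2)+2\cdot 4=2$ while $h^0=1$, so $h^2\neq 0$. No Mayer--Vietoris computation on the punctured sphere can therefore give the vanishing you need. That vanishing only appears after comparing with the local cohomology at $O$, i.e.\ after redoing the paper's argument.
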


\begin{proof}
The cohomology of $H$ is already computed in the proof of Proposition \ref{rh3}. The computations for $j_{*}L$ and $j_{*}\Lambda^2L$ are essentially the same, so we illustrate the procedures of $j_*L$ only. Let $V\xhookrightarrow{i_V} V\cup U\xhookleftarrow{j_V}U$ and $O\xhookrightarrow{i_O} \BR^3\xhookleftarrow{j_O}U\cup V$ be the open and closed embeddings of attaching strata. 
Consider the distinguished triangle
\begin{equation}\label{coh1000}
j_{V*}L\to Rj_{V*} L\to R^{\ge1}j_{V*} L\to.
\end{equation}
Local monodromy matrices in Theorem \ref{5.3} implies $R^1j_{V*}L=\BQ_u^2\oplus\BQ_v^2$ and $R^kj_{V*}L=0$ for $k\ge2$. Thus \eqref{coh1000} becomes
\begin{equation} \label{2000}
j_{V*}L\to Rj_{V*}L\to (\BQ_u^2\oplus\BQ_v^2)[-1]\to.
\end{equation}
Similarly, since $j_*L=j_{O*}j_{V*}L$, we have the distinguished triangle
\begin{equation} \label{2001}
 j_*L\to Rj_{O*}(j_{V*}L)\to R^{\ge1}j_{O*}(j_{V*}L)\to.
\end{equation}
The complex $R^{\ge1}j_{O*}(j_{V*}L)$ is supported at $O$, so it decomposes as
\[
R^{\ge1}j_{O*}(j_{V*}L)\cong\bigoplus_{k\ge1} R^kj_{O*}(j_{V*}L)[-k]
\]
Therefore,
\begin{equation} \label{2002}
H^k(\BR^3,R^{\ge1}j_{O*}(j_{V*}L))=(R^{k}j_{O*}(j_{V*}L))_P=\varinjlim_{B\ni O} H^k(B\setminus O,j_{V*}L),
\end{equation}
where $B$ runs through a filtered system of Euclidean balls centered at $O$. On the other hand, applying the long exact sequence induced by the restriction of \eqref{2000} to $B\setminus O$, we have
\[
\begin{split}
H^k(B\setminus O,j_{V*}L)=&\ker \{H^1(B\setminus O,L)\to H^0(B\cap V_a)\oplus H^0(B\cap V_b)\}\\
=&\ker \{H^1(\BR^3\setminus O,L)\to H^0(V_a)\oplus H^0(V_b)\}\\
=&H^k(\BR^3\setminus O,j_{V*}L).
\end{split}
\]
Therefore, the colimit in \eqref{2002} is actually isomorphisms and hence the natural map
\[
H^k(\BR^3\setminus O,j_{V*}L)\to H^k(\BR^3,R^{\ge1}j_{O*}j_{V*}L)
\]
is an isomorphism for $k\ge1$. Now taking hypercohomology of \eqref{2001} yields 
$H^0(\BR^3,j_*L)=H^0(U,L)=\BQ$ and $H^k(\BR^3,j_{V*}L)=0$ for $k\ge 1$, as desired. The computation for $j_*\Lambda^2L$ follows by the same argument, using the corresponding local monodromy data.
\end{proof}

\begin{thm} \label{leray split}
The Leray spectral sequence 
\[
E_2^{p,q}=H^p(\BR^3,R^qh_*\BQ_X)\Longrightarrow H^{p+q}(X,\BQ)
\]
induced by the map $h:X\to\BR^3$ is concentrated on the column $p=0$ and hence degenerates at the $E_2$-page.
\end{thm}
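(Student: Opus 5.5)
The plan is to read off the $E_2$-page directly from the explicit descriptions of the higher direct images in Propositions \ref{rh1}, \ref{rh2}, \ref{rh3} and the cohomology computations of Proposition \ref{coh}. Each $R^qh_*\BQ_X$ will be shown to have no higher cohomology on $\BR^3$. Once every $E_2^{p,q}$ with $p\ge1$ vanishes, all differentials $d_r$ ($r\ge2$) change $p$ by $r\ge2$, so they start or end in a zero group and the spectral sequence degenerates at $E_2$.

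We go row by row.

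\emph{Row $q=0$.} $R^0h_*\BQ_X=\BQ_{\BR^3}$ by connectedness of the fibers (Propositions \ref{5.2} and \ref{5}). Since $\BR^3$ is contractible, $H^p(\BR^3,\BQ)=0$ for $p\ge1$.

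\emph{Row $q=1$.} $R^1h_*\BQ_X\cong j_*L$ by Proposition \ref{rh1}. Proposition \ref{coh} gives $H^p(\BR^3,j_*L)=0$ for $p\ge1$.

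\emph{Row $q=2$.} Proposition \ref{rh2} gives $R^2h_*\BQ_X\cong j_*\Lambda^2L\oplus\BQ_u^{b-1}\oplus\BQ_v^{a-1}$. The first summand is acyclic by Proposition \ref{coh}. The other two are constant sheaves pushed forward from the closed, contractible axes, so their higher cohomology vanishes.

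\emph{Row $q=3$.} Proposition \ref{rh3} gives $R^3h_*\BQ_X\cong\BQ_{\BR^3}\oplus H$. Proposition \ref{coh} gives $H^p(\BR^3,H)=0$ for $p\ge1$ in both cases.

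\emph{Rows $q\ge4$.} These vanish, since all fibers have real dimension $3$ (Corollary \ref{5.1}) and, by Lemma \ref{push} and properness, the stalk of $R^qh_*\BQ_X$ is $H^q$ of a $3$-dimensional compact fiber.

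The only step needing care is the acyclicity of $H$ when $ord_2(a)=ord_2(b)$, because $H$ is not a direct sum of constant sheaves on strata. I would verify it from the exact sequence \eqref{1999}. The long exact sequence reads
\[
0\to H^0(\BR^3,H)\to\BQ^{a+b-2}\to\BQ^{(a,b)-1}\to H^1(\BR^3,H)\to 0,
\]
and all higher terms vanish because the middle and right sheaves are acyclic. So $H^1(\BR^3,H)=0$ is equivalent to surjectivity of the restriction-to-the-origin map $\BQ^{a+b-2}\to\BQ^{(a,b)-1}$ on global sections. This map coincides with the stalk map at $O$, and that stalk map is surjective by the construction in Proposition \ref{rh3}, where \eqref{1999} is exact as sheaves. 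The same global-sections-equal-stalk-at-$O$ identification gives surjectivity on $H^0$, yielding the claimed vanishing.

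Assembling the rows, $E_2^{p,q}=0$ for $p\neq0$, which proves the statement. As a consistency check, the total dimensions $\sum_q\dim E_2^{0,q}$ should match the Betti numbers in Proposition \ref{sm} and Theorem \ref{s}.
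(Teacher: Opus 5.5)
Your proposal is correct and is essentially the paper's proof: the paper likewise reads off $H^p(\BR^3,R^qh_*\BQ_X)=0$ for $p>0$ from Propositions \ref{rh1}, \ref{rh2}, \ref{rh3} and \ref{coh}, so the $E_2$-page lies in the column $p=0$ and every differential $d_r$ ($r\ge2$) vanishes. Your extra check that $H^1(\BR^3,H)=0$ when $ord_2(a)=ord_2(b)$, using surjectivity of the stalk map at $O$ in \eqref{1999}, spells out a vanishing that the paper simply asserts inside the proof of Proposition \ref{rh3}.
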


\begin{proof}
   By Proposition \ref{rh1}, Proposition \ref{rh2}, Proposition \ref{rh3} and Proposition \ref{coh}, we have
   \[
   \dim H^p(\BR^3,R^qh_*\BQ_X)=0,~~ \forall p>0.
   \]
   Therefore, the nonzero terms of $E_2^{p,q}$ are concentrated on a single column, and hence the Leray spectral sequence degenerates at $E_2$.
\end{proof}

\begin{rmk}
   We don't know whether the decomposition 
   \[
   Rh_*\BQ_X=\bigoplus_{k=0}^3 R^kh_*\BQ_X[-k]
   \]
   holds in the derived category. Nevertheless, as we shall see, the explicit descriptions of $R^kf_*\BQ_X$ will suffice for computing the perverse filtration; their direct summands will be suitably reorganized to produce the perverse cohomology sheaves.
\end{rmk}

\subsection{Perverse filtration of $h$}
We work with upper middle perversity; see Section 2.3. The following lemma is a reformulation of Definition \ref{perv} for the complexes in $D^b_c(\BR^3)$ that are constructible with respect to the stratification induced by $h$. 

\begin{lem} \label{perv3}
   Let $\BR^3=U\sqcup V\sqcup O$ be a stratification where $U$ is the complement of $u$-axis and $v$-axis, $V$ is the union of the two axes with the origin $O$ removed, and $O$ is the origin. Let $F\in D^b_c(\BR^3)$ be a complex such that the restriction of $F$ to each of the stratum has locally constant cohomology sheaves. Then $F$ is a perverse sheaf if and only if the following conditions hold:
   \begin{enumerate}
   \item $j_U^*F\in D^{=-1}_c(U)$, 
   \item $i_V^*F\in D^{\le0}_c(V)$,  
   \item $i_V^!F\in D^{\ge0}_c(V)$,
   \item $i_O^*F\in D^{\le 0}_c(O)$,
   \item $i_O^!F\in D^{\ge 0}_c(O)$.
   \end{enumerate}
\end{lem}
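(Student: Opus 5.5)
The plan is to specialize Definition \ref{perv} to the stratification $\BR^3=U\sqcup V\sqcup O$ with the upper middle perversity $p_+(n)=\ceil{-n/2}$. First I will check that this is an admissible stratification in the sense of Definition \ref{perv}: each of $U$, $V$, $O$ is an equidimensional subanalytic locally closed set, of real dimension $3$, $1$ and $0$ respectively. The hypothesis that $F$ has locally constant cohomology sheaves on each stratum gives the same property for $i_\alpha^*F$. For $i_\alpha^!F$ I will use the standard fact that, for sheaves constructible with respect to a Whitney stratification, $i_\alpha^!F$ is also locally constant along the stratum. By local conic structure, the link data along a stratum are locally trivial, so this is routine.

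Next I will evaluate the perversity on each stratum: $p_+(3)=\ceil{-3/2}=-1$, $p_+(1)=\ceil{-1/2}=0$ and $p_+(0)=0$. Substituting into Definition \ref{perv}, the condition $F\in{^\Fp D^{\le0}}$ becomes three vanishing statements: $\CH^j(j_U^*F)=0$ for $j>-1$, $\CH^j(i_V^*F)=0$ for $j>0$, and $\CH^j(i_O^*F)=0$ for $j>0$. These are exactly the support half of condition (1) together with conditions (2) and (4). Similarly, $F\in{^\Fp D^{\ge0}}$ becomes $\CH^j(j_U^!F)=0$ for $j<-1$, $\CH^j(i_V^!F)=0$ for $j<0$, and $\CH^j(i_O^!F)=0$ for $j<0$, which gives (3) and (5).

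The remaining point is condition (1). Since $j_U$ is an open embedding, $j_U^!=j_U^*$, so on $U$ the two conditions combine to say that $j_U^*F$ is concentrated in degree $-1$, i.e. $j_U^*F\in D^{=-1}_c(U)$. Conversely, conditions (1)–(5) recover all six vanishing statements, so $F\in {^\Fp D^{\le0}}\cap{^\Fp D^{\ge0}}=Perv(\BR^3)$.

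I do not expect a genuine obstacle here. The only point needing care is justifying that $i_V^!F$ and $i_O^!F$ have locally constant cohomology. For $O$ this is automatic, since $O$ is a point. For $V$ it follows from constructibility with respect to the given stratification, which is Whitney (as stated in the introduction), via \cite[Chapter VIII]{KS}.
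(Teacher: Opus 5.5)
Your proposal is correct and follows the same approach as the paper. The paper states this lemma without separate proof, as a direct reformulation of Definition \ref{perv}: substitute $p_+(3)=-1$, $p_+(1)=p_+(0)=0$, and use $j_U^!=j_U^*$ to merge the two conditions on the open stratum, exactly as you do. Your extra check that $i_V^!F$ and $i_O^!F$ have locally constant cohomology is a harmless addition that the paper leaves implicit.
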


\begin{prop} \label{perv4}
The complexes $H$, $j_*L[1]$ and $j_*\Lambda^2 L[1]$ are perverse sheaves.
\end{prop}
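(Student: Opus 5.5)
The plan is to verify, for each of the three complexes, the five conditions of Lemma \ref{perv3} with respect to the stratification $\BR^3=U\sqcup V\sqcup O$. With the upper middle perversity $p_+(n)=\ceil{-n/2}$ we have $p_+(3)=-1$ and $p_+(1)=p_+(0)=0$. First I will check the constructibility hypothesis of Lemma \ref{perv3}.
\begin{enumerate}
\item $j_*L$ and $j_*\Lambda^2L$ restrict to local systems on $U$.
\item By the local monodromy computation (Theorem \ref{5.3}, Corollary \ref{mono2}), their restrictions to each half-axis are constant sheaves $\BQ^2$, as already used in Propositions \ref{rh2} and \ref{coh}.
\item $H$ is constant on each half-axis and on $O$, by Proposition \ref{rh3}.
\end{enumerate}
So all three complexes are constructible with respect to $U\sqcup V\sqcup O$, and Lemma \ref{perv3} applies.

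For $H$, the argument is purely formal. $H$ is a sheaf placed in degree $0$ and supported on the union of the two axes, so $j_U^*H=0$. This satisfies condition (1) trivially, since the zero object lies in $D^{=-1}_c(U)$. The restrictions $i_V^*H$ and $i_O^*H$ are sheaves in degree $0$, which gives conditions (2) and (4). For the costalk conditions (3) and (5), I use that for any sheaf $G$ and any locally closed embedding $i$, the complex $i^!G$ lies in $D^{\ge0}$, because $i^!$ is the right derived functor of the left exact functor of sections with support. Hence $i_V^!H$ and $i_O^!H$ lie in $D^{\ge0}$.

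For $F=j_*L[1]$ (and identically for $j_*\Lambda^2L[1]$), the star conditions are immediate.
\begin{enumerate}
\item $j_U^*F=L[1]$ is concentrated in degree $-1$, which is condition (1).
\item $i_V^*F$ and $i_O^*F$ are sheaves placed in degree $-1\le 0$, which gives conditions (2) and (4).
\end{enumerate}
The costalk conditions (3) and (5) amount to showing that $i_Z^!j_*L\in D^{\ge1}$ for $Z=V$ and $Z=O$. By the left exactness argument above, only the degree-zero part needs checking, namely $\CH^0(i_Z^!j_*L)=i_Z^{-1}\Gamma_Z(j_*L)=0$. This holds because a section of $j_*L$ over an open set $W$ is, by definition, a section of $L$ over $W\cap U$, so a section supported on the closed set $\BR^3\setminus U$ is zero. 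Equivalently, I can apply the triangle $i_*i^!G\to G\to Rj'_*j'^*G\to$ for the complementary open embedding $j'$ (Proposition 2.3). The map $j_*L\to j'_*j'^*j_*L$ on $\CH^0$ is injective, since both sides have the same sections over $U$. Hence $\CH^0$ of the left term, which is $\CH^0(i_*i^!j_*L)$, vanishes.

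I expect the main obstacle to be the costalk conditions at $V$ and $O$ for $j_*L[1]$. In particular, it would be tempting to try to compute $i_O^!j_*L$ explicitly from the monodromy data. I will avoid that computation and use only the vanishing of $\Gamma_Z(j_*L)$, since the higher cohomology sheaves of $i^!$ are automatically in degrees $\ge1$, which is exactly what the upper middle perversity requires.
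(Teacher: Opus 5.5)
Your proof is correct, and it reaches the costalk conditions (3) and (5) for $j_*L[1]$ by a more economical route than the paper.

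The paper uses the same attaching triangles $i_{Z*}i_Z^!j_*L\to j_*L\to Rj'_*j'^*j_*L\to$. It then identifies the third term explicitly through the truncation triangles \eqref{2000} and \eqref{2001} from the proof of Proposition \ref{coh}. This gives $i_V^!j_*L\cong(\BQ_u^2\oplus\BQ_v^2)[-2]$, which rests on the local monodromy input $R^1j_{V*}L=\BQ_u^2\oplus\BQ_v^2$ and $R^{\ge2}j_{V*}L=0$ from Theorem \ref{5.3}. It also gives $i_O^!j_*L[1]\cong\tau_{\ge1}Rj_{O*}(j_{V*}L)$. For $H$, the paper reads off condition (5) from the triangle $i_{O*}i_O^!H\to H\to Ri_{V*}i_V^*H\to$. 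That is essentially your left-exactness argument in different packaging.

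You instead use only two facts: $i^!$ of a sheaf lies in $D^{\ge0}$, and $j_*L$ has no nonzero local sections supported off $U$. This has several advantages:
\begin{enumerate}
\item It needs no monodromy information beyond constructibility.
\item It works verbatim for $j_*M[1]$ with $M$ any local system on $U$.
\item It makes clear why the upper middle perversity suffices: only $\CH^0(i_Z^!j_*L)=0$ is required.
\end{enumerate}
Since $j_*L\to j'_*j'^*j_*L$ is in fact an isomorphism on $\CH^0$, your argument also kills $\CH^1(i_Z^!j_*L)$. So it recovers the stronger bound $i_Z^!j_*L\in D^{\ge2}$ that the paper's explicit formulas exhibit. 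What the paper's computation adds is the actual values of the costalks, which this proposition does not need.
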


\begin{proof}
We verify the conditions in Lemma \ref{perv3} for each complex. Since $H$ is supported on $V\cup O$, conditions (1)-(4) are trivial. To check condition (5), consider the following distinguished triangle
\[
i_{O*}i_O^!H\to H\to Ri_{V*}i_V^*H\to
\]
induced by the open and closed embedding $V\xrightarrow{i_V} V\cup O\xleftarrow{i_O}O$. From this, we conclude that $i_{O*}i_O^!H\in D^{\ge0}_c({V\cup O})$, and hence $i_O^!H\in D^{\ge 0}_c(O)$.

The proof for $j_*L[1]$ and $j_*\Lambda^2L[1]$ are similar, so we just prove that $j_*L[1]$ is a perverse sheaf as an illustration.  Condition (1), (2) and (4) follow directly from the definition. To check condition (3), we note that the stratification
\[
V\xhookrightarrow{i_V} \BR^3\setminus O\xhookleftarrow{j_V}U
\]
which yields a distinguished triangle
\[
i_{V*}i^!_Vj_*L\to j_{V*}L\to Rj_{V*} L\to.
\]
Comparing with \eqref{2000}, we have an isomorphism 
\[
i^!_Vj_*L\cong(\BQ_u^2\oplus\BQ_v^2)[-2],
\]
which lies in the correct perverse degree $i^!_Vj_*L\in D_c^{\ge0}$ and hence (3) holds. Similarly, comparing the distinguished triangle
\[
i_{O*}i^!_Oj_*L\to j_*L\to Rj_{O*}j^*_Oj_*L\to,
\]
with \eqref{2001}, we have 
\[
i^!_Oj_*L[1]=\tau_{\ge1}Rj_{O*}(j_{V*}L)\cong\bigoplus_{k\ge1} R^kj_{O*}(j_{V*}L)[-k].
\]
In particular, we have the complex $i^!_Oj_*L[1]$ is concentrated in $[0,\infty)$, and hence the condition $(5)$ holds.
\end{proof}

Now we are ready to formulate the perverse truncation of $Rh_*\BQ_X$.

\begin{thm} \label{pervtr}
The perverse truncation of the complex $Rh_*\BQ_X$ is 
\[
^\Fp\tau_{\le k}Rh_*\BQ_X=\begin{cases}
0 & k\le0,\\
\BQ_{\BR^3} & k=1,\\
Cone(\alpha)[-1] &k=2,\\
Cone(\beta)[-1] & k=3,\\
Rh_*\BQ_X & k\ge4,
\end{cases}
\]
where $\alpha$ is the composition
\begin{equation} \label{alpha}
\tau_{\le2}Rh_*\BQ_X\to R^2h_*\BQ_X=(j_*\Lambda^2L\oplus \BQ_u^{b-1}\oplus \BQ_v^{a-1})[-2]\to j_*\Lambda^2L[-2].
\end{equation}
and $\beta$ is the composition
\begin{equation}\label{beta}
Rh_*\BQ_X\to R^3h_*\BQ_X[-3]\to \BQ_{\BR^3}[-3].
\end{equation}
Furthermore, the perverse cohomology sheaves are
\[
^\Fp\CH^k(Rh_*\BQ_X)=\begin{cases}
\BQ_{\BR^3}[1] & k=1,\\
j_*L[1]\oplus\BQ_u^{b-1}\oplus\BQ_v^{a-1} &k=2,\\
j_*\Lambda^2L[1]\oplus H& k=3,\\
\BQ_{\BR^3}[1] & k=4, \\
0 & \textup{otherwise}.
\end{cases}
\]
\end{thm}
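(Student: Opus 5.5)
The plan is to apply Proposition \ref{byhand}. We set $G_k$ equal to the proposed truncations, with morphisms $\iota_k:G_k\to G_{k+1}$, and check that each cone $Cone(\iota_k)$ lies in $Perv(\BR^3)[-k]$ and equals the claimed ${^\Fp\CH^k}[-k]$. The inputs are the computations of $R^qh_*\BQ_X$ in Propositions \ref{rh1}, \ref{rh2}, \ref{rh3} and the perversity statements in Proposition \ref{perv4}. We also need the elementary fact that $\BQ_{\BR^3}[1]$, $\BQ_u[0]$ and $\BQ_v[0]$ are perverse for the upper middle perversity: on a $1$-dimensional stratum $p_+(1)=0$, and the costalk conditions at $O$ hold because $i_O^!\BQ_u=\BQ_O[-1]$ and $i_O^!\BQ_{\BR^3}[1]=\BQ_O[-2]$.

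\textbf{Step 1 ($k=1$).} We take $G_1=\BQ_{\BR^3}=\tau_{\le0}Rh_*\BQ_X$, using that the fibers are connected. Its cone from $G_0=0$ is $\BQ_{\BR^3}=\BQ_{\BR^3}[1][-1]$, so this step is immediate.

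\textbf{Step 2 ($k=2$).} We define $G_2=Cone(\alpha)[-1]$, which is the fiber of $\alpha:\tau_{\le2}Rh_*\BQ_X\to j_*\Lambda^2L[-2]$. The morphism $\tau_{\le0}\to\tau_{\le2}$ factors through $G_2$, because $\Hom(\BQ_{\BR^3},j_*\Lambda^2L[-2])=H^{-2}=0$; this gives $\iota_1$.

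To identify the cone of $\iota_1$, we use the octahedral axiom (Proposition \ref{T4}) on the composition $\BQ_{\BR^3}\to G_2\to\tau_{\le2}Rh_*\BQ_X$. This places $Cone(\iota_1)$ in a triangle
\[
Cone(\iota_1)\to\tau_{[1,2]}Rh_*\BQ_X\to j_*\Lambda^2L[-2]\to .
\]
Hence $Cone(\iota_1)$ is an extension of $(\BQ_u^{b-1}\oplus\BQ_v^{a-1})[-2]$ by $j_*L[-1]$. It lies in $Perv[-2]$ because both pieces do. The direct sum decomposition then follows from the vanishing of
\[
\Ext^1(\BQ_u^{b-1}\oplus\BQ_v^{a-1},\,j_*L[1]),
\]
which we check by adjunction to $i_V^!j_*L[1]=(\BQ_u^2\oplus\BQ_v^2)[-1]$ (computed in the proof of Proposition \ref{perv4}). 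This is a degree count, with a separate check at $O$.

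\textbf{Step 3 ($k=3$).} We set $G_3=Cone(\beta)[-1]$ and use the octahedron for $G_2\to G_3\to Rh_*\BQ_X$. The composite $G_2\to Rh_*\BQ_X\to\BQ_{\BR^3}[-3]$ vanishes, since $G_2$ has cohomology sheaves only in degrees $\le2$ and $\Hom$ from $\tau_{\le 2}$ into a degree-$3$ sheaf is zero. So $\iota_2$ exists, and its cone is an extension of $H[-3]$ by $j_*\Lambda^2L[-2]$. The splitting argument is as in Step 2, using $i^!$ of $j_*\Lambda^2L$ together with Proposition \ref{rh3}.

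\textbf{Step 4 ($k=4$).} The last cone is $Cone(G_3\to Rh_*\BQ_X)=\BQ_{\BR^3}[-3]=\BQ_{\BR^3}[1][-4]$, which is perverse after the shift.

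\textbf{Main obstacle.} Throughout, we do not know that $Rh_*\BQ_X$ splits, so every identification must pass through the octahedral axiom, not through summands. The hardest step is verifying that the extensions in Steps 2 and 3 split, and doing so as complexes rather than only on cohomology sheaves. I would attack this first via the costalks $i_V^!$ and $i_O^!$ of $j_*L$ and $j_*\Lambda^2L$, relying on the local monodromy computations of Theorem \ref{5.3} and the triangles \eqref{2000}, \eqref{2001}.
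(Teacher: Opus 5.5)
Your skeleton matches the paper's. You use the same $G_k$, and the same octahedral identification of the successive cones as an extension of $(\BQ_u^{b-1}\oplus\BQ_v^{a-1})[-2]$ by $j_*L[-1]$, and of $H[-3]$ by $j_*\Lambda^2L[-2]$. That part is fine. It already yields the truncation formulas via Proposition \ref{byhand}, because only membership of the cones in $Perv(\BR^3)[-k]$ is needed.

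\textbf{The gap is the splitting behind the direct sums in ${}^\Fp\CH^2$ and ${}^\Fp\CH^3$.} The group you claim vanishes does not vanish, and the costalk you quote shows it:
\[
\Ext^1(\BQ_u,j_*L[1])=\Hom(\BQ_u,j_*L[2])=\BH^0\bigl(u\textup{-axis},\,i_u^!j_*L[2]\bigr).
\]
On the open half-axes, $i_V^!j_*L[2]=(i_V^!j_*L[1])[1]\cong R^1j_{V*}L$ is a rank-$2$ local system (the local coinvariants) sitting in degree $0$. So already over $\BR^3\setminus O$ this group is $H^0(V_u,\BQ^2)\neq0$; your degree count drops the extra shift coming from $\Ext^1$.

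A concrete non-split element: near a point of an open axis, $Rj_*L[1]$ is perverse and is an extension of $R^1j_*L$ by $j_*L[1]$. It does not split, because $i^!Rj_*L[1]=0$ while $i^!R^1j_*L\neq0$.

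Step 3 has the same problem, since $\Lambda^2$ of the local monodromy also has $2$-dimensional coinvariants. The ``separate check at $O$'' fails as well. The classes vanishing off the origin form a copy of $H^0(i_O^!j_*L[2])\cong H^1(S^2,j_*L|_{S^2})$ on the link sphere, and this is $\BQ$ here. Indeed $h^0=1$ (only $d\psi$ is invariant), $h^2=\dim(\textup{coinvariants})=2$, and $\chi(S^2,j_*L)=3\chi_c(S^2\setminus 4\textup{ pts})+4\cdot 2=2$.

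So no formal Ext-vanishing yields the direct sums. You must show that the specific classes produced by the octahedra vanish, and that needs the geometry of $h$. Away from the origin this is available. Near a point of the open $u$-axis one has $x\neq0$, and after reparametrizing $|x|$ fiberwise, $h$ is the product of the trivial $(u,\arg x)$ circle bundle with the $2$-dimensional fibration of $X_b$ near its necklace-of-$b$-spheres fiber. That fibration has a split direct image, so ${}^\Fp\CH^2$ and ${}^\Fp\CH^3$ split locally and the class dies on $\BR^3\setminus O$. The one-dimensional component at $O$ still needs a separate argument at the central fiber.

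The paper's proof does not supply this either. Its step $\Hom(\BQ_u^{b-1}\oplus\BQ_v^{a-1},j_*L[2])=\Hom(j^*(\cdot),L[2])=0$ uses the adjunction $j^*\dashv Rj_*$ with $j_*$ in place of $Rj_*$.

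For the dimension counts used afterwards (Theorem \ref{PH}), the splitting is unnecessary. Since $j_*L$ and $j_*\Lambda^2L$ have no cohomology in positive degrees on $\BR^3$, each extension has the same hypercohomology as the corresponding direct sum.
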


\begin{proof}
By Proposition \ref{byhand}, it suffices to construct the the following diagram 
\[
\begin{tikzcd}
0\arrow[r]& \BQ_{\BR^3}\arrow[r,"\iota_0"]\arrow[d] & Cone(\alpha)[-1]\arrow[r,"\iota_1"]\arrow[d] & Cone(\beta)[-1]\arrow[r,"\iota_2"]\arrow[d]  & Rh_*\BQ_X\arrow[d]\\
& \CP_{1}[-1]\arrow[lu,"+"] & \CP_{2}[-2]\arrow[lu,"+"] & \CP_{3}[-3]\arrow[lu,"+"] & \CP_{4}[-4]\arrow[lu,"+"]  
\end{tikzcd}
\]
such that all the triangles are distinguished triangles and $\CP_k$ are perverse sheaves for all $k$. It is obvious that $\CP_1=\BQ_{\BR^3}[1]$ is a perverse sheaf by Lemma \ref{perv3}. To find $\CP_2$, we construct $\iota_0$ first. Apply the octahedral axiom to \eqref{alpha}, we have
\begin{equation} \label{P21}
\begin{tikzcd}
   & & & (\BQ_u^{b-1}\oplus \BQ_v^{a-1})[-1]\\
   \tau_{\le2}Rh_*\BQ_X\arrow[rd]\arrow[rr,"\alpha"]& & j_*\Lambda^2L[-2]\arrow[r]\arrow[ru] &Cone(\alpha)\arrow[u,dotted] \\
    & (j_*\Lambda^2L\oplus \BQ_u^{b-1}\oplus \BQ_v^{a-1})[-2]\arrow[ru]\arrow[rrd]& & \\
          & & & \tau_{\le1}Rh_*\BQ_X[1]\arrow[uu,dotted].
\end{tikzcd}
\end{equation}
We define $\iota_0$ as the composition
\begin{equation} \label{iota0}
\BQ_{\BR^3}=R^0h_*\BQ_X\to \tau_{\le1}Rh_*\BQ_X\to Cone(\alpha)[-1].
\end{equation}
where $\tau_{\le1}Rh_*\BQ_X[-1]\to Cone(\alpha)$ is the dotted arrow  obtained in \eqref{P21}. Apply the octahedral axiom to \eqref{iota0}, we have
\[
\begin{tikzcd}
   & & & (\BQ_u^{b-1}\oplus \BQ_v^{a-1})[-2]\\
   \BQ_{\BR^3}\arrow[rd]\arrow[rr,"\iota_0"]& & Cone(\alpha)[-1]\arrow[r]\arrow[ru] &\CP_2[-2]\arrow[u,dotted] \\
    & \tau_{\le1}Rh_*\BQ_X\arrow[ru]\arrow[rrd]& & \\
          & & & j_*L[-1].\arrow[uu,dotted]
\end{tikzcd}
\]
The extension class of the (shifted) dotted distinguished triangle 
\[
j_*L[1]\to \CP_2\to \BQ_u^{b-1}\oplus \BQ_v^{a-1}\to
\]
is 
\[
\begin{split}
\Ext^1(\BQ_u^{b-1}\oplus\BQ_v^{a-1},j_*L[1])=\Hom(\BQ_u^{b-1}\oplus\BQ_v^{a-1},j_*L[2])\\
=\Hom(j^*(\BQ_u^{b-1}\oplus\BQ_v^{a-1}),L[2])=\Hom(0,L[2])=0.\\
\end{split}
\]
So $\CP_2=j_*L[1]\oplus \BQ_u^{b-1}\oplus \BQ_v^{a-1}$. Now $\CP_2$ is a perverse sheaf since  $\BQ_u^{b-1}\oplus \BQ_v^{a-1}$ and $j_*L[1]$ are by Lemma \ref{perv3} and Proposition \ref{perv4}. 

Applying the octahedral axiom to \eqref{beta}, we have
\begin{equation} \label{P31}
\begin{tikzcd}
   & & & H[-2]\\
   Rh_*\BQ_X\arrow[rd]\arrow[rr,"\beta"]& & \BQ_{\BR^3}[-3]\arrow[r]\arrow[ru] & Cone(\beta)\arrow[u,dotted] \\
    & (\BQ_{\BR^3}\oplus H)[-3]\arrow[ru]\arrow[rrd]& & \\
          & & & \tau_{\le2}Rh_*\BQ_X[1].\arrow[uu,dotted]
\end{tikzcd}
\end{equation}
We define $\iota_1$ as the composition
\begin{equation}\label{iota1}
Cone(\alpha)[-1]\to \tau_{\le2}Rh_*\BQ_X \to Cone(\beta)[-1],
\end{equation}
where $Cone(\alpha)[-1]\to \tau_{\le2}Rh_*\BQ_X$ is the natural morphism in the distinguished triangle defining $Cone(\alpha)$ and $\tau_{\le2}Rh_*\BQ_X \to Cone(\beta)[-1]$ is the dotted arrow obtained form \eqref{P31}. Apply the octahedral axiom to \eqref{iota1}, we have
\[
\begin{tikzcd}
   & & & H[-3]\\
   Cone(\alpha)[-1]\arrow[rd]\arrow[rr,"\iota_1"]& & Cone(\beta)[-1]\arrow[r]\arrow[ru] & \CP_3[-3]\arrow[u,dotted] \\
    & \tau_{\le2}Rh_*\BQ_X\arrow[ru]\arrow[rrd]& & \\
          & & & j_*\Lambda^2 L[-2].\arrow[uu,dotted]
\end{tikzcd}
\]
The extension class of the dotted distinguished triangle
\[
j_*\Lambda^2L[1]\to \CP_3\to H\to
\]
is 
\[
\Ext^1(H,j_*\Lambda^2L)=\Hom(H,j_*\Lambda^2 L[2] )=\Hom(j^*H,\Lambda^2L[2])=\Hom(0,\Lambda^2L)=0.
\]
So $\CP_3=j_*\Lambda^2L[1]\oplus H$ is perverse since both $H$ and $j_*\Lambda^2L[1]$ are perverse sheaves by Proposition \ref{perv4}.  Finally, the morphism $\iota_2:Cone(\beta)[-1]\to Rh_*\BQ_X$ is defined to be the connection morphism in the distinguished triangle
\[
Rh_*\BQ_X\to \BQ_{\BR^3}[-3]\to Cone(\beta)\to
\]
defining $Cone(\beta)$, and hence $\CP_4=\BQ_{\BR^3}[1]$, which is a perverse sheaf by Lemma \ref{perv3}. 
\end{proof}

Comparing with Theorem \ref{leray split}, the difference between the perverse truncation and the standard truncation of $Rh_*\BQ_X$ the indices where the subquotients $\BQ_u^{b-1},\BQ_v^{a-1},H$ live are shifted. They are supported in real codimension 2 strata and their indices in the perverse truncation are reduced by 1. Similar to Theorem \ref{leray split}, we have $E_2$ degeneracy for the perverse spectral sequence.

\begin{prop} \label{perv split}
   The perverse spectral sequence 
   \[
   ^\Fp E_{2}^{p,q}=\BH^{p}(\BR^3,{^\Fp\CH}^q(Rh_*\BQ_X))\Longrightarrow H^{p+q}(X,\BQ)
   \]
   degenerates at $E_2$-page. 
\end{prop}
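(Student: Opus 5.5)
The approach is a dimension count. The perverse spectral sequence degenerates at $E_2$ as soon as the total dimension of its $E_2$-page equals $\dim H^*(X,\BQ)$, because every differential can only lower the total dimension. So I would compute $\BH^p(\BR^3,{^\Fp\CH}^q(Rh_*\BQ_X))$ for each of the perverse cohomology sheaves listed in Theorem \ref{pervtr}. I would then compare the sum of these dimensions with the $E_2$-page of the ordinary Leray spectral sequence. That spectral sequence is already known to degenerate by Theorem \ref{leray split}, so its total dimension is $\dim H^*(X,\BQ)$.

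For the hypercohomology computations I use the following facts.
\begin{itemize}
\item Since $\BR^3$, the axes and the origin are contractible, $\BQ_{\BR^3}[1]$ has hypercohomology $\BQ$ in degree $-1$ only.
\item The sheaves $\BQ_u$ and $\BQ_v$ are constant sheaves on the closed axes, which are contractible, so each has hypercohomology $\BQ$ in degree $0$.
\item By Proposition \ref{coh}, $j_*L[1]$ has hypercohomology $\BQ$ in degree $-1$, $j_*\Lambda^2L[1]$ has $\BQ^2$ in degree $-1$, and $H$ has $\BQ^{a+b-1}$ (smooth case) or $\BQ^{a+b-(a,b)-1}$ (singular case) in degree $0$.
\end{itemize}
Summing these, the $E_2$-page has total dimension
\[
1+(1+(a+b-2))+(2+\dim H^0(H))+1.
\]
The Leray $E_2$-page is concentrated in the column $p=0$ and consists of $H^0$ of the sheaves in Propositions \ref{rh1}, \ref{rh2} and \ref{rh3}. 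Since both spectral sequences are built from the same summands, only distributed differently, this sum is exactly $1+1+(2+a+b-2)+(1+\dim H^0(H))$. Consistency with Proposition \ref{sm} and Theorem \ref{s} confirms the Betti numbers $1,1,a+b,a+b$ in the smooth case and $1,1,a+b,a+b-(a,b)$ in the singular case. Hence no differential can be nonzero, and the spectral sequence degenerates.

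The main subtlety is making the inequality argument honest. I would phrase it as follows: $\dim H^n(X) = \sum_{p+q=n}\dim {^\Fp E}_\infty^{p,q} \le \sum_{p+q=n}\dim {^\Fp E}_2^{p,q}$, with equality in every total degree if and only if all differentials vanish. So I would check the equality degree by degree, keeping track of the index shifts:
\begin{itemize}
\item $q=1$ contributes $\BQ$ in total degree $0$;
\item $q=2$ contributes the $j_*L$ part in total degree $1$ and the $\BQ_u,\BQ_v$ parts in total degree $2$;
\item $q=3$ contributes the $j_*\Lambda^2L$ part in degree $2$ and the $H$ part in degree $3$;
\item $q=4$ contributes $\BQ$ in degree $3$.
\end{itemize}
This reproduces the Betti numbers $1,1,a+b,\ 1+\dim H^0(H)$ of $X$. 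The only care needed is the correct shift of the hypercohomology degrees and the use of the Betti numbers from Section 3, or equivalently from the degenerate Leray sequence.

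An alternative, which avoids the count, is to note that each perverse cohomology sheaf has hypercohomology concentrated in a single degree, and that the degrees are such that the possible targets of every $d_r$ vanish. I would use the count, since it is cleaner.
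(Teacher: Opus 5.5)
Your proposal is correct and is essentially the paper's proof: the paper also computes the total dimension of the perverse $E_2$-page from Theorem \ref{pervtr} and Proposition \ref{coh} and matches it with the Betti numbers from Proposition \ref{sm} and Theorem \ref{s}; reading those Betti numbers off the degenerate Leray sequence (Theorem \ref{leray split}) instead is an equally valid variant. One small correction to your closing aside: ${}^\Fp\CH^2$ and ${}^\Fp\CH^3$ do not have hypercohomology in a single degree, but that alternative still works, because the whole $E_2$-page lies in the two adjacent columns $p=-1,0$, so every $d_r$ with $r\ge2$ vanishes for degree reasons.
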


\begin{proof}
   By Proposition \ref{sm}, Corollary \ref{s}, Theorem \ref{pervtr} and Proposition \ref{coh}, we have
   \[
   \sum_{p+q=k}\dim\BH^{p}(\BR^3,{^\Fp\CH}^q(Rh_*\BQ_X))= \dim H^{k}(X,\BQ),\,\forall\, k\ge0.
   \]
   So all the differentials $d^{p,q}_r$ are 0 for $r\ge 2$ and $p,q\ge0$. We conclude that the perverse spectral sequence degenerates at $E_2$-page.
\end{proof}

Combining the results above, we have the following description of the perverse filtrations.

\begin{thm} \label{PH}
Let \[
X_{a,b}=\{(x,x',y,y',z)\mid xx'=z^a+1,yy'=z^b+1,\,z\neq0\}
\]
be the cluster variety and
\begin{equation*}
\begin{split}
h:X_{a,b}&\to \BR^3\\
(x,x',y,y',z)&\mapsto(|x|^2-|x'|^2,|y|^2-|y'|^2,\log |z|).
\end{split}
\end{equation*} 
be the real analytic proper map. If $X_{a,b}$ is smooth, i.e. $ord_2(a)\neq ord_2(b)$, the non-zero graded pieces of the perverse filtration associated with $h:X\to\BR^3$ is listed below.
\begin{center}
\begin{tabular}{c|cccc}
$X_{a,b}$ & $P_0$ & $\textup{Gr}_1^P$ & $\textup{Gr}_2^P$ & $\textup{Gr}_3^P$ \\    
\hline
$H^0$ & $1$ &   &   &  \\
$H^1$ &   & $1$ &   &   \\
$H^2$ &   & $a+b-2$& $2$ &  \\
$H^3$ &   &   & $a+b-1$ & $1$
\end{tabular}
\end{center}
If $X_{a,b}$ is singular, i.e. $ord_2(a)= ord_2(b)$, the non-zero graded pieces of the perverse filtration associated with $h:X\to\BR^3$ is listed below.
\begin{center}
\begin{tabular}{c|cccc}
$X_{a,b}$ & $P_0$ & $\textup{Gr}_1^P$ & $\textup{Gr}_2^P$ & $\textup{Gr}_3^P$ \\    
\hline
$H^0$ & $1$ &   &   &  \\
$H^1$ &   & $1$ &   &   \\
$H^2$ &   & $a+b-2$& $2$ &  \\
$H^3$ &   &   & $a+b-(a,b)-1$ & $1$
\end{tabular}
\end{center}
\end{thm}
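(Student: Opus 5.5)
The table will come from combining the perverse truncation of Theorem \ref{pervtr} with the $E_2$-degeneration of Proposition \ref{perv split}. By Proposition \ref{byhand}, $E_2$-degeneration gives
\[
P_k\BH^*(\BR^3,Rh_*\BQ_X)\cong\bigoplus_{i\le k}\BH^*(\BR^3,{^\Fp\CH}^i(Rh_*\BQ_X)).
\]
So $\textup{Gr}^P_k H^n(X)$ is $\BH^{n-k}(\BR^3,{^\Fp\CH}^k)$. The only shift to track is the normalization $1\in P_0H^0$. The class $1$ comes from $\CP_1=\BQ_{\BR^3}[1]$, which sits in perverse degree $1$, so the paper's filtration index is the perverse index minus $1$.

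Next I compute the hypercohomology of each $\CP_k$. The inputs are $H^*(\BR^3,\BQ)=\BQ$ in degree $0$ and Proposition \ref{coh}. That proposition says $j_*L$ has cohomology $\BQ$ in degree $0$, $j_*\Lambda^2L$ has $\BQ^2$ in degree $0$, and $H$ has cohomology only in degree $0$, of dimension $a+b-1$ in the smooth case and $a+b-(a,b)-1$ in the singular case. The constant sheaves $\BQ_u,\BQ_v$ on the (contractible) axes have cohomology $\BQ$ in degree $0$.

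Then $\CP_k[-k]$ contributes to $H^n(X)$ through $\BH^{n-k}(\CP_k)$.
\begin{itemize}
\item $\CP_1[-1]=\BQ_{\BR^3}$ gives $1$ in $H^0$, so $P_0H^0$ has dimension $1$.
\item $\CP_2[-2]=j_*L[-1]\oplus(\BQ_u^{b-1}\oplus\BQ_v^{a-1})[-2]$ gives $1$ in $H^1$ and $a+b-2$ in $H^2$. This is the $\textup{Gr}^P_1$ column.
\item $\CP_3[-3]=j_*\Lambda^2L[-2]\oplus H[-3]$ gives $2$ in $H^2$ and $\dim H^0(H)$ in $H^3$. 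This is the $\textup{Gr}^P_2$ column, with the $H^3$ entry distinguishing the two cases.
\item $\CP_4[-4]=\BQ_{\BR^3}[-3]$ gives $1$ in $H^3$. This is $\textup{Gr}^P_3$.
\end{itemize}

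The only genuine content is that all higher hypercohomology vanishes, so that nothing leaks into other degrees. This is Proposition \ref{coh} together with contractibility of the supports. Its consistency with the actual Betti numbers is exactly what Proposition \ref{perv split} used, by comparison with Proposition \ref{sm} and Theorem \ref{s}.

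The main point needing care is the use of \eqref{nonsplit} to identify graded pieces. This needs $E_2$-degeneracy, which is Proposition \ref{perv split}, and it needs the index shift fixed by the convention that $1\in P_0H^0$.
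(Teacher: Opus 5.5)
Your proposal is correct and follows the paper's proof: the paper also reads off the graded pieces from the perverse cohomology sheaves of Theorem \ref{pervtr}, using the $E_2$-degeneration of Proposition \ref{perv split}, Proposition \ref{byhand}, and the hypercohomology computations of Proposition \ref{coh}. Your bookkeeping is cleaner than the paper's indexing, which leaves these shifts implicit: you write $\textup{Gr}^P_k H^n(X)\cong\BH^{n-k}(\BR^3,{^\Fp\CH}^k)$ and shift explicitly by one to normalize $1\in P_0H^0$.
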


\begin{proof}
By Proposition \ref{byhand} and Proposition \ref{perv split}, we have 
\[
\textup{Gr}_q^PH^p(X,\BQ)\cong \BH^p(X,{^\Fp}\CH^k(Rh_*\BQ_X)).
\]
The dimensions of graded pieces are calculated by taking the hypercohomology of the direct summands
\[
^\Fp\CH^k(Rh_*\BQ_X)=\begin{cases}
\BQ_{\BR^3}[1] & k=1,\\
j_*L[1]\oplus\BQ_u^{b-1}\oplus\BQ_v^{a-1} &k=2,\\
j_*\Lambda^2L[1]\oplus H& k=3,\\
\BQ_{\BR^3}[1] & k=4, \\
0 & \textup{otherwise}.
\end{cases}
\]
obtained in Theorem \ref{pervtr} and use Proposition \ref{coh}. 
\end{proof}

\begin{thm} \label{PIH}
Let $X_{a,b}$ and $h:X\to\BR^3$ be as in Theorem \ref{PH}. Suppose $X_{a,b}$ is singular, i.e. $ord_2(a)=ord_2(b)$. Then the non-zero graded pieces of the perverse filtration $IH^k(X)$ associated with $h$ are listed below.
\begin{center}
\begin{tabular}{c|cccc}
$X_{a,b}$ & $P_0$ & $\textup{Gr}_1^P$ & $\textup{Gr}_2^P$ & $\textup{Gr}_3^P$ \\    
\hline
$IH^0$ & $1$ &   &   &  \\
$IH^1$ &   & $1$ &   &   \\
$IH^2$ &   & $a+b+(a,b)-2$& $2$ &  \\
$IH^3$ &   &   & $a+b-(a,b)-1$ & $1$
\end{tabular}
\end{center}
\end{thm}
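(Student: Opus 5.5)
The plan is to compute the perverse filtration on $IH^*(X)$ by replacing $Rh_*\BQ_X$ with $Rh_*IC_X$ and comparing the two through the resolution $\pi:\widetilde{X}_{a,b}\to X_{a,b}$. Set $\tilde h=h\circ\pi:\widetilde{X}\to\BR^3$. The decomposition $R\pi_*\BQ_{\widetilde X}[3]=IC_X\oplus G$ from the proof of Theorem \ref{s} has $G=\BQ_{Z_{(a,b)}}[1]\oplus\BQ_{Z_{(a,b)}}[-1]$, supported at the $(a,b)$ singular points. This identity holds topologically, after forgetting Hodge structures. Pushing it forward by $h$ gives
\[
R\tilde h_*\BQ_{\widetilde X}=Rh_*IC_X[-3]\oplus h_*G[-3].
\]
By Proposition \ref{sing} and Proposition \ref{5}, every singular point lies in $h^{-1}(O)$: these are exactly the $(a,b)$ collapsed points, where $x=x'=y=y'=0$ and $z^{(a,b)}=-1$ with $|z|=1$. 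Hence $h_*G[-3]=\BQ_O^{(a,b)}[-2]\oplus\BQ_O^{(a,b)}[-4]$, a sum of shifted skyscrapers at $O$. Since $\BQ_O$ is perverse for the upper middle perversity, these summands contribute to perverse degrees $2$ and $4$ of $R\tilde h_*\BQ_{\widetilde X}$, in cohomological degrees $2$ and $4$.

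To find $Rh_*IC_X[-3]$ itself, I will not go through $\widetilde X$. Instead I will use the triangle $\BQ_X\to IC_X[-3]\to C\to$. Because $X$ has isolated $A_1$ singularities, Deligne's formula together with the link computation in Lemma \ref{A1} gives $C=\BQ_{Z_{(a,b)}}^{\oplus 1}[-2]$ locally. Concretely, the link $C^\circ$ has $H^2=\BQ$, and $\tau_{\le 2}$ in degree $2$ survives for the middle perversity on a complex threefold. Consequently $Rh_*IC_X[-3]$ sits in a triangle
\[
Rh_*\BQ_X\to Rh_*IC_X[-3]\to \BQ_O^{(a,b)}[-2]\to .
\]
A skyscraper at $O$ placed in cohomological degree $2$ is a perverse sheaf shifted to perverse degree $2$. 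The key step is then to mimic Proposition \ref{byhand}. I will splice this cone into the tower $G_k$ from Theorem \ref{pervtr} at level $k=2$, using the octahedral axiom, so that the perverse cohomology becomes
\[
{}^\Fp\CH^2(Rh_*IC_X[-3])=j_*L[1]\oplus\BQ_u^{b-1}\oplus\BQ_v^{a-1}\oplus\BQ_O^{(a,b)},
\]
with all other ${}^\Fp\CH^k$ unchanged. The extension class needed to split this is $\Ext^1(\BQ_O,\,\text{previous }\CP_2)$. I expect it to vanish, or at least to be harmless for dimension counting once $E_2$-degeneration is known.

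Next I will prove $E_2$-degeneration of the perverse spectral sequence by the same dimension count as in Proposition \ref{perv split}. The new summand $\BQ_O^{(a,b)}$ has hypercohomology $\BQ^{(a,b)}$ in degree $0$ only. Adding it to $\textup{Gr}^P_1 IH^2$ (index $1$ after normalizing $1\in P_0$) raises the total to $a+b-2+(a,b)$ in $IH^2$, which matches $\dim IH^2$ from Theorem \ref{s}. All other rows agree with Theorem \ref{PH}, and their totals agree with Theorem \ref{s}. Equality of total dimensions forces all higher differentials to vanish. Formula \eqref{nonsplit} then yields the stated table.

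The main obstacle is pinning down the local cone $C$ and its placement in the perverse tower. I need to check that $IC_X[-3]/\BQ_X$ is exactly $\BQ^{(a,b)}_O[-2]$ and contributes nothing in degree $3$. This means computing $\tau_{\le 2}Rj_*\BQ_{C^\circ}$ versus $\BQ_C$ using $H^*(C^\circ)$ from Lemma \ref{A1}: the degree $2$ class survives, while $H^3(C^\circ)$ is truncated away. If the octahedral splicing gives a non-split extension, I would fall back on the resolution route. There, Proposition \ref{byhand} would be applied to $R\tilde h_*\BQ_{\widetilde X}$, and the two skyscraper summands of $h_*G$ would be removed as direct summands, which is legitimate because the decomposition $R\pi_*=IC\oplus G$ is a genuine direct sum.
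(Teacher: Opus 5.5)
Your route is the paper's: the triangle $\BQ_X\to IC_X[-3]\to\BQ_{Z_{(a,b)}}[-2]\to$ (your Deligne-formula computation of the cone is correct), pushed forward along $h$ using $Z_{(a,b)}\subset h^{-1}(O)$, followed by the long exact sequence of perverse cohomology. The gap is your assertion that all other ${}^\Fp\CH^k$ are unchanged. Write ${}^\Fp\CH^k$ for ${}^\Fp\CH^k(Rh_*IC_X[-3])$. The long exact sequence reads
\[
0\to\CP_2\to{}^\Fp\CH^2\to\BQ_O^{(a,b)}\xrightarrow{\ \delta\ }\CP_3=j_*\Lambda^2L[1]\oplus H\to{}^\Fp\CH^3\to 0.
\]
You can insert the full cone $\BQ_O^{(a,b)}[-2]$ into the tower at level $2$, with the higher pieces unchanged, exactly when $\delta=0$.

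The extension class $\Ext^1(\BQ_O,\CP_2)$ that you single out is indeed harmless. The map $\delta$ is not, and your $E_2$ dimension count cannot detect it. Suppose $\delta$ had image $\BQ_O^{r}$ inside $j_*\Lambda^2L[1]$. Because $\BH^0(\BR^3,j_*\Lambda^2L[1])=H^1(\BR^3,j_*\Lambda^2L)=0$, you would get $\dim\BH^0({}^\Fp\CH^2)=a+b+(a,b)-2-r$ and $\dim\BH^{-1}({}^\Fp\CH^3)=2+r$. Every total would still agree with Theorem~\ref{s}, and the spectral sequence would still degenerate. Yet $\textup{Gr}^P_1IH^2$ and $\textup{Gr}^P_2IH^2$ would both be off by $r$.

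The missing input is $\Hom(\BQ_O,j_*\Lambda^2L[1]\oplus H)=0$, which is the heart of the paper's argument.
\begin{itemize}
\item For the first summand, $\Hom(\BQ_O,j_*\Lambda^2L[1])=H^1(i_O^!j_*\Lambda^2L)$. This is the cokernel of $(j_*\Lambda^2L)_O\to\varinjlim_B\Gamma(B\setminus O,j_*\Lambda^2L)$, and both sides equal $\varinjlim_B\Gamma(B\cap U,\Lambda^2L)$. Note this is one degree beyond what the perversity check in Proposition~\ref{perv4} gives.
\item For the second summand, $\Hom(\BQ_O,H)=\Gamma_O(H)=0$, since $H\subset\BQ_u^{b-1}\oplus\BQ_v^{a-1}$ has no sections supported at $O$.
\end{itemize}

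Your fallback through $\widetilde X$ does not avoid this. Applying Proposition~\ref{byhand} to $R\tilde h_*\BQ_{\widetilde X}$ requires its perverse truncation. Theorem~\ref{pervtr} does not supply that, and computing it is equivalent to the problem you started with.
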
 

\begin{proof}
   By Proposition \ref{sing}, all the $(a,b)$ singularities of $X$ are of type $A_1$. Apply the argument of \cite[Example 4.2.2]{dC3fold}, we obtain a distinguished triangle
   \[
   \BQ_X[3]\to IC_X\to\BQ_{Z_{(a,b)}}[1]\to,
   \]
   where $Z_{(a,b)}$ is the set of $(a,b)$-th roots of $-1$. Since $Z_{(a,b)}\subset h^{-1}(O)$, applying $Rh_*$ yields
   \[
   Rh_*\BQ_X[3]\to Rh_*IC_X\to\BQ_{O}^{(a,b)}[1]\to.
   \]
   Applying the perverse cohomology functor $^\Fp\CH^k$, we obtain isomorphisms 
   \[
   ^\Fp\CH^{i}(Rh_*IC_X)\cong {^\Fp}\CH^{i}(Rh_*\BQ_X[3]),\quad i\le-2\textup{ or }i\ge1
   \]
   and a long exact sequence of shifted perverse sheaves
   \[
   0\to j_*L[1]\oplus\BQ_u^{b-1}\oplus\BQ_v^{a-1}\to {^\Fp}\CH^{-1}(Rh_*IC_X)\to \BQ_O^{(a,b)}\xrightarrow{\alpha} j_*\Lambda^2 L[1]\oplus H \to {^\Fp}\CH^{0}(Rh_*IC_X)\to 0.
   \]
   We claim that the connecting map $\alpha=0$. In fact, 
   \[
   \Hom_{D(\BR^3)} (\BQ_O, j_*\Lambda^2)=\Hom_{D(U)}(j^*\BQ_O,j_*\Lambda^2)= \Hom_{D(U)}(0,j_*\Lambda^2)=0,
   \]
   and
   \[
   \Hom_{D(\BR^3)}(\BQ_O,H)=\Hom_{D(\BR^3)}(i_{O!}\BQ_O,H)=\Hom_{D(O)}(\BQ_O,i_O^!H)=H^0(i_O^!H)=0,
   \]
   where the last equality follows from applying the hypercohomology functors to the distinguished triangle
   \[
   i_{O*}i_O^!H\to H\to  Rj_*j^*H\to. 
   \]
   Consequently, we conclude that 
   \[
   {^\Fp}\CH^{0}(Rh_*IC_X)\cong j_*\Lambda^2 L[1]\oplus H={^\Fp}\CH^{0}(Rh_*\BQ_X[3])
   \]
   and 
   \[
   {^\Fp}\CH^{-1}(Rh_*IC_X)=j_*L[1]\oplus \BQ_u^{b-1}\oplus \BQ_u^{a-1}\oplus \BQ_O^{(a,b)}={^\Fp}\CH^{-1}(Rh_*\BQ_X[3])\oplus\BQ_O^{(a,b)}.
   \]
   The perverse filtration on $IH^*(X,\BQ)$ follows from Theorem \ref{PH}.
\end{proof}

\begin{rmk}
   We have several remarks on the perverse filtration associated with the map $h$.
   \begin{enumerate}
   \item The dimension of the graded pieces of the perverse filtrations on the compactly supported cohomology $H^*_c(X,\BQ)$ and on the cohomology of the minimal resolution $H^(\widetilde{X},\BQ)$ can also be computed from Theorem \ref{pervtr}. They do not match the graded pieces on the weight filtration as the $P=W$ phenomenon suggests.
   \item If one adopts the lower middle perversity instead of the upper middle perversity, then: 
   \begin{enumerate}
      \item if $X_{a,b}$ is singular, the indices in the perverse truncations are shifted by 1 globally, and hence the normalized perverse filtration keeps the same.
      \item if $X_{a,b}$ is smooth, in addition to the global shift of indices, the direct summand $\BQ_O$ in $H$ is shifted down by an additional 1. Consequently, the perverse filtration differs from the upper middle perversity case. 
   \end{enumerate}
   \end{enumerate}
\end{rmk}

\section{P=W}
In this section we prove the $P=W$ and $PI=WI$ identities for 3-dimensional isolated cluster varieties  
\[
X_{a,b}=\{(x,x',y,y',z)\mid xx'=z^a+1,yy'=z^b+1,\,z\neq0\}.
\]
As usual, the real analytic proper map to define the perverse filtration is
\begin{equation*}
\begin{split}
h:X_{a,b}&\to \BR^3\\
(x,x',y,y',z)&\mapsto(|x|^2-|x'|^2,|y|^2-|y'|^2,\log |z|).
\end{split}
\end{equation*}

The following two propositions characterize the non-top pieces in both filtrations as the cohomology classes which do not have full support.

\begin{prop} \label{Wlemma}
   Let $X^\circ\subset X$ be the open set where $|z|\neq1$, and let $j:X^\circ\to X$ be the open embedding. Then we have 
\[
W_{2i-2}H^i(X)=\ker\{H^i(X,\BQ)\to H^i(X^\circ,\BQ)\}, \,i=2,3.
\]
\end{prop}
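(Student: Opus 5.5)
The plan is to prove two inclusions: $W_{2i-2}H^i(X)\subseteq\ker$ by a weight argument, and equality by a dimension count coming from the geometry of $h$.

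First I describe $X^\circ$. By construction $X^\circ=h^{-1}(\{w\neq0\})$. The set $\{w\neq 0\}$ lies in $U$ and is the disjoint union of two contractible half-spaces. Proposition \ref{5.2} trivializes $h$ over each of them. Hence $X^\circ=X^\circ_+\sqcup X^\circ_-$, and each piece is homotopy equivalent to $T^3$ with angular coordinates $(\theta,\varphi,\psi)=(\arg x,\arg y,\arg z)$. On $X^\circ$ neither $x$ nor $y$ vanishes: $x=0$ forces $w=0$, and similarly for $y$.

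\textbf{Step 1: $W_{2i-2}\subseteq\ker$.} Consider the Zariski open subset
\[
T_{xy}=\{x\neq0,\ y\neq0\}\subset X_{a,b}.
\]
It is isomorphic to $(\BC^*)^3$ with coordinates $(x,y,z)$, since $x'=(z^a+1)/x$ and $y'=(z^b+1)/y$ are then determined. It contains $X^\circ$. Each inclusion $X^\circ_\pm\hookrightarrow T_{xy}$ identifies the three angular circles, so it is a homotopy equivalence. Therefore the restriction $H^i(X)\to H^i(X^\circ)$ factors as
\[
H^i(X)\to H^i(T_{xy})\to H^i(X^\circ_+)\oplus H^i(X^\circ_-),
\]
and the second map is injective. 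The first map is a morphism of mixed Hodge structures, even though $X^\circ$ itself is not algebraic. Its target $H^i((\BC^*)^3)$ is pure of weight $2i$. So $W_{2i-1}H^i(X)$ maps to zero. By Proposition \ref{sm} and Theorem \ref{s} the structure is Hodge–Tate, so $W_{2i-1}=W_{2i-2}$, which gives $W_{2i-2}H^i(X)\subseteq\ker$.

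\textbf{Step 2: equality by dimensions.} It now suffices to show that the image of $H^i(X)\to H^i(X^\circ)$ has dimension at least $\dim\textup{Gr}^W_{2i}H^i(X)$. By the tables, that number is $2$ for $i=2$ and $1$ for $i=3$, in both the smooth and singular cases. I would use the degeneration of the Leray spectral sequence from Theorem \ref{leray split}, which gives $H^i(X)=H^0(\BR^3,R^ih_*\BQ_X)$. Restriction to $X^\circ$ is then compatible with restricting sections of $R^ih_*\BQ_X$ to $\{w\neq0\}$, whose stalks are $H^i(T^3)$.

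For $i=2$, Proposition \ref{rh2} gives the summand $j_*\Lambda^2L$, and Proposition \ref{coh} gives $H^0(\BR^3,j_*\Lambda^2L)=\BQ^2$. These global sections are monodromy invariant sections of a local system on the connected set $U$. So they inject into the stalk at any point of $U$, in particular at a point with $w>0$. This shows the image has dimension at least $2$.

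For $i=3$, the summand $\BQ_{\BR^3}$ of $R^3h_*\BQ_X$ from Proposition \ref{rh3} is the fundamental class of the fibers. Its global section restricts to a nonzero class on $X^\circ_+$. This shows the image has dimension at least $1$.

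Combining the two inequalities with Step 1 gives equality. The main obstacle is the compatibility used in Step 2: I must check that the edge map $H^i(X)\to H^0(R^ih_*\BQ_X)$, followed by restriction to $\{w\neq0\}$, agrees with the geometric restriction $H^i(X)\to H^i(X^\circ)$. This holds by naturality of the Leray spectral sequence under restriction to the open set $\{w\neq 0\}$, where $h$ is a trivial bundle and the edge map is an isomorphism onto the stalk.
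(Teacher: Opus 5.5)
Your proof is correct, but it takes a different route from the paper.

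\textbf{The paper's route.} It stays entirely on the $W$ side. It pushes $j_!\BQ_{X^\circ}\to\BQ_X\to R\pi_*\BQ_{\widetilde X}$ forward along $f:X\to\BC^*$. It then uses two facts: $f$ is a trivial $(\BC^*)^2$-bundle over $\{|z|\neq1\}$, and $Rf_!\BQ_X$ decomposes as in \eqref{ab}. From these it argues that the image of $H^k_c(X^\circ)\to H^k_c(\widetilde X)$ is exactly the lowest weight piece. Finally it dualizes on the resolution $\widetilde X$ and transfers the result back to $X$, using that $\pi^*$ is an isomorphism on $\textup{Gr}^W_{2k}$.

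\textbf{Your Step 1.} You observe that $X^\circ$ lies in the Zariski open chart $\{x\neq0,\,y\neq0\}\cong(\BC^*)^3$, and that each component of $X^\circ$ is homotopy equivalent to this chart. So the kernel in question is the kernel of a morphism of mixed Hodge structures into a structure that is pure of weight $2i$. This gives $W_{2i-2}\subseteq\ker$ immediately, with no resolution, compact supports or duality. It is also more transparent than the paper's loosely justified claims about the maps $\alpha,\beta,\gamma$.

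\textbf{The cost, in Step 2.} Your lower bound on the image comes from the analysis of $h$: Theorem \ref{leray split} and Propositions \ref{rh2}, \ref{rh3}, \ref{coh}. This is essentially the content of Proposition \ref{Plemma}. It is not circular, since Section 4 does not use this proposition. But your characterization of $W$ is then no longer independent of $h$. In fact, your Step 1 together with Proposition \ref{Plemma} and the matching dimension tables already gives $P=W$ directly. The paper's argument instead keeps the Hodge-theoretic and the topological characterizations of the kernel separate.

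\textbf{A small point.} The Hodge--Tate property is not needed for the inclusion in Step 1, since $W_{2i-2}\subseteq W_{2i-1}$ trivially. It is what Step 2 uses, to identify $\dim H^i(X)/W_{2i-2}$ with $\dim\textup{Gr}^W_{2i}H^i(X)$.
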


\begin{proof}
   Let $\pi:\widetilde{X}\to X$ be the natural projection from the resolution constructed in Proposition \ref{res}. The natural isomorphism $j^*R\pi_*\BQ_{\widetilde{X}}\cong\BQ_{X^\circ}$ induces a morphism $R\pi_*\BQ_{\widetilde{X}}\to Rj_*\BQ_{X^\circ}$, and hence a morphism on cohomology $H^*(\widetilde{X},\BQ)\to H^*(X^\circ,\BQ)$, which fits into the commutative diagram
   \[
    \begin{tikzcd}
    H^*(X,\BQ) \arrow[rr,"j^*"]\arrow[rd,"\pi^*"]& & H^*(X^\circ,\BQ).\\
    & H^*(\widetilde{X},\BQ)\arrow[ru]& 
    \end{tikzcd}
   \]
   The dual of this map is the composition $H^*_c(X^\circ,\BQ)\to H^*_c(X,\BQ)\to H^*_c(\widetilde{X},\BQ)$ obtained by the $R\Gamma_c$ of the morphism $j_!\BQ_{X^\circ}\to\BQ_X\to R\pi_*\BQ_{\widetilde{X}}$. We claim that the image of $H^k_c(X^\circ,\BQ)\to H^k_c(\widetilde{X},\BQ)$ is exactly $W_{2(k-3)}H_c^k(\widetilde{X},\BQ)$. To prove this, we push forward to $\BC^*$:
   \[
   Rf_!j_!\BQ_{X^\circ}\to Rf_!\BQ_X\to Rf_!R\pi_*\BQ_{\widetilde{X}}.
   \]
   Let $\BC^\circ=\BC^*\setminus S^1$. We have the following Cartesian diagram
   \[
   \begin{tikzcd}
       X^\circ \arrow[r,"j"]\arrow[d,"f^\circ"] & X\arrow[d,"f"]\\
       C^\circ=\BC^*\setminus S^1\arrow[r,"j_\BC"] & \BC^*
   \end{tikzcd}
   \]
   where $f:X\to\BC^*$ is given by $(x,x',y,y',z)\mapsto z$ and $f^\circ$ is its restriction to $X^\circ$. We first note that $f^\circ$ is a trivial $(\BC^*)^2$-bundle. Indeed, since $|z|\neq1$, we have $z^a+1\neq0$ and $z^b+1\neq0$. Thus the variables $x,y\in\BC^*$ trivialize the fibers $(\BC^*)^2$ globally. Therefore, we have 
   \[
   Rf^\circ_!\BQ_{X^\circ}=\BQ_{\BC^\circ}[-4]\bigoplus\BQ_{\BC^\circ}^2[-3] \bigoplus\BQ_{\BC^\circ}[-2].
   \]
   By $Rj_{\BC!}\BQ_{\BC^\circ}=\BQ_{0<|z|<1}\oplus \BQ_{|z|>1}$ and \eqref{ab}, we have the commutative diagram
   \[
   \begin{tikzcd}
   Rf_!j_!\BQ_{X^\circ}\arrow[r,"\cong"]\arrow[dd]& (\BQ_{0<|z|<1}\oplus \BQ_{|z|>1})[-4]\bigoplus(\BQ_{0<|z|<1}\oplus \BQ_{|z|>1})^2[-3] \bigoplus (\BQ_{0<|z|<1}\oplus \BQ_{|z|>1})[-2]\arrow[d]\\
    & \BQ_{\BC^*}[-4]\bigoplus\BQ_{\BC^*}^2[-3] \bigoplus \BQ_{\BC^*}[-2]\arrow[d]
   \\
   Rf_!\BQ_X\arrow[r,"\cong"]& \BQ_{\BC^*}[-4]\bigoplus (Rj_{a*}\BQ_{U_a} \oplus Rj_{b*}\BQ_{U_b})[-3]\bigoplus  Rj_{a*}\BQ_{U_a}\otimes Rj_{b*}\BQ_{U_b}[-2],
   \end{tikzcd}
   \]
   where the vertical morphism on the right respects the cohomological shifts. Applying the functor $R\Gamma_c$, we have
   \[
   \begin{tikzcd}
   H^*_c(X^\circ,\BQ)\arrow[r,"\cong"]\arrow[dd]& H^*(\BC^\circ,\BQ)[-4]\bigoplus H^*(\BC^\circ,\BQ)^2[-3] \bigoplus H^*(\BC^\circ,\BQ)[-2]\arrow[d,twoheadrightarrow,"\alpha"]\\
    & H^*(\BC^*,\BQ)[-4]\bigoplus H^*(\BC^*,\BQ)^2[-3] \bigoplus H^*(\BC^*,\BQ)[-2]\arrow[d,"\beta"]
   \\
   H^*_c(X,\BQ)\arrow[r,"\cong"]\arrow[d]& H^*_{c,b}(X,\BQ)\bigoplus H^*_{c,nb}(X,\BQ)\arrow[d,"\gamma"]\\
    H^*_c(\widetilde{X},\BQ)\arrow[r,"\cong"]& H^*_{c,b}(\widetilde{X},\BQ)\bigoplus H^*_{c,nb}(\widetilde{X},\BQ),\\
   \end{tikzcd}
   \]
   where $H^*_{c,b}=\oplus_k W_{2(k-3)}H^k_c$ is the bottom weight piece in the mixed Hodge structure on the compactly supported cohomology, and $H^*_{c,nb}$ is the non-bottom piece spanned by the classes with non-full support. A simple topological calculation shows that $\alpha$ is surjective. The proof of Proposition \ref{fiberedprod} and Theorem \ref{cpt} shows that the image of $\beta$ is exactly $H^*_{c,b}$. Theorem \ref{cpt} and Proposition \ref{res} together imply that $\gamma$ is an isomorphism on $H^*_{c,b}$. So the claim is proved. 
   
   Taking the dual statement, we see that the image of every nonzero class in $\textup{Gr}^W_{2k} H^k(\widetilde{X},\BQ)$ remains nonzero via $H^k(\widetilde{X},\BQ)\to H^k(X^{\circ},\BQ)$. Furthermore, by comparing Proposition \ref{res} and Theorem \ref{s}, we know that the pullback map $H^k(\widetilde{X},\BQ)\to H^k(X,\BQ)$ induces an isomorphism on $\textup{Gr}_{2k}^W H^k$, so we conclude that 
   \[
   W_{2k-2}H^k(X)=\ker \{H^k({X},\BQ)\to H^k(X^{\circ},\BQ)\}.
   \]
\end{proof}

\begin{prop} \label{Plemma}
   Let $X^\circ\subset X$ be the open set where $|z|\neq1$, and let $j:X^\circ\to X$ be the open embedding. Then we have 
\[
P_{i-1}H^i(X)=\ker\{H^i(X)\to H^i(X^\circ)\}, \,i=2,3.
\]
\end{prop}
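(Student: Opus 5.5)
The plan is to identify $X^\circ$ with the preimage $h^{-1}(\BR^3\setminus\{w=0\})$. Since $\log|z|=w$, the open set $X^\circ$ is exactly $h^{-1}(U^\circ)$ with $U^\circ=\{w\neq0\}\subset U$, which is the disjoint union of two open half-spaces. Write $j^\circ:U^\circ\to\BR^3$ for the inclusion. Then the restriction $H^i(X)\to H^i(X^\circ)$ is the map on hypercohomology induced by the adjunction $Rh_*\BQ_X\to Rj^\circ_*j^{\circ*}Rh_*\BQ_X$. By Proposition \ref{5.2}, $h$ is a trivial $T^3$-bundle over each half-space, so $H^i(X^\circ)=H^i(T^3)^{\oplus 2}$.

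\textbf{Step 1: $P_{i-1}H^i$ lies in the kernel.} By Theorem \ref{pervtr} and Proposition \ref{byhand}, $P_{i-1}H^i$ (after the normalization shift by one) is the image of $\BH^i(\BR^3,{}^\Fp\tau_{\le i}Rh_*\BQ_X)$. Restricting ${}^\Fp\tau_{\le k}$ to $U$ commutes with truncation, since restriction to an open set is $t$-exact. Over $U$ all perverse cohomology sheaves are shifted local systems $L^{\wedge q}[1]$ in perverse degree $q+1$, so ${}^\Fp\tau_{\le i}$ restricted to $U^\circ$ is just the standard truncation $\tau_{\le i-1}$ of $Rh_*\BQ_X|_{U^\circ}=\bigoplus_q \Lambda^qL[-q]$. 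Its hypercohomology in degree $i$ over the contractible pieces of $U^\circ$ is therefore $\bigoplus_{q\le i-1}H^{i-q}(U^\circ,\Lambda^qL)=0$ for $i\ge 1$ (contractibility kills $H^{>0}$). Hence every class in $P_{i-1}H^i$ restricts to zero on $X^\circ$.

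\textbf{Step 2: dimension count.} By Theorem \ref{PH}, the quotient $H^i/P_{i-1}H^i=\textup{Gr}^P_iH^i$ has dimension $2$ for $i=2$ and $1$ for $i=3$. It therefore suffices to show that the restriction map $H^i(X)\to H^i(X^\circ)$ has rank at least $2$ for $i=2$ and at least $1$ for $i=3$.

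The top perverse piece is the summand coming from ${}^\Fp\CH^{i}$ together with the top-degree part of Leray: for $i=3$ it is $\BH^{-1}(\BQ_{\BR^3}[1])$, and for $i=2$ it is the global sections $H^0(\BR^3,j_*\Lambda^2L)=\BQ^2$ from ${}^\Fp\CH^3$. Using $E_2$-degeneration of Leray (Theorem \ref{leray split}), these classes map under the edge morphism to $H^0(\BR^3,R^ih_*\BQ_X)\to H^0(\BR^3,j_*\Lambda^iL)$. Composing with restriction to a point of $U^\circ$ gives the monodromy invariants $H^0(U,\Lambda^iL)\hookrightarrow H^i(F)$, which is injective. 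The edge map $H^i(X)\to H^0(U^\circ,R^ih_*)$ is compatible with restriction to $X^\circ$, so the rank is at least $\dim H^0(U,\Lambda^iL)$. By Corollary \ref{mono2} and Proposition \ref{coh}, this is $2$ and $1$ respectively, which finishes the argument.

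\textbf{Main obstacle.} The delicate point is checking in Step 2 that the classes surviving to $j_*\Lambda^iL$ actually lie outside $P_{i-1}$. Concretely, one must verify that the composite $H^i(X)\to H^0(\BR^3,R^ih_*)\to H^0(\BR^3,j_*\Lambda^iL)$ is surjective and vanishes on $P_{i-1}$. I would settle this by comparing the perverse truncation cones $\alpha,\beta$ of Theorem \ref{pervtr}: they are defined precisely through the projections of $R^2h_*$ and $R^3h_*$ onto $j_*\Lambda^2L$ and $\BQ_{\BR^3}$. Together with the equality of dimensions from Step 2, this makes the kernel exactly $P_{i-1}$.
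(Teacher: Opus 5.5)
Your proof is correct, but it follows a different route from the paper's.

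\textbf{The paper's route.} It works summand by summand on the sheaves. It writes the restriction as $R^qh_*\BQ_X\to j_{\BR*}R^qh^\circ_*\BQ_{X^\circ}$. The axis-supported summands ($\BQ_u^{b-1}\oplus\BQ_v^{a-1}$ for $q=2$, and $H$ for $q=3$) die, while $j_*\Lambda^2L$, respectively $\BQ_{\BR^3}$, injects on global sections. The paper then identifies $H^0$ of the axis-supported summand with $P_{i-1}H^i$ ``by Theorem \ref{pervtr}''. That amounts to identifying it with $\ker\BH^i(\alpha)$, respectively $\ker\BH^i(\beta)$, transported through the Leray isomorphism $H^i(X)\cong H^0(\BR^3,R^ih_*\BQ_X)$.

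\textbf{Your route.} You prove $P_{i-1}H^i\subseteq\ker$ formally: restriction to an open set is $t$-exact, on $U^\circ$ the perverse truncation is $\tau_{\le i-1}$, and contractibility kills the relevant hypercohomology. You then obtain equality from a rank count. This never requires tracking the image of $\BH^i({}^\Fp\tau_{\le i}Rh_*\BQ_X)$ through the Leray edge map, which is exactly the step the paper leaves implicit.

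\textbf{What each buys.} Your route is more robust on that point. The paper's route yields the sharper, explicit statement that $P_{i-1}H^i$ is the global sections of the summands supported on the axes.

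\textbf{Two remarks.} First, the ``main obstacle'' you flag is already settled by your own steps. Surjectivity of $H^0(\BR^3,R^ih_*\BQ_X)\to H^0(\BR^3,j_*\Lambda^iL)$ holds because $j_*\Lambda^iL$ is a direct summand of $R^ih_*\BQ_X$ (Propositions \ref{rh2} and \ref{rh3}). Vanishing on $P_{i-1}$ is precisely Step 1. So no comparison with the cones $\alpha,\beta$ is needed.

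Second, Step 2 only needs the upper bound $\dim H^i/P_{i-1}H^i\le\dim\BH^i({}^\Fp\tau_{\ge i+1}Rh_*\BQ_X)$. This bound is $2$, respectively $1$, directly from Theorem \ref{pervtr}. Using it lets you bypass Theorem \ref{PH}, and hence the perverse $E_2$-degeneration, whose proof in the paper appeals to the $W$-side values of $\dim H^k(X)$.

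A minor indexing slip: in unnormalized indexing, the top piece of $H^i$ comes from ${}^\Fp\CH^{i+1}$, not ${}^\Fp\CH^{i}$. Your own case $i=2$ already uses ${}^\Fp\CH^3$ correctly.
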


\begin{proof}
   We consider the following Cartesian diagram
   \[
   \begin{tikzcd}
       X^\circ\arrow[r,"j_X"]\arrow[d,"h^\circ"]& X\arrow[d,"h"]\\
       \BR^{3\circ}=\BR^3\setminus\{w=0\}\arrow[r,"j_\BR"]& \BR^3.
   \end{tikzcd}
   \]
   By Theorem \ref{5.3}, the map $h^\circ$ is a trivial $T^3$-fibration. So 
   \[
   Rh^\circ_*\BQ_{X^\circ}=\BQ_{\BR^{3\circ}}\oplus \BQ_{\BR^{3\circ}}^3[-1]\oplus \BQ_{\BR^{3\circ}}^3[-2]\oplus \BQ_{\BR^{3\circ}}[-3].
   \]
   A direct calculation shows that
   \[
   Rj_{\BR*}\BQ_{\BR^{3\circ}}=j_{\BR*}\BQ_{\BR^3}\cong\BQ_{\BR^{3+}}\oplus \BQ_{\BR^{3-}}
   \]
   where $\BR^{3+}=\{w\ge0\}$ and $\BR^{3-}=\{w\le0\}$.
   Therefore, the natural morphism 
   \[
   Rh_*\BQ_X\to Rj_{\BR*}Rh^\circ_*\BQ_{X^\circ}=j_{\BR*}Rh^\circ_*\BQ_{X^\circ}
   \]
   induces morphisms of sheaves 
   \begin{equation} \label{pervm}
   R^qh_*\BQ_{X}\to j_{\BR*}R^qh^\circ_*\BQ_{X^\circ}.
   \end{equation}
   When $q=2$, following the notation in Proposition \ref{rh2}, the morphism \eqref{pervm} becomes
   \[
   j_*\Lambda^2L\oplus \BQ_u^{b-1}\oplus \BQ_v^{a-1}\to j_{\BR*}j^*_{\BR}(j_*\Lambda^2L\oplus \BQ_u^{b-1}\oplus \BQ_v^{a-1})\cong \BQ_{\BR^{3+}}^3\oplus \BQ_{\BR^{3-}}^3,
   \]
   since $\BQ_u$ and $\BQ_v$ are support outside $\BR^{3\circ}$ and $L$ is trivial on $\BR^{3\circ}$. Therefore, the kernel of the map oncohomology
    \[
   H^0(\BR^3,R^2h_*\BQ_{X})\to H^0(\BR^3,R^2h^\circ_*\BQ_{X^\circ})
   \]
   is exactly contributed by the summand $\BQ_u^{b-1}\oplus \BQ^{a-1}_v$, which, by Theorem \ref{pervtr}, is $P_1H^2(X,\BQ)$. Now by Theorem \ref{leray split}, the morphism $H^0(\BR^3,R^2h_*\BQ_{X})\to H^0(\BR^3,R^2h^\circ_*\BQ_{X^\circ})$ is exactly the map $H^2(X,\BQ)\to H^2(X^\circ,\BQ)$. So we conclude that $q=2$ holds.

   Similarly, when $q=3$, Proposition \ref{rh3} implies that \eqref{pervm} is 
   \[
   \BQ_{\BR^3}\oplus H\to j_{\BR*}j^*_{\BR}\BQ_{\BR^3}=\BQ_{\BR^{3+}}\oplus \BQ_{\BR^{3-}},
   \]
   where $H$ maps to $0$ since it is supported outside $\BR^{3\circ}$. The kernel of \[
   H^0(\BR^3,R^3h_*\BQ_{X})\to H^0(\BR^3,R^3h^\circ_*\BQ_{X^\circ})
   \]
   is contributed by the summand $H$, which, by Theorem \ref{pervtr}, is $P_2H^3(X,\BQ)$. By Theorem \ref{leray split}, $H^0(\BR^3,R^3h_*\BQ_{X})\to H^0(\BR^3,R^3h^\circ_*\BQ_{X^\circ})$ is exactly the map $H^3(X,\BQ)\to H^3(X^\circ,\BQ)$. So the statement holds for $q=3$.
\end{proof}

\begin{thm} \label{P=W}
   The $P=W$ identity on cohomology 
   \[
P_kH^*(X_{a,b},\BQ)=W_{2k}H^*(X_{a,b},\BQ)=W_{2k+1}H^*(X_{a,b},\BQ),~~k\ge0
\]
holds, where the perverse filtration is defined by the map $h$ with respect to the upper middle perversity function.
\end{thm}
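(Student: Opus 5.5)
The plan is to compare the two filtrations degree by degree, using the tables in Proposition \ref{sm}, Theorem \ref{s} and Theorem \ref{PH}, together with the two kernel characterizations in Proposition \ref{Wlemma} and Proposition \ref{Plemma}. Since $H^*(X_{a,b},\BQ)$ is of mixed Hodge--Tate type, every odd graded piece vanishes, so $W_{2k}=W_{2k+1}$. It therefore suffices to prove $P_kH^i=W_{2k}H^i$ for $i=0,1,2,3$ and all $k\ge0$.

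In degrees $0$ and $1$ each cohomology group is one-dimensional and sits in a single graded piece. For $H^0$ this piece is $P_0=W_0$. For $H^1$ it is $\textup{Gr}^P_1$, which matches $\textup{Gr}^W_2$. So both filtrations jump at the same index, and the identity holds trivially in these degrees.

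In degree $i\in\{2,3\}$ the tables show that each filtration has exactly two nonzero graded pieces.
\begin{enumerate}
\item For $i=2$ the pieces are $\textup{Gr}^P_1,\textup{Gr}^P_2$ and $\textup{Gr}^W_2,\textup{Gr}^W_4$.
\item For $i=3$ the pieces are $\textup{Gr}^P_2,\textup{Gr}^P_3$ and $\textup{Gr}^W_4,\textup{Gr}^W_6$.
\end{enumerate}
In both cases the dimensions agree. Consequently both filtrations are determined by a single subspace:
\begin{itemize}
\item on the perverse side, $P_{i-1}H^i$, with $P_{i-2}H^i=0$ and $P_iH^i=H^i$;
\item on the weight side, $W_{2i-2}H^i$, with $W_{2i-4}H^i=0$ and $W_{2i}H^i=H^i$.
\end{itemize}
Propositions \ref{Wlemma} and \ref{Plemma} identify both of these subspaces with the same topological subspace $\ker\{H^i(X,\BQ)\to H^i(X^\circ,\BQ)\}$. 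Hence $P_{i-1}H^i=W_{2i-2}H^i$, and all other steps of the two filtrations agree by the vanishing and fullness just listed.

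The main point to check carefully is that the index conventions really align. The normalization $1\in P_0H^0$ is built into the definition of the perverse filtration, so I will confirm that the zero and full pieces read off from Theorem \ref{PH} match $k\mapsto 2k$ on the weight side, separately in the smooth and singular cases. The only potential gap is whether Proposition \ref{Wlemma} applies in the smooth case, since its proof uses the resolution $\widetilde X$. In the smooth case I would take $\widetilde X=X$, so that the map $\gamma$ is the identity and the proof goes through unchanged.
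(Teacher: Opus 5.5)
Your proposal is correct and follows the paper's route. You match the graded dimensions from Proposition \ref{sm}, Theorem \ref{s} and Theorem \ref{PH}, reduce to the single middle step $P_{i-1}H^i=W_{2i-2}H^i$ for $i=2,3$, and identify both sides with $\ker\{H^i(X,\BQ)\to H^i(X^\circ,\BQ)\}$ via Propositions \ref{Wlemma} and \ref{Plemma}. Your extra remarks fill in details the paper leaves implicit: the Hodge--Tate type gives $W_{2k}=W_{2k+1}$, degrees $0$ and $1$ are immediate, and taking $\widetilde X=X$ makes Proposition \ref{Wlemma} apply in the smooth case.
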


\begin{proof}
By Theorem \ref{s} and \ref{PH}, the dimensions of the graded pieces of the weight filtration match those of the perverse filtration. To prove the corresponding subspaces coincide, it suffices to prove that 
\[
P_{k-1}H^{k}(X)=W_{2k-2}H^{k}(X), ~~k=2,3.
\]
By Proposition \ref{Wlemma} and \ref{Plemma}, they are both identified with the kernel of $H^k(X)\to H^k(X^\circ)$.
\end{proof}

\begin{thm} \label{PI=WI}
   The $PI=WI$ identity on the intersection cohomology
  \[
P_kIH^*(X_{a,b},\BQ)=W_{2k}IH^*(X_{a,b},\BQ)=W_{2k+1}IH^*(X_{a,b},\BQ),~~k\ge0
\]
holds, where the perverse filtration is defined by the map $h$ with respect to the upper middle perversity function.
\end{thm}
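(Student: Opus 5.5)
The plan mirrors the proof of Theorem \ref{P=W}, with $IH^*$ in place of $H^*$. In the smooth case $ord_2(a)\neq ord_2(b)$ we have $IH^*(X)=H^*(X)$, so the claim is just Theorem \ref{P=W}. Assume from now on that $ord_2(a)=ord_2(b)$.

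First I will compare dimensions. Theorem \ref{s} gives the dimensions of the weight-graded pieces of $IH^*(X_{a,b})$, and Theorem \ref{PIH} gives those of the perverse-graded pieces. These agree entry by entry. In particular $IH^0$ and $IH^1$ each lie in a single graded piece for both filtrations, so the identity is automatic there. Also $\textup{Gr}^P_3$ and $\textup{Gr}^W_6$ of $IH^3$ are one-dimensional and top-degree. It therefore suffices to prove
\[
P_{k-1}IH^k(X)=W_{2k-2}IH^k(X),\qquad k=2,3,
\]
since both sides are the unique proper nonzero step of the respective filtrations.

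Next I will characterize both sides as $\ker\{IH^k(X)\to H^k(X^\circ)\}$, where $X^\circ=\{|z|\neq 1\}$ as in Propositions \ref{Wlemma} and \ref{Plemma}. The singular points lie on $|z|=1$, so $X^\circ$ is smooth and $IC_X|_{X^\circ}=\BQ_{X^\circ}[3]$. This gives a natural restriction map $IH^k(X)\to H^k(X^\circ)$.

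On the $W$ side I will use the decomposition $H^k(\widetilde X)=IH^k(X)\oplus G^k$ from the proof of Theorem \ref{s}. Here $G^k$ is supported on $Z_{(a,b)}$, so it restricts to zero on $X^\circ$, and $G^k$ is pure of weight $2$ in $k=2$. The restriction $H^k(\widetilde X)\to H^k(X^\circ)$ factors through $IH^k(X)$ after killing $G^k$. In the proof of Proposition \ref{Wlemma} I showed that $\textup{Gr}^W_{2k}H^k(\widetilde X)$ injects into $H^k(X^\circ)$ and that $W_{2k-2}$ maps to zero, the latter because $H^k(X^\circ)$ is pure of top weight $2k$, being the cohomology of a $(\BC^*)^2$-bundle over $\BC^\circ$. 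Transferring both facts to the summand $IH^k$ gives $W_{2k-2}IH^k=\ker$.

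On the $P$ side I will redo the argument of Proposition \ref{Plemma} for $Rh_*IC_X$. The proof of Theorem \ref{PIH} identifies ${}^\Fp\CH^*(Rh_*IC_X)$ as ${}^\Fp\CH^*(Rh_*\BQ_X[3])$ with one extra summand $\BQ_O^{(a,b)}$ placed in the same perverse degree as $j_*L[1]\oplus\BQ_u^{b-1}\oplus\BQ_v^{a-1}$. Restricting to $\BR^{3\circ}=\{w\neq0\}$ kills every summand supported on the axes, namely $\BQ_u,\BQ_v,H$ and $\BQ_O$. It also maps $j_*\Lambda^iL$ and $\BQ_{\BR^3}$ injectively on global sections, because $L$ is trivial on $\BR^{3\circ}$ and $H^0(j_*\Lambda^iL)$ injects into the invariants. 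Together with $E_2$-degeneration of the perverse spectral sequence for $IC$ (which follows by dimension count, as in Proposition \ref{perv split}), this shows that the kernel equals the sum of the axis-supported contributions, which is exactly $P_{k-1}IH^k$.

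The main obstacle is this last step: the perverse filtration has to be compatible with restriction to $X^\circ$, but $Rh_*IC_X$ is not known to split. I plan to avoid needing a splitting by working with the triangle $Rh_*\BQ_X[3]\to Rh_*IC_X\to\BQ_O^{(a,b)}[1]$. The third term restricts to zero on $\BR^{3\circ}$, and it contributes only to $IH^2$ in perverse degree $1$. So the kernel computation reduces to the one already done in Proposition \ref{Plemma}, plus the extra summand $\BQ^{(a,b)}$, which lies in $P_1IH^2$ and in the kernel.
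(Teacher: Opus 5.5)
Your reduction to the steps $P_{k-1}IH^k=W_{2k-2}IH^k$ for $k=2,3$ is sound, and so is the $W$-side argument. Your route is genuinely different from the paper's. The paper does not redo the kernel characterization for $IC_X$. It uses the injection $H^*(X)\hookrightarrow IH^*(X)$, whose cokernel is $\BQ^{(a,b)}$ in degree $2$ (one exceptional class per singular point), asserts that these extra classes lie in $P_1IH^2\cap W_2IH^2$, and then imports Theorem \ref{P=W}.

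One small correction on the $W$ side. $X^\circ=\{|z|\neq1\}$ is not Zariski open, so $H^k(X^\circ)$ carries no mixed Hodge structure, and "pure of top weight" is meaningless there. You should instead get $W_{2k-2}\mapsto 0$ by factoring the restriction through the Zariski open set $f^{-1}(\BC^*\setminus(Z_a\cup Z_b))\cong(\BC^*)^2\times(\BC^*\setminus(Z_a\cup Z_b))$. Its $H^k$ is pure of weight $2k$.

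The real gap is on the $P$ side, in the workaround you adopt for the step you call the main obstacle. In the triangle $Rh_*\BQ_X[3]\to Rh_*IC_X\to\BQ_O^{(a,b)}[1]\to$, the term $\BQ_O^{(a,b)}[1]$ is the cone. On hypercohomology it gives $0\to H^2(X)\to IH^2(X)\to\BQ^{(a,b)}\to0$, so the extra classes are a quotient, defined only modulo $H^2(X)$.

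The vanishing of the cone on $\BR^{3\circ}$ tells you only that $IH^2(X)\to H^2(X^\circ)$ extends $H^2(X)\to H^2(X^\circ)$. It does not tell you that the two images coincide. Nor does it tell you that lifts of the extra classes can be chosen in $P_1IH^2\cap\ker$. Compare $j_!\BQ_{\BR\setminus0}\to\BQ_\BR\to\BQ_0\to$ on $\BR$: the cone vanishes on $\BR\setminus0$, yet $H^0(\BR,\BQ)$ restricts injectively while $H^0(\BR,j_!\BQ)=0$. So "the extra summand lies in $P_1IH^2$ and in the kernel" is unjustified as written.

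The fix is that the obstacle is not real. Restriction to an open subset is $t$-exact for the perverse $t$-structure, since the conditions in Definition \ref{perv} are local on strata. Hence $j_\BR^*\,{}^\Fp\tau_{\le k}={}^\Fp\tau_{\le k}\,j_\BR^*$, and $IH^k(X)\to H^k(X^\circ)$ is a filtered map; no splitting of $Rh_*IC_X$ is needed.

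On $\BR^{3\circ}$ you have $Rh^\circ_*\BQ_{X^\circ}=\bigoplus_i\BQ^{\binom3i}[-i]$ with $\BQ[1]$ perverse. With the normalization $1\in P_0$, this gives $H^k(X^\circ)=\textup{Gr}^P_kH^k(X^\circ)$, that is, $P_{k-1}H^k(X^\circ)=0$. Therefore $P_{k-1}IH^k\subseteq\ker\{IH^k(X)\to H^k(X^\circ)\}$.

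Equality then follows in either of two ways. You can combine your $W$-side identification of this kernel with $W_{2k-2}IH^k$ and the dimension tables of Theorems \ref{s} and \ref{PIH}. Or you can use injectivity on $\textup{Gr}^P_k$, via $E_2$-degeneration together with $H^0(U,\Lambda^kL)\hookrightarrow H^0(\BR^{3\circ},\Lambda^kL)$.

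With this repair your route is complete. It also has an advantage over the paper's: you never need to exhibit a common complement to $H^2(X)$ inside $IH^2(X)$ lying in $P_1\cap W_2$. That matters because two subspaces of equal dimension that meet $H^2(X)$ in the same subspace need not coincide. This is the point the paper justifies only by appeal to the exceptional divisors.
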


\begin{proof}
   When $X_{a,b}$ is smooth, the statement is the same Theorem \ref{P=W}. Now suppose $X_{a,b}$ is singular, i.e. $ord_2(a)=ord_2(b)$. By Theorem \ref{s} and Theorem \ref{PIH} we see that   
   \[
   IH^*(X,\BQ)=H^*(X,\BQ)\oplus \BQ_O^{(a,b)}[-2],
   \]
   where the direct summand $\BQ_O^{(a,b)}[-2]$ is contributed by one $H^2$ class of each exceptional divisor in the resolution $\widetilde{X}_{a,b}\to X_{a,b}$, and they belongs to $P_1H^2(X_{a,b},\BQ)$ and $W_2H^2(X_{a,b},\BQ)$. Therefore, the $PI=WI$ identity holds by Theorem \ref{P=W}.
\end{proof}
 
Since the full rank case for 3-dimensional isolated cluster varieties is proved in \cite[Theorem 1.3]{Z2}, combining with Theorem \ref{P=W} and \ref{PI=WI}, we have:

\begin{cor}
The $P=W$ and $PI=WI$ identities hold for all 3-dimensional isolated cluster varieties. 
\end{cor}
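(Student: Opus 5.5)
The statement to be proved is the final corollary: $P=W$ and $PI=WI$ hold for every 3-dimensional isolated cluster variety. The plan is to reduce to two cases already settled, using the classification recalled in Section 2.4.

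\textbf{Step 1 (reduction).} A 3-dimensional isolated cluster variety is $X(M)$ for an integer matrix $M$ of size $m\times n$ with $m+n=3$. By the classification of \cite{ZZ} quoted in Section 2.4, one of two things happens.
\begin{enumerate}
\item $M$ has full column rank. This includes the degenerate shapes $\begin{pmatrix}0\\0\end{pmatrix}$ and $(a,0)$, since a matrix with no columns trivially has full column rank, and $(a,0)$ decouples into a product already covered in the full-rank setting.
\item $M=(a,b)$ with $a,b\neq 0$, so $X(M)=X_{a,b}$.
\end{enumerate}
One should check that the zero-column and zero-entry cases really fall under \cite[Theorem 1.3]{Z2}, or are products of lower-dimensional cases. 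If not, I would treat them directly as products of $\BC^*$ factors and $X_a$, using the Künneth formula on both sides. Both filtrations are multiplicative for products, and the product map $h$ is the product of the factor maps.

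\textbf{Step 2 (full rank).} In the full-rank case the variety is smooth, so $IH^*=H^*$. The identity is then exactly \cite[Theorem 1.3]{Z2}, with $h$ given by \eqref{h}.

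One compatibility point needs checking here. Our perverse filtration uses the upper middle perversity, normalized so that $1\in P_0H^0$. By the example in Section 2.3, changing perversity only shifts indices globally on smooth-fibered pieces, and that shift is removed by the normalization. The exception is $\BQ_O$-type summands, as the closing remark of Section 4 shows. So I would check that in the full-rank case $Rh_*$ has no summands supported on odd-codimension strata that would see the difference. If it does, I would verify that the filtration in \cite{Z2} is the upper-middle one.

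\textbf{Step 3 (non-full rank).} For $X_{a,b}$, invoke Theorem \ref{P=W} for cohomology and Theorem \ref{PI=WI} for intersection cohomology.

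The conclusion is then the union of Steps 2 and 3. The main obstacle is the perversity normalization in Step 2, i.e. confirming that the statement of \cite{Z2} is made with respect to the same perversity convention used here, since the remark after Theorem \ref{PIH} shows the two conventions can genuinely differ.
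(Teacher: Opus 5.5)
Your proposal follows the paper's argument: the corollary cites \cite[Theorem 1.3]{Z2} for the full-rank case and Theorems \ref{P=W} and \ref{PI=WI} for $X_{a,b}$ with $a,b\neq 0$, and the degenerate shapes are dismissed through the classification recalled in Section 2.4. One correction: $\begin{pmatrix}0\\0\end{pmatrix}$ is a zero $2\times 1$ matrix, not a matrix with no columns, so it is not of full column rank (it gives $(\BC^*)^3$ and, like $(a,0)$, falls under your product fallback); your perversity worry is harmless in the full-rank case, because there $Rh_*\BQ_X$ is constructible for a stratification of $\BR^3$ by at most a line of critical values and its complement, both odd-dimensional, so the upper and lower middle conventions differ by a uniform shift that the normalization $1\in P_0H^0$ removes.
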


\end{document}